% 2024-09-25 
\documentclass{amsart}
%%%%%%%%%%%%%%%%%%%%%%%%%%%%%%%%%%%%%%%%%%%%%%%%%%%%%%%%%%%%%%%%%%%%%%%%%%%%%%%%%%%%%%%%%%%%%%%%%%%%%%%%%%%%%%%%%%%%%%%%%%%%%%%%%%%%%%%%%%%%%%%%%%%%%%%%%%%%%%%%%%%%%%%%%%%%%%%%%%%%%%%%%%%%%%%%%%%%%%%%%%%%%%%%%%%%%%%%%%%%%%%%%%%%%%%%%%%%%%%%%%%%%%%%%%%%
\usepackage{amssymb}
\usepackage{amsfonts}
\usepackage[mag=1000]{geometry}
\usepackage{color}
\usepackage{hyperref}
\usepackage{newpnts}
\usepackage{graphicx}
\setcounter{MaxMatrixCols}{10}

\newtheorem{theorem}{Theorem}
\theoremstyle{plain}

\newtheorem{conclusion}[theorem]{Conclusion}

\newtheorem{corollary}[theorem]{Corollary}

\newtheorem{definition}[theorem]{Definition}

\newtheorem{lemma}[theorem]{Lemma}

\newtheorem{proposition}[theorem]{Proposition}
\newtheorem{remark}[theorem]{Remark}

\allowdisplaybreaks

\begin{document}
\title[Moser method]{The Moser method and boundedness of solutions to
infinitely degenerate elliptic equations\thanks{\textit{2010 Mathematics
Subject Classification}. Primary 35B65, 35D30, 35H99. Secondary 51F99, 46E35.%
}}
\author{Lyudmila Korobenko}
\address{Reed College\\
Portland, Oregon, USA}
\email{korobenko@reed.edu}
\author{Cristian Rios}
\address{University of Calgary\\
Calgary, Alberta, Canada}
\email{crios@ucalgary.ca}
\author{Eric Sawyer}
\address{McMaster University\\
Hamilton, Ontario, Canada}
\email{sawyer@mcmaster.ca}
\author{Ruipeng Shen}
\address{Center for Applied Mathematics\\
Tianjin University \\
Tianjin 300072, P.R.China}
\email{srpgow@163.com}
\date{\today }

\begin{abstract}
We show that if $\mathbb{R}^{n}$ is equipped with certain non-doubling
metric and an Orlicz-Sobolev inequality holds for a special family of Young
functions $\Phi $, then weak solutions to quasilinear infinitely degenerate
elliptic divergence equations of the form%
\begin{equation*}
-\mathrm{div}\mathcal{A}\left( x,u\right) \nabla u=\phi _{0}-\mathrm{div}_{A}%
\vec{\phi}_{1}.
\end{equation*}%
are locally bounded. Furthermore, we establish a maximum principle for
solutions whenever a global Orlicz-Soblev estimate is available. We obtain
these results via the implementation of a Moser iteration method, what
constitutes the first instance of such technique applied to infinite
degenerate equations. These results partially extend previously known
estimates for solutions of these equations but for which the right hand side
did not have a drift term. We also obtain a-priori bounds for small negative
powers of nonnegative solutions; these will be applied to obtain continuity
of solutions in a subsequent paper. \newline
As an application of the abstract theorems, we consider the family $\left\{
f_{k,\sigma }\right\} _{k\in \mathbb{N},\sigma >0}$ of functions on the real
line, 
\begin{equation*}
f_{k,\sigma }\left( x\right) =\left\vert x\right\vert ^{\left( \ln ^{\left(
k\right) }\frac{1}{\left\vert x\right\vert }\right) ^{\sigma }},\ \ \ \ \
-\infty <x<\infty ,
\end{equation*}%
that are infinitely degenerate at the origin, and show that all weak
solutions to the associated infinitely degenerate planar quasilinear
equations of the form 
\begin{equation*}
\mathrm{div}A\left( x,y,u\right) \mathrm{grad}u=\phi \left( x,y\right) ,\ \ \
A\left( x,y,z\right) \sim \left[ 
\begin{array}{cc}
1 & 0 \\ 
0 & f_{k,\sigma }\left( x\right) ^{2}%
\end{array}%
\right] ,
\end{equation*}%
with rough data $A,\phi _{0},\vec{\phi}_{1}$, are locally bounded when $k=1$
and $0<\sigma <1$.
\end{abstract}

\maketitle
\tableofcontents

\section{Introduction and main results}

We consider divergence form quasilinear degenerate elliptic equations of the
form%
\begin{equation}
\mathcal{L}u\equiv -\nabla ^{\mathrm{tr}}\mathcal{A}\left( x,u\left(
x\right) \right) \nabla u=\phi _{0}-\mathrm{div}_{A}\vec{\phi}_{1},\ \ \ \ \
x\in \Omega   \label{eq-quasi}
\end{equation}%
in a domain $\Omega \subset \mathbb{R}^{n}$ is a bounded domain. The matrix $%
\mathcal{A}\left( x,z\right) $ is assumed to be equivalent to a degenerate
elliptic matrix $A\left( x\right) $ in the sense of quadratic forms, i.e. $%
\mathcal{A}\left( \cdot ,z\right) \in \mathfrak{A}\left( A,\Lambda ,\lambda
\right) $ uniformly in $z\in \mathbb{R}$, where $\mathfrak{A}\left(
A,\Lambda ,\lambda \right) $ denotes the class of nonnegative symmetric
matrices $\tilde{A}\left( x\right) $ satisfying 
\begin{equation}
0\leq \lambda ~\xi ^{\mathrm{tr}}A\left( x\right) \xi \leq \xi ^{\mathrm{tr%
}}\tilde{A}\left( x\right) \xi \leq \Lambda \,\xi ^{\mathrm{tr}}A\left(
x\right) \xi \ ,  \label{struc_0}
\end{equation}%
for a.e. $x\in \Omega $ , $\xi \in \mathbb{R}^{n}$, and some fixed $%
0<\lambda \leq \Lambda <\infty $. We further assume that the reference
matrix $A$ satisfies that $\sqrt{A}$ is a bounded Lipschitz continuous $%
n\times n$ real-valued nonnegative definite matrix in $\Omega $, and define
the $A$-gradient and the $A$-divergence operators by%
\begin{equation}
\nabla _{A}=\sqrt{A\left( x\right) }\nabla \ ,\qquad \mathrm{div}_{A}=\mathrm{div%
}\left( \sqrt{A\left( x\right) }\cdot \right) ,  \label{def grad A}
\end{equation}

To obtain local bounds for weak solutions $u$ of the second order
quasilinear equation (\ref{eq-quasi}) it suffices to consider the linear
operator 
\begin{equation}
L_{\tilde{A}}u=-\mathrm{div}\tilde{A}\nabla u=-\mathrm{div}_{\tilde{A}}\nabla _{%
\tilde{A}}u=\phi _{0}-\mathrm{div}_{A}\vec{\phi}_{1},\ \ \ \ \ x\in \Omega 
\label{eq-linear}
\end{equation}%
where the matrix $\tilde{A}\in \mathfrak{A}\left( A,\Lambda ,\lambda \right) 
$, i.e. it satisfies the equivalences (\ref{struc_0}). 

We first work in an abstract setting which requires the existence of an
underlying metric $d$ satisfying some geometric compatibility with the
differential structure induced by $A$, including the validity of a certain
Orlicz-Sobolev inequality (Definition \ref{def OS}) for compactly supported
functions on $d$-metric balls.

The Moser technique implemented here is the first instance of it for
infinite degenerate equations, and as such it has an interest on its own.
The technical obstacles in implementing this technique require the use of a
specially designed family of Young functions $\Phi _{m}\left( t\right) \sim
t\exp \left( \left( \ln t\right) ^{\frac{m-1}{m}}\right) $ as $t\rightarrow
\infty $, $m>1$, because these functions are well-behaved under successive
compositions. These Young functions however are much larger than the ones
considered in \cite{KoRiSaSh19}, namely $\Psi _{N}\sim t\left( \ln t\right)
^{N}$, so our results hold in a more restrictive family of geometries,
although they do extend the boundedness results in \cite{KoRiSaSh19} for
such geometries to operators in which the right hand side also has a drift
term.

Another reason to implement the Moser iteration is that it yields $L^{2}$-$%
L^{\infty }$ estimates for \emph{small negative powers} $u^{\alpha }$ of
nonnegative solutions $u$, which combined with similar estimates for small
positive powers of nonnegative solutions can render a Harnack-type
inequality and provide continuity of solutions. However, our present methods
require convexity of the power functions $t^{\alpha }$, limiting our results
to exponents $\alpha <0$ or $\alpha \geq 1$. In a subsequent work we will
establish estimates for small positive powers of nonnegative solutions via
the De Giorgi method, and obtain continuity of solutions combining these
results.

The abstract results are of interest in themselves because of their greater
generality, but they prove their true relevance in actual geometric settings
where they can yield new boundedness theorems. We provide in this paper an
application of our abstract theory to a \emph{two-dimensional} quasilinear
operator comparable to a diagonal linear operator with degeneracy controlled
by a function $f$ that only depends on one of the variables. The current
implementation of the Moser method requires a rather restrictive assumption
on the type of the degeneracy that is allowed, and does not handle as large
a range of degeneracies as is covered by the De Giorgi iteration in \cite%
{KoRiSaSh19}, or by the trace method in \cite{KoSa21}. However, it does
guarantee boundedness of solutions to degenerate quasilinear equations as in 
\cite{KoRiSaSh19}-\cite{KoSa21} while including the case of non-zero right
hand side. In this application, the structural assumptions on $A$ will
ensure that $A$ is elliptic away from the hyperplane $x_{1}=0$, and that the
Carnot-Carath\'{e}odory metric $d_{A}$ induced by $A$ is topologically
equivalent to the Euclidean metric $d_{\mathbb{E}}$, although these will not
be equivalent metrics since the $d_{A}$-balls are \emph{not doubling} when
centered on that hyperplane. We prove that the assumptions necessary for the
abstract theory, including an Orlicz-Sobolev embedding, all hold, thereby
obtaining boundedness of weak solutions to $-\mathrm{div}\mathcal{A}\left(
x,u\right) \nabla u=\phi _{0}-\mathrm{div}_{A}\vec{\phi}_{1}$ for these
operators in the plane (Theorem \ref{th-geom-bound}). The right hand side
pair $\left( \phi _{0},\vec{\phi}_{1}\right) $ is required to be admissible
as given in Definition \ref{def A admiss} below, which basically requires
the $\left( \phi _{0},\vec{\phi}_{1}\right) $ to belong to the dual of the
homogeneous space $W_{A,0}^{1,1}$ (see Section \ref{section abs-setting} for
the definition of these spaces).

We now present the two-dimensional geometric applications, boundedness
Theorem \ref{th-geom-bound} and the maximum principle, Theorem \ref%
{th-geom-max}. For our geometric results we will specifically consider the
geometry of balls induced by diagonal matrices 
\begin{equation}
A(x)=\left[ 
\begin{array}{cc}
1 & 0 \\ 
0 & f\left( x\right) ^{2}%
\end{array}%
\right]  \label{sing-matrix}
\end{equation}%
where $f=f_{k,\sigma }=e^{-F_{k,\sigma }}$ with 
\begin{equation*}
F_{k,\sigma }\left( r\right) =\left( \ln \frac{1}{r}\right) \left( \ln
^{\left( k\right) }\frac{1}{r}\right) ^{\sigma },\qquad r>0,~k\in \mathbb{N}%
\text{, and }\sigma >0.
\end{equation*}%
That is, $f_{k,\sigma }\left( r\right) =e^{-F_{k,\sigma }\left( r\right)
}=r^{\left( \ln ^{\left( k\right) }\frac{1}{r}\right) ^{\sigma }}$ vanishes
to infinite order at $r=0$, and $f_{k,\sigma }$ vanishes faster than $%
f_{k^{\prime },\sigma ^{\prime }}$ if either $k<k^{\prime }$ or \ if $%
k=k^{\prime }$ and $\sigma >\sigma ^{\prime }$. These geometries are
particular examples of the general geometries $F$ considered in our abstract
theory defined by the structural conditions \ref{structure conditions} in
Section \ref{section geom-setting} below. In \cite{KoRiSaSh19} we consider $%
F_{\sigma }=F_{0,\sigma }=r^{-\sigma }$ ($k=0$) with $0<\sigma <1$, so $%
f_{1,\sigma }\left( r\right) \gg f_{\sigma }=e^{-\frac{1}{r^{\sigma }}}$
near $r=0$. The boundedness results obtained here, albeit having a drift
term on the right hand side and being able to treat small negative powers of
supersolutions, do not include the case $k=0$, $0<\sigma <1$ (as in \cite%
{KoRiSaSh19}), due to the current technical limitations for implementing a
Moser iteration in the infinite degenerate setting.

\begin{theorem}[geometric local boundedness]
\label{th-geom-bound}Let $\left\{ \left( 0,0\right) \right\} \subset \Omega
\subset \mathbb{R}^{2}$ and $\mathcal{A}(x,z)$ be a nonnegative semidefinite
matrix in $\Omega \times \mathbb{R}$ that satisfies the degenerate elliptic
condition (\ref{struc_0}) where $A(x)$ is given by (\ref{sing-matrix}) with $%
f=f_{k,\sigma }$. Then every weak subsolution of (\ref{eq-quasi}): 
\begin{equation*}
\mathcal{L}u\equiv -\nabla ^{\mathrm{tr}}\mathcal{A}\left( x,u\left(
x\right) \right) \nabla u=\phi _{0}-\mathrm{div}_{A}\vec{\phi}_{1}
\end{equation*}%
is \emph{locally bounded above} in $\Omega \subset \mathbb{R}^{2}$ provided
that:

\begin{enumerate}
\item the right hand side pair $\left( \phi _{0},\vec{\phi}_{1}\right) $
satisfies $\phi _{0}\in L_{\mathrm{loc}}^{\Phi ^{\ast }}\left( \Omega
\right) $, where $\Phi ^{\ast }$ is the adjoint Young function to $\Phi _{m}$%
, for some $m>2$, and $\left\vert \vec{\phi}_{1}\right\vert \in L_{\mathrm{%
loc}}^{\infty }\left( \Omega \right) $,

\item at least one of the following two conditions hold:%
\setcounter{enumi}{0}\RESUME%

\begin{enumerate}
\item $k\geq 1$ and $0<\sigma <1$,

\item $k\geq 2$ and $\sigma >0$.%
\setcounter{enumii}{0}\RESUME%
%EndExpansion
\end{enumerate}
\end{enumerate}
\end{theorem}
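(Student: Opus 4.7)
The plan is to derive Theorem \ref{th-geom-bound} as a direct application of the abstract local boundedness theorem, whose hypotheses I must verify in the planar geometry generated by $A(x) = \mathrm{diag}(1, f_{k,\sigma}(x_1)^2)$. The two nontrivial items to check are the geometric compatibility of the Carnot--Carath\'{e}odory metric $d_A$ with the reference matrix $A$, and the validity of an Orlicz--Sobolev inequality with Young function $\Phi_m$ for some $m > 2$ on compactly supported functions in $d_A$-balls. Once these are in place, the remaining hypotheses are either immediate from the structural assumptions or follow from condition (1) of the theorem, which places $(\phi_0, \vec{\phi}_1)$ in the required dual space.

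First I would set up $d_A$ as the control metric associated to the vector fields $\partial_{x_1}$ and $f_{k,\sigma}(x_1)\,\partial_{x_2}$, and describe its balls explicitly. Away from the line $\{x_1 = 0\}$ the metric is equivalent to the Euclidean one, so only balls meeting this line need attention. A direct construction of admissible horizontal paths shows that $B_r(0,0)$ is comparable to a rectangle of horizontal width $r$ and vertical height $r f_{k,\sigma}(r)$, yielding $|B_r(0,0)| \sim r^2 f_{k,\sigma}(r)$. This gives the topological equivalence of $d_A$ with the Euclidean metric, recovers the Lipschitz character of $\sqrt{A}$, and confirms that although $d_A$-balls are \emph{non-doubling} at the singular line, the basic covering, Poincar\'{e}-to-Sobolev, and cutoff properties required by the abstract framework still hold.

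The main obstacle is establishing the Orlicz--Sobolev inequality with the specific target $\Phi_m(t) \sim t\exp((\ln t)^{(m-1)/m})$. Here I would follow the strategy of \cite{KoRiSaSh19}, combining a $(1,1)$-Poincar\'{e} inequality on $d_A$-balls with a subrepresentation identity, and then rearranging via a Luxemburg-norm computation to obtain the embedding $W_{A,0}^{1,1}(B)\hookrightarrow L^{\Phi_m}(B)$. The admissible rate of growth of $\Phi_m$ is dictated by the decay of the volume function $\Lambda(r) = |B_r(0,0)|$: schematically, the Orlicz gain is controlled by $\Phi_m^{-1}(t)\cdot r \lesssim \Lambda^{-1}(1/t)$, which converts the doubly-exponential decay of $f_{k,\sigma}$ near $r=0$ into the required $\Phi_m^{-1}$ growth. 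Careful bookkeeping with the iterated logarithms in $F_{k,\sigma}$ shows that the restrictions $k \geq 1,\ 0 < \sigma < 1$ and $k \geq 2,\ \sigma > 0$ are precisely those under which a feasible $m > 2$ can be chosen; the excluded borderline cases correspond to the gain dropping to $m \leq 2$, at which point the Moser iteration in the abstract theorem loses the room it needs to close.

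With the Orlicz--Sobolev inequality secured and the metric/matrix compatibility verified, I plug everything into the abstract Moser scheme. The admissibility of $(\phi_0, \vec{\phi}_1)$ in hypothesis (1) is exactly what is needed to absorb the drift term into the iteration at each stage, and the $L^2$--$L^\infty$ bound on subsolutions that emerges is precisely Theorem \ref{th-geom-bound}. I would expect the bulk of the technical work, and the part most likely to require delicate logarithmic estimates, to be the Orlicz--Sobolev step just described.
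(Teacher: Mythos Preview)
Your plan is correct and matches the paper's approach: Theorem~\ref{th-geom-bound} is reduced to the abstract boundedness theorem by verifying its three hypotheses---admissibility of $(\phi_0,\vec\phi_1)$ via the Orlicz--H\"older/Sobolev duality (Proposition~\ref{prop:admiss-suff}), existence of standard Lipschitz cutoffs from the topological equivalence of $d_A$ and $d_E$ (Lemma~\ref{spec_cutoff_lemma}), and the $(\Phi_m,A,\varphi)$-Orlicz--Sobolev inequality (Corollary~\ref{Sob Fsigma}), with the latter carrying essentially all the work. The only minor discrepancy is in your sketch of that last step: the paper does not proceed via a Luxemburg-norm rearrangement of a Poincar\'e inequality but rather proves an endpoint kernel estimate $\Phi^{-1}\bigl(\sup_y\int_B\Phi(K(x,y)|B|\alpha)\,d\mu\bigr)\le C\alpha\varphi(r)$ for the subrepresentation kernel by explicit integration and a maximization argument, with the parameter restriction on $(k,\sigma)$ emerging from the monotonicity of the superradius $\varphi(r)=|F'(r)|^{-1}\exp\bigl(C_m(|F'|^2/F''+1)^{m-1}\bigr)$ rather than directly from the ball-volume asymptotics.
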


The two-dimensional geometric applications, boundedness Theorem \ref%
{th-geom-bound} and the geometric maximum principle Theorem \ref{th-geom-max}%
, are consequences of the corresponding general abstract results, Theorems %
\ref{th-abs-bound} and \ref{th-abs-max}. The following is our geometric
maximum principle in the plane for infinitely degenerate quasilinear
equations. The proofs are presented in Section \ref{section geom-setting}.

\begin{theorem}[geometric maximum principle]
\label{th-geom-max}Let $\Omega \subset \mathbb{R}^{2}$ and $\mathcal{A}(x,z)$
be a nonnegative semidefinite matrix in $\Omega \times \mathbb{R}$ that
satisfies the degenerate elliptic condition (\ref{struc_0}), and assume in
addition that $A(x)=\left[ 
\begin{array}{cc}
1 & 0 \\ 
0 & f\left( x\right) ^{2}%
\end{array}%
\right] $ where $f=f_{k,\sigma }$. Let $u$ be a subsolution of 
\begin{equation*}
\mathcal{L}u\equiv -\nabla ^{\mathrm{tr}}\mathcal{A}\left( x,u\left(
x\right) \right) \nabla u=\phi _{0}-\mathrm{div}_{A}\vec{\phi}_{1},\ \ \ \ \
x\in \Omega .
\end{equation*}%
Then we have the maximum principle, 
\begin{equation*}
\mathrm{esssup}_{x\in \Omega }u\left( x\right) \leq \sup_{x\in \partial
\Omega }u\left( x\right) +C\left\Vert \phi \right\Vert _{L^{\Phi ^{\ast
}}\left( \Omega \right) }\ ,
\end{equation*}%
provided that:

\begin{enumerate}
\item the right hand side pair $\left( \phi _{0},\vec{\phi}_{1}\right) $
satisfies $\phi _{0}\in L^{\Phi ^{\ast }}\left( \Omega \right) $, where $%
\Phi ^{\ast }$ is the adjoint Young function to $\Phi _{m}$, for some $m>2$,
and $\left\vert \vec{\phi}_{1}\right\vert \in L^{\infty }\left( \Omega
\right) $,

\item at least one of the following two conditions hold:%
\setcounter{enumi}{0}\RESUME%

\begin{enumerate}
\item $k\geq 1$ and $0<\sigma <1$,

\item $k\geq 2$ and $\sigma >0$.%
\setcounter{enumii}{0}\RESUME%
\end{enumerate}
\end{enumerate}
\end{theorem}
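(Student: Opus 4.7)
The plan is to derive the geometric maximum principle as a direct consequence of the abstract maximum principle Theorem \ref{th-abs-max}, by verifying that the specific planar geometry associated with $A(x)=\mathrm{diag}(1,f_{k,\sigma}(x)^2)$ satisfies all the hypotheses of the abstract theory on the \emph{whole} domain $\Omega$ — most importantly, a \emph{global} Orlicz-Sobolev inequality on $\Omega$ with Young function $\Phi_m$ for some $m>2$. This parallels the derivation of Theorem \ref{th-geom-bound} from Theorem \ref{th-abs-bound}, the only difference being that the Orlicz-Sobolev estimate must now be available globally on $\Omega$ rather than just on $d_A$-balls.

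The first step is a routine reduction: since $\mathcal{A}(\cdot,z)\in \mathfrak{A}(A,\Lambda,\lambda)$ uniformly in $z$, a weak subsolution $u$ of the quasilinear equation is also a weak subsolution of the linear equation $L_{\tilde{A}}u=\phi_0-\mathrm{div}_A\vec{\phi}_1$ with $\tilde{A}(x)=\mathcal{A}(x,u(x))\in \mathfrak{A}(A,\Lambda,\lambda)$, and so the full conclusion reduces to the linear case. The second step is to verify, for $f=f_{k,\sigma}$ in the admissible parameter ranges (a) $k\geq 1$, $0<\sigma<1$ and (b) $k\geq 2$, $\sigma>0$, the structural conditions of Section \ref{section geom-setting} on the defining exponent $F_{k,\sigma}(r)=(\ln\frac1r)(\ln^{(k)}\frac1r)^\sigma$: namely that $F_{k,\sigma}$ is (essentially) decreasing, regularly varying at the origin with $F_{k,\sigma}(r)/\ln\frac1r\to\infty$, and satisfies the admissibility inequalities needed to match the Young function $\Phi_m$ with some index $m>2$.

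The third and central step is establishing the \emph{global} Orlicz-Sobolev embedding
\[
\|\varphi\|_{L^{\Phi_m}(\Omega)}\leq C\,\|\nabla_A\varphi\|_{L^2(\Omega)}, \qquad \varphi\in W^{1,2}_{A,0}(\Omega),
\]
for some $m>2$. Since $\Omega$ is bounded and the only geometric pathology lies on the hyperplane $\{x_1=0\}\cap\Omega$, the global inequality reduces, via a finite cover by $d_A$-balls plus one ball covering a fixed neighborhood of the degeneracy set, to the local Orlicz-Sobolev estimates on $d_A$-balls that were already verified in the course of proving Theorem \ref{th-geom-bound}. One takes a partition of unity subordinate to such a cover, applies the local inequality to each localized piece, and sums; away from $\{x_1=0\}$ the matrix $A$ is uniformly elliptic so the standard Sobolev inequality gives a better estimate there. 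Combining these pieces yields the required global Orlicz-Sobolev inequality with the same Young function $\Phi_m$ for $m>2$ when $(k,\sigma)$ lies in one of the two admissible ranges.

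The step I expect to be the main obstacle is the global Orlicz-Sobolev inequality itself: the $d_A$-balls centered on $\{x_1=0\}$ are non-doubling and the metric $d_A$ is not equivalent to the Euclidean one there, so care is required in piecing together the local estimates into a global one without losing the Young-function scale, and to confirm that the passage from the local $\Phi_m$-estimate on balls to the global $\Phi_m$-estimate on $\Omega$ really does preserve $m>2$ — the precise threshold that Theorem \ref{th-abs-max} needs. Once this global inequality is in hand, the admissibility of $(\phi_0,\vec{\phi}_1)$ asserted in hypothesis (1) (with $\phi_0\in L^{\Phi^\ast}(\Omega)$ and $|\vec{\phi}_1|\in L^\infty(\Omega)$) places the data in the dual space to which Theorem \ref{th-abs-max} applies, and the claimed maximum principle follows directly from the abstract theorem.
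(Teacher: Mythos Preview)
Your approach is correct and essentially the same as the paper's: reduce to the abstract maximum principle (Theorem \ref{th-abs-max}) by checking $A$-admissibility of $(\phi_0,\vec\phi_1)$ via Proposition \ref{prop:admiss-suff}, and by proving the global Orlicz-Sobolev inequality (\ref{OS global}) on $\Omega$ through a finite cover by $d_A$-balls, a partition of unity, and the already-established local $(\Phi_m,A,\varphi)$-Sobolev inequality (Corollary \ref{Sob Fsigma}).

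Two small points to tighten. First, the global inequality needed is (\ref{OS global}), i.e.\ $\Phi^{-1}\!\big(\int_\Omega \Phi(|w|)\,dx\big)\le C\|\nabla_A w\|_{L^1(\Omega)}$ for $w\in \mathrm{Lip}_{\mathrm c}(\Omega)$, with $L^1$ on the right and the nonhomogeneous $\Phi$-quasi-norm on the left; your displayed version with $L^2$ and the Luxemburg norm is not the form invoked by Theorem \ref{th-abs-max}. Second, when you localize $v=\sum_j v\eta_j$ and apply the local inequality, the product rule produces a lower-order term $\|v\|_{L^1(\Omega)}$; the paper absorbs it using the elementary $(1,1)$-Sobolev estimate $\|v\|_{L^1(\Omega)}\le C\,\mathrm{diam}\,\Omega\,\|\nabla_A v\|_{L^1(\Omega)}$, which holds simply because $A$ is non-singular in the $x_1$-direction. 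This is the step that makes the globalization go through without any doubling hypothesis, so it is worth making explicit.
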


\vspace*{0.1in}

\subsection{Relation to other results in the literature}

Apart from the two papers by the authors \cite{KoRiSaSh19} and \cite{KoSa21}%
, mentioned earlier, there have been very few related results obtained by
other authors, since this current paper first appeared on the arXiv in 2015.
The two most recent and relevant ones are \cite{CrRo21} and \cite{DiFaMoRo23}%
. In \cite{CrRo21} the authors obtain boundedness of weak solutions to a
certain class of degenerate elliptic Dirichlet problems using an adaptation
of the De Giorgi technique developed in \cite{KoRiSaSh19}. The results there
are of abstract type where one assumes a weighted Sobolev inequality, and
these results are similar, but incomparable, to our abstract results.
However, they obtain a quantitative bound for a much larger class of
inhomogeneous data. On the other hand, there are no geometric theorems
there, which would require verification of complicated hypotheses, such as a
Sobolev inequality. In this paper, as in the original version in the arXiv
in 2015 \cite{KoRiSaSh1}, the use of the Moser iteration is crucial, this
despite the comment made in \cite[page 5]{CrRo21} to the effect that "We
were unable to adapt Moser iteration to work in the context of Orlicz norms,
and it remains an open question whether such an approach is possible in this
setting."\newline
More recently in \cite{DiFaMoRo23} the authors consider quasilinear
degenerate equations of this nature, and they use Moser iteration to obtain
abstract results on Harnack inequalities and H\"{o}lder continuity of
solutions. Similar to \cite{CrRo21}, the authors use an axiomatic approach,
where the relevant (weighted) Sobolev and Poincar\'{e} inequalities, as well
as the doubling property of the weights on the metric balls, are assumed to
hold a-priori. Since there are no geometric theorems established in \cite%
{DiFaMoRo23}, their results are also incomparable to those in our paper.%
\newline
From the point of view of abstract results, the current paper also makes a
new significant contribution. In both \cite{CrRo21} and \cite{DiFaMoRo23}
the authors use $(q,p)$ Sobolev inequalities with $q>p$ and do not perform
Moser or De Giorgi iterations using a weaker Orlicz-Sobolev inequality
employed in this paper. Due to the inhomogeneous nature of the Orlicz norm,
adapting these techniques to this new setting was a highly technical
nontrivial task which required new ideas. This allows to establish
regularity of solutions in the case when the metric balls are non-doubling
with respect to Lebesgue measure, that is, the metric space is not of
homogenous type, see \cite{KoMaRi}.

\subsection{The abstract setting\label{section abs-setting}}

We work in an open, bounded domain $\Omega \subset \mathbb{R}^{n}$ and as
described above we consider nonnegative symmetric real valued matrices $A$
in $\Omega $ such that $\sqrt{A\left( x\right) }$ is uniformly bounded and
uniformly Lipschitz in $\Omega $. The degenerate Sobolev space $%
W_{A}^{1,2}\left( \Omega \right) $ associated to $A$ has norm%
\begin{equation*}
\left\Vert v\right\Vert _{W_{A}^{1,2}}\equiv \sqrt{\int_{\Omega }\left\vert
v\right\vert ^{2}+\left( \left( \nabla v\right) ^{\mathrm{tr}}A\nabla
v\right) }=\sqrt{\int_{\Omega }\left( \left\vert v\right\vert
^{2}+\left\vert \nabla _{A}v\right\vert ^{2}\right) }.
\end{equation*}%
Since $\sqrt{A}$ is Lipschitz then $\mathrm{div}\sqrt{A\left( x\right) }\in
\left( L^{\infty }\left( \Omega \right) \right) ^{n}$, hence the space $%
W_{A}^{1,2}\left( \Omega \right) $ is a Hilbert space (see \cite[Theorem 2]%
{SaWh3}) contained in $L^{2}\left( \Omega \right) $, with inner product
given by the bilinear form 
\begin{equation*}
a_{1}\left( u,v\right) =\int_{\Omega }\nabla _{A}v\cdot \nabla
_{A}w~dx+\int_{\Omega }vw~dx,\qquad v,w\in W_{A}^{1,2}\left( \Omega \right)
\end{equation*}%
where $\nabla _{A}v=\sqrt{A}\nabla v$. The associated homogeneous subspace $%
W_{A,0}^{1,2}\left( \Omega \right) $ is defined as the closure in $%
W_{A}^{1,2}\left( \Omega \right) $ of Lipschitz functions with compact
support, $\mathrm{Lip}_{\mathrm{c}}\left( \Omega \right) $. If a
global (1-1)-Sobolev inequality holds in $\Omega $, i.e. 
\begin{equation}
\int_{\Omega }\left\vert g\right\vert ~dx\leq C_{\Omega }\ \int_{\Omega
}\left\vert \nabla _{A}g\right\vert ~dx\qquad \text{for some }C_{\Omega }>0%
\text{ and all }g\in \mathrm{Lip}_{\mathrm{c}}\left( \Omega
\right) ,  \label{def 11-Sob}
\end{equation}
it follows that the Hilbert space structure in $W_{A,0}^{1,2}\left( \Omega
\right) $ has the equivalent inner product%
\begin{equation*}
a\left( u,v\right) =\int_{\Omega }A\left( x\right) \nabla v\cdot \nabla
w~dx=\int_{\Omega }\nabla _{A}v\cdot \nabla _{A}w~dx,\qquad v,w\in
W_{A,0}^{1,2}\left( \Omega \right) .
\end{equation*}%
In this case we have that $\left\Vert v\right\Vert _{W_{A}^{1,2}\left(
\Omega \right) }\approx \left\Vert \nabla _{A}v\right\Vert _{L^{2}\left(
\Omega \right) }$ for all $v\in W_{A,0}^{1,2}\left( \Omega \right) $. In 
\cite[Section 8.2]{KoRiSaSh19} we show that the above inequality holds for a
wide variety of infinitely degenerate geometries.

Note that $\nabla _{A}:W_{A}^{1,2}\left( \Omega \right) \rightarrow \left(
L^{2}\left( \Omega \right) \right) ^{n}$ and $\mathrm{div}_{A}:\left(
L^{2}\left( \Omega \right) \right) ^{n}\rightarrow \left(
W_{A,0}^{1,2}\left( \Omega \right) \right) ^{\ast }$ are bounded linear
operators, where $\left( W_{A,0}^{1,2}\left( \Omega \right) \right) ^{\ast }$
is the dual space of $W_{A,0}^{1,2}\left( \Omega \right) $. The derivatives
in $W_{A}^{1,2}\left( \Omega \right) $ are understood in the weak sense,
i.e., $\vec{f}=\nabla _{A}u$ in $\Omega $ if and only if $\vec{f}\in \left(
L_{1}\left( \Omega \right) _{\mathrm{loc}}\right) ^{n}$ and for all $\vec{v}%
\in \left( \mathrm{Lip}_{\mathrm{c}}\left( \Omega \right) \right)
^{n}$%
\begin{equation*}
\int_{\Omega }f\cdot \vec{v}~dx=\int_{\Omega }u\mathrm{div}_{A}\vec{v}~dx,
\end{equation*}%
note that the right hand side is integrable since $\mathrm{div}_{A}\vec{v}\in
L^{\infty }\left( \Omega \right) $ and $u\in L^{2}\left( \Omega \right) $.
When $u\in W_{A}^{1,2}\left( \Omega \right) $ and $\tilde{A}\in \mathfrak{A}%
\left( A,\Lambda ,\lambda \right) $ we define the equivalent $\tilde{A}$%
-gradient and $\mathrm{div}_{\tilde{A}}$ operators associated to by setting $%
\nabla _{\tilde{A}}v=\sqrt{\tilde{A}}\nabla v$ and $\left\langle \mathrm{div}_{%
\tilde{A}}\vec{w},v\right\rangle =-\int \vec{w}\cdot \nabla _{\tilde{A}}v$
for all $v\in \mathrm{Lip}_{\mathrm{c}}\left( \Omega \right) $.
From (\ref{struc_0}) it is clear that $\left\vert \nabla _{\tilde{A}}v\left(
x\right) \right\vert \approx \left\vert \nabla _{A}v\left( x\right)
\right\vert $ for a.e. $x\in \Omega $ . Each $u\in W_{A}^{1,2}\left( \Omega
\right) $ then defines the bilinear form 
\begin{equation*}
\tilde{a}\left( v,w\right) =\int_{\Omega }\tilde{A}\left( x\right) \nabla
v\cdot \nabla w=\int_{\Omega }\nabla _{\tilde{A}}v\cdot \nabla _{\tilde{A}%
}w~dx,\qquad v,w\in W_{A}^{1,2}\left( \Omega \right) .
\end{equation*}%
The assumptions (\ref{struc_0}) imply that $\tilde{a}\approx a$ as bilinear
forms, which are bounded on $W_{A}^{1,2}\left( \Omega \right) $, that is $%
\left\vert \tilde{a}\left( v,w\right) \right\vert \lesssim \left\vert
a\left( v,w\right) \right\vert \lesssim \left\Vert v\right\Vert
_{W_{A}^{1,2}}\left\Vert w\right\Vert _{W_{A}^{1,2}}$. In the presence of a
(1-1)-Sobolev inequality (\ref{def 11-Sob}) we moreover have that $a$ and $%
\tilde{a}$ are coercive on $W_{A,0}^{1,2}\left( \Omega \right) $, i.e. $%
\tilde{a}\left( v,v\right) \gtrsim a\left( v,v\right) \gtrsim \left\Vert
v\right\Vert _{W_{A}^{1,2}\left( \Omega \right) }^{2}$.

\begin{definition}[Weak solutions]
\label{def weak solution}Let $\Omega $ be a bounded domain in $\mathbb{R}^{n}
$. Assume that $\phi _{0},\left\vert \vec{\phi}_{1}\right\vert \in L_{%
\mathrm{loc}}^{2}\left( \Omega \right) $. We say that $u\in
W_{A}^{1,2}\left( \Omega \right) $ is a \emph{weak solution} to $L_{\tilde{A}%
}u=-\mathrm{div}\tilde{A}\nabla u=\phi _{0}-\mathrm{div}_{A}\vec{\phi}_{1}$
provided 
\begin{equation}
\int_{\Omega }\tilde{A}\left( x\right) \nabla u\cdot \nabla
w~dx=\int_{\Omega }\phi _{0}w+\vec{\phi}_{1}\cdot \nabla _{A}w~dx
\label{equation}
\end{equation}%
for all $w\in \mathrm{Lip}_{\mathrm{c}}\left( \Omega \right) $.
Equation (\ref{equation}) may be written as $\tilde{a}\left( u,w\right)
=F\left( w\right) $ where $F$ is the operator defined by the right hand side
of (\ref{equation}), which is a bounded linear operator on $%
W_{A,0}^{1,2}\left( \Omega \right) $. With this notation we similarly define
the notion of subsolution (supersolution) by saying that $u\in
W_{A}^{1,2}\left( \Omega \right) $ is a (\emph{weak})\emph{\ subsolution} (%
\emph{supersolution}) to $L_{\tilde{A}}u=\phi _{0}-\mathrm{div}_{A}\vec{\phi}%
_{1}$, and write $L_{\tilde{A}}u\leq \phi _{0}-\mathrm{div}_{A}\vec{\phi}_{1}$
($L_{\tilde{A}}u\geq \phi _{0}-\mathrm{div}_{A}\vec{\phi}_{1}$), if and only
if 
\begin{equation*}
\tilde{a}\left( u,w\right) \leq F\left( w\right) \qquad (\tilde{a}\left(
u,w\right) \geq F\left( w\right) )\qquad \text{for all \emph{nonnegative} }%
w\in \mathrm{Lip}_{\mathrm{c}}\left( \Omega \right) .
\end{equation*}%
Finally, we say that $u\in W_{A}^{1,2}\left( \Omega \right) $ is a weak
solution (subsolution, supersolution) to $\mathcal{L}u=-\mathrm{div}\mathcal{A}%
\left( x,u\right) \nabla u=\phi _{0}-\mathrm{div}_{A}\vec{\phi}_{1}$ provided $%
u$ is a weak solution (subsolution, supersolution) to $L_{\tilde{A}}u=\phi
_{0}-\mathrm{div}_{A}\vec{\phi}_{1}$ for $\tilde{A}\left( x\right) =\mathcal{A}%
\left( x,u\left( x\right) \right) $.
\end{definition}

Note that our structural condition (\ref{struc_0}) implies that the integral
on the left above is absolutely convergent, and our assumption that $\phi
_{0},\left\vert \vec{\phi}_{1}\right\vert \in L_{\mathrm{loc}}^{2}\left(
\Omega \right) $ implies that the integrals on the right above are
absolutely convergent. In Definition \ref{def A admiss} below we weaken the
assumptions on the right hand side pair $\left( \phi _{0},\vec{\phi}%
_{1}\right) $.

In this abstract setting we work with the differential structure defined
through the matrix $A$, inducing the Sobolev spaces $W_{A}^{1,2}\left(
\Omega \right) $. We further assume the existence of a metric $d:\mathbb{R}%
^{n}\times \mathbb{R}^{n}\rightarrow \left[ 0,\infty \right) $ satisfying
certain geometric compatibility with this differential structure, namely
conditions (\ref{cond-1}), (\ref{cond-2}), and (\ref{cond-3}) in Theorem \ref%
{th-abs-bound}. We now describe each assumption in more detail.

\begin{definition}[Standard sequence of accumulating Lipschitz functions]
\label{def Lip cutoff}Let $\Omega $ be a bounded domain in $\mathbb{R}^{n}$
and let $d:\mathbb{R}^{n}\times \mathbb{R}^{n}\rightarrow \left[ 0,\infty
\right) $ be a metric. Fix , $r>0$, $\nu \in \left( 0,1\right) $, and $x\in
\Omega $. We define an $\left( A,d\right) $-\emph{\ standard} sequence of
Lipschitz cutoff functions $\left\{ \psi _{j}\right\} _{j=1}^{\infty }$ at $%
\left( x,r\right) $, along with sets $B(x,r_{j})\supset \mathrm{supp}\psi
_{j}$, to be a sequence satisfying $\psi _{j}=1$ on $B(x,r_{j+1})$, $r_{1}=r$%
, $r_{\infty }\equiv \lim_{j\rightarrow \infty }r_{j}=\nu r$, $r_{j}-r_{j+1}=%
\frac{c}{j^{2}}\left( 1-\nu \right) r$ for a uniquely determined constant $c$%
, and $\left\Vert \nabla _{A}\psi _{j}\right\Vert _{\infty }\lesssim \frac{%
j^{2}}{\left( 1-\nu \right) r}$ with $\nabla _{A}$ as in (\ref{def grad A})
(see e.g. \cite{SaWh4}).
\end{definition}

A sufficient condition for the existence of these cutoffs would be the
existence of a constant $C_{d}>0$ such that whenever $0<r<R<\infty $ and $%
B\left( x,R\right) \subset \Omega $, then there exists a Lipschitz function $%
\psi =\psi _{x,r,R}\in \mathrm{Lip}_{\mathrm{c}}\left( B_{R}\right) $ such
that $0\leq \psi \leq 1$, $\psi \equiv 1$ in $B_{r}$ and $\left\Vert \nabla
_{A}\psi \right\Vert _{\infty }\leq \frac{C_{d}}{R-r}$. This is indeed the
case $d=d_{A}$ is the Carnot-Carath\'{e}odory metric induced by a continuous
matrix $A$, and this metric is topologically equivalent to the Euclidean
metric (see Lemma \ref{spec_cutoff_lemma}).

We will need to assume the following single scale $\left( \Phi ,A,\varphi
\right) $-Orlicz-Sobolev bump inequality:

\begin{definition}[Orlicz-Sobolev inequality]
\label{def OS}Let $\Omega $ be a bounded domain in $\mathbb{R}^{n}$, the $%
\left( \Phi ,A\right) $-Orlicz-Sobolev bump inequality for $\Omega $ is 
\begin{equation}
\Phi ^{\left( -1\right) }\left( \int_{\Omega }\Phi \left( w\right)
~dx\right) \leq C\ \left\Vert \nabla _{A}w\right\Vert _{L^{1}\left( \Omega
\right) },\ \ \ \ \ w\in \mathrm{Lip}_{\mathrm{c}}\left( \Omega
\right) ,  \label{OS global}
\end{equation}%
where $dx$ is Lebesgue measure in $\mathbb{R}^{n}$ and $C$ depends on $n$, $%
A $, $\Phi $, and $\Omega $ but not on $w$.\newline
Fix $x\in \Omega $ and $r>0$ such that $B\left( x,r\right) \subset \Omega $,
the $\left( \Phi ,A,\varphi \right) $-Orlicz-Sobolev bump inequality at $%
\left( x,r\right) $ is: 
\begin{equation}
\Phi ^{\left( -1\right) }\left( \int_{B\left( x,\rho \right) }\Phi \left(
w\right) d\mu _{x,\rho }\right) \leq \varphi \left( \rho \right) \
\left\Vert \nabla _{A}w\right\Vert _{L^{1}\left( \mu _{x,\rho }\right) },\ \
\ \ \ 0<\rho \leq r\text{, }w\in \mathrm{Lip}_{\mathrm{c}}\left(
B\left( x,\rho \right) \right) ,  \label{OS ineq}
\end{equation}%
where $d\mu _{x,\rho }\left( y\right) =\frac{1}{\left\vert B\left( x,\rho
\right) \right\vert }\mathbf{1}_{B\left( x,\rho \right) }\left( y\right) dy$%
, and the function ${\varphi }\left( r\right) $, dubbed the superradius, is
continuous, nondecreasing, and it satisfies ${\varphi }\left( 0\right) =0$, $%
{\varphi }\left( \rho \right) \geq \rho $ for all $0\leq \rho \leq r$.%
\newline
Finally, we say that the \emph{single scale}\footnote{%
as opposed to the multi-scale Sobolev bump inequalities assumed for
continuity, that require $0<\rho <r_{0}$.} $\left( \Phi ,A,\varphi \right) $%
-Orlicz-Sobolev bump inequality holds at $\left( x,r\right) $ if (\ref{OS
ineq}) holds for $\rho =r$ (and not necessarily for $0<\rho <r$).
\end{definition}

The particular family of Orlicz bump functions $\Phi _{m}$ required above
that is crucial for our theorem is the family%
\begin{equation}
\Phi _{m}\left( t\right) =e^{\left( \left( \ln t\right) ^{\frac{1}{m}%
}+1\right) ^{m}},\ \ \ \ \ t>E_{m}=e^{2^{m}},\text{ }m>1,
\label{def Orlicz bumps}
\end{equation}%
which is then extended in (\ref{def Phi m ext}) below to be \emph{linear} on
the interval $\left[ 0,E_{m}\right] $, continuous and submultiplicative on $%
\left[ 0,\infty \right) $; we discuss this in more detail in Section \ref%
{Sec Orlicz}.

Finally, we describe the notion of admissible right hand side pair.

\begin{definition}[Admissible right hand sides]
\label{def A admiss}Let $\Omega $ be a bounded domain in $\mathbb{R}^{n}$
and let $\phi _{0}:\Omega \rightarrow \mathbb{R}$, $\vec{\phi}_{1}:\Omega
\rightarrow \mathbb{R}^{n}$ be locally integrable. We call$\left( \phi _{0},%
\vec{\phi}_{1}\right) $ a right hand side pair (although we may just refer
them as just a "pair"). Fix $x\in \Omega $ and $\rho >0$, we say that the
right hand side pair $\left( \phi _{0},\vec{\phi}_{1}\right) $ is $A$\emph{%
-admissible} at $\left( x,\rho \right) $ if 
\begin{equation}
\left\Vert \left( \phi ,\vec{\phi}_{1}\right) \right\Vert _{\mathcal{X}%
\left( B\left( x,\rho \right) \right) }\equiv \sup_{v\in \mathcal{W}%
_{1}}\left\vert \int_{B\left( x,\rho \right) }v\phi _{0}\,dy\right\vert
+\sup_{v\in \mathcal{W}_{1}}\left\vert \int_{B\left( x,\rho \right) }\nabla
_{A}v\cdot \vec{\phi}_{1}\,dy\right\vert <\infty .  \label{admiss}
\end{equation}%
where $\mathcal{W}_{1}=\left\{ v\in \left( W_{A,0}^{1,1}\right) \left(
B\left( x,\rho \right) \right) :\int_{B\left( x,\rho \right) }\left\vert
\nabla _{A}v\right\vert \,dy=1\right\} $. We also write $\left\Vert \phi
_{0}\right\Vert _{\mathcal{X}\left( B\left( x,\rho \right) \right)
}=\left\Vert \left( \phi ,\mathbf{0}\right) \right\Vert _{\mathcal{X}\left(
B\left( x,\rho \right) \right) }$ and $\left\Vert \vec{\phi}_{1}\right\Vert
_{\mathcal{X}\left( B\left( x,\rho \right) \right) }=\left\Vert \left( 0,%
\vec{\phi}_{1}\right) \right\Vert _{\mathcal{X}\left( B\left( x,\rho \right)
\right) }$. Similarly, we say the pair $\left( \phi _{0},\vec{\phi}%
_{1}\right) $ is $A$\emph{-admissible} for $\Omega $ if (\ref{admiss}) holds
with $\Omega $ replacing $B\left( x,\rho \right) $. \newline
For convenience we also introduce the concept of \emph{strongly} $A$\emph{%
-admissible} pair. We say that $\left( \phi _{0},\vec{\phi}_{1}\right) $ is
strongly $A$\emph{-}admissible at $\left( x,\rho \right) $ if%
\begin{equation*}
\left\Vert \left( \phi ,\vec{\phi}_{1}\right) \right\Vert _{\mathcal{X}%
^{\ast }\left( B\left( x,\rho \right) \right) }\equiv \sup_{v\in \mathcal{W}%
_{1}}\int_{B\left( x,\rho \right) }\left\vert v\phi _{0}\right\vert
\,dy+\sup_{v\in \mathcal{W}_{1}}\int_{B\left( x,\rho \right) }\left\vert
\nabla _{A}v\cdot \vec{\phi}_{1}\right\vert \,dy<\infty .
\end{equation*}%
It is clear that if $\left( \phi _{0},\vec{\phi}_{1}\right) $ is strongly $A$%
\emph{-}admissible at $\left( x,\rho \right) $ then it is $A$\emph{-}%
admissible at $\left( x,\rho \right) $.
\end{definition}

In the above definition an $A$-admissible right hand side pair at $\left(
x,r\right) $ defines a bounded linear operator $T_{\left( \phi ,\vec{\phi}%
_{1}\right) }$ on the space $W_{A,0}^{1,1}\left( B\left( x,r\right) \right) $
by setting%
\begin{equation*}
T_{\left( \phi ,\vec{\phi}_{1}\right) }\left( v\right) =\int_{B\left( x,\rho
\right) }v\phi _{0}\,dy+\int_{B\left( x,\rho \right) }\nabla _{A}v\cdot \vec{%
\phi}_{1}\,dy.
\end{equation*}

Recall that a measurable function $u$ in $\Omega $ is \emph{\ locally
bounded above} at $x$ if $u$ can be modified on a set of measure zero so
that the modified function $\widetilde{u}$ is bounded above in some
neighbourhood of $x$.

\begin{theorem}[abstract local boundedness]
\label{th-abs-bound}Let $\Omega $ be a bounded domain in $\mathbb{R}^{n}$.
Suppose that $\mathcal{A}(x,z)$ is a nonnegative semidefinite matrix in $%
\Omega \times \mathbb{R}$ that satisfies the degenerate elliptic condition (%
\ref{struc_0}). Let $d(x,y)$ be a symmetric metric in $\Omega $, and suppose
that $B\left( x,r\right) =\{y\in \Omega :d(x,y)<r\}$ with $x\in \Omega $ are
the corresponding metric balls. Fix $x\in \Omega $. Then every weak \emph{%
subsolution} (\emph{supersolution}) of (\ref{eq-quasi}) is \emph{locally
bounded above} (\emph{locally bounded below}) at $x$ provided there is $%
r_{0}>0$ such that:%
\renewcommand{\theenumi}{\roman{enumi}}%

\begin{enumerate}
\item \label{cond-1}the right hand side pair $\left( \phi _{0},\vec{\phi}%
_{1}\right) $ is $A$-admissible at $\left( x,r_{0}\right) $,

\item \label{cond-2}the single scale $\left( \Phi ,A,\varphi \right) $%
-Orlicz-Sobolev bump inequality (\ref{OS ineq}) holds at $\left(
x,r_{0}\right) $ with $\Phi =\Phi _{m}$ for some $m>2$,

\item \label{cond-3}there exists an $\left( A,d\right) $-\emph{standard}
accumulating sequence of Lipschitz cutoff functions at $\left(
x,r_{0}\right) $.\newline
Similarly, under the above three conditions every weak \emph{supersolution}
of (\ref{eq-quasi}) is \emph{locally bounded below} at $x$, and every weak 
\emph{solution} of (\ref{eq-quasi}) is \emph{locally bounded} at $x$.\newline
In particular, every weak \emph{solution} (\emph{supersolution}) of (\ref%
{eq-quasi}) is \emph{locally bounded} at $x$.%
\renewcommand{\theenumi}{\arabic{enumi}}%
\setcounter{enumi}{0}\RESUME%
\end{enumerate}
\end{theorem}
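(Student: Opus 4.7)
The plan is to implement a Moser iteration adapted to the Orlicz-Sobolev setting, using the sequence of shrinking balls $B(x,r_j)$ and cutoffs $\psi_j$ supplied by hypothesis (\ref{cond-3}). By truncation and by replacing $u$ with $u^+ + \epsilon$ if needed, we may assume the subsolution is strictly positive on a neighborhood of $x$. The test function in the weak form will be $w_j = \psi_j^2 u^{2p-1}$ for an exponent $p$ to be iterated; this lies in $\mathrm{Lip}_{\mathrm{c}}(B(x,r_j))$ after suitable truncation at high levels. For the supersolution half of the theorem we take the same test function but with $p<0$, exploiting that $p(2p-1)>0$ so the Caccioppoli inequality retains the correct sign; a weak solution is both, so both bounds apply and yield local boundedness.

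The first main step is a Caccioppoli estimate: expanding $\nabla_A w_j$ via the product rule, using the ellipticity (\ref{struc_0}), Cauchy's inequality to absorb the $|\nabla_A u|^2$ term, and bounding the drift via the $A$-admissibility in (\ref{cond-1}) produces
\begin{equation*}
\int_{\Omega} \psi_j^{2} \bigl|\nabla_A u^{p}\bigr|^{2}\,dy
\;\lesssim\; p^{2}\!\int_{\Omega}|\nabla_A\psi_j|^{2}\, u^{2p}\,dy
\;+\; C(p)\,\bigl\|(\phi_0,\vec\phi_1)\bigr\|_{\mathcal{X}(B(x,r_j))}\,\bigl\|\psi_j^{2} u^{2p-1}\bigr\|_{W_{A,0}^{1,1}}.
\end{equation*}
The second step is the Orlicz-Sobolev gain: applied to $v_j = \psi_j u^p$, the single-scale bump inequality (\ref{OS ineq}), together with Cauchy-Schwarz to upgrade its $L^{1}(\mu_{x,r_j})$ gradient norm to an $L^{2}(\mu_{x,r_j})$ gradient norm, yields
\begin{equation*}
\Phi_m^{(-1)}\!\left(\int_{B(x,r_j)} \Phi_m(\psi_j u^{p})\,d\mu_{x,r_j}\right)
\;\lesssim\; \varphi(r_j)\,\|\nabla_A v_j\|_{L^{2}(\mu_{x,r_j})},
\end{equation*}
and chaining with the Caccioppoli bound gives a single-step inequality of the schematic form $\|u^p\|_{L^{\Phi_m}(B_{r_{j+1}})} \leq K_j \|u^p\|_{L^{2}(B_{r_j})} + (\text{drift})$, with $K_j \sim j^{2}\varphi(r_0)/((1-\nu)r_0)$ coming from $\|\nabla_A\psi_j\|_\infty$.

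The third step is the iteration. Because the Orlicz target is $\Phi_m$ rather than a pure power, the standard $p_{j+1}=\sigma p_j$ bookkeeping must be replaced by an iteration through self-compositions of $\Phi_m$. The critical analytic fact about the choice (\ref{def Orlicz bumps}) is that $\Phi_m$ is submultiplicative and behaves well under self-composition: $\Phi_m\circ\Phi_m$ is dominated by $\Phi_{m'}$ for a strictly smaller index $m'$, so iterating $N$ times produces a controlled Orlicz norm as long as the parameter stays above $1$ throughout the iteration. This is precisely why the hypothesis $m>2$ is needed; it leaves a margin of at least two self-compositions before degeneration. One then chooses exponents $p_j$ growing geometrically, uses that $r_j-r_{j+1}=c/j^{2}$ to keep the product $\prod K_j^{1/p_j}$ convergent, bounds the accumulated drift contribution via the uniform admissibility norm, and sends $j\to\infty$; the embedding $\|u\|_{L^{p_j}} \to \|u\|_{L^\infty}$ on $B(x,\nu r_0)$ completes the proof.

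The main obstacle will be Step 3, namely tracking the compositional behavior of $\Phi_m$ under iteration while maintaining control of constants. Unlike the $(q,p)$-Sobolev setting of \cite{CrRo21} and \cite{DiFaMoRo23}, each iteration here does not simply multiply the exponent by a fixed factor, and the Orlicz norm on the left is inhomogeneous, so the standard device of raising to a power to rehomogenize is unavailable. The delicate point is to choose the sequence $p_j$ and the iterates of $\Phi_m$ in lockstep so that at every stage the left hand Orlicz norm dominates an $L^{2p_{j+1}}$ norm with $p_{j+1}>p_j$, while simultaneously the constants $p_j^{2}$ from Caccioppoli, $j^{4}$ from the cutoffs, and the submultiplicativity factors of $\Phi_m$ multiply to give a convergent product when raised to the powers $1/p_j$. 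The admissibility term contributes additively at each stage and is absorbed using the uniform bound on $\|(\phi_0,\vec\phi_1)\|_{\mathcal{X}(B(x,r_0))}$, which the hypothesis (\ref{cond-1}) provides.
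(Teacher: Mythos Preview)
Your outline captures the broad shape of the argument (Caccioppoli plus Orlicz--Sobolev, iterated on shrinking balls), but the mechanism you describe in Step~3 is not the one that actually works, and the specific claims you make there are incorrect.

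First, the test functions are not powers $u^{2p-1}$ with $p$ iterated geometrically. The paper applies the Caccioppoli inequality (Lemma~\ref{reverse Sobolev}) to the functions $h_j(t)=\sqrt{\Phi_m^{(j-1)}(t^{2\beta})}$ directly, with $\beta$ \emph{fixed} throughout the iteration. The key computation (Proposition~\ref{prop hinc}) is that $t\,h_j'(t)/h_j(t)\le C_m j^{\,m-1}$, so the Caccioppoli constant grows polynomially in the iteration index $j$, not in a power exponent. Combining this with the Orlicz--Sobolev inequality applied to $w=\psi_j^2 h_j(v)^2$ yields the clean recursion
\[
\int_{B_{j+1}}\Phi_m^{(j)}(v^{2\beta})\,d\mu_{j+1}\;\le\;\Phi_m\!\left(Kj^{\,m+1}\int_{B_j}\Phi_m^{(j-1)}(v^{2\beta})\,d\mu_j\right),
\]
which is the heart of the proof (Theorem~\ref{L_infinity}).

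Second, your description of the compositional behaviour of $\Phi_m$ is wrong: $\Phi_m\circ\Phi_m$ is \emph{not} dominated by $\Phi_{m'}$ with $m'<m$. For large $t$ one has the exact formula $\Phi_m^{(j)}(t)=\exp\bigl(((\ln t)^{1/m}+j)^m\bigr)$, so the parameter $m$ never moves; what changes is the additive shift $j$. Consequently your stated reason for the hypothesis $m>2$ (``a margin of at least two self-compositions before degeneration'') is not the actual reason. The condition $m>2$ enters through Lemma~\ref{ineq2}: writing $\beta_n=(\ln b_n)^{1/m}$, the recursion gives $\beta_{n+1}\le \beta_n+1+\frac{\ln(Kn^\gamma)}{m\beta_n^{\,m-1}}$, and since $\beta_n\gtrsim n$ the accumulated error $\sum_n \ln(Kn^\gamma)/n^{\,m-1}$ converges precisely when $m>2$ (see Remark~\ref{Moser fails} for the failure when $m\le 2$).

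Finally, the passage to $L^\infty$ is not via $\|u\|_{L^{p_j}}\to\|u\|_{L^\infty}$ with exponents tending to infinity, but via Proposition~\ref{prop ineq3}, which shows that $\Phi_m^{(-j)}\bigl(\int\Phi_m^{(j)}(|f|)\,d\mu\bigr)\to\|f\|_{L^\infty}$ using the growth property $\Phi_m^{(j)}(M)/\Phi_m^{(j)}(M_1)\to\infty$ for $M>M_1$. Your geometric-exponent scheme would require extracting a fixed power gain from $\Phi_m$ at each step, but $\Phi_m(t)\sim t^{1+(\ln t)^{-1/m}}$ gives a gain that \emph{decays} with $t$, so no such geometric growth is available.
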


\begin{proof}
This local boundedness result is an immediate consequence of Theorem \ref%
{L_infinity} for $\beta =1$, proven in Section \ref{section local bound}.
Indeed, setting $\tilde{A}\left( x\right) =\mathcal{A}\left( x,u\left(
x\right) \right) $ because of the equivalences (\ref{struc_0}) we have that $%
\tilde{A}$ satisfies (\ref{struc_0}). By hypothesis, the $\left( \Phi
,A,\varphi \right) $-Orlicz-Sobolev bump inequality (\ref{OS ineq}) holds at 
$\left( x,r_{0}\right) $ with $\Phi =\Phi _{m}$ for some $m>2$ and an $%
\left( A,d\right) $-\emph{standard} accumulating sequence of Lipschitz
cutoff functions at $\left( x,r_{0}\right) $.

Thus, if $u$ is a weak subsolution of (\ref{eq-quasi}), then it is a weak
subsolution of $L_{\tilde{A}}u=-\mathrm{div}\tilde{A}\nabla u=\phi _{0}-\mathrm{%
div}_{A}\vec{\phi}_{1}$, and all the hypotheses of Theorem \ref{L_infinity}
are satisfied, therefore $u$ is locally bounded above ($u^{+}\in L_{\mathrm{%
loc}}^{\infty }\left( \Omega \right) $). In fact, Theorem \ref{L_infinity}
provides precise estimates: for $\nu _{0}\leq \nu <1$, with $\nu _{0}=1-%
\frac{\delta _{0}\left( r\right) }{r}$, where $\delta _{x}\left( r\right) $
is the doubling increment of $B\left( x,r\right) $, defined by (\ref%
{nondoub_order}), we have that there exists a constant $C=C\left( \varphi
,m,\lambda ,\Lambda ,r,\nu \right) $ such that 
\begin{equation*}
\left\Vert u^{+}+\phi ^{\ast }\right\Vert _{L^{\infty }\left( B\left( x,\nu
r\right) \right) }\leq C~\left\Vert u^{+}+\phi ^{\ast }\right\Vert
_{L^{2}(B\left( x,r\right) ,d\mu _{r})}<\infty
\end{equation*}%
where . The last inequality follows form the fact that since $u\in
W_{A}^{1,2}\left( B\left( x,r\right) \right) $, then $\left\Vert u^{+}+\phi
^{\ast }\right\Vert _{L^{2}(B\left( x,r\right) ,,d\mu _{r})}=\left( \frac{1}{%
\left\vert B\left( x,r\right) \right\vert }\int_{B\left( x,r\right) }\left(
u^{+}+\phi ^{\ast }\right) ^{2}dx\right) ^{\frac{1}{2}}<\infty $. Similarly,
if $u$ is a weak supersolution of (\ref{eq-quasi}) we conclude that%
\begin{equation*}
\left\Vert u^{-}+\phi ^{\ast }\right\Vert _{L^{\infty }\left( B\left( x,\nu
r\right) \right) }\leq C~\left\Vert u^{-}+\phi ^{\ast }\right\Vert
_{L^{2}(B\left( x,r\right) ,d\mu _{r})}<\infty .
\end{equation*}
\end{proof}

\begin{remark}
The hypotheses required for local boundedness of weak solutions to $L_{%
\tilde{A}}u=\phi _{0}-\mathrm{div}_{A}\vec{\phi}_{1}$ at a single \emph{fixed}
point $x$ in $\Omega $ are quite weak; namely we only need the existence of
cutoff functions for $B\left( x,r_{0}\right) $ for some $r_{0}>0$, that the
inhomogeneous couple $\left( \phi _{0},\vec{\phi}_{1}\right) $ is $A$\emph{%
-admissible} at \textbf{just one} point $\left( x,r_{0}\right) $ , and the 
\emph{single} \emph{scale} condition relating the geometry to the equation
at \textbf{the one} point $\left( x,r_{0}\right) $.
\end{remark}

\begin{remark}
We could of course take the metric $d$ to be the Carnot-Carath\'{e}odory
metric associated with $A$, but the present formulation allows for
additional flexibility in the choice of balls used for Moser iteration.
\end{remark}

In the special case that a weak subsolution $u$ to (\ref{eq-quasi}) is \emph{%
\ nonpositive} on the boundary of $\Omega $, we can obtain a global
boundedness inequality $\left\Vert u\right\Vert _{L^{\infty }\left( B\left(
x,r_{0}\right) \right) }\lesssim \Vert \left( \phi _{0},\vec{\phi}%
_{1}\right) \Vert _{X\left( B\left( x,r_{0}\right) \right) }$ from the
arguments used for Theorem \ref{th-abs-bound}, simply by noting that
integration by parts no longer requires premultiplication by a Lipschitz
cutoff function. Moreover, the ensuing arguments work just as well for an
arbitrary bounded open set $\Omega $ in place of the ball $B\left(
x,r_{0}\right) $, provided only that we assume our Sobolev inequality for $%
\Omega $ instead of for the ball $B\left( x,r_{0}\right) $. Of course there
is no role played here by a superradius $\varphi $. This type of result is
usually referred to as a \emph{maximum principle}, and we now formulate our
theorem precisely, the proof is presented in Section \ref{section maxp}.

We say a function $u\in W_{A}^{1,2}\left( \Omega \right) $ is \emph{bounded
by a constant }$\ell \in \mathbb{R}$ on the boundary $\partial \Omega $ if $%
\left( u-\ell \right) ^{+}=\max \left\{ u-\ell ,0\right\} \in \left(
W_{A}^{1,2}\right) _{0}\left( \Omega \right) $. We define $\sup_{x\in
\partial \Omega }u\left( x\right) $ to be $\inf \left\{ \ell \in \mathbb{R}%
:\left( u-\ell \right) ^{+}\in \left( W_{A}^{1,2}\right) _{0}\left( \Omega
\right) \right\} $.

\begin{theorem}[abstract maximum principle]
\label{th-abs-max}Let $\Omega $ be a bounded domain in $\mathbb{R}^{n}$.
Suppose that $\mathcal{A}(x,z)$ is a nonnegative semidefinite matrix in $%
\Omega \times \mathbb{R}$ that satisfies the degenerate elliptic condition (%
\ref{struc_0}). Let $u$ be a subsolution of (\ref{eq-quasi}). Then the
following maximum principle holds, 
\begin{equation*}
\mathrm{esssup}_{x\in \Omega }u\left( x\right) \leq \sup_{x\in \partial
\Omega }u\left( x\right) +C\left\Vert \left( \phi _{0},\vec{\phi}_{1}\right)
\right\Vert _{\mathcal{X}\left( \Omega \right) }\ ,
\end{equation*}%
where the constant $C$ depends only on $n,~m,~\lambda ,~\Lambda ,~A,~\Phi $
and $\Omega $, provided that:

\begin{enumerate}
\item the pair $\left( \phi _{0},\vec{\phi}_{1}\right) $ is $A$-admissible
for $\Omega $,

\item the global $\left( \Phi ,A\right) $-Orlicz-Sobolev bump inequality (%
\ref{OS global}) in $\Omega $ holds with $\Phi =\Phi _{m}$ for some $m>2$.%
\setcounter{enumi}{0}\RESUME%
\end{enumerate}
\end{theorem}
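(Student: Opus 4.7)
The strategy is to re-run the Moser iteration underlying the local boundedness Theorem \ref{th-abs-bound} (i.e.\ Theorem \ref{L_infinity}), but with the Lipschitz cutoff chain $\{\psi_j\}$ suppressed and the single-scale Orlicz--Sobolev bump inequality replaced by the global one (\ref{OS global}). Reduction to a zero-boundary subsolution is immediate: let $\ell=\sup_{x\in\partial\Omega} u(x)$ and $v=(u-\ell)^+$. By the definition recalled just above the theorem, $v\in W_{A,0}^{1,2}(\Omega)$, and since $\ell$ is a constant the subsolution property passes to $v$, so $\tilde a(v,w)\le F(w)$ for all nonnegative $w\in \mathrm{Lip}_{\mathrm{c}}(\Omega)$; by density this extends to the full class of nonnegative test functions $w\in W_{A,0}^{1,2}(\Omega)$.

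For $\beta\ge 1$ and a truncation $v_N=\min(v,N)$, I would use the test function $w_\beta= v_N^{2\beta-1}$, which still lies in $W_{A,0}^{1,2}(\Omega)$. Ellipticity (\ref{struc_0}) yields $\tilde a(v,w_\beta)\ge c(\lambda,\beta)\|\nabla_A v_N^\beta\|_{L^2(\Omega)}^2$; the admissibility hypothesis, which is precisely the statement that $(\phi_0,\vec\phi_1)$ acts as a bounded functional on $W_{A,0}^{1,1}(\Omega)$, controls the right-hand side by
\begin{equation*}
|F(w_\beta)| \le \|(\phi_0,\vec\phi_1)\|_{\mathcal{X}(\Omega)} \int_\Omega |\nabla_A w_\beta|\,dx.
\end{equation*}
The chain rule, Cauchy--Schwarz, and absorption then give the cutoff-free Caccioppoli-type estimate
\begin{equation*}
\|\nabla_A v_N^\beta\|_{L^2(\Omega)}^2 \le C\beta^2\|(\phi_0,\vec\phi_1)\|_{\mathcal{X}(\Omega)}^2 \|v_N\|_{L^{2\beta-2}(\Omega)}^{2\beta-2},
\end{equation*}
which is the analogue, stripped of the localization error coming from $|\nabla_A\psi_j|$, of the Caccioppoli step in the proof of Theorem \ref{L_infinity}.

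Feeding $v_N^\beta$ into the global Orlicz--Sobolev inequality (\ref{OS global}) with $\Phi=\Phi_m$ upgrades the $L^2$ gradient bound to an estimate on $\int_\Omega \Phi_m(v_N^\beta)\,dx$. The submultiplicativity and composition identities for $\Phi_m$ --- exactly the structural feature that forces $m>2$ and motivates the choice (\ref{def Orlicz bumps}) --- then convert this into an $L^{p_{k+1}}$ gain with $p_{k+1}\ge \alpha_m p_k$ for some fixed $\alpha_m>1$. Iterating along $p_k = 2\alpha_m^k$, tracking the $\beta_k$-dependence of the constants exactly as in the proof of Theorem \ref{L_infinity}, and then sending $N\to\infty$ by monotone convergence, produces
\begin{equation*}
\|v\|_{L^\infty(\Omega)} \le C\bigl(\|v\|_{L^2(\Omega)} + \|(\phi_0,\vec\phi_1)\|_{\mathcal{X}(\Omega)}\bigr).
\end{equation*}
The residual $\|v\|_{L^2}$ is removed by the standard reiteration with a small absorption parameter (or directly by applying (\ref{OS global}) once to $v$ and invoking Jensen, since $\Phi_m$ dominates $t^2$ eventually). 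Returning to $u=v+\ell$ yields the stated maximum principle, with the constant $C$ depending only on $n,m,\lambda,\Lambda,A,\Phi_m$ and on $\Omega$ through the Sobolev constant in (\ref{OS global}).

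The principal obstacle lies in the iteration step. Because $\Phi_m(t)\sim t\exp\bigl((\ln t)^{(m-1)/m}\bigr)$ grows only marginally faster than linearly, each Moser step produces only a sliver of integrability gain, and compounding these slivers requires the precise asymptotic and submultiplicativity properties of $\Phi_m$ along with a careful bookkeeping of the $\beta_k$-dependent constants so that they telescope. This is the technical heart of Theorem \ref{L_infinity}; once that machinery is in place, however, the present global statement is strictly easier, because there is no cutoff chain to track, no superradius $\varphi(\rho)$ appears, and no local-scale doubling increment enters the constants --- so the adaptation amounts essentially to recording the simplified form of the constants in the cutoff-free setting.
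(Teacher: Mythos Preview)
Your overall architecture matches the paper's: reduce to $\sup_{\partial\Omega}u=0$, establish a cutoff-free $L^\infty$--$L^2$ bound via Moser iteration (this is Theorem~\ref{L_infinity max} in the paper), then eliminate the residual $\|u^+\|_{L^2}$ by testing the equation against $u^+$ itself and combining Jensen with the global Orlicz--Sobolev inequality (\ref{OS global}). Your final paragraph also correctly locates where the global argument simplifies relative to the local one.

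The genuine gap is in the iteration step. You propose power test functions $w_\beta=v_N^{2\beta-1}$ and assert that the Orlicz--Sobolev bump delivers ``an $L^{p_{k+1}}$ gain with $p_{k+1}\ge\alpha_m p_k$ for some fixed $\alpha_m>1$''. This is precisely what $\Phi_m$ does \emph{not} deliver: since $\Phi_m(t)\sim t\exp\bigl((\ln t)^{1-1/m}\bigr)$ is weaker than any power $t^{1+\varepsilon}$, a single application of (\ref{OS global}) to $v_N^{2\beta}$ cannot control $\|v_N\|_{L^{2\alpha\beta}}$ for any fixed $\alpha>1$; indeed your own last paragraph acknowledges the gain is only a ``sliver'', contradicting the geometric schedule $p_k=2\alpha_m^k$ you propose earlier. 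The paper therefore iterates not along Lebesgue exponents but along the \emph{composition ladder}: at step $j$ the test function is built from $h_j(t)=\sqrt{\Phi_m^{(j)}(t^2)}$ (applied to $u^++\phi^*$, not $u^+$), Lemma~\ref{reverse Sobolev} with $\psi\equiv 1$ gives $\int_\Omega|\nabla_A h_j(u^++\phi^*)|^2\le C_m j^{2(m-1)}\int_\Omega h_j(u^++\phi^*)^2$, and combining with (\ref{OS global}) yields the recurrence
\[
\int_\Omega\Phi^{(j+1)}\bigl((u^++\phi^*)^2\bigr)\,dx\le\Phi\Bigl(Kj^{2(m-1)}\int_\Omega\Phi^{(j)}\bigl((u^++\phi^*)^2\bigr)\,dx\Bigr).
\]
It is Lemma~\ref{ineq2} --- and only here that $m>2$ is used, see Remark~\ref{Moser fails} --- that controls this recurrence and allows passage to $L^\infty$ via Proposition~\ref{prop ineq3}. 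Your scheme with geometrically increasing powers bypasses this machinery and, as written, does not close.
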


The proof of the abstract maximum principle is given in Section \ref{section
maxp}.

The specific relation between the metric and the Orlicz-Sobolev embedding
will be given in terms of the concept of \emph{doubling increment} of a ball
and its connection with the superradius ${\varphi }$. The bounds in Theorems %
\ref{L_infinity} and \ref{L_infinity-beta} are the embedding norms of $%
L^{\infty }\left( B_{r-\delta _{x}\left( r\right) }\right) $ into $%
L^{2}\left( B_{r}\right) $.

\begin{definition}
\label{definition doubling}Let $\Omega $ be a bounded domain in $\mathbb{R}%
^{n}$. Let $\delta _{x}\left( r\right) $ be defined implicitly by 
\begin{equation}
\left\vert B\left( x,r-\delta _{x}\left( r\right) \right) \right\vert =\frac{%
1}{2}\left\vert B\left( x,r\right) \right\vert ,  \label{nondoub_order}
\end{equation}%
We refer to $\delta _{x}\left( r\right) $ as the \emph{doubling increment}
of the ball $B\left( x,r\right) $.
\end{definition}

\section{Caccioppoli inequalities for weak subsolutions and supersolutions 
\label{Sec Caccioppoli}}

In this section we establish various Caccioppoli inequalities for
subsolutions and supersolutions of (\ref{eq-linear}) (see Definition \ref%
{def weak solution}). In order to prove a Caccioppoli inequality, we assume
that the inhomogeneous pair $\left( \phi _{0},\vec{\phi}_{1}\right) $ in (%
\ref{equation}) is admissible for $A$ in the whole domain $\Omega $ in sense
of Definition \ref{def A admiss}.

What is usually called a Caccioppoli inequality is a reverse Sobolev
inequality which is valid only for functions satisfying an equation of the
form $L_{\tilde{A}}u\geq \phi _{0}-\mathrm{div}_{A}\vec{\phi}_{1}$ or $L_{%
\tilde{A}}u\leq \phi _{0}-\mathrm{div}_{A}\vec{\phi}_{1}$. The Moser iteration
is based on these type of inequalities obtained from the equation when the
test function is an appropriate function of the solution. If $u\in
W_{A}^{1,2}\left( \Omega \right) $, and $h$ is a $C^{0,1}$ or $C^{1,1}$
function on $\left[ 0,\infty \right) $, then $h\left( u\right) $ formally
satisfies the equation 
\begin{equation*}
L_{\tilde{A}}\left( h\left( u\right) \right) =-\mathrm{div}\tilde{A}\nabla
\left( h\left( u\right) \right) =-\mathrm{div}\tilde{A}h^{\prime }\left(
u\right) \nabla u=h^{\prime }\left( u\right) Lu-h^{\prime \prime }\left(
u\right) \left\vert \nabla _{\tilde{A}}u\right\vert ^{2}.
\end{equation*}%
Indeed, if $w\in W_{A,0}^{1,2}\left( \Omega \right) $ and $u$ is a positive
subsolution or supersolution of (\ref{eq-linear}) in $\Omega $, we have%
\begin{eqnarray*}
\int \nabla _{\tilde{A}}w\cdot \nabla _{\tilde{A}}h\left( u\right)  &=&\int
h^{\prime }\left( u\right) \nabla _{\tilde{A}}w\cdot \nabla _{\tilde{A}%
}u=\int \nabla _{\tilde{A}}\left( h^{\prime }\left( u\right) w\right) \cdot
\nabla _{\tilde{A}}u-\int wh^{\prime \prime }\left( u\right) \nabla _{\tilde{%
A}}u\cdot \nabla _{\tilde{A}}u \\
&\leq &\int wh^{\prime }\left( u\right) \phi _{0}+\int \nabla _{A}\left(
wh^{\prime }\left( u\right) \right) \cdot \vec{\phi}_{1}-\int wh^{\prime
\prime }\left( u\right) \left\vert \nabla _{\tilde{A}}u\right\vert ^{2}
\end{eqnarray*}%
provided that $wh^{\prime }\left( u\right) \in W_{A,0}^{1,2}\left( \Omega
\right) $ and that it is nonnegative if $u$ is a subsolution, and
nonpositive if $u$ is a supersolution. Note that $wh^{\prime }\left(
u\right) \in W_{A,0}^{1,2}\left( \Omega \right) $ if in addition we have
that $h^{\prime }$ is bounded.

We will establish two Caccioppoli inequalities. Lemma \ref{reverse Sobolev}
holds for \emph{convex increasing} functions $h$ applied to $u^{\pm }$; this
estimate is utilized to implement a Moser iteration scheme to obtain
boundedness of solutions without restrictions on their sign. The other
result, Lemma \ref{Cacc-positive}, applies to convex functions of
nonnegative subsolutions or supersolutions, and the function $h$ will
satisfy suitable structural properties which will allow us to obtain
(through a Moser iteration) inner ball inequalities for negative powers $%
u^{\beta }$ of the solution,.

\begin{lemma}
\label{reverse Sobolev}Assume that $u\in W_{A}^{1,2}\left( B\right) $ is a
weak \emph{subsolution} to $L_{\tilde{A}}u=\phi _{0}-\mathrm{div}_{A}\vec{\phi}%
_{1}$ in $B=B\left( x,r\right) $, where $\left( \phi _{0},\vec{\phi}%
_{1}\right) $ is an admissible pair and $\tilde{A}\in \mathfrak{A}\left(
A,\Lambda ,\lambda \right) $ (i.e. it satisfies the equivalences (\ref%
{struc_0}) for some $0<\lambda \leq \Lambda <\infty $). Let $h(t)\geq 0$ be
a Lipschitz convex function which satisfies $0<h_{-}^{\prime }\left(
t\right) \leq C_{h}\frac{h\left( t\right) }{t}$, for $t>0$ and it is
piecewise twice continuously differentiable except possibly at finitely many
points, where $C_{h}\geq 1$ is a constant. Then the following reverse
Sobolev inequality holds for any $\psi \in \mathrm{Lip}_{\mathrm{c}%
}\left( B\right) $: 
\begin{equation}
\int_{B}\psi ^{2}\left\vert \nabla _{A}\left[ h\left( u^{+}+\phi ^{\ast
}\right) \right] \right\vert ^{2}dx\leq C_{\lambda ,\Lambda
}C_{h}^{2}\int_{B}h\left( u^{+}+\phi ^{\ast }\right) ^{2}\left( \left\vert
\nabla _{A}\psi \right\vert ^{2}+\psi ^{2}\right) ,  \label{reverse Sobs}
\end{equation}%
where $\phi ^{\ast }=\phi ^{\ast }\left( x,r\right) =\left\Vert \left( \phi ,%
\vec{\phi}_{1}\right) \right\Vert _{X\left( B\left( x,r\right) \right) }$ as
given in Definition \ref{def A admiss}. Moreover, if $u\in W_{A}^{1,2}\left(
B\right) $ is a weak \emph{supersolution} to $L_{\tilde{A}}u=\phi _{0}-\mathrm{%
div}_{A}\vec{\phi}_{1}$ in $B$, then (\ref{reverse Sobs}) holds with $u^{+}$
replaced by $u^{-}$.
\end{lemma}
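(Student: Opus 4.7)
The plan is to test the weak subsolution inequality against a carefully chosen function of the form $w=\psi^{2}G(u^{+})$, designed so that (i) $w$ vanishes on $\{u\le 0\}$, thereby avoiding uncontrolled contributions from $\nabla u$ on the negative set; (ii) $\nabla w$ contains no $h''$ factor, which the structural hypothesis $h'_{-}(t)\le C_{h}h(t)/t$ cannot dominate; and (iii) the quadratic term produced by the test matches the desired left-hand side $\int\psi^{2}|\nabla_{\tilde{A}}h(v)|^{2}$ with $v=u^{+}+\phi^{\ast}$. The right choice is the antiderivative of $(h')^{2}$,
\begin{equation*}
G(t)=\int_{0}^{t}h'(s+\phi^{\ast})^{2}\,ds \quad (t\ge 0), \qquad G(t)=0 \quad (t<0),
\end{equation*}
so that $w\ge 0$ is compactly supported in $B$, and
\begin{equation*}
\nabla w = 2\psi\,G(u^{+})\,\nabla\psi+\psi^{2}h'(v)^{2}\,\nabla u^{+}
\end{equation*}
on $\{u>0\}$, while $\nabla w=0$ on $\{u\le 0\}$. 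Since $h$ is Lipschitz, $h'\in L^{\infty}$ and hence $G$ is Lipschitz, so $w\in W_{A,0}^{1,2}(B)$ is admissible in the weak formulation by density of $\mathrm{Lip}_{\mathrm{c}}(B)$.

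Inserting $w$ into the subsolution inequality and using $\nabla_{A}h(v)=h'(v)\chi_{\{u>0\}}\nabla u=h'(v)\nabla u^{+}$ a.e., the quadratic-in-$\nabla u^{+}$ contribution reproduces $\int\psi^{2}|\nabla_{\tilde{A}}h(v)|^{2}$ exactly, and one obtains
\begin{equation*}
\int_{B}\psi^{2}|\nabla_{\tilde{A}}h(v)|^{2} \le -2\int_{B}\psi\,G(u^{+})\,\nabla_{\tilde{A}}u^{+}\cdot\nabla_{\tilde{A}}\psi+\int_{B}\phi_{0}w+\int_{B}\vec{\phi}_{1}\cdot\nabla_{A}w.
\end{equation*}
The cross term is handled via the key monotonicity $(hh')'=(h')^{2}+hh''\ge(h')^{2}$, which integrated gives the pointwise bound $G(u^{+})\le h(v)h'(v)-h(\phi^{\ast})h'(\phi^{\ast})\le h(v)h'(v)$. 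Hence $G(u^{+})|\nabla_{\tilde{A}}u^{+}|\le h(v)|\nabla_{\tilde{A}}h(v)|$ on $\{u>0\}$, and Young's inequality absorbs a fraction of $\int\psi^{2}|\nabla_{\tilde{A}}h(v)|^{2}$ into the left-hand side, leaving an $O(1)\int h(v)^{2}|\nabla_{\tilde{A}}\psi|^{2}$ remainder.

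For the inhomogeneous terms, admissibility yields $|\int\phi_{0}w|+|\int\vec{\phi}_{1}\cdot\nabla_{A}w|\le 2\phi^{\ast}\int_{B}|\nabla_{A}w|$. The structural hypothesis $h'(t)\le C_{h}h(t)/t$ combined with $v\ge\phi^{\ast}$ produces the two crucial reductions
\begin{equation*}
\phi^{\ast}G(u^{+})\le C_{h}h(v)^{2}, \qquad \phi^{\ast}h'(v)\le C_{h}h(v).
\end{equation*}
The first controls the $\psi|\nabla_{A}\psi|G(u^{+})$ piece of $\phi^{\ast}|\nabla_{A}w|$ by $C_{h}\int h(v)^{2}(\psi^{2}+|\nabla_{A}\psi|^{2})$ after Young; the second controls the $\psi^{2}h'(v)|\nabla_{A}h(v)|$ piece by $\varepsilon\int\psi^{2}|\nabla_{A}h(v)|^{2}+(C_{h}^{2}/\varepsilon)\int\psi^{2}h(v)^{2}$. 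Choosing $\varepsilon$ sufficiently small and absorbing into the LHS, together with the equivalence $|\nabla_{\tilde{A}}\,\cdot\,|\approx|\nabla_{A}\,\cdot\,|$ provided by (\ref{struc_0}), yields the claimed inequality with constant $C_{\lambda,\Lambda}C_{h}^{2}$. The supersolution case follows by applying the same argument to $-u$, so that $u^{-}=(-u)^{+}$ plays the role of $u^{+}$.

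The main obstacle is the design of $G$. The ``natural'' test function $w=\psi^{2}h(v)h'(v)$ fails on two counts simultaneously: its gradient contains the term $\psi^{2}h(v)h''(v)\nabla v$ with no available control on $h''$, and it does \emph{not} vanish on $\{u\le 0\}$, leaving an uncontrolled leftover $h(\phi^{\ast})h'(\phi^{\ast})\int_{\{u\le 0\}}\psi\,\nabla_{\tilde{A}}u\cdot\nabla_{\tilde{A}}\psi$. Taking $G$ to be the antiderivative of $(h')^{2}$ rather than the antiderivative $hh'$ of $(h')^{2}+hh''$ neutralizes both obstructions at once, at the single cost of proving the scalar inequality $G\le hh'$, which is an immediate consequence of convexity.
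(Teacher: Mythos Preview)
Your proposal is correct and follows essentially the same approach as the paper. Your function $G(t)=\int_0^t h'(s+\phi^\ast)^2\,ds$ is, after the change of variable $s\mapsto s-\phi^\ast$, precisely the paper's $\omega(u^++\phi^\ast)=\int_{\phi^\ast}^{u^++\phi^\ast}(H'(s))^2\,ds$, and your key scalar inequality $G\le hh'$ is the paper's $\omega\le H'H$; the only cosmetic difference is that the paper first truncates $h$ at a level $N$ (as in Gilbarg--Trudinger) and passes to the limit, a step you may safely omit since $h$ is assumed globally Lipschitz.
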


\begin{proof}
From the hypothesis, if $t>0$ is a discontinuity point of $h^{\prime }$,
then $h^{\prime }$ has simple jump discontinuity there, and both the left
and right derivatives are defined with $h_{+}^{\prime }\left( t\right)
-h_{-}^{\prime }\left( t\right) >0$. Following the proof of Theorem 8.15 in 
\cite{GiTr}, for $N\gg \phi ^{\ast }\geq 0$ larger than the last point of
discontinuity of $h^{\prime }$, we define $H\in C^{0,1}\left( \left[ \phi
^{\ast },\infty \right) \right) $ by%
\begin{equation*}
H\left( t\right) =\left\{ 
\begin{array}{ll}
h\left( t\right) -h\left( \phi ^{\ast }\right)  & t\in \left[ \phi ^{\ast },N%
\right]  \\ 
h\left( N\right) -h\left( \phi ^{\ast }\right) +h_{-}^{\prime }\left(
N\right) \left( t-N\right)  & t>N%
\end{array}%
\right. ,
\end{equation*}%
and let $\omega \left( t\right) =\int_{\phi ^{\ast }}^{t}\left( H^{\prime
}\left( s\right) \right) ^{2}ds$ for $t\geq \phi ^{\ast }$, i.e. 
\begin{equation*}
\omega \left( t\right) =\left\{ 
\begin{array}{ll}
\int_{\phi ^{\ast }}^{t}\left( h^{\prime }\left( s\right) \right) ^{2}ds & 
t\in \left[ \phi ^{\ast },N\right]  \\ 
\int_{\phi ^{\ast }}^{N}\left( h^{\prime }\left( s\right) \right)
^{2}ds+\left( h_{-}^{\prime }\left( N\right) \right) ^{2}\left( t-N\right)
\quad  & t>N%
\end{array}%
\right. .
\end{equation*}%
Then $\omega $ is continuous and piecewise differentiable for all $t\geq 0$,
with $\omega ^{\prime }\left( t\right) $ having at most finitely many simple
jump discontinuities. Since $h$ is convex we have that $H^{\prime }\left(
t\right) $ is increasing, and therefore%
\begin{equation}
\omega \left( t\right) =\int_{\phi ^{\ast }}^{t}\left( H^{\prime }\left(
s\right) \right) ^{2}ds\leq H^{\prime }\left( t\right) \int_{\phi ^{\ast
}}^{t}H^{\prime }\left( s\right) ds=H^{\prime }\left( t\right) H\left(
t\right) .  \label{we-H}
\end{equation}%
Note also that, since $h$ is convex, $H\left( t\right) \leq h\left( t\right) 
$ for all $t\geq 0$. Now, since both $h$ and $h^{\prime }$ are locally
bounded on $\left[ 0,\infty \right) $, it follows the function $w\left(
x\right) =\omega \left( u^{+}\left( x\right) +\phi ^{\ast }\right) \in
W_{A}^{1,2}\left( \Omega \right) $ whenever $u\in W_{A}^{1,2}\left( \Omega
\right) $, moreover, $\mathrm{supp}w=\mathrm{supp}u^{+}$ and $\nabla
_{A}w=\left( H^{\prime }\left( u^{+}\left( x\right) +\phi ^{\ast }\right)
\right) ^{2}\nabla _{A}u^{+}$.

If $u$ is a subsolution to $L_{\tilde{A}}u=\phi _{0}-\mathrm{div}_{A}\vec{\phi}%
_{1}$ in $B(0,r)$ and $\psi \in \mathrm{Lip}_{\mathrm{c}}\left(
B\left( 0,r\right) \right) $, then we have that $\psi ^{2}w\in
W_{A,0}^{1,2}\left( \Omega \right) $ and we have%
\begin{equation*}
\int \nabla _{\tilde{A}}u\cdot \nabla _{\tilde{A}}\left( \psi ^{2}w\right)
\leq \int \psi ^{2}w\phi _{0}+\int \nabla _{A}\left( \psi ^{2}w\right) \cdot 
\vec{\phi}_{1}.
\end{equation*}%
Write $v\left( x\right) =H\left( u^{+}\left( x\right) +\phi ^{\ast }\right) $%
, and $v^{\prime }\left( x\right) =H^{\prime }\left( u^{+}\left( x\right)
+\phi ^{\ast }\right) $ then the left hand side equals 
\begin{eqnarray*}
\int \nabla _{\tilde{A}}u\cdot \nabla _{\tilde{A}}\left( \psi ^{2}w\right) 
&=&\int \psi ^{2}\nabla _{\tilde{A}}u\cdot \nabla _{\tilde{A}}w+2\int \psi
w\nabla _{\tilde{A}}u\cdot \nabla _{\tilde{A}}\psi  \\
&=&\int \psi ^{2}\left( v^{\prime }\right) ^{2}\nabla _{\tilde{A}}u^{+}\cdot
\nabla _{\tilde{A}}u^{+}+2\int \psi w\nabla _{\tilde{A}}u^{+}\cdot \nabla
_{A}\psi  \\
&=&\int \psi ^{2}\left\vert \nabla _{\tilde{A}}v\right\vert ^{2}+2\int \psi
w\nabla _{\tilde{A}}u^{+}\cdot \nabla _{A}\psi ,
\end{eqnarray*}%
where we used that $\mathrm{supp}w=\mathrm{supp}u^{+}$; we obtain%
\begin{equation*}
\int \psi ^{2}\left\vert \nabla _{\tilde{A}}v\right\vert ^{2}\leq -2\int
\psi w\nabla _{\tilde{A}}u^{+}\cdot \nabla _{\tilde{A}}\psi +\int \psi
^{2}w\phi _{0}+\int \nabla _{A}\left( \psi ^{2}w\right) \cdot \vec{\phi}_{1}.
\end{equation*}%
From (\ref{we-H}) we have $w\left( x\right) =\omega \left( u^{+}\left(
x\right) +\phi ^{\ast }\right) \leq H^{\prime }\left( u^{+}\left( x\right)
+\phi ^{\ast }\right) H\left( u^{+}\left( x\right) +\phi ^{\ast }\right)
=v^{\prime }\left( x\right) v\left( x\right) $, so we can estimate the first
term on the right hand side by 
\begin{eqnarray*}
2\int \psi w\left\vert \nabla _{A}u^{+}\right\vert \left\vert \nabla
_{A}\psi \right\vert  &\leq &2\int \psi vv^{\prime }\left\vert \nabla _{%
\tilde{A}}u^{+}\right\vert \left\vert \nabla _{\tilde{A}}\psi \right\vert
=2\int \psi v\left\vert \nabla _{\tilde{A}}v\right\vert \left\vert \nabla _{%
\tilde{A}}\psi \right\vert  \\
&\leq &\frac{1}{2}\int \psi ^{2}\left\vert \nabla _{\tilde{A}}v\right\vert
^{2}+2\int \left\vert \nabla _{\tilde{A}}\psi \right\vert ^{2}v^{2},
\end{eqnarray*}%
Substituting above and absorbing into the left, we obtain%
\begin{equation}
\int \psi ^{2}\left\vert \nabla _{\tilde{A}}v\right\vert ^{2}\leq 4\int
\left\vert \nabla _{\tilde{A}}\psi \right\vert ^{2}v^{2}+2\int \psi
^{2}w\phi _{0}+2\int \nabla _{A}\left( \psi ^{2}w\right) \cdot \vec{\phi}%
_{1}.  \label{weak-eq}
\end{equation}

Now, since $\left( \phi _{0},\vec{\phi}_{1}\right) $ is admissible, we have
that%
\begin{eqnarray}
\left\vert \int \psi ^{2}w\phi _{0}\right\vert +\left\vert \int \nabla
_{A}\left( \psi ^{2}w\right) \cdot \vec{\phi}_{1}\right\vert &\leq &\phi
^{\ast }\int \left\vert \nabla _{A}\left( \psi ^{2}w\right) \right\vert 
\notag \\
&\leq &2\phi ^{\ast }\int \psi \left\vert \nabla _{A}\psi \right\vert w+\phi
^{\ast }\int \psi ^{2}\left\vert \nabla _{A}w\right\vert .  \label{we-RHS}
\end{eqnarray}%
We assume now that $\phi ^{\ast }>0$, if this is not the case, then we
substitute $\phi ^{\ast }$ by a small constant $c>0$ and let $c\rightarrow 0$
at the end of the proof. By the inequality $h^{\prime }\left( t\right) \leq
C_{h}\frac{h\left( t\right) }{t}$and the definition of $H$ we have that%
\begin{equation}
H^{\prime }\left( t\right) =\left\{ 
\begin{array}{ll}
h^{\prime }\left( t\right) & t\in \left[ \phi ^{\ast },N\right] \\ 
h^{\prime }\left( N\right) & t>N%
\end{array}%
\right. \leq C_{h}\left\{ 
\begin{array}{ll}
\frac{h\left( t\right) }{t} & t\in \left[ \phi ^{\ast },N\right] \\ 
\frac{h\left( N\right) }{N} & t>N%
\end{array}%
\right. \leq C_{h}\frac{h\left( t\right) }{\phi ^{\ast }}.  \label{we-Hp}
\end{equation}%
Then by (\ref{we-H}) we have that $v^{\prime }\left( x\right) \leq C_{h}%
\frac{h\left( u^{+}\left( x\right) +\phi ^{\ast }\right) }{\phi ^{\ast }}$,
and writing $\tilde{v}\left( x\right) =h\left( u^{+}\left( x\right) +\phi
^{\ast }\right) $, the first term on the right of (\ref{we-RHS}) is bounded
by%
\begin{eqnarray*}
2\phi ^{\ast }\int \psi \left\vert \nabla _{A}\psi \right\vert w &\leq
&2\phi ^{\ast }\int \psi \left\vert \nabla _{A}\psi \right\vert vv^{\prime
}\leq 2C_{h}\int \psi \left\vert \nabla _{A}\psi \right\vert v\tilde{v} \\
&\leq &C_{h}\int \left( \psi ^{2}\tilde{v}^{2}+\left\vert \nabla _{A}\psi
\right\vert ^{2}v^{2}\right) .
\end{eqnarray*}%
Similarly, the second term on the right of (\ref{we-RHS}) is bounded by%
\begin{eqnarray*}
\phi ^{\ast }\int \psi ^{2}\left\vert \nabla _{A}w\right\vert &=&\phi ^{\ast
}\int \psi ^{2}\left( v^{\prime }\right) ^{2}\left\vert \nabla
_{A}u^{+}\right\vert =\phi ^{\ast }\int \psi ^{2}v^{\prime }\left\vert
\nabla _{A}v\right\vert \leq C_{h}\int \psi ^{2}\tilde{v}\left\vert \nabla
_{A}v\right\vert \\
&\leq &\frac{\lambda }{4}\int \psi ^{2}\left\vert \nabla _{A}v\right\vert
^{2}+\frac{C_{h}^{2}}{\lambda }\int \psi ^{2}\tilde{v}^{2}.
\end{eqnarray*}%
where $\lambda >0$ is as in (\ref{struc_0}) and we also used (\ref{we-Hp}).
Plugging these estimates into (\ref{we-RHS}) and substituting into (\ref%
{weak-eq}) yields%
\begin{equation*}
\int \psi ^{2}\left\vert \nabla _{\tilde{A}}v\right\vert ^{2}\leq 4\int
\left\vert \nabla _{\tilde{A}}\psi \right\vert ^{2}v^{2}+2C_{h}\int \left(
\psi ^{2}\tilde{v}^{2}+\left\vert \nabla _{A}\psi \right\vert
^{2}v^{2}\right) +\frac{\lambda }{2}\int \psi ^{2}\left\vert \nabla
_{A}v\right\vert ^{2}+\frac{2C_{h}^{2}}{\lambda }\int \psi ^{2}\tilde{v}^{2}.
\end{equation*}%
Using the structural assumptions (\ref{struc_0}) yields%
\begin{equation*}
\lambda \int \psi ^{2}\left\vert \nabla _{A}v\right\vert ^{2}\leq 4\Lambda
\int \left\vert \nabla _{A}\psi \right\vert ^{2}v^{2}+2C_{h}\int \left( \psi
^{2}\tilde{v}^{2}+\left\vert \nabla _{A}\psi \right\vert ^{2}v^{2}\right) +%
\frac{\lambda }{2}\int \psi ^{2}\left\vert \nabla _{A}v\right\vert ^{2}+%
\frac{2C_{h}^{2}}{\lambda }\int \psi ^{2}\tilde{v}^{2},
\end{equation*}%
absorbing in to the left we obtain%
\begin{equation*}
\int \psi ^{2}\left\vert \nabla _{A}v\right\vert ^{2}\leq 16C_{h}^{2}\left( 
\frac{\Lambda }{\lambda }+\frac{1}{\lambda ^{2}}\right) \int \left( \psi
^{2}+\left\vert \nabla _{A}\psi \right\vert ^{2}\right) \tilde{v}^{2},
\end{equation*}%
where we used the inequality $v\left( x\right) =H\left( u^{+}\left( x\right)
+\phi ^{\ast }\right) \leq h\left( u^{+}\left( x\right) +\phi ^{\ast
}\right) =\tilde{v}\left( x\right) $. This is%
\begin{equation*}
\int \psi ^{2}\left\vert \nabla _{A}\left[ H\left( u^{+}+\phi ^{\ast
}\right) \right] \right\vert ^{2}dx\leq C_{\lambda ,\Lambda }C_{h}^{2}\int
\left( \psi ^{2}+\left\vert \nabla _{A}\psi \right\vert ^{2}\right) \left(
h\left( u^{+}\left( x\right) +\phi ^{\ast }\right) \right) ^{2},
\end{equation*}%
the lemma the follows in this case by by letting $N\rightarrow \infty $.

When $u$ is a weak supersolution to $L_{\tilde{A}}u=\phi _{0}-\mathrm{div}_{A}%
\vec{\phi}_{1}$ in $B$, then $-u$ is a weak subsolution to $L\left(
-u\right) =-\phi _{0}-\mathrm{div}_{A}\left( -\vec{\phi}_{1}\right) $ with the
same admissible norm $\phi ^{\ast }$, and $\left( -u\right) ^{+}=u^{-}$, so (%
\ref{reverse Sobs}) holds in this case with $u^{+}$ replaced by $u^{-}$.
\end{proof}

\begin{remark}
\label{remark-power-zero}Taking $h\left( t\right) \equiv t$ in Lemma \ref%
{reverse Sobolev} we have that 
\begin{equation*}
\int_{B(0,r)}\psi ^{2}\left\vert \nabla _{A}u^{+}\right\vert ^{2}dx\leq
C_{\lambda ,\Lambda }\int_{B(0,r)}\left( u^{+}+\phi ^{\ast }\right)
^{2}\left( |\nabla _{A}\psi |^{2}+\psi ^{2}\right) ~dx
\end{equation*}%
when $u$ is a subsolution to $L_{\tilde{A}}u=\phi _{0}-\mathrm{div}_{A}\vec{%
\phi}_{1}$, and the same estimate holds for $u^{-}$ ($\left\vert
u\right\vert $) when $u$ is a supersolution (solution).
\end{remark}

The following variation of Caccioppoli requires stronger hypotheses on the
function $h$, however $h$ is allowed to be \emph{decreasing} when applied to
supersolutions. In particular, $h$ needs to be $C^{1,1}$ since the second
derivative of $h$ explicitly appears within the integrals in the
calculations. When $h$ is $C^{1,1}$ the second derivative may be
discontinuous (piece-wise discontinuous in our applications)\ but
discontinuities will only be jump discontinuities, which do not affect the
integrals.

\begin{lemma}
\label{Cacc-positive} Assume that $u\in W_{A}^{1,2}\left( \Omega \right) $
is a nonnegative weak subsolution or supersolution to $L_{\tilde{A}}u=\phi
_{0}-\mathrm{div}_{A}\vec{\phi}_{1}$ in $B(0,r)$, where $\left( \phi _{0},\vec{%
\phi}_{1}\right) $ is an $A$-admissible pair with norm $\phi ^{\ast }$ and $%
\tilde{A}$ satisfies the equivalences (\ref{struc_0}) for some $0<\lambda
\leq \Lambda <\infty $. Let $h(t)\geq 0$ be a convex monotonic $C^{1}$ and
piecewise twice continuously differentiable function on $\left( 0,\infty
\right) $ that satisfies the following conditions except possibly at
finitely many points when $t\in \left( 0,\infty \right) $:%
\renewcommand{\theenumi}{\Roman{enumi}}%

\begin{enumerate}
\item \label{I}$\Upsilon \left( t\right) =h\left( t\right) h^{\prime \prime
}\left( t\right) +\left( h^{\prime }\left( t\right) \right) ^{2}$ satisfies $%
c_{1}\left( h^{\prime }\left( t\right) \right) ^{2}\leq \Upsilon \left(
t\right) \leq C_{1}\left( h^{\prime }\left( t\right) \right) ^{2}$ at every
point of continuity of $h^{\prime \prime }$, where $0<c_{1}\leq 1\leq
C_{1}<\infty $ are constant;

\item \label{II}The derivative $h^{\prime }\left( t\right) $ satisfies the
inequality $0<\left\vert h^{\prime }\left( t\right) \right\vert \leq C_{2}%
\frac{h\left( t\right) }{t}$, where $C_{2}\geq 1$ is a constant;\newline
Furthermore, we assume that

\item \label{III}if $u$ is a weak subsolution then $h^{\prime }\geq 0$, and
if $u$ is a weak supersolution then $h^{\prime }\leq 0$.\newline
Then the following reverse Sobolev inequality holds for any $\psi \in 
\mathrm{Lip}_{\mathrm{c}}(B(0,r))$: 
\begin{equation}
\int_{B\left( x,r\right) }\psi ^{2}\left\vert \nabla _{A}\left[ h\left(
u+\phi ^{\ast }\right) \right] \right\vert ^{2}dx\leq C_{\lambda ,\Lambda }%
\frac{C_{1}^{2}C_{2}^{2}}{c_{1}^{2}}\int_{B(x,r)}h\left( u+\phi ^{\ast
}\right) ^{2}\left( \left\vert \nabla _{A}\psi \right\vert ^{2}+\psi
^{2}\right) .  \label{reverse Sobs positive}
\end{equation}
\end{enumerate}
\end{lemma}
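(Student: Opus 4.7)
I will follow the general strategy of Lemma~\ref{reverse Sobolev} but replace the test function with one that exploits the stronger curvature hypothesis~(\ref{I}). Setting $v = h(u + \phi^{\ast})$, the natural choice is $w = \psi^2 h(u+\phi^{\ast})h'(u+\phi^{\ast})$, which by~(\ref{III}) is $\geq 0$ for subsolutions and $\leq 0$ for supersolutions; in the supersolution case I use $-w$ and the defining inequality reverses, so in either case I obtain $\tilde{a}(u,w) \leq F(w)$. To ensure $w \in W_{A,0}^{1,2}(B)$ I first truncate $h'$ outside $[\phi^{\ast},N]$ by freezing it to $h'(N)$ (exactly as in the proof of Lemma~\ref{reverse Sobolev}) and pass to the limit $N\to\infty$ at the end; if $\phi^{\ast} = 0$ I replace it by $\varepsilon>0$ and let $\varepsilon \to 0^{+}$ afterward.

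The central identity is
\[
\nabla_{\tilde{A}} w = 2\psi\, h h'\,\nabla_{\tilde{A}}\psi + \psi^2\,\Upsilon(u+\phi^{\ast})\,\nabla_{\tilde{A}}u,
\]
since $\tfrac{d}{dt}(hh') = \Upsilon$. Testing $u$ against $w$ and invoking the lower bound $\Upsilon \geq c_1 (h')^2$ from~(\ref{I}), together with the pointwise identity $|\nabla_{\tilde{A}} v| = |h'|\,|\nabla_{\tilde{A}} u|$, isolates the coercive term $c_1 \int \psi^2 |\nabla_{\tilde{A}} v|^2$ on the left. The cross term $2\int \psi h\, \nabla_{\tilde{A}} v \cdot \nabla_{\tilde{A}}\psi$ is then handled by Cauchy--Schwarz followed by Young's inequality, producing an absorbable $\int \psi^2 |\nabla_{\tilde{A}} v|^2$ contribution and a favorable $\int |\nabla_{\tilde{A}}\psi|^2 h^2$ contribution.

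The real difficulty will be estimating $F(w)$ in terms of $h$ rather than $h'$. Admissibility of $(\phi_0,\vec{\phi}_1)$ gives $|F(w)| \lesssim \phi^{\ast}\int |\nabla_A w|$, and the formula for $\nabla_A w$ splits this bound into one piece containing $|\psi||\nabla_A\psi|\,h|h'|$ and one piece containing $\psi^2 \Upsilon\,|\nabla_A u|$. Hypothesis~(\ref{II}) is the crucial ingredient here: since $u+\phi^{\ast}\geq \phi^{\ast}$, it yields $\phi^{\ast}|h'(u+\phi^{\ast})| \leq C_2\,h(u+\phi^{\ast})$, which converts every factor of $\phi^{\ast} h'$ into a tame $C_2 h$. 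Combined with the upper bound $\Upsilon \leq C_1 (h')^2$ from~(\ref{I}) and the identity $(h')^2 |\nabla_A u| = |h'||\nabla_A v|$, this produces
\[
\phi^{\ast} \int |\nabla_A w| \leq 2C_2 \int |\psi||\nabla_A\psi|\,h^2 + C_1 C_2 \int \psi^2 h\,|\nabla_A v|,
\]
and a final application of Young's inequality absorbs the resulting $\int \psi^2 |\nabla_A v|^2$ piece back into the left side. Collecting all the terms, interchanging $\nabla_{\tilde{A}}$ and $\nabla_A$ via~(\ref{struc_0}), absorbing, and finally removing the truncation by letting $N\to\infty$ (and $\varepsilon \to 0^{+}$ when needed) yields the stated inequality with constant $C_{\lambda,\Lambda}\, C_1^2 C_2^2 / c_1^2$.
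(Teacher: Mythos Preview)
Your proposal is correct and follows essentially the same approach as the paper: the same test function $w=\psi^{2}h(u+\phi^{\ast})h'(u+\phi^{\ast})$, the same key identity $\nabla_{\tilde A}w=2\psi hh'\nabla_{\tilde A}\psi+\psi^{2}\Upsilon(u+\phi^{\ast})\nabla_{\tilde A}u$, and the same use of~(\ref{I})--(\ref{II}) to absorb the admissibility terms. The only minor difference is in the truncation: the paper linearizes $h$ beyond $N$ in the increasing case and below $1/N$ in the decreasing case (rather than uniformly outside $[\phi^{\ast},N]$), which is what is actually needed to guarantee $h(u+\phi^{\ast})\in L^{2}$ in the decreasing situation when one eventually sends $\phi^{\ast}\to 0$.
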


\begin{proof}
We will prove the lemma with an extra assumption that $h^{\prime }(t)$ is
bounded and $h\left( u+\phi ^{\ast }\right) \in L^{2}\left( B\left(
0,r\right) \right) $. These assumptions can be dropped by the following
limiting argument. Using standard truncations as in \cite{SaWh4}. If $h$ is
increasing we define for $N\gg 1$,%
\begin{equation*}
h_{N}\left( t\right) \equiv \left\{ 
\begin{array}{lcc}
h\left( t\right)  & \text{ if } & 0\leq t\leq N \\ 
h\left( N\right) +h_{-}^{\prime }\left( N\right) \left( t-N\right)  & \text{
if } & t\geq N%
\end{array}%
\right. .
\end{equation*}%
while if $h$ is decreasing we let%
\begin{equation*}
h_{N}\left( t\right) \equiv \left\{ 
\begin{array}{lcc}
h\left( \frac{1}{N}\right) +h_{-}^{\prime }\left( \frac{1}{N}\right) \left(
t-\frac{1}{N}\right)  & \text{ if } & 0\leq t\leq \frac{1}{N} \\ 
h\left( t\right)  & \text{ if } & t\geq \frac{1}{N}%
\end{array}%
\right. 
\end{equation*}%
We note that either function $h_{N}$ still satisfy conditions (\ref{I})-(\ref%
{III}) in the lemma with the same constants $C_{1}$ and $C_{2}$, if we can
obtain a reverse Sobolev inequality similar to (\ref{reverse Sobs positive})
for $h_{N}$, then the dominated converge theorem applies to establish (\ref%
{reverse Sobs positive}) in general. Moreover, note that since $h_{N}$ is
linear for large $t$ when $h$ is increasing and for small $t$ when $h$ is
decreasing, then $h\left( u+\phi ^{\ast }\right) \in L^{2}\left( B\left(
0,r\right) \right) \iff u\in L^{2}\left( B\left( 0,r\right) \right) $.
Hence, if $\phi ^{\ast }>0$ from (\ref{II}) it follows that also $\left(
h^{\prime }\left( u+\phi ^{\ast }\right) \right) ^{2}$ and $\Upsilon \left(
u+\phi ^{\ast }\right) \in L^{1}\left( B\left( 0,r\right) \right) $. If $%
\phi ^{\ast }=0$ we replace it by a small positive $\varepsilon >0$ and then
let $\varepsilon \rightarrow 0$ at the end. Thus, in what follows we will
assume that $h^{\prime }\left( t\right) $ and $h^{\prime \prime }\left(
t\right) $ are bounded on the range of $u+\phi ^{\ast }$, and that all
integrals below are finite.

Assume that $h$ is $C^{1}$, convex, and piecewise twice differentiable in $%
\left( 0,\infty \right) $ with bounded first and second derivatives. By
these assumptions it follows that $h$ is twice differentiable everywhere
except a finitely many points where $h^{\prime \prime }$ has finite jump
discontinuities.

Let $\psi \in \mathrm{Lip}_{\mathrm{c}}(B(0,r))$, $v\left(
x\right) =h\left( u\left( x\right) +\phi ^{\ast }\right) $ and write $%
v^{\prime }\left( x\right) =h^{\prime }\left( u\left( x\right) +\phi ^{\ast
}\right) $, $v^{\prime \prime }\left( x\right) =h^{\prime \prime }\left(
u\left( x\right) +\phi ^{\ast }\right) $. Then we have that $w\left(
x\right) =\psi ^{2}\left( x\right) v\left( x\right) v^{\prime }\left(
x\right) $ is in the space $W_{A,0}^{1,2}\left( B(0,r)\right) $. Now, by
assumption (\ref{III}) we have that $w\geq 0$ when $u$ is a subsolution, and 
$w\leq 0$ when $u$ is a supersolution, then we have 
\begin{equation}
\int \nabla _{\tilde{A}}u\cdot \nabla _{\tilde{A}}w\leq \int w\phi _{0}+\int
\nabla _{A}w\cdot \vec{\phi}_{1}  \label{weakequation'}
\end{equation}%
Since $\nabla _{\tilde{A}}v=v^{\prime }\nabla _{\tilde{A}}u$, and $\left(
v^{\prime }\right) ^{2}+vv^{\prime \prime }=\Upsilon \left( u+\phi ^{\ast
}\right) $, the left side of (\ref{weakequation'}) equals%
\begin{eqnarray*}
\int \nabla _{\tilde{A}}u\cdot \nabla _{\tilde{A}}w &=&\int \nabla _{\tilde{A%
}}u\cdot v^{\prime }\nabla _{\tilde{A}}\left( \psi ^{2}v\right) +\int \psi
^{2}vv^{\prime \prime }\nabla _{\tilde{A}}u\cdot \nabla _{\tilde{A}}u \\
&=&\int \nabla _{\tilde{A}}v\cdot \nabla _{\tilde{A}}\left( \psi
^{2}v\right) +\int \psi ^{2}vv^{\prime \prime }\left\vert \nabla _{\tilde{A}%
}u\right\vert ^{2} \\
&=&2\int \psi v\nabla _{\tilde{A}}v\cdot \nabla _{\tilde{A}}\psi +\int \psi
^{2}\Upsilon \left( u+\phi ^{\ast }\right) \left\vert \nabla _{\tilde{A}%
}u\right\vert ^{2}.
\end{eqnarray*}%
Combining this and (\ref{weakequation'}), we obtain%
\begin{equation}
\int \psi ^{2}\Upsilon \left( u+\phi ^{\ast }\right) \left\vert \nabla _{%
\tilde{A}}u\right\vert ^{2}\leq -2\int \psi v\nabla _{\tilde{A}}v\cdot
\nabla _{\tilde{A}}\psi +\int w\phi _{0}+\int \nabla _{A}w\cdot \vec{\phi}%
_{1}.  \label{weakequation''}
\end{equation}%
By property (\ref{I}) and the equivalences (\ref{struc_0}) we obtain:%
\begin{equation*}
c_{1}\lambda \int \psi ^{2}\left\vert \nabla _{A}v\right\vert ^{2}\leq
2\Lambda \int \psi v\left\vert \nabla _{A}v\right\vert \left\vert \nabla
_{A}\psi \right\vert +\int w\phi _{0}+\int \nabla _{A}w\cdot \vec{\phi}_{1}
\end{equation*}%
By Schwartz inequality we can estimate the first term on the right hand side
by 
\begin{equation*}
2\Lambda \int \psi v\left\vert \nabla _{A}v\right\vert \left\vert \nabla
_{A}\psi \right\vert \leq \frac{c_{1}\lambda }{2}\int \psi ^{2}\left\vert
\nabla _{A}v\right\vert ^{2}+\frac{4\Lambda ^{2}}{c_{1}\lambda }\int
v^{2}\left\vert \nabla _{A}\psi \right\vert ^{2}.
\end{equation*}%
Substituting above and absorbing into the left, we obtain%
\begin{equation}
\frac{c_{1}\lambda }{2}\int \psi ^{2}\left\vert \nabla _{A}v\right\vert
^{2}\leq \frac{4\Lambda ^{2}}{c_{1}\lambda }\int v^{2}\left\vert \nabla
_{A}\psi \right\vert ^{2}+\int w\phi _{0}+\int \nabla _{A}w\cdot \vec{\phi}%
_{1}.  \label{weake}
\end{equation}%
Since $\left( \phi _{0},\vec{\phi}_{1}\right) $ is admissible, we have that%
\begin{eqnarray*}
\left\vert \int \psi ^{2}vv^{\prime }\phi _{0}\right\vert +\left\vert \int
\nabla _{A}\left( \psi ^{2}vv^{\prime }\right) \cdot \vec{\phi}%
_{1}\right\vert &\leq &\phi ^{\ast }\int \left\vert \nabla _{A}\left( \psi
^{2}vv^{\prime }\right) \right\vert \\
&\leq &2\phi ^{\ast }\int \psi \left\vert \nabla _{A}\psi \right\vert
v\left\vert v^{\prime }\right\vert +\phi ^{\ast }\int \psi ^{2}\Upsilon
\left( u+\phi ^{\ast }\right) \left\vert \nabla _{A}u\right\vert
\end{eqnarray*}%
By property (\ref{II}) we have that $\left\vert v^{\prime }\right\vert
=\left\vert h^{\prime }\left( u+\phi ^{\ast }\right) \right\vert \leq C_{2}%
\frac{h\left( u+\phi ^{\ast }\right) }{u+\phi ^{\ast }}=C_{2}\frac{v}{u+\phi
^{\ast }}$; applying this to the first term on the right, and properties (%
\ref{I}-\ref{II}) to the second, we obtain%
\begin{eqnarray}
&&\left\vert \int w\phi _{0}\right\vert +\left\vert \int \nabla _{A}w\cdot 
\vec{\phi}_{1}\right\vert  \notag \\
&\leq &2C_{2}\phi ^{\ast }\int \psi \left\vert \nabla _{A}\psi \right\vert 
\frac{v^{2}}{u+\phi ^{\ast }}+C_{1}\phi ^{\ast }\int \psi ^{2}\left(
v^{\prime }\right) ^{2}\left\vert \nabla _{A}u\right\vert  \notag \\
&\leq &2C_{2}\int \psi \left\vert \nabla _{A}\psi \right\vert
v^{2}+C_{1}C_{2}\phi ^{\ast }\int \psi ^{2}\frac{v}{u+\phi ^{\ast }}%
\left\vert \nabla _{A}v\right\vert .  \notag \\
&\leq &C_{2}\int \left( \psi ^{2}+\left\vert \nabla _{A}\psi \right\vert
^{2}\right) v^{2}+C_{1}C_{2}\int \psi ^{2}v\left\vert \nabla _{A}v\right\vert
\notag \\
&\leq &C_{2}\int \left( \psi ^{2}+\left\vert \nabla _{A}\psi \right\vert
^{2}\right) v^{2}+\frac{2C_{1}^{2}C_{2}^{2}}{c_{1}\lambda }\int \psi
^{2}v^{2}+\frac{c_{1}\lambda }{4}\int \psi ^{2}\left\vert \nabla
_{A}v\right\vert ^{2}.  \label{weak-w}
\end{eqnarray}%
Replacing this on the right of (\ref{weake}) and operating yields%
\begin{eqnarray*}
\int \psi ^{2}\left\vert \nabla _{A}v\right\vert ^{2} &\leq &\frac{16\Lambda
^{2}}{c_{1}^{2}\lambda ^{2}}\int v^{2}\left\vert \nabla _{A}\psi \right\vert
^{2}+4\frac{C_{2}}{c_{1}\lambda }\int \left( \psi ^{2}+\left\vert \nabla
_{A}\psi \right\vert ^{2}\right) v^{2}+\frac{8C_{1}^{2}C_{2}^{2}}{%
c_{1}^{2}\lambda ^{2}}\int \psi ^{2}v^{2} \\
&\leq &16\frac{C_{1}^{2}C_{2}^{2}}{c_{1}^{2}}\left( \frac{\Lambda ^{2}}{%
\lambda ^{2}}+\frac{1}{\lambda ^{2}}+\frac{1}{\lambda }\right) \int \left(
\psi ^{2}+\left\vert \nabla _{A}\psi \right\vert ^{2}\right) v^{2}.
\end{eqnarray*}
\end{proof}

\section{Preliminaries on Young functions\label{section-Orlicz}}

\renewcommand{\theenumi}{\arabic{enumi}}%
\setcounter{enumi}{0}\RESUME%
In this section introduce some basic concepts from Orlicz spaces and define
the particular families of Young function that we will use in your
applications. We also compute successive compositions of these functions and
their inverses and obtain estimates for their derivatives.

\subsection{The Orlicz norm and the Orlicz quasidistance\label{section-quasi}%
}

Suppose that $\mu $ is a $\sigma $-finite measure on a set $X$, and $\Phi :%
\left[ 0,\infty \right) \rightarrow \left[ 0,\infty \right) $ is a Young
function, which for our purposes is an increasing convex piecewise
differentiable (meaning there are at most finitely many points where the
derivative of $\Phi $ may fail to exist, but right and left hand derivatives
exist everywhere) function such that $\Phi \left( 0\right) =0$. Since $\Phi $
is convex it follows that $\Phi $ is absolutely continuous and $\Phi
^{\prime }\left( t\right) $ is a nonnegative nondecreasing function, in fact 
$\Phi ^{\prime }\left( t\right) >0$ for almost all $t>0$. The homogeneous
Luxemburg norm associated to a Young function $\Phi $ is given by%
\begin{equation}
\left\Vert f\right\Vert _{L^{\Phi }\left( X,d\mu \right) }=\inf \left\{
t>0:\int_{X}\Phi \left( \frac{\left\vert f\right\vert }{t}\right) d\mu \leq
1\right\} \in \left[ 0,\infty \right] ,  \label{Lux}
\end{equation}%
where it is understood that $\inf \left( \emptyset \right) =\infty $. The
completion of the space of $\mu $-measurable functions in $X$ with respect
to this norm is the Orlicz space $L^{\Phi }\left( X,d\mu \right) $ which is
a Banach space by definition. The conjugate Young function $\Phi ^{\ast }$
is defined through the relation $\left( \Phi ^{\ast }\right) ^{\prime
}=\left( \Phi ^{\prime }\right) ^{-1}$ and it can be used to give an
equivalent norm%
\begin{equation*}
\left\Vert f\right\Vert _{L_{\ast }^{\Phi }\left( \mu \right) }\equiv \sup
\left\{ \int_{X}\left\vert fg\right\vert d\mu :\int_{X}\Phi ^{\ast }\left(
\left\vert g\right\vert \right) d\mu \leq 1\right\} .
\end{equation*}%
The conjugate function $\Phi ^{\ast }$ is equivalently defined as%
\begin{equation}
\Phi ^{\ast }\left( s\right) =\sup_{t>0}st-\Phi \left( t\right) ,\qquad 
\text{for all }s>0.  \label{Phi-conj}
\end{equation}

Given a Young function $\Phi $ and a measure $\mu $ we will define a
non-homogeneous norm as follows. We let $L_{\ast }^{\Phi }\left( \mu \right) 
$ be the set of measurable functions $f:X\rightarrow \mathbb{R}$ such that
the integral%
\begin{equation*}
\int_{X}\Phi \left( \left\vert f\right\vert \right) ~d\mu ,
\end{equation*}%
is finite, where as usual, functions that agree almost everywhere are
identified. The set $L_{\ast }^{\Phi }\left( \mu \right) $ may not in
general be closed under scalar multiplication, but if $\Phi $ is $K$%
-submultiplicative for some constant $K>0$, i.e. 
\begin{equation*}
\Phi \left( st\right) \leq K\Phi \left( s\right) \Phi \left( t\right) \qquad 
\text{for all }s,t\geq 0
\end{equation*}%
then clearly $\int_{X}\Phi \left( \left\vert Cf\right\vert \right) ~d\mu
\leq K\Phi \left( C\right) \int_{X}\Phi \left( \left\vert f\right\vert
\right) ~d\mu $ and $L_{\ast }^{\Phi }\left( \mu \right) $ is a vector space
because if $f,g\in L_{\ast }^{\Phi }\left( \mu \right) $ then%
\begin{eqnarray*}
\int_{X}\Phi \left( \left\vert f+g\right\vert \right) ~d\mu
&=&\int_{0}^{\infty }\Phi ^{\prime }\left( t\right) \mu \left\{ \left\vert
f+g\right\vert >t\right\} ~dt \\
&\leq &\int_{0}^{\infty }\Phi ^{\prime }\left( t\right) \mu \left\{
\left\vert f\right\vert >\frac{t}{2}\right\} ~dt+\int_{0}^{\infty }\Phi
^{\prime }\left( t\right) \mu \left\{ \left\vert g\right\vert >\frac{t}{2}%
\right\} ~dt \\
&=&\int_{X}\Phi \left( 2\left\vert f\right\vert \right) ~d\mu +\int_{X}\Phi
\left( 2\left\vert g\right\vert \right) ~d\mu \\
&<&K\Phi \left( 2\right) \left\{ \int_{X}\Phi \left( \left\vert f\right\vert
\right) ~d\mu +\int_{X}\Phi \left( \left\vert g\right\vert \right) ~d\mu
\right\} <\infty .
\end{eqnarray*}%
We claim that if $\Phi $ is an $A$-submultiplicative Young function then the
function 
\begin{equation}
\left\Vert f\right\Vert _{\mathcal{D}^{\Phi }\left( \mu \right) }:=\Phi
^{-1}\left( \int_{X}\Phi \left( \left\vert f\right\vert \right) ~d\mu \right)
\label{quasi-metric}
\end{equation}%
is a \emph{nonhomogeneous quasi-norm }in $L_{\ast }^{\Phi }\left( \mu
\right) $, that is, $\left\Vert \cdot \right\Vert _{\mathcal{D}^{\Phi
}\left( \mu \right) }:L_{\ast }^{\Phi }\left( \mu \right) \rightarrow \left[
0,\infty \right) $ satisfies%
\begin{eqnarray*}
\left\Vert f\right\Vert _{\mathcal{D}^{\Phi }\left( \mu \right) } &=&0\qquad
\iff \qquad f\equiv 0 \\
\left\Vert f+g\right\Vert _{\mathcal{D}^{\Phi }\left( \mu \right) } &\leq
&C_{\Phi }\left( \left\Vert f\right\Vert _{\mathcal{D}^{\Phi }\left( \mu
\right) }+\left\Vert g\right\Vert _{\mathcal{D}^{\Phi }\left( \mu \right)
}\right) .
\end{eqnarray*}%
Indeed, it is clear that $\left\Vert f\right\Vert _{\mathcal{D}^{\Phi
}\left( \mu \right) }\geq 0$ and $\left\Vert f\right\Vert _{\mathcal{D}%
^{\Phi }\left( \mu \right) }=0\iff f=0$, and that $\left\Vert f-g\right\Vert
_{\mathcal{D}^{\Phi }\left( \mu \right) }=\left\Vert g-f\right\Vert _{%
\mathcal{D}^{\Phi }\left( \mu \right) }$. From the above computation we also
have that 
\begin{eqnarray*}
\Phi \left( \left\Vert f+g\right\Vert _{\mathcal{D}^{\Phi }\left( \mu
\right) }\right) &=&\int_{X}\Phi \left( \left\vert f+g\right\vert \right)
~d\mu \leq K\Phi \left( 2\right) \left\{ \int_{X}\Phi \left( \left\vert
f\right\vert \right) ~d\mu +\int_{X}\Phi \left( \left\vert g\right\vert
\right) ~d\mu \right\} \\
&=&K\Phi \left( 2\right) \left\{ \Phi \left( \left\Vert f\right\Vert _{%
\mathcal{D}^{\Phi }\left( \mu \right) }\right) +\Phi \left( \left\Vert
g\right\Vert _{\mathcal{D}^{\Phi }\left( \mu \right) }\right) \right\} \\
&\leq &2K\Phi \left( 2\right) ~\Phi \left( \left\Vert f\right\Vert _{%
\mathcal{D}^{\Phi }\left( \mu \right) }+\left\Vert g\right\Vert _{\mathcal{D}%
^{\Phi }\left( \mu \right) }\right) \\
&\leq &\Phi \left( 2K\Phi \left( 2\right) \left\{ \left\Vert f\right\Vert _{%
\mathcal{D}^{\Phi }\left( \mu \right) }+\left\Vert g\right\Vert _{\mathcal{D}%
^{\Phi }\left( \mu \right) }\right\} \right)
\end{eqnarray*}%
where we used that $\Phi $ is increasing and that $C\Phi \left( t\right)
\leq \Phi \left( Ct\right) $ since $\Phi $ is increasing convex with $\Phi
\left( 0\right) =0$. Thus, we have%
\begin{equation*}
\left\Vert f+g\right\Vert _{\mathcal{D}^{\Phi }\left( \mu \right) }\leq
C_{\Phi }\left( \left\Vert f\right\Vert _{\mathcal{D}^{\Phi }\left( \mu
\right) }+\left\Vert g\right\Vert _{\mathcal{D}^{\Phi }\left( \mu \right)
}\right) \qquad \text{for all }f,g\in L_{\ast }^{\Phi }\left( \mu \right) .
\end{equation*}%
The same proof provides an inequality for any general finite sum of
functions $\sum_{j=1}^{N}f_{j}$ :%
\begin{equation}
\left\Vert \sum_{j=1}^{N}f_{j}\right\Vert _{\mathcal{D}^{\Phi }\left( \mu
\right) }\leq C_{\Phi ,N}\left( \sum_{j=1}^{N}\left\Vert f_{j}\right\Vert _{%
\mathcal{D}^{\Phi }\left( \mu \right) }\right) \qquad \text{whenever }%
f_{j}\in L_{\ast }^{\Phi }\left( \mu \right) \text{, }j=1,\cdots ,N,
\label{finite}
\end{equation}
where $C_{\Phi ,N}=NK~\Phi \left( N\right) $.

The function $\left\Vert \cdot \right\Vert _{\mathcal{D}^{\Phi }\left( \mu
\right) }$ in general would not be a quasinorm because it may fail to be
absolutely homogeneous, i.e., in general$\left\Vert Cf\right\Vert _{\mathcal{%
D}^{\Phi }\left( \mu \right) }=\left\vert C\right\vert \left\Vert
f\right\Vert _{\mathcal{D}^{\Phi }\left( \mu \right) }$ may not hold. It is
clear though that $d_{\Phi }\left( f,g\right) =\left\Vert f-g\right\Vert _{%
\mathcal{D}^{\Phi }\left( \mu \right) }$ is a quasi-distance in $L_{\ast
}^{\Phi }\left( \mu \right) $, i.e. the function $d_{\Phi }\left( \cdot
,\cdot \right) :L_{\ast }^{\Phi }\left( \mu \right) \times L_{\ast }^{\Phi
}\left( \mu \right) \rightarrow \left[ 0,\infty \right) $ is symmetric, $%
d_{\Phi }\left( f,g\right) =0\iff f\equiv g$, and satisfies a triangle
inequality with a constant $C_{\Phi }$ that may be bigger than 1. We note
that the same conclusion may be reached if $\Phi $ is $K$%
-supermultiplicative, i.e.%
\begin{equation*}
K\Phi \left( st\right) \geq \Phi \left( s\right) \Phi \left( t\right) \qquad 
\text{for all }s,t>0.
\end{equation*}%
Indeed, we have that for any $C>0$ and $f\in L_{\ast }^{\Phi }\left( \mu
\right) $%
\begin{equation*}
\int_{X}\Phi \left( \left\vert Cf\right\vert \right) ~d\mu =\frac{1}{\Phi
\left( \frac{1}{C}\right) }\int_{X}\Phi \left( \frac{1}{C}\right) \Phi
\left( \left\vert Cf\right\vert \right) ~d\mu \leq \frac{K}{\Phi \left( 
\frac{1}{C}\right) }\int_{X}\Phi \left( \left\vert f\right\vert \right)
~d\mu <\infty ,
\end{equation*}%
and it similarly follows as above that $f+g\in L_{\ast }^{\Phi }\left( \mu
\right) $ for all $f,g\in L_{\ast }^{\Phi }\left( \mu \right) $. We have
shown the following:

\begin{proposition}
\label{prop quasi}If $\Phi $ is a $K$-submultiplicative or $K$%
-supermultiplicative Young function in $\left[ 0,\infty \right) $ for some $%
K>0$ then the space 
\begin{equation*}
L_{\ast }^{\Phi }\left( \mu \right) =\left\{ f:\int_{X}\Phi \left(
\left\vert f\right\vert \right) ~d\mu <\infty \right\}
\end{equation*}%
is a vector space and the function $\left\Vert \cdot \right\Vert _{\mathcal{D%
}^{\Phi }\left( \mu \right) }:L_{\ast }^{\Phi }\left( \mu \right)
\rightarrow \left[ 0,\infty \right) $ defined in (\ref{quasi-metric}) is a
nonhomogeneous quasi-norm in $L_{\ast }^{\Phi }\left( \mu \right) $.
\end{proposition}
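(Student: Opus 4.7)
The plan is to establish two separate claims, each under whichever multiplicativity hypothesis is in force: (a) $L_{\ast}^{\Phi}(\mu)$ is closed under addition and scalar multiplication, and (b) $\|\cdot\|_{\mathcal{D}^{\Phi}(\mu)}$ satisfies the axioms of a nonhomogeneous quasi-norm. The nonnegativity, the definiteness ($\|f\|_{\mathcal{D}^{\Phi}(\mu)}=0 \iff f\equiv 0$), and the symmetry of the induced quasi-distance are immediate from the definition \eqref{quasi-metric} together with the facts that $\Phi$ is strictly increasing and continuous with $\Phi(0)=0$. The substantive content lies in the closure under sums and in the corresponding quasi-triangle inequality; closure under scalar multiplication is a one-line consequence of whichever multiplicativity estimate is available.

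In the $K$-submultiplicative case I would follow the layer-cake identity displayed in the discussion preceding the statement: bound $\int_{X}\Phi(|f+g|)\,d\mu$ by $\int_{X}\Phi(2|f|)\,d\mu+\int_{X}\Phi(2|g|)\,d\mu$ using the inclusion $\{|f+g|>t\}\subset\{|f|>t/2\}\cup\{|g|>t/2\}$, then apply $K$-submultiplicativity $\Phi(2|f|)\leq K\Phi(2)\Phi(|f|)$ to arrive at
\[
\Phi\bigl(\|f+g\|_{\mathcal{D}^{\Phi}(\mu)}\bigr)\leq K\Phi(2)\bigl\{\Phi(\|f\|_{\mathcal{D}^{\Phi}(\mu)})+\Phi(\|g\|_{\mathcal{D}^{\Phi}(\mu)})\bigr\}.
\]
Combining this with the elementary monotonicity bound $\Phi(a)+\Phi(b)\leq 2\Phi(a+b)$ and with $2K\Phi(2)\Phi(s)\leq \Phi(2K\Phi(2)s)$ (which holds once $2K\Phi(2)\geq 1$, by convexity and $\Phi(0)=0$), and then applying the increasing function $\Phi^{-1}$, yields the quasi-triangle inequality with constant $C_{\Phi}=2K\Phi(2)$. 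Closure under scalar multiplication follows directly from $\Phi(C|f|)\leq K\Phi(C)\Phi(|f|)$.

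For the $K$-supermultiplicative case the same skeleton works, but the key pointwise upper bound $\Phi(2|f|)\lesssim \Phi(|f|)$ must be derived in a different way, since supermultiplicativity is the wrong direction if applied naively. The trick is to set $s=1/2$ and $t=2|f|$ in $K\Phi(st)\geq \Phi(s)\Phi(t)$, obtaining $\Phi(2|f|)\leq K\Phi(|f|)/\Phi(1/2)$, with $\Phi(1/2)>0$ since $\Phi$ is a nontrivial Young function. Scalar closure is the same trick with $s=1/C$ and $t=C|f|$, giving $\Phi(C|f|)\leq K\Phi(|f|)/\Phi(1/C)$, as already noted in the discussion above the proposition. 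From this point the argument runs verbatim as in the submultiplicative case, with the constant $C_{\Phi}$ now involving $K/\Phi(1/2)$ in place of $K\Phi(2)$.

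The main subtlety, rather than any deep obstacle, is tracking carefully the lack of homogeneity of $\|\cdot\|_{\mathcal{D}^{\Phi}(\mu)}$. In contrast to the homogeneous Luxemburg norm, one cannot pre-normalize $f$ and $g$ to unit norm, so the quasi-triangle inequality must be manufactured by composing with $\Phi$, splitting the integral via the layer-cake decomposition, and then uncomposing with $\Phi^{-1}$ at the end. The convexity inequality $C\Phi(t)\leq \Phi(Ct)$ for $C\geq 1$ is the ingredient that permits the two multiplicative constants produced along the way to be absorbed into a single $C_{\Phi}$ inside the argument of $\Phi^{-1}$, thereby producing a bona fide quasi-triangle estimate with a finite, dimension-free constant.
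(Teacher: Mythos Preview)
Your proposal is correct and follows essentially the same route as the paper: the layer-cake splitting to reduce $\Phi(|f+g|)$ to $\Phi(2|f|)+\Phi(2|g|)$, the pointwise bound $\Phi(2|f|)\lesssim\Phi(|f|)$ via submultiplicativity (or via the $s=1/2$ trick in the supermultiplicative case), and then the convexity inequality $C\Phi(t)\leq\Phi(Ct)$ for $C\geq 1$ to absorb constants before applying $\Phi^{-1}$. The paper's discussion preceding the proposition already contains the submultiplicative argument verbatim and indicates the supermultiplicative modification exactly as you describe it.
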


In this paper we consider Young functions which satisfy the hypotheses of
the above proposition, so our Moser iteration may be considered as an
iteration scheme in quasi-metric spaces. The homogeneity of the norm $%
\left\Vert f\right\Vert _{L^{\Phi }\left( \mu \right) }$ is not that
important, but rather it is the iteration of Orlicz expressions that is
critical. The following lemma shows the relations between the Orlicz norm
and the quasi-norm when the Young function is sub- or supermultiplicative.

\begin{lemma}
\label{lem:subsup}If a Young function ${\Phi }$ is $K$-submultiplicative for
some constant $K\geq 1$, then 
\begin{equation*}
\Phi ^{\left( -1\right) }\left( \int_{B\left( x,\rho \right) }\Phi \left(
v\right) d\mu _{x,\rho }\right) \leq K\left\Vert v\right\Vert _{L^{\Phi
}\left( \mu _{x,\rho }\right) }.
\end{equation*}%
On the other hand, if $\Phi $ is a $K$-supermultiplicative Young function
for some $K\geq 1$, then%
\begin{equation*}
\left\Vert v\right\Vert _{L^{\Phi }\left( \mu _{x,\rho }\right) }\leq K\Phi
^{\left( -1\right) }\left( \int_{B\left( x,\rho \right) }\Phi \left(
v\right) d\mu _{x,\rho }\right) .
\end{equation*}
\end{lemma}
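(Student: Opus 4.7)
The approach hinges on two basic observations. First, the $K$-sub/supermultiplicative hypothesis $\Phi(st) \leq K\Phi(s)\Phi(t)$ (resp. $K\Phi(st) \geq \Phi(s)\Phi(t)$) lets me compare $\Phi(|v|)$ with $\Phi(|v|/t)\cdot\Phi(t)$ up to a factor of $K$, which is exactly the mixture of the inner argument of $\Phi$ (for the Luxemburg norm) and the outer argument (for the quasi-norm) that needs to be reconciled. Second, since $\Phi$ is convex with $\Phi(0)=0$, for any $K\geq 1$ and $t\geq 0$ one has $\Phi(t)=\Phi(\tfrac{1}{K}\cdot Kt+(1-\tfrac{1}{K})\cdot 0)\leq \tfrac{1}{K}\Phi(Kt)$, i.e. $K\Phi(t)\leq \Phi(Kt)$. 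This inequality is the bridge that converts bounds of the form $\Phi^{-1}(K\Phi(t))$ into $Kt$.

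For the submultiplicative direction, I would set $t_0=\|v\|_{L^{\Phi}(\mu_{x,\rho})}$ (handling the trivial cases $t_0=0$ or $t_0=\infty$ separately, and replacing $t_0$ by $t_0+\varepsilon$ if the infimum fails to be attained, then letting $\varepsilon\to 0$ by monotone convergence). Writing $|v|=(|v|/t_0)\cdot t_0$ and applying submultiplicativity,
\begin{equation*}
\int_{B(x,\rho)}\Phi(|v|)\,d\mu_{x,\rho}\leq K\Phi(t_0)\int_{B(x,\rho)}\Phi(|v|/t_0)\,d\mu_{x,\rho}\leq K\Phi(t_0)\leq \Phi(Kt_0),
\end{equation*}
where the last step uses the convexity inequality above. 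Applying the monotone function $\Phi^{-1}$ to both ends yields the first stated bound.

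For the supermultiplicative direction, I would set $S=\Phi^{-1}\!\left(\int_{B(x,\rho)}\Phi(|v|)\,d\mu_{x,\rho}\right)$, so that $\Phi(S)=\int_{B(x,\rho)}\Phi(|v|)\,d\mu_{x,\rho}$. To conclude $\|v\|_{L^{\Phi}(\mu_{x,\rho})}\leq KS$ it suffices, by definition of the Luxemburg norm, to verify $\int_{B(x,\rho)}\Phi(|v|/(KS))\,d\mu_{x,\rho}\leq 1$. Supermultiplicativity applied to $|v|=(|v|/(KS))\cdot KS$ gives $\Phi(|v|/(KS))\leq K\Phi(|v|)/\Phi(KS)$; integrating,
\begin{equation*}
\int_{B(x,\rho)}\Phi(|v|/(KS))\,d\mu_{x,\rho}\leq \frac{K}{\Phi(KS)}\int_{B(x,\rho)}\Phi(|v|)\,d\mu_{x,\rho}=\frac{K\Phi(S)}{\Phi(KS)}\leq 1,
\end{equation*}
the last inequality being $\Phi(KS)\geq K\Phi(S)$, once again from the convexity bound.

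The proof is essentially structural, and I do not expect genuine difficulty. The only mild nuisance is bookkeeping around the infimum in the Luxemburg norm: it may not be attained, so I would either work with $t_0+\varepsilon$ and pass to the limit via monotone convergence, or invoke the continuity of $t\mapsto \int \Phi(|v|/t)\,d\mu_{x,\rho}$ on $(0,\infty)$. The degenerate cases ($v\equiv 0$ or $\Phi(|v|)\notin L^1$) are handled up front as trivial.
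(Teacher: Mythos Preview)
Your proof is correct and uses exactly the same two ingredients as the paper's proof: the sub/supermultiplicativity hypothesis together with the convexity bound $K\Phi(t)\leq \Phi(Kt)$ for $K\geq 1$. The only difference is organizational---in each of the two cases you start from the ``other'' quantity (the Luxemburg norm in the submultiplicative part, the quasi-norm in the supermultiplicative part) compared to the paper, but the arguments are otherwise identical.
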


\begin{proof}
Recall that we have by definition%
\begin{equation*}
\left\Vert v\right\Vert _{L^{\Phi }\left( \mu _{x,\rho }\right) }=\inf
\left\{ t>0:\int_{B\left( x,\rho \right) }\Phi \left( \frac{\left\vert
v\right\vert }{t}\right) ~d\mu _{x,\rho }\leq 1\right\} .
\end{equation*}%
Let $\kappa =\left\Vert v\right\Vert _{\mathcal{D}^{\Phi }\left( \mu \right)
}=\Phi ^{-1}\left( \int_{B\left( x,\rho \right) }\Phi \left( \left\vert
v\right\vert \right) ~d\mu _{x,\rho }\right) $, by the submultiplicativity
of $\Phi $ we have%
\begin{eqnarray*}
\Phi \left( \left\vert v\right\vert \right) &=&\Phi \left( \frac{\left\vert
v\right\vert }{\kappa }\cdot \kappa \right) \leq K\Phi \left( \frac{%
\left\vert v\right\vert }{\kappa }\right) \Phi \left( \kappa \right) =K\Phi
\left( \frac{\left\vert v\right\vert }{\kappa }\right) \int_{B\left( x,\rho
\right) }\Phi \left( \left\vert v\right\vert \right) ~d\mu _{x,\rho } \\
&\leq &\Phi \left( K\frac{\left\vert v\right\vert }{\kappa }\right)
\int_{B\left( x,\rho \right) }\Phi \left( \left\vert v\right\vert \right)
~d\mu _{x,\rho },
\end{eqnarray*}%
where we used that $C\Phi \left( t\right) \leq \Phi \left( Ct\right) $ for
all $C\geq 1$. Integrating gives%
\begin{equation*}
\int_{B\left( x,\rho \right) }\Phi \left( \left\vert v\right\vert \right)
~d\mu _{x,\rho }\leq \int_{B\left( x,\rho \right) }\Phi \left( \frac{%
\left\vert v\right\vert }{\kappa }\right) ~d\mu _{x,\rho }\cdot
\int_{B\left( x,\rho \right) }\Phi \left( \left\vert v\right\vert \right)
~d\mu _{x,\rho }
\end{equation*}%
so that $\int_{B\left( x,\rho \right) }\Phi \left( K\frac{\left\vert
v\right\vert }{\kappa }\right) ~d\mu _{x,\rho }\geq 1$, which yields $%
\left\Vert v\right\Vert _{L^{\Phi }\left( \mu _{x,\rho }\right) }\geq \frac{%
\kappa }{K}=\frac{1}{K}\Phi ^{-1}\left( \int_{B\left( x,\rho \right) }\Phi
\left( \left\vert v\right\vert \right) ~d\mu _{x,\rho }\right) $.

Now assume that $\Phi $ is a $K$-supermultiplicative Young function, i.e. $%
K\Phi \left( st\right) \geq \Phi \left( s\right) \Phi \left( t\right) $ for
all $s,t\geq 0$. We have%
\begin{eqnarray*}
K\int_{B\left( x,\rho \right) }\Phi \left( \left\vert v\right\vert \right)
~d\mu _{x,\rho } &=&K\int_{B\left( x,\rho \right) }\Phi \left( \left\Vert
v\right\Vert _{L^{\Phi }\left( \mu _{x,\rho }\right) }\frac{\left\vert
v\right\vert }{\left\Vert v\right\Vert _{L^{\Phi }\left( \mu _{x,\rho
}\right) }}\right) ~d\mu _{x,\rho } \\
&\geq &\Phi \left( \left\Vert v\right\Vert _{L^{\Phi }\left( \mu _{x,\rho
}\right) }\right) \int_{B\left( x,\rho \right) }\Phi \left( \frac{\left\vert
v\right\vert }{\left\Vert v\right\Vert _{L^{\Phi }\left( \mu _{x,\rho
}\right) }}\right) ~d\mu _{x,\rho } \\
&=&\Phi \left( \left\Vert v\right\Vert _{L^{\Phi }\left( \mu _{x,\rho
}\right) }\right) .
\end{eqnarray*}

Then, since $\frac{1}{C}\Phi \left( t\right) \leq \Phi \left( \frac{t}{C}%
\right) $ for all $0<C\leq 1$. Thus, we have that%
\begin{equation*}
\Phi ^{-1}\left( \int_{B\left( x,\rho \right) }\Phi \left( \left\vert
v\right\vert \right) ~d\mu _{x,\rho }\right) \geq \Phi ^{-1}\left( \frac{1}{K%
}\Phi \left( \left\Vert v\right\Vert _{L^{\Phi }\left( \mu _{x,\rho }\right)
}\right) \right) \geq \frac{\left\Vert v\right\Vert _{L^{\Phi }\left( \mu
_{x,\rho }\right) }}{K}.
\end{equation*}
\end{proof}

\subsection{Orlicz norms and admissibility}

The next proposition gives sufficient conditions for strong admissibility.

\begin{proposition}
\label{prop:admiss-suff}Given a right hand side pair $\left( \phi _{0},\vec{%
\phi}_{1}\right) $ defined in a bounded domain $\Omega $. Suppose that $\vec{%
\phi}_{1}\in L^{\infty }\left( \Omega \right) $ and that there exists a bump
function $\Phi $ and a constant $C_{\Omega }$ such that the global $\left(
\Phi ,A\right) $-Orlicz-Sobolev bump inequality (\ref{OS global}) holds, and
such that $\phi \in L^{\Phi ^{\ast }}\left( \Omega \right) $ where $\Phi
^{\ast }$ is the conjugate Young function to $\Phi $. Then $\left( \phi _{0},%
\vec{\phi}_{1}\right) $ is strongly admissible in $\Omega $ as given in
Definition \ref{def A admiss} with norm%
\begin{equation*}
\left\Vert \left( \phi ,\vec{\phi}_{1}\right) \right\Vert _{\mathcal{X}%
^{\ast }\left( \Omega \right) }\leq 2C_{\Omega }\left\Vert {\phi }%
_{0}\right\Vert _{L^{\Phi ^{\ast }}\left( \Omega \right) }+\left\Vert {\vec{%
\phi}}_{1}\right\Vert _{L^{\infty }\left( \Omega \right) }<\infty .
\end{equation*}
\end{proposition}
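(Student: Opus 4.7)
The plan is to split the strong admissibility norm into its two defining suprema and bound each using a separate tool: the $\vec{\phi}_1$-term by a direct $L^1$-$L^\infty$ duality, and the $\phi_0$-term by combining the generalized H\"older inequality in conjugate Orlicz spaces with the hypothesized bump inequality (\ref{OS global}). First, for $v\in\mathcal{W}_1$ the normalization $\int_{\Omega}|\nabla_A v|\,dy=1$ immediately yields
\[
\int_{\Omega}\bigl|\nabla_A v\cdot\vec{\phi}_1\bigr|\,dy\leq \|\vec{\phi}_1\|_{L^{\infty}(\Omega)}\int_{\Omega}|\nabla_A v|\,dy=\|\vec{\phi}_1\|_{L^{\infty}(\Omega)},
\]
which accounts for the second summand in the stated bound and requires no appeal to the Orlicz structure.

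For the $\phi_0$-term I would use the generalized H\"older inequality $\int_{\Omega}|v\phi_0|\,dy\leq 2\|v\|_{L^{\Phi}(\Omega)}\|\phi_0\|_{L^{\Phi^{\ast}}(\Omega)}$ (this is the standard conjugate-Orlicz duality, whose factor $2$ comes from the Luxemburg normalization and Young's inequality $ab\leq\Phi(a)+\Phi^{\ast}(b)$), and then convert the hypothesis (\ref{OS global}) into a Luxemburg-norm Sobolev inequality
\[
\|v\|_{L^{\Phi}(\Omega)}\leq C_{\Omega}\|\nabla_A v\|_{L^1(\Omega)},\qquad v\in W_{A,0}^{1,1}(\Omega).
\]
To obtain this, I would apply (\ref{OS global}) to the rescaled function $w=v/t$ with $t=C_{\Omega}\|\nabla_A v\|_{L^1}$, obtaining $\Phi^{(-1)}(\int_{\Omega}\Phi(v/t)\,dy)\leq 1$, equivalently $\int_{\Omega}\Phi(v/t)\,dy\leq 1$, which is exactly the Luxemburg definition of $\|v\|_{L^{\Phi}}\leq t$. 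Combining these two facts, and using $\|\nabla_A v\|_{L^1}=1$ for $v\in\mathcal{W}_1$, yields
\[
\sup_{v\in\mathcal{W}_1}\int_{\Omega}|v\phi_0|\,dy\leq 2C_{\Omega}\|\phi_0\|_{L^{\Phi^{\ast}}(\Omega)}.
\]
Summing the two sup-bounds produces the claimed inequality.

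The main technical point, rather than any deep obstacle, lies in two bookkeeping details. First, (\ref{OS global}) is stated for $w\in\mathrm{Lip}_{\mathrm{c}}(\Omega)$, whereas the test functions $v\in\mathcal{W}_1$ live in $W_{A,0}^{1,1}(\Omega)$; this gap is closed by the definition of $W_{A,0}^{1,1}(\Omega)$ as the closure of $\mathrm{Lip}_{\mathrm{c}}(\Omega)$ and Fatou's lemma applied to $\Phi(|v|/t)$. Second, the passage from the quasi-norm bound $\Phi^{(-1)}(\int\Phi(v/t))\leq 1$ to the Luxemburg bound requires the elementary convexity inequality $\Phi(as)\leq a\Phi(s)$ for $0\leq a\leq 1$ (which follows from $\Phi(0)=0$ and convexity) to absorb any multiplicative factor depending on $\Phi(1)$ into $C_{\Omega}$; this is the only place the conventions of the Young function enter nontrivially, and keeping track of it is what fixes the constant in the final inequality at exactly $2C_{\Omega}$.
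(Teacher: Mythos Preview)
Your proposal is correct and follows essentially the same route as the paper: split the strong-admissibility norm into its two suprema, bound the $\vec{\phi}_1$ part by $L^1$--$L^\infty$ duality, and bound the $\phi_0$ part by the Orlicz--H\"older inequality followed by the Orlicz--Sobolev bump inequality. In fact you are more careful than the paper, which silently passes from the inhomogeneous inequality (\ref{OS global}) to the Luxemburg-norm Sobolev bound $\|v\|_{L^\Phi}\le C_\Omega\|\nabla_A v\|_{L^1}$; your rescaling argument $w=v/t$ together with the density of $\mathrm{Lip}_{\mathrm{c}}(\Omega)$ in $W_{A,0}^{1,1}(\Omega)$ fills precisely this gap (modulo the harmless $\Phi(1)$ factor you note).
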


\begin{proof}
First, note that for any $v\in \mathrm{Lip}_{\mathrm{c}}\left( \Omega
\right) $ 
\begin{equation*}
\int_{\Omega }\left\vert \nabla _{A}v{\cdot \vec{\phi}}_{1}\right\vert {~}%
dx\leq \left\Vert {\vec{\phi}}_{1}\right\Vert _{L^{\infty }\left( \Omega
\right) }\left\Vert \nabla _{A}v\right\Vert _{L^{1}\left( \Omega \right) },
\end{equation*}%
so $\left\Vert \phi _{1}\right\Vert _{\mathcal{X}^{\ast }\left( \Omega
\right) }\leq \left\Vert {\vec{\phi}}_{1}\right\Vert _{L^{\infty }\left(
\Omega \right) }$. Next, by the Orlicz-H\"{o}lder inequality and (\ref{OS
ineq}), for any $v\in \mathrm{Lip}_{\mathrm{c}}\left( \Omega \right) $%
\begin{eqnarray*}
\int_{\Omega }\left\vert v{\phi }_{0}\right\vert {~}dx &\leq &2\left\Vert {%
\phi }_{0}\right\Vert _{L^{\Phi ^{\ast }}\left( \Omega \right) }\left\Vert
v\right\Vert _{L^{\Phi }\left( \Omega \right) } \\
&\leq &2C_{\Omega }\left\Vert {\phi }_{0}\right\Vert _{L^{\Phi ^{\ast
}}\left( \Omega \right) }\left\Vert \nabla _{A}v\right\Vert _{L^{1}\left(
\Omega \right) }
\end{eqnarray*}%
this is $\left\Vert \phi _{0}\right\Vert _{\mathcal{X}^{\ast }\left( B\left(
y,R_{0}\right) \right) }\leq 2C_{\Omega }\left\Vert {\phi }_{0}\right\Vert
_{L^{\Phi ^{\ast }}\left( \Omega \right) }$.
\end{proof}

\subsection{Submultiplicative extensions}

In our application to Moser iteration the convex bump function $\Phi \left(
t\right) $ is assumed to satisfy in addition:

\begin{itemize}
\item The function $\frac{\Phi (t)}{t}$ is positive, nondecreasing and tends
to $\infty $ as $t\rightarrow \infty $;

\item $\Phi$ is submultiplicative on an interval $\left( E,\infty \right) $
for some $E>1$: 
\begin{equation}
\Phi \left( ab\right) \leq \Phi \left( a\right) \Phi \left( b\right) ,\ \ \
\ \ a,b>E.  \label{submult}
\end{equation}
\end{itemize}

Note that if we consider more generally the quasi-submultiplicative
condition or $K$-submultiplicativity,%
\begin{equation}
\Phi \left( ab\right) \leq K\Phi \left( a\right) \Phi \left( b\right) ,\ \ \
\ \ a,b>E,  \label{submult quasi}
\end{equation}%
for some constant $K$, then $\Phi \left( t\right) $ satisfies (\ref{submult
quasi}) if and only if $\Phi _{K}\left( t\right) \equiv K\Phi \left(
t\right) $ satisfies (\ref{submult}). Thus we can always rescale a
quasi-submultiplicative function to be submultiplicative.

Now let us consider the \emph{linear extension} of $\Phi $ defined on $\left[
E,\infty \right) $ to the entire positive real axis $\left( 0,\infty \right) 
$ defined by%
\begin{equation*}
\Phi \left( t\right) =\frac{\Phi \left( E\right) }{E}t,\ \ \ \ \ 0\leq t\leq
E.
\end{equation*}%
We claim that this extension of $\Phi $ is submultiplicative on $%
\left(0,\infty \right) $, i.e. 
\begin{equation*}
\Phi \left( ab\right) \leq \Phi \left( a\right) \Phi \left( b\right) ,\ \ \
\ \ a,b>0.
\end{equation*}%
In fact, the identity $\Phi(t)/t = \Phi(\max\{t,E\})/\max\{t,E\}$ and the
monotonicity of $\Phi(t)/t$ imply 
\begin{equation*}
\frac{\Phi(ab)}{ab} \leq \frac{\Phi (\max\{a,E\} \max\{b, E\})}{\max\{a,E\}
\max\{b, E\}} \leq \frac{\Phi(\max\{a,E\})}{\max\{a,E\}} \cdot \frac{
\Phi(\max\{b,E\})}{\max\{b,E\}} = \frac{\Phi(a)}{a} \frac{\Phi(b)}{b}.
\end{equation*}

\begin{conclusion}
\label{sub extensions}If $\Phi :[E,\infty )\rightarrow {\mathbb{R}}^{+}$ is
a submultiplicative piecewise differentiable convex function so that $\Phi
(t)/t$ is nondecreasing, then we can extend $\Phi $ to a submultiplicative
piecewise differentiable convex function on $\left[ 0,\infty \right) $ that
vanishes at $0$ \emph{if and only if} 
\begin{equation}
\Phi ^{\prime }\left( E\right) \geq \frac{\Phi \left( E\right) }{E}.
\label{extension}
\end{equation}
\end{conclusion}

\subsection{An explicit family of Orlicz bumps\label{section-xYoung}}

We now consider the \emph{near} power bump case $\Phi \left( t\right) =\Phi
_{m}\left( t\right) =e^{\left( \left( \ln t\right) ^{\frac{1}{m}}+1\right)
^{m}}$ for $m>1$. In the special case that $m>1$ is an integer we can expand
the $m^{th}$ power in 
\begin{equation*}
\ln \Phi \left( e^{s}\right) =\left( s^{\frac{1}{m}}+1\right)
^{m}=\sum_{k=0}^{m}\left( 
\begin{array}{c}
m \\ 
k%
\end{array}%
\right) s^{\frac{k}{m}},
\end{equation*}%
and using the inequality $1\leq \left( \frac{s}{s+t}\right) ^{\alpha
}+\left( \frac{t}{s+t}\right) ^{\alpha }$ for $s,t>0$ and $0\leq \alpha \leq
1$, we see that $\Theta _{m}\left( s\right) \equiv \ln \Phi _{m}\left(
e^{s}\right) $ is subadditive on $\left( 0,\infty \right) $, hence $\Phi
_{m} $ is submultiplicative on $\left( 1,\infty \right) $. In fact, it is
not hard to see that for $m>1$, $\Theta _{m}\left( s\right) =\left( s^{\frac{%
1}{m}}+1\right) ^{m}$ is subadditive on $\left( 0,\infty \right) $, and so $%
\Phi _{m}$ is submultiplicative on $\left( 1,\infty \right) $.

We will show that $\Phi $ is increasing and convex in $\left[ E,\infty
\right) $. For any $t>1$ we have%
\begin{eqnarray}
\Phi ^{\prime }\left( t\right) &=&\Phi \left( t\right) m\left( \left( \ln
t\right) ^{\frac{1}{m}}+1\right) ^{m-1}\frac{1}{m}\left( \ln t\right) ^{%
\frac{1}{m}-1}\frac{1}{t}  \notag \\
&=&\frac{\Phi \left( t\right) }{t}\left( 1+\frac{1}{\left( \ln t\right) ^{%
\frac{1}{m}}}\right) ^{m-1}:=\frac{\Phi \left( t\right) }{t}\Omega \left(
t\right) ,  \label{Phi1}
\end{eqnarray}%
with $\Omega \left( t\right) =\Omega _{m}\left( t\right) =\left( 1+\left(
\ln t\right) ^{-\frac{1}{m}}\right) ^{m-1}>1$; and so for any $E>1$ we have 
\begin{equation}
\Phi ^{\prime }\left( E\right) >\frac{\Phi \left( E\right) }{E}.
\label{PhiE}
\end{equation}%
Next, we compute%
\begin{eqnarray*}
\Phi ^{\prime \prime }\left( t\right) &=&\frac{\Phi \left( t\right) }{t^{2}}%
\left( \left( \Omega \left( t\right) \right) ^{2}-\Omega \left( t\right)
+t\Omega ^{\prime }\left( t\right) \right) \\
&=&\frac{\Phi _{m}\left( t\right) }{t^{2}}\left( \left( \Omega \left(
t\right) \right) ^{2}-\Omega \left( t\right) +t\Omega ^{\prime }\left(
t\right) \right) .
\end{eqnarray*}%
Since $\Omega ^{\prime }\left( t\right) =-\frac{m-1}{m}\frac{1}{t}\left(
1+\left( \ln t\right) ^{-\frac{1}{m}}\right) ^{m-2}\left( \ln t\right) ^{-%
\frac{1}{m}-1}=-\frac{m-1}{m}\frac{1}{t}\Omega ^{\frac{m-2}{m-1}}\left( \ln
t\right) ^{-1-\frac{1}{m}}$, for $t>1$ we have%
\begin{eqnarray}
\Phi ^{\prime \prime }\left( t\right) &=&\frac{\Phi \left( t\right) }{t^{2}}%
\left( \left( \Omega \left( t\right) \right) ^{2}-\Omega \left( t\right) -%
\frac{m-1}{m}\Omega ^{\frac{m-2}{m-1}}\left( \ln t\right) ^{-\frac{1}{m}%
-1}\right)  \notag \\
&=&\frac{\Phi _{m}\left( t\right) }{t^{2}}\Omega \left( t\right) \left(
\Omega \left( t\right) -1-\frac{\frac{m-1}{m}}{\Omega ^{\frac{1}{m-1}}\left(
\ln t\right) ^{1+\frac{1}{m}}}\right)  \notag \\
&=&\frac{\Phi _{m}\left( t\right) }{t^{2}}\Omega \left( t\right) \Gamma
\left( t\right) ,  \label{Phi2}
\end{eqnarray}%
where%
\begin{equation*}
\Gamma \left( t\right) =\Gamma _{m}\left( t\right) =\Omega \left( t\right)
-1-\frac{\frac{m-1}{m}}{\Omega ^{\frac{1}{m-1}}\left( \ln t\right) ^{1+\frac{%
1}{m}}}.
\end{equation*}%
since $\Omega \left( t\right) -1=\left( 1+\left( \ln t\right) ^{-\frac{1}{m}%
}\right) ^{m-1}-1\geq \left( m-1\right) \left( \ln t\right) ^{-\frac{1}{m}}$%
, it follows that%
\begin{equation*}
\Gamma \left( t\right) \geq \frac{m-1}{\left( \ln t\right) ^{\frac{1}{m}}}%
\left( 1-\frac{\frac{1}{m}}{\Omega ^{\frac{1}{m-1}}\ln t}\right) >\frac{%
C_{m,E}}{\left( \ln t\right) ^{\frac{1}{m}}}>0
\end{equation*}%
for all $t\geq e$ and $m>1$. This shows that $\Phi $ is convex on $\left[
e,\infty \right) $, and so by (\ref{PhiE}) and Conclusion \ref{sub
extensions} we can extend $\Phi $ to a positive increasing \emph{%
submultiplicative} convex function on $\left[ 0,\infty \right) $. However,
due to technical calculations below, it is convenient to take $%
E=E_{m}=e^{2^{m}}$, $F=F_{m}=e^{3^{m}}$, and so we will work from now on
with the definition%
\begin{equation}
\Phi \left( t\right) =\Phi _{m}\left( t\right) \equiv \left\{ 
\begin{array}{ccc}
e^{\left( \left( \ln t\right) ^{\frac{1}{m}}+1\right) ^{m}} & \text{ if } & 
t\geq E=e^{2^{m}} \\ 
\frac{F}{E}t & \text{ if } & 0\leq t\leq E=e^{2^{m}}%
\end{array}%
\right. ,  \label{def Phi m ext}
\end{equation}%
where $m>1$ will be explicitly mentioned or understood from the context. 
\begin{figure}
\includegraphics[
  width=8cm,
  keepaspectratio,
]{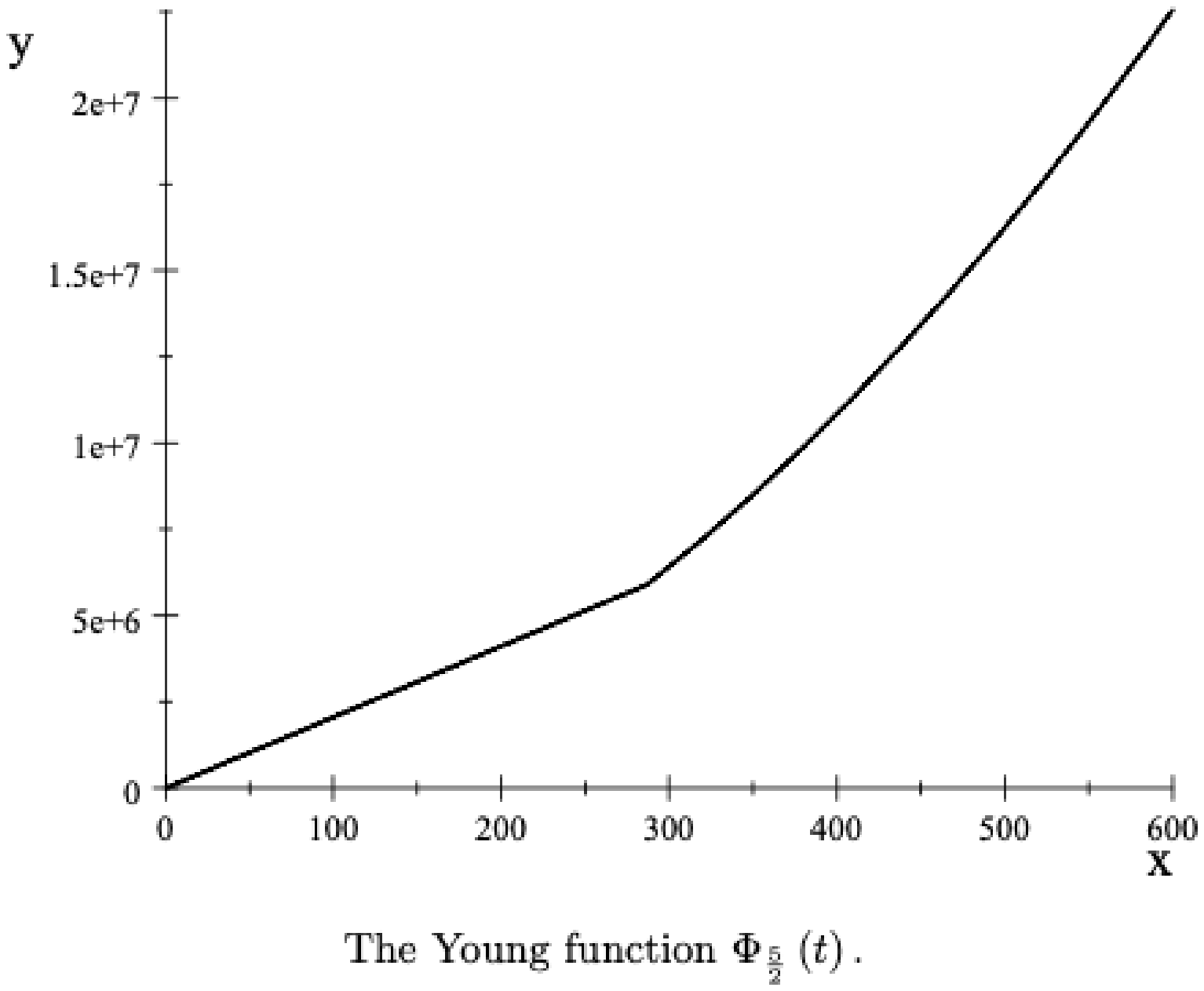}
\end{figure}
The function $\Phi _{m}$ is
clearly continuous and piecewise $C^{\infty }$. In some of our applications
we will require that the Young function should be $C^{1}$ and piece-wise
smooth, so the second derivative only has at most jump discontinuities. For
this reason we define a variation $\tilde{\Phi}_{m}$ of the Young function $%
\Phi _{m}$ which has the same growth as $t\rightarrow \infty $, and has the
required smoothness. We define%
\begin{equation}
\tilde{\Phi}_{m}\left( t\right) \equiv \left\{ 
\begin{array}{lll}
\Phi _{m}\left( t\right) & \text{ if } & t\geq E \\ 
\varrho _{m}\left( t\right) & \text{ if } & \frac{2E^{2}}{F}\leq t\leq E \\ 
\frac{1}{2}\frac{F}{E}t & \text{ if } & 0\leq t\leq \frac{2E^{2}}{F}%
\end{array}%
\right.  \label{Phi-tilde}
\end{equation}%
where $\varrho _{m}\left( t\right) $ is an increasing convex function
satisfying%
\begin{equation*}
\begin{array}{ll}
\varrho _{m}\left( \frac{2E^{2}}{F}\right) =E,\qquad & \varrho _{m}\left(
E\right) =F \\ 
\varrho _{m}^{\prime }\left( \frac{2E^{2}}{F}\right) =\frac{1}{2}\frac{F}{E}%
\qquad & \varrho _{m}^{\prime }\left( E\right) =\frac{F}{E}\left( \frac{3}{2}%
\right) ^{m-1}%
\end{array}%
\end{equation*}%
For example, we can take%
\begin{equation*}
\varrho _{m}\left( t\right) =E+\frac{1}{2}\frac{F}{E}\left( t-a\right)
+\left\{ 
\begin{array}{ll}
0 & a\leq t\leq \alpha \\ 
\frac{A}{E-\alpha }\frac{\left( t-\alpha \right) ^{2}}{2} & \alpha <t\leq E%
\end{array}%
\right.
\end{equation*}%
where $a=\frac{2E^{2}}{F}$, $A=\frac{F}{E}\left( \left( \frac{3}{2}\right)
^{m-1}-\frac{1}{2}\right) $, and $\alpha =2\frac{A}{E}-E$. 
\begin{figure}
\includegraphics[
  width=8cm,
  keepaspectratio,
]{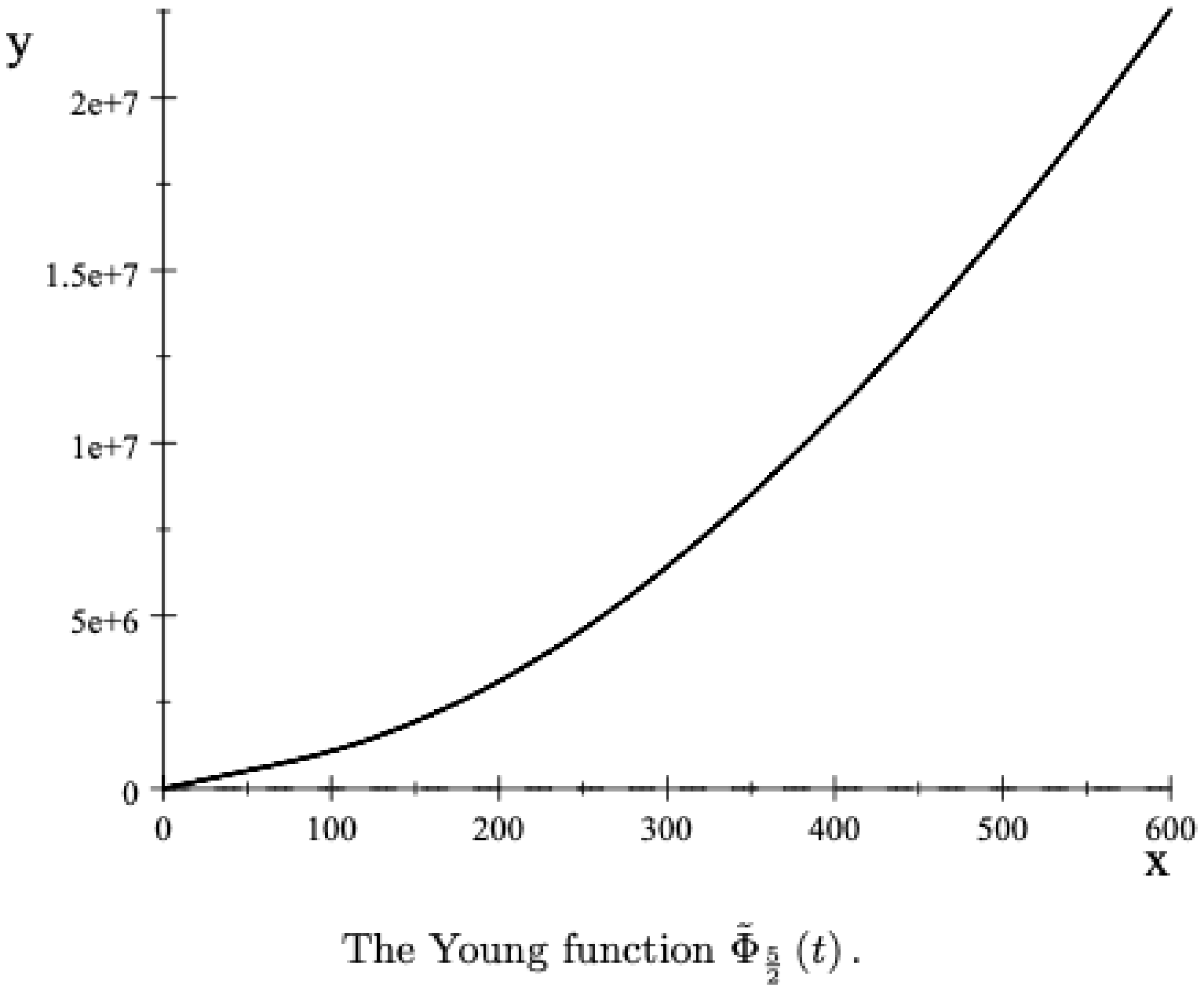}
\end{figure}
Notice that $\tilde{\Phi}_{m}\left( t\right) \equiv \Phi _{m}\left( t\right) 
$ for $t\geq E$, while $\frac{1}{C_{m}}\Phi _{m}\left( t\right) \leq \tilde{%
\Phi}_{m}\left( t\right) \leq \Phi _{m}\left( t\right) $ for all $t\geq 0$.
It follows that if we have an Orlicz Sobolev inequality for $\Phi _{m}$ with
superradius ${\varphi }$, we have that the Orlicz Sobolev inequality holds
for $\tilde{\Phi}_{m}$ with superradius $C_{m}{\varphi }$ for some constant $%
C_{m}$. Indeed, if $v\in \mathrm{Lip}_{\mathrm{c}}\left( B\left( x,r\right)
\right) $ for a ball $B\left( x,r\right) $, then 
\begin{eqnarray}
\tilde{\Phi}^{\left( -1\right) }\left( \int_{B\left( x,r\right) }\tilde{\Phi}%
\left( \left\vert v\right\vert \right) \frac{dx}{\left\vert B\left(
x,r\right) \right\vert }\right) &\leq &C_{m}\Phi ^{\left( -1\right) }\left(
\int_{B\left( x,r\right) }\Phi \left( \left\vert v\right\vert \right) \frac{%
dx}{\left\vert B\left( x,r\right) \right\vert }\right)  \notag \\
&\leq &C_{m}{\varphi }\left( x,r\right) \int_{B\left( x,r\right) }\left\vert
\nabla _{A}v\right\vert \frac{dx}{\left\vert B\left( x,r\right) \right\vert }%
.  \label{same-OS}
\end{eqnarray}%
Moreover, $\tilde{\Phi}_{m}$ is defined to be linear on $\left[ 0,a\right] $
with $\tilde{\Phi}\left( a\right) =E$ to facilitate computing successive
compositions $\tilde{\Phi}_{m}^{\left( n\right) }\left( t\right) $; indeed,
for $t$ small these compositions are just linear, for $t\geq E$ these are $%
\tilde{\Phi}_{m}^{\left( n\right) }\left( t\right) =\Phi _{m}^{\left(
n\right) }\left( t\right) $, and when $a\leq t\leq E$ then $\Phi \left(
t\right) \geq E$, so the modified formula in the \emph{middle} appears at
most once in any composition. See Corollary \ref{cor hinc} for details.

\subsection{Iterates of increasing functions}

In this subsection we consider the specific families of test functions $h$
that arise in our proofs. To implement the Moser iteration scheme we are
interested in estimates for the iterates $h_{j}\left( t\right) =h\circ
h\circ \cdots \circ h$ ($j$ times), in particular, to apply the previous
Caccioppoli inequalities, we want to estimate the quotients $\frac{%
th_{j}^{\prime }\left( t\right) }{h_{j}\left( t\right) }$ and $\frac{%
th_{j}^{\prime \prime }\left( t\right) }{h_{j}^{\prime }\left( t\right) }$,
as well as the function $\Upsilon _{j}\left( t\right) =\left( \frac{1}{2}%
h_{j}^{2}\left( t\right) \right) ^{\prime \prime }=h_{j}\left( t\right)
h_{j}^{\prime \prime }\left( t\right) +\left( h_{j}^{\prime }\left( t\right)
\right) ^{2}$.

One family of test functions we consider is 
\begin{equation}
h_{j}\left( t\right) =\Gamma _{m}^{(j)}\left( t\right) \equiv \Gamma
_{m}\circ \Gamma _{m}\circ \ldots \Gamma _{m}(t)\quad \text{(}j\text{ times),%
}  \label{hinc}
\end{equation}%
where the function $\Gamma _{m}(t)\equiv \sqrt{\Phi _{m}(t^{2})}$ for $m>1$.
When $t>\sqrt{E_{m}}=e^{2^{m-1}}$, we have the explicit formula 
\begin{equation*}
\Gamma _{m}\left( t\right) \equiv \sqrt{\Phi _{m}\left( t^{2}\right) }=e^{%
\frac{1}{2}\left( \left( 2\ln t\right) ^{\frac{1}{m}}+1\right) ^{m}}>t.
\end{equation*}

\begin{proposition}
\label{prop hinc}Let $m>1$, the function $h\left( t\right) =h_{j}\left(
t\right) =\sqrt{\Phi _{m}^{\left( j\right) }\left( t^{2}\right) }$ defined
in (\ref{hinc}) for each $j\geq 1$ satisfies%
\begin{equation*}
h^{\prime }(t)^{2}\leq \Upsilon \left( t\right) \leq 2h^{\prime
}(t)^{2}\qquad \text{and\qquad }1\leq \frac{th^{\prime }\left( t\right) }{%
h\left( t\right) }\leq C_{m}j^{m-1},
\end{equation*}%
where $\Upsilon \left( t\right) =\left( \frac{1}{2}h^{2}\left( t\right)
\right) ^{\prime \prime }=h\left( t\right) h^{\prime \prime }\left( t\right)
+\left( h^{\prime }\left( t\right) \right) ^{2}$. Moreover, we have that%
\begin{equation}
h_{j}^{\prime }\left( t\right) =\frac{h_{j}\left( t\right) }{t}\tilde{\Omega}%
_{j}^{\ast }\left( t\right) \qquad \text{with }h_{j}\left( t\right) \tilde{%
\Omega}_{j}^{\ast }\left( t\right) \text{ increasing,}  \label{hin}
\end{equation}%
and $h^{\prime \prime }\left( t\right) \geq 0$ for all $t>0$.
\end{proposition}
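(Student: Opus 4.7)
My plan is to exploit the iteration structure $h_j = \Gamma_m^{(j)}$ from (\ref{hinc}) together with the explicit formulas (\ref{Phi1}) and (\ref{Phi2}) for the derivatives of $\Phi_m$. Differentiating the relation $\Gamma_m(t)^2 = \Phi_m(t^2)$ and using (\ref{Phi1}) yields $\Gamma_m'(t) = \frac{\Gamma_m(t)}{t}\Omega(t^2)$. Applying the chain rule inductively to $h_j = \Gamma_m \circ h_{j-1}$, the numerators $\Gamma_m(h_{k}(t)) = h_{k+1}(t)$ and denominators $h_k(t)$ telescope and I obtain
\begin{equation*}
h_j'(t) = \frac{h_j(t)}{t}\,\tilde\Omega_j^*(t), \qquad \tilde\Omega_j^*(t) := \prod_{k=0}^{j-1}\Omega\!\left(h_k(t)^2\right),
\end{equation*}
which is the identity in (\ref{hin}). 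Since each $\Omega(s) = (1+(\ln s)^{-1/m})^{m-1} \geq 1$ (and equals $1$ on the linear range $s \leq E_m$), this already gives the lower bound $\tfrac{t h_j'(t)}{h_j(t)} \geq 1$.

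Next I establish convexity $h_j'' \geq 0$. The composition of convex, nondecreasing functions is convex and nondecreasing, so by induction it suffices to prove this for $\Gamma_m$ itself. Differentiating $\Gamma_m\Gamma_m' = t\Phi_m'(t^2)$ once more and substituting (\ref{Phi1})--(\ref{Phi2}), a short rearrangement rewrites $\Gamma_m \Gamma_m''$ as a positive multiple of $2\Theta(t^2) - (\Omega(t^2)-1)$, where $\Theta$ denotes the function called $\Gamma_m$ in (\ref{Phi2}) (renamed to avoid the clash with the iterate $\Gamma_m^{(j)}$). The estimate $\Omega - 1 \geq (m-1)(\ln s)^{-1/m}$ derived in the text shows this is positive for $t$ in the nonlinear range, while $\Gamma_m$ is linear (hence convex) on the remaining range. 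With convexity in hand, the lower bound $(h_j')^2 \leq \Upsilon(t)$ is immediate from $h_j,h_j'' \geq 0$, and the monotonicity of $h_j(t)\tilde\Omega_j^*(t) = t h_j'(t)$ claimed in (\ref{hin}) follows by $(th_j')' = h_j' + t h_j'' \geq 0$.

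For the upper bound $\Upsilon \leq 2(h_j')^2$, substituting $h_j' = h_j \tilde\Omega_j^*/t$ into $\Upsilon = h_j h_j'' + (h_j')^2$ produces the identity
\begin{equation*}
t^2 \Upsilon(t) - 2t^2 (h_j'(t))^2 = h_j(t)^2\, \tilde\Omega_j^*(t) \cdot t\,\bigl(\ln\tilde\Omega_j^*\bigr)'(t).
\end{equation*}
Now $\Omega(s)$ is a decreasing function of $s$ (since $(\ln s)^{-1/m}$ decreases) and each iterate $h_k(t)^2$ is increasing in $t$, so every factor of $\tilde\Omega_j^*(t)$ is a positive, decreasing function of $t$; hence $\tilde\Omega_j^*$ itself is decreasing, $(\ln\tilde\Omega_j^*)' \leq 0$, and the claimed inequality follows. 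For the quantitative upper bound $\tfrac{th_j'}{h_j}\leq C_m j^{m-1}$, set $s_k := 2\ln h_k(t)$; the explicit form of $\Phi_m$ on $[E_m,\infty)$ yields the clean recursion $s_{k+1} = (s_k^{1/m}+1)^m$, so $s_k^{1/m} = s_0^{1/m}+k$, and the product telescopes to
\begin{equation*}
\tilde\Omega_j^*(t) \;=\; \prod_{k=0}^{j-1}\left(1+\tfrac{1}{s_0^{1/m}+k}\right)^{m-1} \;=\; \left(\frac{s_0^{1/m}+j}{s_0^{1/m}}\right)^{m-1},
\end{equation*}
which is bounded by $C_m j^{m-1}$ because $s_0^{1/m} \geq 2$ on the relevant range $t \geq \sqrt{E_m}$.

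The principal obstacle is the junction $t^2 = E_m$, where $\Phi_m$ is only piecewise $C^2$; $h_j$ then inherits a finite set of points where $h_j''$ has a jump discontinuity, and I interpret $h_j''\geq 0$ in the sense of convexity (equivalently, distributionally). The telescoping must also be adapted when some intermediate $h_k(t)$ lies in the linear range; but $\Omega \equiv 1$ there, so those factors drop out harmlessly, and, as noted after (\ref{Phi-tilde}), the middle regime appears at most once in any composition, so the junction contribution is absorbed into $C_m$.
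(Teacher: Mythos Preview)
Your argument is correct and leads to the same bounds, but it is organized differently from the paper's proof. The paper writes down the closed form $\gamma_2^{(j)}(t)=\exp\bigl(\tfrac12((2\ln t)^{1/m}+j)^m\bigr)$ for the nonlinear range and computes \emph{all} derivatives directly from that single expression, obtaining $\Omega_j^*(t)=(1+j(2\ln t)^{-1/m})^{m-1}$ and then checking the required ratios on each interval $I_k$ of the piecewise decomposition. You instead derive the product formula $\tilde\Omega_j^*(t)=\prod_{k=0}^{j-1}\Omega(h_k(t)^2)$ via the chain rule and telescope it to recover the same $\Omega_j^*$, reduce convexity to the base case $j=1$ via ``composition of convex nondecreasing functions'', and get the upper bound $\Upsilon\le 2(h_j')^2$ from the monotonicity of $\Omega$. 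Your route is a bit more conceptual and avoids repeating the second-derivative computation for every $j$; the paper's route is more explicit and makes the piecewise structure (the intervals $I_k$) transparent.

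Two small remarks. First, your displayed identity for $t^2\Upsilon-2t^2(h_j')^2$ is off by a term: the correct expression is $h_j^2\,\tilde\Omega_j^*\bigl(t(\ln\tilde\Omega_j^*)'-1\bigr)$, not $h_j^2\,\tilde\Omega_j^*\cdot t(\ln\tilde\Omega_j^*)'$. This only helps you, since the extra $-1$ makes the right side even more negative, so the conclusion is unaffected. Second, your final paragraph invokes the ``middle regime appears at most once'' observation after (\ref{Phi-tilde}); that remark concerns $\tilde\Phi_m$, not the $\Phi_m$ used here. For $\Phi_m$ there is no middle regime: it is linear on $[0,E_m]$ and given by the explicit formula thereafter, so the only junction is at $t^2=E_m$ where $\Phi_m'$ jumps upward. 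Your handling (interpret $h_j''\ge0$ as convexity, with $\Omega\equiv1$ on the linear range so those factors drop out of the product) is exactly right; just drop the misplaced reference.
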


\begin{proof}
From the definition (\ref{def Phi m ext}) of $\Phi _{m}$, and induction we
have $h(t)=h_{j}\left( t\right) =e^{\frac{1}{2}\left( \left( 2\ln t\right) ^{%
\frac{1}{m}}+j\right) ^{m}}$ for $t\geq e^{2^{m-1}}$, while, since for $%
0<t<e^{2^{m-1}}$, $h_{1}\left( t\right) =\sqrt{\Phi _{m}\left( t^{2}\right) }%
=\tau t$ with $\tau =\exp \left( \frac{1}{2}\left( 3^{m}-2^{m}\right)
\right) $. Letting $\gamma _{1}\left( t\right) =\tau t$ and $\gamma
_{2}\left( t\right) =$ $e^{\frac{1}{2}\left( \left( 2\ln t\right) ^{\frac{1}{%
m}}+1\right) ^{m}}$, we can write%
\begin{equation*}
h_{1}\left( t\right) =\left\{ 
\begin{array}{ll}
\gamma _{1}\left( t\right) & 0\leq t<e^{2^{m-1}} \\ 
\gamma _{2}\left( t\right) & e^{2^{m-1}}\leq t%
\end{array}%
\right. .
\end{equation*}%
Then, defining the intervals we have , $I_{0}=\left( 0,\tau
^{-(j-1)}e^{2^{m-1}}\right) $, $I_{k}=\left[ \tau ^{-(j-k)}e^{2^{m-1}},\tau
^{-(j-k-1)}e^{2^{m-1}}\right) $ for $k=1,\cdots ,j-1$, and $I_{j}=\left[
e^{2^{m-1}},\infty \right) $, we have the expression 
\begin{equation}
h\left( t\right) =h_{j}\left( t\right) =\left\{ 
\begin{array}{ll}
\gamma _{1}^{\left( j\right) }\left( t\right) & t\in I_{0}; \\ 
\gamma _{2}^{\left( k\right) }\left( \gamma _{1}^{\left( j-k\right) }\left(
t\right) \right) & t\in I_{k}\;k=1,\cdots ,j-1; \\ 
\gamma _{2}^{\left( j\right) }\left( t\right) . & t\in I_{j}.%
\end{array}%
\right. .  \label{hp}
\end{equation}%
Since $\gamma _{1}^{\left( j\right) }\left( t\right) =\tau ^{j}t$ is tis
clear that for all $j\geq 1$%
\begin{equation}
\Upsilon _{1}^{\left( j\right) }\left( t\right) :=\gamma _{1}^{\left(
j\right) }\left( t\right) \left( \gamma _{1}^{\left( j\right) }\left(
t\right) \right) ^{\prime \prime }+\left( \left( \gamma _{1}^{\left(
j\right) }\left( t\right) \right) ^{\prime }\right) ^{2}=\left( \left(
\gamma _{1}^{\left( j\right) }\left( t\right) \right) ^{\prime }\right)
^{2}\qquad \text{and\qquad }\frac{\left( \gamma _{1}^{\left( j\right)
}\left( t\right) \right) ^{\prime }t}{\gamma _{1}^{\left( j\right) }\left(
t\right) }=1  \label{hp0}
\end{equation}%
Now, for $j\geq 1$, and $t\geq e^{2^{m-1}}$%
\begin{equation}
\left( \gamma _{2}^{\left( j\right) }\left( t\right) \right) ^{\prime
}=\left( e^{\frac{1}{2}\left( \left( 2\ln t\right) ^{\frac{1}{m}}+j\right)
^{m}}\right) ^{\prime }=\frac{\gamma _{2}^{\left( j\right) }\left( t\right) 
}{t}\Omega _{j}^{\ast }\left( t\right)  \label{hp5}
\end{equation}%
with $\Omega _{j}^{\ast }\left( t\right) =\left( 1+\frac{j}{\left( 2\ln
t\right) ^{\frac{1}{m}}}\right) ^{m-1}$, and%
\begin{equation}
\left( \gamma _{2}^{\left( j\right) }\left( t\right) \right) ^{\prime \prime
}=\frac{\gamma _{2}^{\left( j\right) }\left( t\right) }{t^{2}}\left( \left(
\Omega _{j}^{\ast }\left( t\right) \right) ^{2}-\Omega _{j}^{\ast }\left(
t\right) +t\left( \Omega _{j}^{\ast }\left( t\right) \right) ^{\prime
}\right) .  \label{hp6}
\end{equation}%
So from (\ref{hp5}) and (\ref{hp0}) (since $\gamma _{1}^{\left( j-k\right)
}\left( t\right) \geq e^{2^{m-1}}$ whenever $t\in I_{k}$, $k=1,\cdots ,j$)
we obtain%
\begin{equation}
1\leq \frac{t\left( \gamma _{2}^{\left( j\right) }\left( t\right) \right)
^{\prime }}{\gamma _{2}^{\left( j\right) }\left( t\right) }=\Omega
_{j}^{\ast }\left( t\right) \leq \left( 1+\frac{j}{2}\right) ^{m-1}
\label{hp1}
\end{equation}%
and 
\begin{eqnarray}
\frac{\Upsilon _{2}^{\left( j\right) }\left( t\right) }{\left( \left( \gamma
_{2}^{\left( j\right) }\left( t\right) \right) ^{\prime }\right) ^{2}} &:&=%
\frac{\gamma _{2}^{\left( j\right) }\left( t\right) \left( \gamma
_{2}^{\left( j\right) }\left( t\right) \right) ^{\prime \prime }+\left(
\left( \gamma _{2}^{\left( j\right) }\left( t\right) \right) ^{\prime
}\right) ^{2}}{\left( \left( \gamma _{2}^{\left( j\right) }\left( t\right)
\right) ^{\prime }\right) ^{2}}  \notag \\
&=&\frac{\left( \Omega _{j}^{\ast }\left( t\right) \right) ^{2}-\Omega
_{j}^{\ast }\left( t\right) +t\left( \Omega _{j}^{\ast }\left( t\right)
\right) ^{\prime }}{\left( \Omega _{j}^{\ast }\left( t\right) \right) ^{2}}%
+1.  \label{hp2}
\end{eqnarray}%
Since $\left( \Omega _{j}^{\ast }\left( t\right) \right) ^{\prime }=-\frac{%
\frac{m-1}{m}j}{2^{\frac{1}{m}}\left( \ln t\right) ^{\frac{1}{m}+1}t}\left(
1+\frac{j}{\left( 2\ln t\right) ^{\frac{1}{m}}}\right) ^{m-2}$, using the
estimate $\left( 1+x\right) ^{m-1}-1\geq \left( 1+x\right) ^{-\left(
2-m\right) _{+}}\left( m-1\right) x\geq \left( 1+x\right) ^{-1}\left(
m-1\right) x$ when $m>1$, $x\geq 0$, we have that%
\begin{eqnarray*}
\Omega _{j}^{\ast }\left( t\right) -1+t\frac{\left( \Omega _{j}^{\ast
}\left( t\right) \right) ^{\prime }}{\Omega _{j}^{\ast }\left( t\right) }
&=&\Omega _{j}^{\ast }\left( t\right) -1-\frac{\frac{m-1}{m}j}{2^{\frac{1}{m}%
}\left( \ln t\right) ^{\frac{1}{m}+1}\left( 1+\frac{j}{\left( 2\ln t\right)
^{\frac{1}{m}}}\right) } \\
&\geq &\frac{1}{\left( 1+\tfrac{j}{\left( 2\ln t\right) ^{\frac{1}{m}}}%
\right) }\left( \tfrac{\left( m-1\right) j}{\left( 2\ln t\right) ^{\frac{1}{m%
}}}-\frac{\frac{m-1}{m}j}{2^{\frac{1}{m}}\left( \ln t\right) ^{\frac{1}{m}+1}%
}\right) \\
&=&\frac{\tfrac{\left( m-1\right) j}{\left( 2\ln t\right) ^{\frac{1}{m}}}}{%
\left( 1+\tfrac{j}{\left( 2\ln t\right) ^{\frac{1}{m}}}\right) }\left( 1-%
\frac{\frac{1}{m}}{\ln t}\right) \geq 0,\qquad \text{since }t\geq
e^{2^{m-1}}.
\end{eqnarray*}%
It follows that $\left( \gamma _{2}^{\left( j\right) }\left( t\right)
\right) ^{\prime \prime }\geq 0$, and 
\begin{equation}
0\leq \frac{\gamma _{2}^{\left( j\right) }\left( t\right) \left( \gamma
_{2}^{\left( j\right) }\left( t\right) \right) ^{\prime \prime }}{\left(
\left( \gamma _{2}^{\left( j\right) }\left( t\right) \right) ^{\prime
}\right) ^{2}}=\frac{\left( \Omega _{j}^{\ast }\left( t\right) \right)
^{2}-\Omega _{j}^{\ast }\left( t\right) +t\left( \Omega _{j}^{\ast }\left(
t\right) \right) ^{\prime }}{\left( \Omega _{j}^{\ast }\left( t\right)
\right) ^{2}}\leq 1,  \label{hp4}
\end{equation}%
substituting in (\ref{hp2}) yields%
\begin{equation}
1\leq \frac{\Upsilon _{2}^{\left( j\right) }\left( t\right) }{\left( \left(
\gamma _{2}^{\left( j\right) }\left( t\right) \right) ^{\prime }\right) ^{2}}%
\leq 2.  \label{hp3}
\end{equation}%
Then from from the expression (\ref{hp}) for $h_{j}$ and (\ref{hp0})-(\ref%
{hp1})-(\ref{hp3}) it follows that $h_{j}\left( t\right) $ satisfies the
estimates claimed in the proposition both for $t\in I_{0}$ and $t\in I_{j}$.
Now, note that when $t\in I_{k}$, $k=1,\cdots ,j-1$, we have $\gamma
_{1}^{\left( j-k\right) }\left( t\right) \in I_{j}$, and also 
\begin{eqnarray*}
h_{j}^{\prime }\left( t\right) &=&\left( \gamma _{2}^{\left( k\right)
}\right) ^{\prime }\left( \gamma _{1}^{\left( j-k\right) }\left( t\right)
\right) \cdot \left( \gamma _{1}^{\left( j-k\right) }\right) ^{\prime
}\left( t\right) \\
h_{j}^{\prime \prime }\left( t\right) &=&\left( \gamma _{2}^{\left( k\right)
}\right) ^{\prime \prime }\left( \gamma _{1}^{\left( j-k\right) }\left(
t\right) \right) \cdot \left( \left( \gamma _{1}^{\left( j-k\right) }\right)
^{\prime }\left( t\right) \right) ^{2}+\left( \gamma _{2}^{\left( k\right)
}\right) ^{\prime }\left( \gamma _{1}^{\left( j-k\right) }\left( t\right)
\right) \cdot \left( \gamma _{1}^{\left( j-k\right) }\right) ^{\prime \prime
}\left( t\right) \\
&=&\left( \gamma _{2}^{\left( k\right) }\right) ^{\prime \prime }\left(
\gamma _{1}^{\left( j-k\right) }\left( t\right) \right) \cdot \left( \left(
\gamma _{1}^{\left( j-k\right) }\right) ^{\prime }\left( t\right) \right)
^{2}
\end{eqnarray*}%
since $\left( \gamma _{1}^{\left( j-k\right) }\right) ^{\prime \prime
}\equiv 0$, then, in these intervals, by (\ref{hp0}) we have 
\begin{eqnarray*}
\frac{th_{j}^{\prime }\left( t\right) }{h_{j}\left( t\right) } &=&\frac{%
t~\left( \gamma _{2}^{\left( k\right) }\right) ^{\prime }\left( \gamma
_{1}^{\left( j-k\right) }\left( t\right) \right) \cdot \left( \gamma
_{1}^{\left( j-k\right) }\right) ^{\prime }\left( t\right) }{\gamma
_{2}^{\left( k\right) }\left( \gamma _{1}^{\left( j-k\right) }\left(
t\right) \right) }=\frac{\gamma _{1}^{\left( j-k\right) }\left( t\right)
\left( \gamma _{2}^{\left( k\right) }\right) ^{\prime }\left( \gamma
_{1}^{\left( j-k\right) }\left( t\right) \right) }{\gamma _{2}^{\left(
k\right) }\left( \gamma _{1}^{\left( j-k\right) }\left( t\right) \right) }%
\frac{t~\left( \gamma _{1}^{\left( j-k\right) }\right) ^{\prime }\left(
t\right) }{\gamma _{1}^{\left( j-k\right) }\left( t\right) } \\
&=&\frac{\gamma _{1}^{\left( j-k\right) }\left( t\right) \left( \gamma
_{2}^{\left( k\right) }\right) ^{\prime }\left( \gamma _{1}^{\left(
j-k\right) }\left( t\right) \right) }{\gamma _{2}^{\left( k\right) }\left(
\gamma _{1}^{\left( j-k\right) }\left( t\right) \right) }
\end{eqnarray*}%
so $1\leq \frac{th_{j}^{\prime }\left( t\right) }{h_{j}\left( t\right) }\leq
\left( 1+\frac{j}{2}\right) ^{m-1}\leq C_{m}j^{m-1}$ by (\ref{hp1}) for $%
t\in I_{k}$, $k=0,\cdots ,j$; this finishes the proof of the second set of
inequalities in the lemma. Also, for $t\in I_{k}$, $k=1,\cdots ,j-1$%
\begin{eqnarray*}
0 &\leq &\frac{h_{j}\left( t\right) h_{j}^{\prime \prime }\left( t\right) }{%
\left( h_{j}^{\prime }\left( t\right) \right) ^{2}}=\frac{\gamma
_{2}^{\left( k\right) }\left( \gamma _{1}^{\left( j-k\right) }\right) \cdot
\left( \gamma _{2}^{\left( k\right) }\right) ^{\prime \prime }\left( \gamma
_{1}^{\left( j-k\right) }\left( t\right) \right) \cdot \left( \left( \gamma
_{1}^{\left( j-k\right) }\right) ^{\prime }\left( t\right) \right) ^{2}}{%
\left( \left( \gamma _{2}^{\left( k\right) }\right) ^{\prime }\left( \gamma
_{1}^{\left( j-k\right) }\left( t\right) \right) \cdot \left( \gamma
_{1}^{\left( j-k\right) }\right) ^{\prime }\left( t\right) \right) ^{2}} \\
&=&\frac{\gamma _{2}^{\left( k\right) }\left( \gamma _{1}^{\left( j-k\right)
}\right) \cdot \left( \gamma _{2}^{\left( k\right) }\right) ^{\prime \prime
}\left( \gamma _{1}^{\left( j-k\right) }\right) }{\left( \left( \gamma
_{2}^{\left( k\right) }\right) ^{\prime }\left( \gamma _{1}^{\left(
j-k\right) }\right) \right) ^{2}}\leq 1
\end{eqnarray*}%
by (\ref{hp4}). Hence we also have $1\leq \frac{\Upsilon _{j}\left( t\right) 
}{\left( h_{j}^{\prime }\left( t\right) \right) ^{2}}\leq 2$ for $t\in I_{k}$%
, what finishes the proof of the first pair of inequalities.
\end{proof}

\begin{remark}
\label{remark-h0}Note that the identity, $h\left( t\right) =t$, trivially
satisfies the conclusions in the previous proposition.
\end{remark}

The following is a corollary of the proof of Proposition \ref{prop hinc},
which extends its conclusions with the Young function $\Phi _{m}$ replaced
by $\tilde{\Phi}_{m}$.

\begin{corollary}
\label{cor hinc}For any $m>1$, then for any integer $j\geq 1$ the function $%
\tilde{h}\left( t\right) =\tilde{h}_{j}\left( t\right) =\sqrt{\tilde{\Phi}%
_{m}^{\left( j\right) }\left( t^{2}\right) }$ with $\tilde{\Phi}_{m}\left(
t\right) $ defined in (\ref{Phi-tilde}) satisfies%
\begin{equation*}
\tilde{h}^{\prime }(t)^{2}\leq \tilde{\Upsilon}\left( t\right) \leq 3\tilde{h%
}^{\prime }(t)^{2}\qquad \text{and\qquad }1\leq \frac{t\tilde{h}^{\prime
}\left( t\right) }{\tilde{h}\left( t\right) }\leq C_{m}j^{m-1},
\end{equation*}%
where $\tilde{\Upsilon}\left( t\right) =\left( \frac{1}{2}\tilde{h}%
^{2}\left( t\right) \right) ^{\prime \prime }=\tilde{h}\left( t\right) 
\tilde{h}^{\prime \prime }\left( t\right) +\left( \tilde{h}^{\prime }\left(
t\right) \right) ^{2}$. Moreover, we have that $\tilde{h}^{\prime \prime
}\left( t\right) \geq 0$ for all $t>0$.
\end{corollary}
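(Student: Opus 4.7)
The plan is to imitate the proof of Proposition \ref{prop hinc} step by step, treating $\tilde{\Phi}_{m}$ as a three-piece function: linear on $[0,a]$ with $a=\tfrac{2E^{2}}{F}$, the smooth convex transition $\varrho_{m}$ on $[a,E]$, and $\Phi_{m}$ itself on $[E,\infty)$. Because $\tilde{\Phi}_{m}\equiv \Phi_{m}$ on $[E,\infty)$, whatever was proved for $\Phi_{m}$ in Proposition \ref{prop hinc} continues to hold verbatim once the argument is in that regime. Writing $\tilde\gamma_0(t)=\tfrac12\tfrac{F}{E}t$, $\tilde\gamma_1(t)=\varrho_m(t)$, $\tilde\gamma_2(t)=\Phi_m(t)$, the composition $\tilde\Phi_{m}^{(j)}(t^{2})$ will break into a sequence $\tilde\gamma_{i_{k}}\circ\cdots\circ\tilde\gamma_{i_{1}}$ whose indices $i_{k}$ depend on the scale of $t$; this provides the natural analog of the interval partition $I_{0},\dots,I_{j}$ used in (\ref{hp}).

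The crucial structural observation, exactly mirroring the comment made after the display (\ref{Phi-tilde}), is that the middle piece $\varrho_m$ can appear \emph{at most once} in any such composition: once $\varrho_m$ is applied, its output lies in $[E,F]\subset[E,\infty)$, and every subsequent factor is $\Phi_m$; conversely, $\tilde\gamma_{0}$ maps $[0,a]$ into $[0,E]$, so a finite sequence of linear factors either stays linear or hits the $\varrho_m$ or $\Phi_m$ regime and never returns. Using this, I partition $(0,\infty)$ into finitely many subintervals on each of which the $j$-fold composition is a fixed word in $\{\tilde\gamma_0,\tilde\gamma_1,\tilde\gamma_2\}$ with $\tilde\gamma_1$ occurring at most once, and perform the estimates separately on each subinterval.

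On any subinterval where only $\tilde\gamma_0$ and $\tilde\gamma_2$ occur the calculation of Proposition \ref{prop hinc} transfers line by line, yielding $1\leq \tilde\Upsilon/\tilde h'{}^{2}\leq 2$ and $1\leq t\tilde h'/\tilde h\leq C_{m}j^{m-1}$ via the identities (\ref{hp0})--(\ref{hp4}); the linear $\tilde\gamma_0$ contributes nothing to the quotient $\tilde h\tilde h''/(\tilde h')^{2}$ by (\ref{hp0}), and the $\tilde\gamma_2$ contributions are bounded by (\ref{hp1}) and (\ref{hp3}). On subintervals where $\varrho_m$ appears once, I differentiate $\varrho_m$ directly from its explicit quadratic formula: on $[a,\alpha]$ it coincides with a linear function, so contributes trivially as in (\ref{hp0}); on $[\alpha,E]$ it is an affine plus quadratic expression whose derivatives are explicitly computable, and a direct check using $E/a=F/(2E)$, $\varrho_m(t)\in[E,F]$ and $\varrho_m'(t)\in[\tfrac12\tfrac{F}{E},\tfrac{F}{E}(3/2)^{m-1}]$ gives bounds of the form $\varrho_m\varrho_m''/(\varrho_m')^{2}\leq 1$ and $t\varrho_m'/\varrho_m\leq C_{m}$, so this single middle step contributes a bounded multiplicative factor. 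Combining these with the chain rule $\tilde h'=\prod\tilde\gamma_{i_k}'$ and the product rule for $\tilde h\tilde h''/(\tilde h')^{2}$ (which telescopes to a sum of $\tilde\gamma_{i_k}(\cdot)\tilde\gamma_{i_k}''(\cdot)/(\tilde\gamma_{i_k}'(\cdot))^{2}$-type terms, each bounded above by $1$) produces the upper bound $3$ in place of $2$, the extra unit coming from the single $\varrho_m$ step.

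The main obstacle will be bookkeeping: verifying that the constants $c_1=1$, $C_1=3$, and the linear-in-$j^{m-1}$ bound on $t\tilde h'/\tilde h$ are actually uniform in $j$ despite the transition interval. This amounts to checking that $\varrho_m$ was chosen with slope at $E$ matching $\Omega_1^{\ast}(E)\Phi_m(E)/E=\tfrac{F}{E}(3/2)^{m-1}$ (which is precisely how $\varrho_m$ is defined in the excerpt), so that no jumps in $\tilde h'$ occur and the composition indeed satisfies the claimed bounds with constants depending only on $m$.
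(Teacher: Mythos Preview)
Your approach is essentially the same as the paper's: decompose $\tilde\Phi_m$ into its three pieces, observe that the transition piece $\varrho_m$ occurs at most once in any $j$-fold composition, and use a chain-rule telescoping identity to bound $\tilde h\tilde h''/(\tilde h')^{2}$ as a weighted sum of $\gamma\gamma''/(\gamma')^{2}$-type terms, with the extra unit in the upper bound $3$ coming from the single $\varrho_m$ factor.

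One small bookkeeping point: you decompose $\tilde\Phi_m$ itself into $\tilde\gamma_0,\tilde\gamma_1,\tilde\gamma_2$, but $\tilde h_j(t)=\sqrt{\tilde\Phi_m^{(j)}(t^{2})}$ is not literally a composition of these pieces (there is an outer square root and inner square). The paper avoids this by working instead with the pieces $\gamma_0,\gamma_1,\gamma_2$ of $\tilde h_1(t)=\sqrt{\tilde\Phi_m(t^{2})}$, so that $\tilde h_j=\gamma_{i_j}\circ\cdots\circ\gamma_{i_1}$ is a genuine $j$-fold composition and your chain-rule formula $\tilde h'=\prod\gamma_{i_k}'$ holds exactly. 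The paper then writes down the explicit three-factor identity for $a(b(c))\cdot(a(b(c)))''/((a(b(c)))')^{2}$, in which the second and third terms carry weights of the form $\tfrac{a}{a'\cdot b}\leq 1$ (by the lower bound $t h'/h\geq 1$ from (\ref{hp1})), and then invokes $0\leq \varrho_m\varrho_m''/(\varrho_m')^{2}\leq 1$ for the middle factor. Your telescoping description is morally correct but slightly imprecise without these weights; working at the $\tilde h_1$ level as the paper does makes the argument cleaner.
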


\begin{proof}
The proof is the same as for Proposition \ref{prop hinc}, with the
appropriate modification of the explicit formula for the compositions.
Indeed, for $t\in \left[ 0,\left( \frac{2E^{2}}{F}\right) ^{\frac{1}{2}}%
\right] :=\left[ 0,\tilde{a}\right] $ we have that $\tilde{h}_{1}\left(
t\right) =\sqrt{\tilde{\Phi}_{m}\left( t^{2}\right) }=$ $\tilde{\tau}t$,
with $\tilde{\tau}=\sqrt{\frac{1}{2}\frac{F}{E}}$. Using the definition (\ref%
{Phi-tilde}) of $\tilde{\Phi}_{m}$, we write%
\begin{eqnarray*}
\tilde{h}_{1}\left( t\right) &=&\sqrt{\tilde{\Phi}_{m}\left( t^{2}\right) }%
\equiv \left\{ 
\begin{array}{lll}
\sqrt{\Phi _{m}\left( t^{2}\right) } & \text{ if } & t\geq E^{\frac{1}{2}}
\\ 
\sqrt{\varrho _{m}\left( t^{2}\right) } & \text{ if } & \left( \frac{2E^{2}}{%
F}\right) ^{\frac{1}{2}}\leq t\leq E^{\frac{1}{2}} \\ 
\sqrt{\frac{1}{2}\frac{F}{E}}t & \text{ if } & 0\leq t\leq \left( \frac{%
2E^{2}}{F}\right) ^{\frac{1}{2}}%
\end{array}%
\right. \\
&:&=\left\{ 
\begin{array}{lll}
\gamma _{2}\left( t\right) & \text{ if } & t\geq E^{\frac{1}{2}} \\ 
\gamma _{1}\left( t\right) & \text{ if } & \left( \frac{2E^{2}}{F}\right) ^{%
\frac{1}{2}}\leq t\leq E^{\frac{1}{2}} \\ 
\gamma _{0}\left( t\right) & \text{ if } & 0\leq t\leq \left( \frac{2E^{2}}{F%
}\right) ^{\frac{1}{2}}%
\end{array}%
\right. .
\end{eqnarray*}%
Then, defining the intervals $\tilde{I}_{0}=\left( 0,\tilde{\tau}^{-(j-1)}%
\tilde{a}\right) $, $\tilde{I}_{k}=\left[ \tilde{\tau}^{-(j-k)}\tilde{a},%
\tilde{\tau}^{-(j-k-1)}\tilde{a}\right) $ for $k=1,\cdots ,j-1$, $\tilde{I}%
_{j}=\left[ \tilde{a},E^{\frac{1}{2}}\right) $, and $\tilde{I}_{j+1}=\left[
E^{\frac{1}{2}},\infty \right) $, we have that 
\begin{equation*}
\tilde{h}_{j}\left( t\right) \equiv \left\{ 
\begin{array}{lll}
\gamma _{2}^{\left( j\right) }\left( t\right) =h_{j}\left( t\right) =\sqrt{%
\Phi _{m}^{\left( j\right) }\left( t^{2}\right) } & \text{ if } & t\in 
\tilde{I}_{j+1} \\ 
\gamma _{2}^{\left( k-1\right) }\circ \gamma _{1}\left( t\right) \circ
\gamma _{0}^{\left( j-k\right) }\left( t\right) & \text{ if } & t\in \tilde{I%
}_{k},\quad k=1,\cdots ,j \\ 
\gamma _{0}^{\left( j\right) }\left( t\right) & \text{ if } & t\in \tilde{I}%
_{0}%
\end{array}%
\right.
\end{equation*}%
The proof when $t\in \tilde{I}_{0}$ or $t\in \tilde{I}_{j+1}$ is the same as
before (note that now $\gamma _{0}$ replaces $\gamma _{1}$ in the previous
proof), while if $t\in \tilde{I}_{k},$, $k=1,\cdots ,j$, 
\begin{eqnarray*}
\left( \tilde{h}_{j}\left( t\right) \right) ^{\prime } &=&\left( \gamma
_{2}^{\left( k-1\right) }\right) ^{\prime }\left( \gamma _{1}\left( t\right)
\left( \gamma _{0}^{\left( j-k\right) }\right) \right) \cdot \left( \gamma
_{1}\left( t\right) \right) ^{\prime }\left( \gamma _{0}^{\left( j-k\right)
}\right) \cdot \left( \gamma _{0}^{\left( j-k\right) }\right) ^{\prime } \\
\left( \tilde{h}_{j}\left( t\right) \right) ^{\prime \prime } &=&\left(
\gamma _{2}^{\left( k-1\right) }\right) ^{\prime \prime }\left( \gamma
_{1}\left( t\right) \left( \gamma _{0}^{\left( j-k\right) }\left( t\right)
\right) \right) \cdot \left( \left( \gamma _{1}\left( t\right) \right)
^{\prime }\left( \gamma _{0}^{\left( j-k\right) }\right) \cdot \left( \gamma
_{0}^{\left( j-k\right) }\right) ^{\prime }\right) ^{2} \\
&&+\left( \gamma _{2}^{\left( k-1\right) }\right) ^{\prime }\left( \gamma
_{1}\left( t\right) \left( \gamma _{0}^{\left( j-k\right) }\right) \right)
\cdot \left( \left( \gamma _{1}\right) ^{\prime \prime }\left( \gamma
_{0}^{\left( j-k\right) }\right) \right) \cdot \left( \left( \gamma
_{0}^{\left( j-k\right) }\right) ^{\prime }\right) ^{2}
\end{eqnarray*}%
where we used that $\gamma _{3}^{\prime \prime }\equiv 0$. Since by the
chain rule we have that for any smooth functions $a\left( t\right) ,b\left(
t\right) ,c\left( t\right) $%
\begin{eqnarray*}
\frac{a\left( b\left( c\left( t\right) \right) \right) \cdot \left( a\left(
b\left( c\left( t\right) \right) \right) \right) ^{\prime \prime }}{\left(
\left( a\left( b\left( c\left( t\right) \right) \right) \right) ^{\prime
}\right) ^{2}} &=&\frac{a\left( b\left( c\right) \right) \cdot a^{\prime
\prime }\left( b\left( c\right) \right) }{\left( a^{\prime }\left( b\left(
c\right) \right) \right) ^{2}}+\frac{a\left( b\left( c\right) \right) }{%
a^{\prime }\left( b\left( c\right) \right) \cdot b\left( c\right) }\frac{%
b\left( c\right) \cdot b^{\prime \prime }\left( c\right) }{\left( b^{\prime
}\left( c\right) ^{\prime }\right) ^{2}} \\
&&+\frac{a\left( b\left( c\right) \right) }{a^{\prime }\left( b\left(
c\right) \right) \cdot b\left( c\right) }\frac{b\left( c\right) }{b^{\prime
}\left( c\right) \cdot c}\frac{c\cdot c^{\prime \prime }}{\left( c^{\prime
}\right) ^{2}}
\end{eqnarray*}%
we have%
\begin{eqnarray*}
\frac{\tilde{h}_{j}\left( t\right) \left( \tilde{h}_{j}\left( t\right)
\right) ^{\prime \prime }}{\left( \left( \tilde{h}_{j}\left( t\right)
\right) ^{\prime }\right) ^{2}} &=&\frac{\gamma _{2}^{\left( k-1\right)
}\left( \gamma _{1}\left( t\right) \left( \gamma _{0}^{\left( j-k\right)
}\right) \right) \cdot \left( \gamma _{2}^{\left( k-1\right) }\right)
^{\prime \prime }\left( \gamma _{1}\left( t\right) \left( \gamma
_{0}^{\left( j-k\right) }\left( t\right) \right) \right) }{\left( \left(
\gamma _{2}^{\left( k-1\right) }\right) ^{\prime }\left( \gamma _{1}\left(
t\right) \left( \gamma _{0}^{\left( j-k\right) }\right) \right) \right) ^{2}}
\\
&&+\frac{\gamma _{2}^{\left( k-1\right) }\left( \gamma _{1}\left( t\right)
\left( \gamma _{0}^{\left( j-k\right) }\right) \right) }{\left( \gamma
_{2}^{\left( k-1\right) }\right) ^{\prime }\left( \gamma _{1}\left( t\right)
\left( \gamma _{0}^{\left( j-k\right) }\right) \right) \gamma _{1}\left(
t\right) \left( \gamma _{0}^{\left( j-k\right) }\right) }\frac{\gamma
_{1}\left( t\right) \left( \gamma _{0}^{\left( j-k\right) }\right) \left(
\gamma _{1}\right) ^{\prime \prime }\left( \gamma _{0}^{\left( j-k\right)
}\right) }{\left( \left( \gamma _{1}\left( t\right) \right) ^{\prime }\left(
\gamma _{0}^{\left( j-k\right) }\right) \right) ^{2}}
\end{eqnarray*}%
Then, from (\ref{hp1}) and (\ref{hp2}),%
\begin{equation*}
0\leq \frac{\tilde{h}_{j}\left( t\right) \left( \tilde{h}_{j}\left( t\right)
\right) ^{\prime \prime }}{\left( \left( \tilde{h}_{j}\left( t\right)
\right) ^{\prime }\right) ^{2}}\leq 1+\frac{\gamma _{1}\left( t\right)
\left( \gamma _{0}^{\left( j-k\right) }\right) \left( \gamma _{1}\right)
^{\prime \prime }\left( \gamma _{0}^{\left( j-k\right) }\right) }{\left(
\left( \gamma _{1}\left( t\right) \right) ^{\prime }\left( \gamma
_{0}^{\left( j-k\right) }\right) \right) ^{2}}
\end{equation*}%
From the definition of $\gamma _{1}\left( t\right) =\varrho _{m}\left(
t\right) $ it follows that $0\leq \frac{\varrho _{m}\left( t\right) \varrho
_{m}^{\prime \prime }\left( t\right) }{\left( \varrho _{m}^{\prime }\left(
t\right) \right) ^{2}}\leq 1$. Then $1\leq \frac{\tilde{\Upsilon}\left(
t\right) }{\tilde{h}^{\prime }(t)^{2}}=\frac{\tilde{h}_{j}\left( t\right)
\left( \tilde{h}_{j}\left( t\right) \right) ^{\prime \prime }}{\left( \left( 
\tilde{h}_{j}\left( t\right) \right) ^{\prime }\right) ^{2}}+1\leq 3$.
\end{proof}

We now consider $h_{\beta }\left( t\right) \equiv \sqrt{\Phi ^{\left(
j\right) }\left( t^{2\beta }\right) }\equiv \Gamma _{m}^{(j)}(t^{\beta })$.
We will show that this $h$ satisfies the hypotheses of Lemma \ref{reverse
Sobolev} for $\beta <0$ and $\beta \geq 1$.

\begin{proposition}
\label{prop hdec}The function $h_{j,\beta }\left( t\right) =h_{\beta
}=h_{j}\left( t^{\beta }\right) $, $\beta <0$ or $\beta \geq 1$, where $%
h\left( t\right) =h_{j}\left( t\right) =\sqrt{\Phi _{m}^{\left( j\right)
}\left( t^{2}\right) }$ is defined in (\ref{hinc}) for each $j\geq 1$,
satisfies $h_{\beta }^{\prime \prime }\left( t\right) \geq 0$ and 
\begin{equation*}
1\leq \frac{\Upsilon _{j,\beta }\left( t\right) }{\left( h_{j,\beta
}^{\prime }\left( t\right) \right) ^{2}}\leq 2+\frac{\left\vert \beta
-1\right\vert }{\left\vert \beta \right\vert }\qquad \text{and\qquad }%
\left\vert \beta \right\vert \leq \frac{t\left\vert h_{j,\beta }^{\prime
}\left( t\right) \right\vert }{h_{j,\beta }\left( t\right) }\leq
C_{m}\left\vert \beta \right\vert j^{m-1},
\end{equation*}%
where $\Upsilon _{\beta }\left( t\right) =\left( \frac{1}{2}h_{\beta
}^{2}\left( t\right) \right) ^{\prime \prime }=h_{\beta }\left( t\right)
h_{\beta }^{\prime \prime }\left( t\right) +\left( h_{\beta }^{\prime
}\left( t\right) \right) ^{2}$.\newline
Moreover, if $\tilde{h}_{j,\beta }\left( t\right) =\tilde{h}_{\beta }=\tilde{%
h}_{j}\left( t^{\beta }\right) $ with $\tilde{h}_{j}\left( t\right) =\sqrt{%
\tilde{\Phi}_{m}^{\left( j\right) }\left( t^{2}\right) }$ as in Corollary %
\ref{cor hinc}, then for $\tilde{\Upsilon}_{\beta }\left( t\right) =\left( 
\frac{1}{2}\tilde{h}_{\beta }^{2}\left( t\right) \right) ^{\prime \prime }=%
\tilde{h}_{\beta }\left( t\right) \tilde{h}_{\beta }^{\prime \prime }\left(
t\right) +\left( \tilde{h}_{\beta }^{\prime }\left( t\right) \right) ^{2}$ 
\begin{equation*}
1\leq \frac{\tilde{\Upsilon}_{j,\beta }\left( t\right) }{\left( \tilde{h}%
_{j,\beta }^{\prime }\left( t\right) \right) ^{2}}\leq 3+\frac{\left\vert
\beta -1\right\vert }{\left\vert \beta \right\vert }\qquad \text{and\qquad }%
\left\vert \beta \right\vert \leq \frac{t\left\vert \tilde{h}_{j,\beta
}^{\prime }\left( t\right) \right\vert }{\tilde{h}_{j,\beta }\left( t\right) 
}\leq C_{m}\left\vert \beta \right\vert j^{m-1},
\end{equation*}
\end{proposition}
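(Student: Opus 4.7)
The plan is to reduce both sets of inequalities to Proposition \ref{prop hinc} (respectively Corollary \ref{cor hinc}) via a chain-rule computation with the substitution $s = t^{\beta}$. Writing $h = h_j$ for brevity so that $h_{j,\beta}(t) = h(s)$, one computes
\begin{align*}
h_{j,\beta}'(t) &= \beta\, t^{\beta-1}\, h'(s),\\
h_{j,\beta}''(t) &= \beta^{2}\, t^{2\beta-2}\, h''(s) + \beta(\beta-1)\, t^{\beta-2}\, h'(s).
\end{align*}
Proposition \ref{prop hinc} supplies $h'(s) > 0$ and $h''(s) \geq 0$, and the hypothesis $\beta < 0$ or $\beta \geq 1$ forces $\beta(\beta-1) \geq 0$; hence both summands above are nonnegative, yielding $h_{j,\beta}''(t) \geq 0$. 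Similarly $t h_{j,\beta}'(t)/h_{j,\beta}(t) = \beta \cdot s h'(s)/h(s)$, and taking absolute values the two-sided bound $1 \leq s h'(s)/h(s) \leq C_{m} j^{m-1}$ from Proposition \ref{prop hinc} immediately gives $|\beta| \leq t|h_{j,\beta}'(t)|/h_{j,\beta}(t) \leq C_{m}|\beta| j^{m-1}$.

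For the $\Upsilon$ ratio, substituting the chain-rule expressions into $\Upsilon_{j,\beta} = h_{j,\beta}\, h_{j,\beta}'' + (h_{j,\beta}')^{2}$ yields
\begin{equation*}
\Upsilon_{j,\beta}(t) = \beta^{2}\, t^{2\beta-2}\,\Upsilon(s) + \beta(\beta-1)\, t^{\beta-2}\, h(s) h'(s),
\end{equation*}
and dividing by $(h_{j,\beta}'(t))^{2} = \beta^{2}\, t^{2\beta-2}\, (h'(s))^{2}$ gives
\begin{equation*}
\frac{\Upsilon_{j,\beta}(t)}{(h_{j,\beta}'(t))^{2}} = \frac{\Upsilon(s)}{(h'(s))^{2}} + \frac{\beta-1}{\beta}\cdot\frac{h(s)}{s\, h'(s)}.
\end{equation*}
The admissible range of $\beta$ gives $(\beta-1)/\beta = |\beta-1|/|\beta| \geq 0$, so combining the bounds $1 \leq \Upsilon(s)/(h'(s))^{2} \leq 2$ and $0 < h(s)/(s h'(s)) \leq 1$ from Proposition \ref{prop hinc} produces exactly $1 \leq \Upsilon_{j,\beta}(t)/(h_{j,\beta}'(t))^{2} \leq 2 + |\beta-1|/|\beta|$.

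The tilde version follows by the identical chain-rule computation but invoking Corollary \ref{cor hinc} in place of Proposition \ref{prop hinc}; only the upper bound on $\tilde{\Upsilon}/(\tilde{h}')^{2}$ changes from $2$ to $3$, giving the advertised $3 + |\beta-1|/|\beta|$. There is no serious obstacle: the only point requiring vigilance is the sign of $h_{j,\beta}'$ when $\beta < 0$, but both $\beta(\beta-1)$ and $(\beta-1)/\beta$ remain nonnegative throughout the allowed range of $\beta$, so the cancellations that one would fear in the excluded intermediate range $0 < \beta < 1$ never arise.
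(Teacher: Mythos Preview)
Your proof is correct and follows essentially the same route as the paper: both arguments compute $h_{j,\beta}'$ and $h_{j,\beta}''$ by the chain rule, observe that $\beta(\beta-1)\ge 0$ and $(\beta-1)/\beta\ge 0$ in the admissible range, and reduce all four inequalities to the corresponding bounds for $h_j$ (resp.\ $\tilde h_j$) supplied by Proposition~\ref{prop hinc} (resp.\ Corollary~\ref{cor hinc}). The only cosmetic difference is that you introduce the substitution variable $s=t^{\beta}$ and group the $\Upsilon_{j,\beta}$ terms before dividing, whereas the paper keeps $t^{\beta}$ explicit and expands the quotient directly; the algebra and the resulting key identity $\Upsilon_{j,\beta}/(h_{j,\beta}')^{2}=\Upsilon/(h')^{2}+\tfrac{\beta-1}{\beta}\cdot h/(sh')$ are identical.
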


\begin{proof}
Since for all $\beta \neq 0$ 
\begin{equation}
h_{\beta }^{\prime }\left( t\right) =\beta t^{\beta -1}h^{\prime }\left(
t^{\beta }\right) \qquad \text{and\qquad }h_{\beta }^{\prime \prime }\left(
t\right) =\beta \left( \beta -1\right) t^{\beta -2}h^{\prime }\left(
t^{\beta }\right) +\beta ^{2}t^{2\beta -2}h^{\prime \prime }\left( t^{\beta
}\right) .  \label{hd00}
\end{equation}%
The lower bound $h_{\beta }^{\prime \prime }\left( t\right) \geq 0$ follows
from the second identity and the facts that $h$ is an increasing convex
function, and $\beta \left( \beta -1\right) \geq 0$ when $\beta <0$ or $%
\beta \geq 1$. Now, by Proposition \ref{prop hinc} we have%
\begin{equation}
\left\vert \beta \right\vert \leq \left\vert \beta \right\vert \frac{%
t^{\beta }h^{\prime }\left( t^{\beta }\right) }{h\left( t^{\beta }\right) }=%
\frac{t\left\vert h_{\beta }^{\prime }\left( t\right) \right\vert }{h_{\beta
}\left( t\right) }\leq C_{m}\left\vert \beta \right\vert j^{m-1}.
\label{hd0}
\end{equation}%
Similarly, 
\begin{eqnarray}
\frac{\Upsilon _{\beta }\left( t\right) }{\left( h_{\beta }^{\prime }\left(
t\right) \right) ^{2}} &=&\frac{\left( \frac{1}{2}h_{\beta }(t)^{2}\right)
^{\prime \prime }}{h_{\beta }^{\prime }(t)^{2}}=\frac{h_{\beta }(t)h_{\beta
}^{\prime \prime }(t)+h_{\beta }^{\prime }(t)^{2}}{\left( h_{\beta }^{\prime
}\left( t\right) \right) ^{2}}  \notag \\
&=&\frac{h\left( t^{\beta }\right) \left( \beta \left( \beta -1\right)
t^{\beta -2}h^{\prime }\left( t^{\beta }\right) +\beta ^{2}t^{2\beta
-2}h^{\prime \prime }\left( t^{\beta }\right) \right) }{\left( \beta
t^{\beta -1}h^{\prime }\left( t^{\beta }\right) \right) ^{2}}+1  \notag \\
&=&\frac{\beta -1}{\beta }\frac{h\left( t^{\beta }\right) }{t^{\beta
}h^{\prime }\left( t^{\beta }\right) }+\frac{h\left( t^{\beta }\right)
h^{\prime \prime }\left( t^{\beta }\right) }{\left( h^{\prime }\left(
t^{\beta }\right) \right) ^{2}}+1  \notag \\
&=&\frac{\beta -1}{\beta }\frac{h\left( t^{\beta }\right) }{t^{\beta
}h^{\prime }\left( t^{\beta }\right) }+\frac{\Upsilon \left( t^{\beta
}\right) }{\left( h^{\prime }\left( t^{\beta }\right) \right) ^{2}}.
\label{hd1}
\end{eqnarray}%
By Proposition \ref{prop hinc} we have that $\frac{1}{C_{m}j^{m-1}}\leq 
\frac{h\left( t^{\beta }\right) }{t^{\beta }h^{\prime }\left( t^{\beta
}\right) }\leq 1$, and $1\leq \frac{\Upsilon \left( t^{\beta }\right) }{%
\left( h^{\prime }\left( t^{\beta }\right) \right) ^{2}}\leq 2$, so if $%
\beta <0$ or $\beta \geq 1$ we have 
\begin{equation*}
1\leq \frac{\beta -1}{\beta }\frac{1}{C_{m}j^{m-1}}+1\leq \frac{\Upsilon
_{\beta }\left( t\right) }{\left( h_{\beta }^{\prime }\left( t\right)
\right) ^{2}}\leq \frac{\beta -1}{\beta }+2,
\end{equation*}%
where we used that $\frac{\beta -1}{\beta }\geq 0$. The proof for $\tilde{h}%
_{j,\beta }$ is identical, using instead the estimates from Corollary \ref%
{cor hinc}.
\end{proof}

\subsection{The $L^{\infty }$ norm}

The following proposition establishes sufficient conditions for the iterated
integrals to converge to the supremum norm.

\begin{proposition}
\label{prop ineq3}Suppose that $\Xi $ is a nonnegative strictly increasing
function such that $\Xi \left( 0\right) =0$ and with the following property: 
\begin{equation}
\liminf_{j\rightarrow \infty }\frac{\Xi ^{(j)}\left( M\right) }{\Xi
^{(j)}\left( M_{1}\right) }=\infty \qquad \text{for all }M>M_{1}>0.
\label{LI-growth}
\end{equation}%
Let $D\Subset D_{1}$ be nonempty open bounded sets in $\mathbb{R}^{n}$, and
let $\left\{ D_{j}\right\} _{j=1}^{\infty }$ be a sequence of nested open
bounded sets satisfying 
\begin{equation*}
D_{1}\Supset D_{2}\Supset \cdots \Supset D_{j}\Supset D_{j+1}\Supset \cdots
\Supset D
\end{equation*}%
and such that $\overline{D}=\bigcap_{j=1}^{\infty }D_{j}$, Let $\omega $ be
a Borel measure in $D_{1}$, with $\omega \left( D_{1}\right) <\infty $, such
that $\mathfrak{m\ll \omega }$ where $\mathfrak{m}$ denotes Lebesgue's
measure. Then, if $f$ is $\omega $-measurable in $D_{1}$we have%
\begin{eqnarray*}
\left\Vert f\right\Vert _{L^{\infty }\left( D\right) } &\leq
&\liminf_{j\rightarrow \infty }\Xi ^{\left( -j\right) }\left(
\int_{D_{j}}\Xi ^{\left( j\right) }\left( \left\vert f\left( x\right)
\right\vert \right) ~d\omega \right) \qquad \text{and} \\
\lim_{j\rightarrow \infty }\left\Vert f\right\Vert _{L^{\infty }\left(
D_{j}\right) } &\geq &\limsup_{j\rightarrow \infty }\Xi ^{\left( -j\right)
}\left( \int_{D_{j}}\Xi ^{\left( j\right) }\left( \left\vert f\left(
x\right) \right\vert \right) ~d\omega \right) .
\end{eqnarray*}
\end{proposition}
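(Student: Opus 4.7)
The plan is to reduce both inequalities to a single asymptotic lemma: if $\Xi$ satisfies the growth hypothesis (\ref{LI-growth}), then for every $M>0$ and every constant $C>0$,
\[
\lim_{j\to\infty}\Xi^{(-j)}\bigl(C\,\Xi^{(j)}(M)\bigr)=M.
\]
To prove this lemma, fix $\varepsilon>0$ and let $M':=M+\varepsilon$; by (\ref{LI-growth}) we have $\Xi^{(j)}(M')/\Xi^{(j)}(M)\to\infty$, so eventually $C\,\Xi^{(j)}(M)<\Xi^{(j)}(M')$, and monotonicity of $\Xi^{(-j)}$ yields $\Xi^{(-j)}(C\Xi^{(j)}(M))<M+\varepsilon$. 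For the lower bound, if $C\ge 1$ it is immediate; if $0<C<1$, pick $M_1:=M-\varepsilon$ and apply (\ref{LI-growth}) to get $\Xi^{(j)}(M)/\Xi^{(j)}(M_1)>1/C$ for $j$ large, hence $C\,\Xi^{(j)}(M)>\Xi^{(j)}(M_1)$ and $\Xi^{(-j)}(C\Xi^{(j)}(M))>M-\varepsilon$. Letting $\varepsilon\to 0$ concludes the claim.

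With the lemma in hand, the first inequality follows from a lower bound on the integral. Fix any $M_0<\|f\|_{L^\infty(D)}$; by definition of the essential supremum the set $E:=\{x\in D:|f(x)|>M_0\}$ has positive Lebesgue measure, and the assumption $\mathfrak{m}\ll\omega$ transfers this to $\omega(E)>0$. Since $D\subset D_j$ and $\Xi^{(j)}$ is increasing,
\[
\int_{D_j}\Xi^{(j)}(|f|)\,d\omega\ \ge\ \int_E \Xi^{(j)}(|f|)\,d\omega\ \ge\ \omega(E)\,\Xi^{(j)}(M_0).
\]
Applying $\Xi^{(-j)}$ and invoking the lemma with $C=\omega(E)$ gives $\liminf_{j\to\infty}\Xi^{(-j)}(\int_{D_j}\Xi^{(j)}(|f|)\,d\omega)\ge M_0$, and taking the supremum over all $M_0<\|f\|_{L^\infty(D)}$ completes this inequality.

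The second inequality is dual, using the trivial upper bound. Let $M_\infty:=\lim_j \|f\|_{L^\infty(D_j)}$ (which exists in $[0,\infty]$ as a decreasing limit). For any $M'>M_\infty$, eventually $|f|\le M'$ a.e.\ on $D_j$, so
\[
\int_{D_j}\Xi^{(j)}(|f|)\,d\omega\ \le\ \Xi^{(j)}(M')\,\omega(D_j)\ \le\ \omega(D_1)\,\Xi^{(j)}(M').
\]
Applying $\Xi^{(-j)}$ and the lemma with $C=\omega(D_1)<\infty$ yields $\limsup_{j\to\infty}\Xi^{(-j)}(\int_{D_j}\Xi^{(j)}(|f|)\,d\omega)\le M'$; letting $M'\downarrow M_\infty$ finishes the proof.

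The main obstacle is the asymptotic lemma, which is really the heart of the matter: the growth condition (\ref{LI-growth}) is tailored precisely so that any positive multiplicative constant (in our case $\omega(E)$ or $\omega(D_1)$) is absorbed in the limit when $\Xi$ is iterated $j$ times and then inverted. Once this is established, everything else amounts to monotonicity of $\Xi$, finiteness of $\omega(D_1)$, and the absolute continuity $\mathfrak{m}\ll\omega$ used to convert positive Lebesgue measure into positive $\omega$-measure on the set where $|f|$ exceeds $M_0$.
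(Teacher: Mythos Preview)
Your proof is correct and follows essentially the same approach as the paper. The paper carries out the same two estimates (level-set lower bound using $\mathfrak{m}\ll\omega$, trivial pointwise upper bound using $\omega(D_1)<\infty$) and absorbs the resulting multiplicative constants via the growth condition (\ref{LI-growth}) in exactly the way you do; the only cosmetic difference is that you isolate the absorption step as a standalone lemma $\lim_j \Xi^{(-j)}(C\,\Xi^{(j)}(M))=M$, whereas the paper applies the growth condition inline with explicit choices of $M$, $M_1$, and $\delta$ at each of the two places it is needed.
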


\begin{proof}
Since $\Xi \left( 0\right) =0$ and $\Xi $ is strictly increasing, it is
invertible, and $\Xi ^{\left( j\right) }$, $\Xi ^{\left( -j\right) }$ are
nonnegative and strictly increasing for all $j\geq 1$. From the hypothesis
on $\Xi $ we have that for all $\delta \in (0,1)$, the inequality 
\begin{equation}
\delta \Xi ^{(j)}\left( M\right) \geq \Xi ^{(j)}\left( M_{1}\right)
\label{dM}
\end{equation}%
holds for each sufficiently large $j>N\left( M,M_{1},\delta \right) $, note
that if $\delta \geq 1$ the inequality trivially holds since $\Xi $ is
increasing. It follows that for all $M>M_{1}>0$ and $\delta >0$, there
exists $N\left( M,M_{1},\delta \right) $ such that (\ref{dM}) holds for all $%
j\geq N\left( M,M_{1},\delta \right) $. We have that (\ref{dM}) implies $\Xi
\left( \left[ 0,\infty \right) \right) =\left[ 0,\infty \right) $ so $\Xi
^{-1}$ is also defined on $\left[ 0,\infty \right) $. Since $\mathfrak{m\ll }%
\omega ,$ we have that $\omega \left( D_{j}\right) >0$ and in general $%
\omega \left( U\right) >0$ for all nonempty open sets $U$.

Since $\left\Vert f\right\Vert _{L^{\infty }\left( D_{j}\right) }$ is a
decreasing sequence bounded below by $\left\Vert f\right\Vert _{L^{\infty
}\left( D\right) }$, it follows that $F=\lim_{j\rightarrow \infty
}\left\Vert f\right\Vert _{L^{\infty }\left( D_{j}\right) }$ exists, and $%
F\geq \left\Vert f\right\Vert _{L^{\infty }\left( D\right) }$. Now, for each
fixed $k\geq 1$ and $j\geq k$ we have 
\begin{eqnarray*}
\Xi ^{\left( -j\right) }\left( \int_{D_{j}}\Xi ^{\left( j\right) }\left(
\left\vert f\left( x\right) \right\vert \right) ~d\omega \right) &\leq &\Xi
^{\left( -j\right) }\left( \int_{D_{j}}\Xi ^{\left( j\right) }\left(
\left\Vert f\right\Vert _{L^{\infty }\left( D_{k}\right) }\right) ~d\omega
\right) \\
&\leq &\Xi ^{\left( -j\right) }\left( \omega \left( D_{k}\right) \Xi
^{\left( j\right) }\left( \left\Vert f\right\Vert _{L^{\infty }\left(
D_{k}\right) }\right) \right) .
\end{eqnarray*}%
For $\varepsilon >0$ and $j\geq N_{k}=\max \left\{ k,N\left( \left\Vert
f\right\Vert _{L^{\infty }\left( D_{k}\right) }+\varepsilon ,\left\Vert
f\right\Vert _{L^{\infty }\left( D_{k}\right) },\omega \left( D_{k}\right)
\right) \right\} $, we have that \newline
$\omega \left( D_{k}\right) \Xi ^{\left( j\right) }\left( \left\Vert
f\right\Vert _{L^{\infty }\left( D_{k}\right) }\right) \leq \Xi ^{(j)}\left(
\left\Vert f\right\Vert _{L^{\infty }\left( D_{k}\right) }+\varepsilon
\right) $, so%
\begin{equation*}
\Xi ^{\left( -j\right) }\left( \int_{D_{j}}\Xi ^{\left( j\right) }\left(
\left\vert f\left( x\right) \right\vert \right) ~d\omega \right) \leq \Xi
^{\left( -j\right) }\left( \Xi ^{\left( j\right) }\left( \left\Vert
f\right\Vert _{L^{\infty }\left( D_{k}\right) }+\varepsilon \right) \right)
=\left\Vert f\right\Vert _{L^{\infty }\left( D_{k}\right) }+\varepsilon
\end{equation*}%
for all $j\geq N_{k}$. Since $\varepsilon >0$ is arbitrary, this proves that%
\begin{equation*}
\limsup_{j\rightarrow \infty }\Xi ^{\left( -j\right) }\left( \int_{D_{j}}\Xi
^{\left( j\right) }\left( \left\vert f\left( x\right) \right\vert \right)
~d\omega \right) \leq F=\lim_{j\rightarrow \infty }\left\Vert f\right\Vert
_{L^{\infty }\left( D_{j}\right) }.
\end{equation*}

On the other hand, for $0<2\varepsilon <\left\Vert f\right\Vert _{L^{\infty
}\left( D\right) }$ (assume $f$ is not trivially zero in $D$), define $%
\Delta _{\varepsilon }=\left\{ x\in D:\left\vert f\left( x\right)
\right\vert \geq \left\Vert f\right\Vert _{L^{\infty }\left( D\right)
}-\varepsilon \right\} $. Then we have that $0<\omega \left( \Delta
_{\varepsilon }\right) <\infty \ $(here we used that $\omega \left(
D_{1}\right) <\infty $) and 
\begin{equation*}
\int_{D_{j}}\Xi ^{\left( j\right) }\left( \left\vert f\left( x\right)
\right\vert \right) ~d\omega \geq \omega \left( \Delta _{\varepsilon
}\right) \Xi ^{\left( j\right) }\left( \left\Vert f\right\Vert _{L^{\infty
}\left( D\right) }-\varepsilon \right) .
\end{equation*}%
Hence, from (\ref{dM}), for $j\geq N\left( \left\Vert f\right\Vert
_{L^{\infty }\left( D\right) }-\varepsilon ,\left\Vert f\right\Vert
_{L^{\infty }\left( D\right) }-2\varepsilon ,\omega \left( \Delta
_{\varepsilon }\right) \right) $ it follows that%
\begin{eqnarray*}
\Xi ^{\left( -j\right) }\left( \int_{D_{j}}\Xi ^{\left( j\right) }\left(
\left\vert f\left( x\right) \right\vert \right) ~d\omega \right) &\geq &\Xi
^{\left( -j\right) }\left( \omega \left( \Delta _{\varepsilon }\right) \Xi
^{\left( j\right) }\left( \left\Vert f\right\Vert _{L^{\infty }\left(
D\right) }-\varepsilon \right) \right) \\
&\geq &\Xi ^{\left( -j\right) }\left( \Xi ^{\left( j\right) }\left(
\left\Vert f\right\Vert _{L^{\infty }\left( D\right) }-2\varepsilon \right)
\right) =\left\Vert f\right\Vert _{L^{\infty }\left( D\right) }-2\varepsilon
.
\end{eqnarray*}%
Letting $\varepsilon \rightarrow 0^{+}$, we conclude that $\left\Vert
f\right\Vert _{L^{\infty }\left( D\right) }\leq \liminf_{j\rightarrow \infty
}\Xi ^{\left( -j\right) }\left( \int_{D_{j}}\Xi ^{\left( j\right) }\left(
\left\vert f\left( x\right) \right\vert \right) ~d\mu _{j}\right) $. This
finishes the proof.
\end{proof}

\begin{remark}
Note that in the previous result we cannot in general guarantee that 
\begin{equation*}
\left\Vert f\right\Vert _{L^{\infty }\left( D\right) }=\lim_{j\rightarrow
\infty }\Xi ^{\left( -j\right) }\left( \int_{D_{j}}\Xi ^{\left( j\right)
}\left( \left\vert f\left( x\right) \right\vert \right) ~d\omega \right)
\end{equation*}%
unless we have $\left\Vert f\right\Vert _{L^{\infty }\left( D\right)
}=\lim_{j\rightarrow \infty }\left\Vert f\right\Vert _{L^{\infty }\left(
D_{j}\right) }$. This will be the case if, for example, $f$ is continuous.
\end{remark}

\begin{remark}
Proposition \ref{prop ineq3} also holds with $d\omega $ replaced by $d\mu
_{j}=\frac{d\omega }{\omega \left( D_{j}\right) }$ in each $D_{j}$, the
proof is the same.
\end{remark}

\begin{remark}
\label{remark-LI-PP}The Young functions $\Phi =\Phi _{m}$ defined on (\ref%
{def Phi m ext}) satisfies the hypotheses of Proposition \ref{prop ineq3}.
Indeed, it is clear that both $\Phi _{m}$ and $\Upsilon _{m}$ are
nonnegative, strictly increasing, and vanish at the origin. Given any $%
M>M_{1}>0$, there exists $N_{0}$ such that $\Phi ^{\left( N_{0}\right)
}\left( M_{1}\right) \geq E$, and $\Upsilon ^{\left( N_{0}\right) }\left(
M_{1}\right) \leq \frac{1}{E}$, so for all $N\geq 1$ we have%
\begin{equation*}
\frac{\Phi ^{N+N_{0}}\left( M\right) }{\Phi ^{N+N_{0}}\left( M_{1}\right) }%
=\exp \left( \left( a+N\right) ^{m}-\left( b+N\right) ^{m}\right) ,
\end{equation*}%
where $a=\left( \ln \Phi ^{N_{0}}\left( M\right) \right) ^{\frac{1}{m}%
}>\left( \ln \Phi ^{N_{0}}\left( M_{1}\right) \right) ^{\frac{1}{m}}=b$.
Since for $m>1$, we have 
\begin{equation*}
\lim_{N\rightarrow \infty }\left[ \left( a+N\right) ^{m}-\left( b+N\right)
^{m}\right] \geq \lim_{N\rightarrow \infty }(a-b)\cdot m(b+N)^{m-1}=\infty ,
\end{equation*}%
we see that the growth condition (\ref{LI-growth}) holds for $\Phi $. Note
that in terms of the associated Orlicz quasidistance (\ref{quasi-metric}) we
have that $\left\Vert f\right\Vert _{L^{\infty }\left( D\right) }\leq
\lim_{j\rightarrow \infty }\left\Vert f\right\Vert _{\mathcal{D}^{\Phi
^{\left( j\right) }}\left( D,\mu \right) }$.
\end{remark}

In \cite{CrRo23}, Cruz-Uribe and Rodney established a general result for
Orlicz norms with Young functions $B_{pq}\left( t\right) =t^{p}\left( \log
\left( e_{0}+t\right) \right) ^{q}$, $1\leq p<\infty $, $q>0$, $e_{0}=e-1$.
They showed that if $f$ is measurable in a general measure space $\left( X,%
\mathcal{M},\mu \right) $ then $\lim_{q\rightarrow \infty }\left\Vert
f\right\Vert _{B_{pq}}=\left\Vert f\right\Vert _{\infty }$, where $%
\left\Vert f\right\Vert _{B_{pq}}$ is the Orlicz norm of $f$ in $X$. Even
though the results seem of a similar type, Proposition \ref{prop ineq3}
neither contains nor it is contained in the theorem in \cite{CrRo23}, since
the integrals $\Xi ^{\left( -j\right) }\left( \int_{D_{j}}\Xi ^{\left(
j\right) }\left( \left\vert f\right\vert \right) ~d\omega \right) $ are not
in general the Orlicz norms associated with the Young functions $\Xi
^{\left( j\right) }$, but rather the quadi-distances $\left\Vert \cdot
\right\Vert _{D^{\Xi ^{\left( j\right) }}}$ defined in Section \ref%
{section-quasi}.

\section{The Moser Method - Abstract local boundedness and maximum principle 
\label{section bound}}

In this section we prove the abstract boundedness result under the presence
of an Orlicz-Sobolev inequality (\ref{OS ineq}) and a standard sequence of
Lipschitz cutoff functions (Definition \ref{def Lip cutoff}) for the Young
functions $\Phi _{m}$ given in (\ref{def Orlicz bumps}).

\subsection{Boundedness of subsolutions and supersolutions\label{section
local bound}}

\begin{theorem}
\label{L_infinity} Suppose $\sqrt{A\left( x\right) }$ is a bounded Lipschitz
continuous $n\times n$ real-valued nonnegative definite matrix in $\mathbb{R}%
^{n}$, and let $B=B\left( x,r\right) $, $0<r\leq 1$, be a $d_{A}$-metric
ball. Let $\tilde{A}$ be a symmetric $n\times n$ matrix defined in $B$ such
that the equivalence (\ref{struc_0}) holds for some $0<\lambda \leq \Lambda
<\infty $, i.e. $0\leq \lambda \,\xi ^{\mathrm{tr}}A\left( x\right) \xi
\leq \xi ^{\mathrm{tr}}\tilde{A}\left( x\right) \xi \leq \Lambda ~\xi ^{%
\mathrm{tr}}A\left( x\right) \xi .$\newline
Let $\Phi (t)=\Phi _{m}(t)$ be as in (\ref{def Phi m ext}) with $m>2$.
Suppose that there exists a superradius ${\varphi }$ so that the $\left( {%
\Phi }_{m},A,{\varphi }\right) $-Sobolev bump inequality (\ref{OS ineq})
holds in $B$, and that an $\left( A,d\right) $-standard sequence of
Lipschitz cutoff functions, as given in Definition \ref{def Lip cutoff},
exists. \newline
Let $\nu _{0}=1-\frac{\delta _{x}\left( r\right) }{r}$, where $\delta
_{x}\left( r\right) $ is the doubling increment of $B\left( x,r\right) $,
defined by (\ref{nondoub_order}). Then for all $\nu \in \left[ \nu
_{0},1\right) $ and $\beta \in \left[ 1,\infty \right) $ there exists a
constant $C\left( \varphi ,m,\lambda ,\Lambda ,r,\nu ,\beta \right) $ such
that if $u$ is a weak \textbf{subsolution} to the equation $L_{\tilde{A}}u=-%
\mathrm{div}_{\tilde{A}}\nabla _{\tilde{A}}u=\phi _{0}-\mathrm{div}_{A}\vec{\phi}%
_{1}$ in $B\left( x,r\right) $, with $A$-admissible right hand side $\left(
\phi _{0},\vec{\phi}_{1}\right) $ as prescribed in Definition \ref{def A
admiss}, then%
\begin{equation}
\left\Vert \left( u^{+}+\phi ^{\ast }\right) ^{\beta }\right\Vert
_{L^{\infty }\left( B\left( x,\nu r\right) \right) }\leq C\left( \varphi
,m,\lambda ,\Lambda ,r,\nu ,\beta \right) ~\left\Vert \left( u^{+}+\phi
^{\ast }\right) ^{\beta }\right\Vert _{L^{2}(B\left( x,r\right) ,d\mu
_{r})}\qquad \beta \geq 1,  \label{L-inf-est}
\end{equation}%
where $\phi ^{\ast }=\left\Vert \left( \phi ,\vec{\phi}_{1}\right)
\right\Vert _{\mathcal{X}\left( B\left( x,r\right) \right) }$ and .$d\mu
_{r}=\frac{dx}{\left\vert B\left( x,r\right) \right\vert }$. In fact, we can
choose%
\begin{equation*}
C\left( \varphi ,m,\lambda ,\Lambda ,r,\nu ,\beta \right) =\exp \left(
C_{m,\lambda ,\Lambda }\left( \left( \beta -1\right) ^{m}+\left( \ln \frac{%
\varphi \left( r\right) }{\left( 1-\nu \right) r}\right) ^{m}\right) \right)
.
\end{equation*}%
Furthermore, if $u$ is a weak \textbf{supersolution} to the equation $L_{%
\tilde{A}}u=\phi _{0}-\mathrm{div}_{A}\vec{\phi}_{1}$ in $B\left( x,r\right) $%
, then (\ref{L-inf-est}) holds with $u^{+}$ replaced by $u^{-}$. In
particular, if $u$ is a \textbf{solution} to $L_{\tilde{A}}u=\phi _{0}-\mathrm{%
div}_{\tilde{A}}\left( \vec{\phi}_{1}\right) $ in $B\left( x,r\right) $,
then $u$ is locally bounded in $B\left( x,r\right) $ and (\ref{L-inf-est})
holds for $\left\vert u\right\vert $ and all $\nu \in \left[ \nu
_{0},1\right) $.
\end{theorem}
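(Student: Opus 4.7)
The plan is to carry out a Moser iteration in the Orlicz scale, using the family $h_{j,\beta}(t)=\sqrt{\Phi_m^{(j)}(t^{2\beta})}$ from Proposition \ref{prop hdec} together with an $(A,d)$-standard sequence of Lipschitz cutoffs $\{\psi_j\}$ at $(x,r)$ with shrinking supports $B(x,r_j)\supset\mathrm{supp}\,\psi_j$, $r_\infty=\nu r$, and $\|\nabla_A\psi_j\|_\infty\lesssim j^2/((1-\nu)r)$. Set $w=u^++\phi^*$ and treat the subsolution case; the supersolution case follows verbatim with $u^-$ via the last clause of Lemma~\ref{reverse Sobolev}. The relevant structural bounds on $h_{j,\beta}$ are verified in Proposition~\ref{prop hdec}: it is convex, increasing, piecewise $C^{1,1}$, and satisfies $th'_{j,\beta}(t)/h_{j,\beta}(t)\le C_m\beta\,j^{m-1}$. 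Thus Lemma~\ref{reverse Sobolev} gives the Caccioppoli estimate
\begin{equation*}
\int_{B}\psi_j^{2}\bigl|\nabla_{A}h_{j,\beta}(w)\bigr|^{2}\,dx \;\leq\; C_{\lambda,\Lambda}\,(C_m\beta\,j^{m-1})^{2}\!\int_{B}h_{j,\beta}(w)^{2}\bigl(|\nabla_{A}\psi_j|^{2}+\psi_j^{2}\bigr)\,dx.
\end{equation*}

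Next I plug $v_j=\psi_j\,h_{j,\beta}(w)$ into the $(\Phi_m,A,\varphi)$-Orlicz-Sobolev bump inequality (\ref{OS ineq}) at $(x,r_j)$. Expanding $\nabla_Av_j$ by the product rule and applying Cauchy-Schwarz to pass from $L^1(\mu_{r_j})$ to $L^2(\mu_{r_j})$, the Caccioppoli bound controls the right side, while the left side produces $\Phi_m^{(-1)}\!\bigl(\int_{B(x,r_j)}\Phi_m(v_j)\,d\mu_{r_j}\bigr)$. The \emph{key algebraic identity} $\Phi_m(h_{j,\beta}(t)^{2})=h_{j+1,\beta}(t)^{2}$, together with the submultiplicativity estimates of Lemma~\ref{lem:subsup} (allowing the scalar $\psi_j\le 1$ to be dropped on the sub-ball where $\psi_{j+1}=1$), converts the Sobolev output into the quasinorm of $h_{j+1,\beta}(w)$. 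Writing $I_j=\Phi_m^{(-j)}\!\bigl(\int_{B(x,r_j)}\Phi_m^{(j)}(w^{2\beta})\,d\mu_{r_j}\bigr)$, this yields a recursion of the form
\begin{equation*}
I_{j+1}\;\leq\;\Bigl[\,C_{\lambda,\Lambda}\,\bigl(C_m\beta\,j^{m-1}\bigr)^{2}\,\varphi(r)^{2}\,\tfrac{j^{4}}{(1-\nu)^{2}r^{2}}\Bigr]^{1/\Phi_m^{(j)}(\cdot)}\cdot I_{j},
\end{equation*}
where the multiplicative factor, being inside $\Phi_m^{-1}$ relative to the previous step, contributes additively to the exponent $(\ln\cdot)^{1/m}$ rather than multiplicatively. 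The requirement $\nu\ge\nu_0=1-\delta_x(r)/r$ guarantees $|B(x,r_j)|\approx|B(x,r)|$ throughout, so switching between $\mu_{r_j}$ and $\mu_r$ costs at most a bounded factor at every step.

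To close the iteration I take $j\to\infty$ via Proposition~\ref{prop ineq3} with $\Xi=\Phi_m$ (the growth condition (\ref{LI-growth}) is verified in Remark~\ref{remark-LI-PP}), applied to the nested sequence $D_j=B(x,r_j)\supset D=B(x,\nu r)$. This yields $\|w^{\beta}\|_{L^{\infty}(B(x,\nu r))}\le\liminf_{j}I_j$, while $I_1$ is comparable to $\|w^{\beta}\|_{L^{2}(B(x,r),d\mu_r)}$ by the construction of $h_{1,\beta}$. Tracking the accumulated constant amounts to summing $\sum_j\bigl(\ln C_j\bigr)^{1/m}$ under $\Phi_m^{(-j)}\circ\Phi_m^{(j)}$; since $(\ln C_j)^{1/m}\sim\bigl((m-1)\ln j+\ln\beta+\ln(\varphi(r)/((1-\nu)r))\bigr)^{1/m}$, the telescoping identity $(a+j)^m-j^m\sim m\,a\,j^{m-1}$ converts the sum into a single term of order $(\ln\beta)^{m}+(\ln(\varphi(r)/((1-\nu)r)))^{m}$ in the exponent, producing exactly the claimed constant.

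The main obstacle is the inhomogeneity of the Orlicz (quasi)norm: the scalars $C_j$ produced at every Moser step cannot be pulled outside the iterated $\Phi_m^{(-j)}$ as in the classical $L^p$ Moser iteration, and one must exploit submultiplicativity (Lemma~\ref{lem:subsup}) together with the precise composition rule $\Phi_m\circ(\cdot)^{2}=h_{\cdot+1,\beta}^{2}\circ(\cdot)^{1/\beta}$ to push them into the next level. The combinatorial bookkeeping showing that the accumulated exponent stays finite — which is where the hypothesis $m>2$ enters to dominate both the $j^{m-1}$ factor from $th_{j,\beta}'/h_{j,\beta}$ and the $j^{2}$ factor from the cutoffs — is the delicate computation of the proof. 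The non-doubling nature of the geometry requires the restriction $\nu\ge\nu_0$ to keep the normalized measures $\mu_{r_j}$ mutually comparable throughout the iteration.
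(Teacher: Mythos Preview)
Your outline is essentially the paper's argument: Caccioppoli via Lemma~\ref{reverse Sobolev} with $h_{j,\beta}$, then Orlicz--Sobolev, then the composition identity $\Phi_m(h_{j,\beta}^2)=h_{j+1,\beta}^2$ to iterate, and finally Proposition~\ref{prop ineq3} to pass to $L^\infty$. Two places where the paper is cleaner than your sketch are worth flagging.

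First, the paper \emph{normalizes} at the outset: it replaces $u$ by $\tilde u=\tilde a\,u$ with $\tilde a$ chosen so that $\|(\tilde u^++\tilde\phi^*)^\beta\|_{L^2(d\mu_r)}=e^{2^{m-1}}$. This pins the initial value $b_1=\int_{B_1}|v|^{2\beta}\,d\mu_1=e^{2^{m}}$ and removes one degree of freedom from the constant tracking; you work directly with $w=u^++\phi^*$ and absorb this into the final bound, which is equivalent but messier.

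Second, and more substantively, your displayed recursion $I_{j+1}\le[\,C_j\,]^{1/\Phi_m^{(j)}(\cdot)}\cdot I_j$ is not well-defined as written, and your description of the accumulated constant as ``summing $\sum_j(\ln C_j)^{1/m}$'' is not quite the right mechanism. The paper instead keeps the iteration at the level of the raw integrals $b_j=\int_{B_j}\Phi^{(j-1)}(v^{2\beta})\,d\mu_j$, obtaining the clean recurrence $b_{j+1}\le\Phi(Kj^{m+1}b_j)$ with $K=C_{m,\lambda,\Lambda}(\beta+1)\varphi(r)/((1-\nu)r)$, and then invokes Lemma~\ref{ineq2}, whose proof works with $\beta_n=(\ln b_n)^{1/m}$ and shows $\beta_{n+1}\le\beta_n+1+\frac{\ln(Kn^\gamma)}{m\beta_n^{m-1}}$; the tail sum $\sum_j\frac{\ln(Kj^\gamma)}{(j+\beta_1)^{m-1}}$ converges precisely because $m>2$, yielding $\Phi^{(-n)}(b_{n+1})\le C^*=\exp\bigl((\ln b_1)^{1/m}+C_m(\gamma+\ln K)\bigr)^m$. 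Your informal telescoping description gestures at this computation but does not capture it; the point is that the constants do \emph{not} contribute additively in $(\ln C_j)^{1/m}$ at each step, but rather the single step increment is $1+O((\ln j)/j^{m-1})$, whose sum is bounded.
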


\begin{proof}
Let us start by considering the\emph{\ standard} sequence of Lipschitz
cutoff functions $\left\{ \psi _{j}\right\} _{j=1}^{\infty }$ depending on $%
r $ as given in Definition \ref{def Lip cutoff}, along with the balls $%
B_{j}=B(x,r_{j})\supset \mathrm{supp}\psi _{j}$, so that $%
r=r_{1}>r_{2}>\dots >r_{j}>r_{j+1}>\dots r_{\infty }\equiv
\lim_{j\rightarrow \infty }r_{j}=\nu r$, and $\left\Vert \nabla _{A}\psi
_{j}\right\Vert _{\infty }\leq \dfrac{Cj^{2}}{\left( 1-\nu \right) r}$ with $%
\nabla _{A}$ as in (\ref{def grad A}) and $1-\frac{\delta _{x}\left(
r\right) }{r}=\nu _{0}\leq \nu <1$.

Note that a priori we do not know whether $\left\vert u\right\vert +\phi
^{\ast }\in L^{2\beta }\left( B\right) $ when $\beta >1$, however, the proof
will proceed with the assumption that $\left\vert u\right\vert +\phi ^{\ast
}\in L^{2\beta }\left( B\right) $ for all $\beta $, and then, a posteriori,
the case $\beta =1$ implies that $u^{\pm }+\phi ^{\ast }\in L^{2\beta
}\left( B\left( x,\nu r\right) \right) $ for all $0<\nu <1$, $\phi ^{\ast
}\geq 0$, $\beta \geq 1$. Let $u$ be a subsolution or supersolution of $L_{%
\tilde{A}}u=\phi _{0}-\mathrm{div}_{A}\vec{\phi}_{1}$ in $B\left( x,r\right) $%
. Then we have that%
\begin{equation}
\text{if\quad }\tilde{a}=\frac{e^{\frac{2^{m-1}}{\beta }}}{\left\Vert u^{\pm
}+\phi ^{\ast }\right\Vert _{L^{2\beta }(d\mu _{r})}}\quad \text{then\quad }%
\tilde{u}=\tilde{a}u  \label{utilde}
\end{equation}%
is a subsolution or supersolution (respectively)\ of $L_{\tilde{A}}\tilde{u}=%
\tilde{\phi}_{0}-\mathrm{div}_{A}\left( \overrightarrow{\tilde{\phi}}%
_{1}\right) $ in $B(x,r)$ with $\tilde{\phi}_{0}=\tilde{a}\phi _{0}$, $%
\overrightarrow{\tilde{\phi}}_{1}=\tilde{a}\vec{\phi}_{1}$. Moreover, $%
\tilde{\phi}^{\ast }:=\left\Vert \left( \tilde{\phi}_{0},\overrightarrow{%
\tilde{\phi}}_{1}\right) \right\Vert _{X\left( B\right) }=\frac{\phi ^{\ast
}e^{\frac{2^{m-1}}{\beta }}}{\left\Vert u^{\pm }+\phi ^{\ast }\right\Vert
_{L^{2\beta }(d\mu _{r})}}\leq e^{\frac{2^{m-1}}{\beta }}$, and%
\begin{equation}
\left\Vert \left( \tilde{u}^{\pm }+\tilde{\phi}^{\ast }\right) ^{\beta
}\right\Vert _{L^{2}(d\mu _{r})}^{\frac{1}{\beta }}=\left\Vert \tilde{u}%
^{\pm }+\tilde{\phi}^{\ast }\right\Vert _{L^{2\beta }(d\mu
_{r})}^{2}=\left\Vert \frac{u^{\pm }+\phi ^{\ast }}{\left\Vert u^{\pm }+\phi
^{\ast }\right\Vert _{L^{2\beta }(d\mu _{r})}}\right\Vert _{L^{2\beta }(d\mu
_{r})}^{2}e^{\frac{2^{m}}{\beta }}=e^{\frac{2^{m}}{\beta }}.  \label{utb}
\end{equation}

For simplicity, in what follows we write $v=\tilde{u}^{\pm }+\tilde{\phi}%
^{\ast }$, explicitly, 
\begin{equation}
v=\left\{ 
\begin{array}{lll}
\tilde{u}^{+}+\tilde{\phi}^{\ast } &  & \text{if }L_{\tilde{A}}\tilde{u}\leq 
\tilde{\phi}_{0}-\mathrm{div}_{A}\left( \overrightarrow{\tilde{\phi}}%
_{1}\right) \\ 
\tilde{u}^{-}+\tilde{\phi}^{\ast } &  & \text{if }L_{\tilde{A}}\tilde{u}\geq 
\tilde{\phi}_{0}-\mathrm{div}_{A}\left( \overrightarrow{\tilde{\phi}}%
_{1}\right)%
\end{array}%
\right.  \label{choices}
\end{equation}%
By Proposition \ref{prop hdec} we have that $h\left( t\right) =h_{j,\beta
}\left( t\right) =\sqrt{\Phi _{m}^{(j-1)}\left( t^{2\beta }\right) },$ $%
j\geq 1$, (where $\Phi ^{\left( 0\right) }\left( t\right) =t$, see Remark %
\ref{remark-h0}) satisfies the hypotheses of Lemma \ref{reverse Sobolev}
with constant $C_{h_{j,\beta }}=C_{m}\left\vert \beta \right\vert j^{m-1}$,
namely, $\left\vert h_{j,\beta }^{\prime }\left( t\right) \right\vert \leq
C_{m}\left\vert \beta \right\vert j^{m-1}\frac{h_{j,\beta }\left( t\right) }{%
t}$. We apply Lemma \ref{reverse Sobolev} to $h\left( t\right) =h_{j,\beta
}\left( t\right) $, with $\psi =\psi _{j}$, $d\mu _{j}\equiv \frac{dx}{%
\left\vert B_{j}\right\vert }$, we obtain%
\begin{equation}
\int_{B_{j}}\psi _{j}^{2}\left\vert \nabla _{A}\left[ h_{j}(v)\right]
\right\vert ^{2}d\mu _{j}\leq C_{m,\lambda ,\Lambda }^{2}\beta
^{2}j^{2\left( m-1\right) }\int_{B_{j}}\left( h(v)\right) ^{2}\left( |\nabla
_{A}\psi |^{2}+\psi ^{2}\right) ~d\mu _{j},  \label{main-cacc}
\end{equation}%
where we used the estimates in Proposition \ref{prop hdec}, namely, $%
\left\vert h_{j,\beta }^{\prime }\left( t\right) \right\vert \leq
C_{m}\left\vert \beta \right\vert j^{m-1}\frac{h_{j,\beta }\left( t\right) }{%
t}$. It follows that 
\begin{eqnarray}
\left\Vert \nabla _{A}[\psi _{j}h(v)]\right\Vert _{L^{2}(\mu _{j})}^{2}
&\leq &2\left\Vert \psi _{j}\nabla _{A}h\left( v\right) \right\Vert
_{L^{2}\left( \mu _{j}\right) }^{2}+2\left\Vert \left\vert \nabla _{A}\psi
_{j}\right\vert h\left( v\right) \right\Vert _{L^{2}\left( \mu _{j}\right)
}^{2}  \notag \\
&\leq &C_{m,\lambda ,\Lambda }^{2}\beta ^{2}j^{2\left( m-1\right)
}\int_{B_{j}}h(v)^{2}\left( |\nabla _{A}\psi _{j}|^{2}+\psi _{j}^{2}\right)
~d\mu _{j}  \notag \\
&&+2\left\Vert \left\vert \nabla _{A}\psi _{j}\right\vert h\left( v\right)
\right\Vert _{L^{2}\left( \mu _{j}\right) }^{2}  \notag \\
&\leq &C_{m,\lambda ,\Lambda }^{2}\left( \beta +1\right) ^{2}j^{2\left(
m-1\right) }\left\Vert \nabla _{A}\psi _{j}\right\Vert _{L^{\infty
}}^{2}\left\Vert h\left( v\right) \right\Vert _{L^{2}\left( B_{j},\mu
_{j}\right) }^{2}\   \notag \\
&\leq &C_{m,\lambda ,\Lambda }^{2}\frac{\left( \beta +1\right) ^{2}}{\left(
1-\nu \right) ^{2}r^{2}}j^{2\left( m+1\right) }\left\Vert h\left( v\right)
\right\Vert _{L^{2}\left( B_{j},\mu _{j}\right) }^{2},  \label{msr00}
\end{eqnarray}%
where we used the inequalities $\left\Vert \psi _{j}\right\Vert _{\infty
}\leq 1\leq \left\Vert \nabla _{A}\psi _{j}\right\Vert _{L^{\infty }}\leq
Cj^{2}/\left( \left( 1-\nu \right) r\right) $, and the fact $r_{j}\leq r\leq
1$.

Taking $w=\psi _{j}^{2}h(v)^{2}$ in the Orlicz-Sobolev inequality, and since 
$\frac{\left\vert B\left( x,r_{j}\right) \right\vert }{\left\vert B\left(
x,r_{j+1}\right) \right\vert }\leq 2$ by the choice of the sequence of
radii, yields%
\begin{eqnarray*}
&&\Phi ^{\left( -1\right) }\left( \int_{B_{j+1}}\Phi (h(v)^{2})d\mu
_{j+1}\right) \leq \Phi ^{\left( -1\right) }\left( \int_{B_{j}}2\Phi (\psi
_{j}^{2}h(v)^{2})d\mu _{j}\right) \\
&\leq &\Phi ^{\left( -1\right) }\left( \int_{B_{j}}\Phi (2\psi
_{j}^{2}h(v)^{2})d\mu _{j}\right) \leq C\varphi \left( r_{j}\right)
\left\Vert \nabla _{A}\left( \left( \psi _{j}h\left( v\right) \right)
^{2}\right) \right\Vert _{L^{1}\left( B_{j},\mu _{j}\right) } \\
&\leq &2C\varphi \left( r_{j}\right) \left\Vert \nabla _{A}\left( \psi
_{j}h(v)\right) \right\Vert _{L^{2}\left( B_{j},\mu _{j}\right) }\left\Vert
\psi _{j}h(v)\right\Vert _{L^{2}\left( B_{j},\mu _{j}\right) } \\
&\leq &C_{m,\lambda ,\Lambda }\left( \beta +1\right) \frac{\varphi \left(
r_{j}\right) }{\left( 1-\nu \right) r}j^{m+1}\left\Vert h\left( v\right)
\right\Vert _{L^{2}\left( B_{j},\mu _{j}\right) }^{2}
\end{eqnarray*}%
where we applied (\ref{msr00}). Recalling the definition of $h(v)=\sqrt{\Phi
^{(j)}\left( t^{2\beta }\right) }$ with $\Phi =\Phi _{m}$, this is%
\begin{eqnarray*}
\int_{B_{j+1}}\Phi ^{\left( j\right) }\left( v^{2\beta }\right) ~d\mu _{j+1}
&=&\int_{B_{j+1}}\Phi (h(v)^{2})~d\mu _{j+1} \\
&\leq &\Phi \left( C_{m,\lambda ,\Lambda }\left( \beta +1\right) \frac{%
\varphi \left( r\right) }{\left( 1-\nu \right) r}j^{m+1}\int_{B_{j}}\Phi
^{\left( j-1\right) }\left( v^{2\beta }\right) d\mu _{j}\right) .
\end{eqnarray*}%
Thus, setting 
\begin{equation}
K=K_{\mathrm{standard}}(\varphi ,r)=C_{m,\lambda ,\Lambda }\left( \beta
+1\right) \frac{\varphi \left( r\right) }{\left( 1-\nu \right) r}>1,
\label{def K standard}
\end{equation}%
we have that 
\begin{equation}
\int_{B\left( x,r_{j+1}\right) }\Phi ^{\left( j\right) }\left( v^{2\beta
}\right) ~d\mu _{j+1}\leq \Phi \left( Kj^{m+1}\int_{B\left( x,r_{j}\right)
}\Phi ^{\left( j-1\right) }\left( v^{2\beta }\right) ~d\mu _{j}\right)
=b_{j+1}.  \label{iteration inequality}
\end{equation}%
Now define a sequence by 
\begin{equation}
b_{1}=\int_{B_{1}}\left\vert v\right\vert ^{2\beta }d\mu _{1},\ \ \ \ \
b_{j+1}=\Phi \left( Kj^{m+1}b_{j}\right) .  \label{def Bn}
\end{equation}%
The inequality \ref{iteration inequality} and a basic induction shows that 
\begin{equation}
\int_{B_{j}}\Phi ^{\left( j-1\right) }\left( v^{2\beta }\right) ~d\mu
_{j}\leq b_{j}.  \label{upper bound of composition}
\end{equation}

Now we apply Lemma \ref{ineq2} with $b_{1}=\int_{B\left( x,r_{1}\right)
}\left\vert v\right\vert ^{2\beta }d\mu _{r_{1}}$, $b_{j+1}=\Phi \left(
Kj^{\gamma }b_{j}\right) $, and $\gamma =m+1$, then there exists a positive
number $C^{\ast }=C^{\ast }\left( b_{1},K,m\right) $ such that the
inequality $\Phi ^{(j)}\left( C^{\ast }\right) \geq b_{j+1}$ holds for each
positive number $j$. Moreover, since from (\ref{utb}) we have that $%
b_{1}=\left\Vert v\right\Vert _{L^{2\beta }(d\mu _{r})}^{\beta
}=e^{2^{m-1}}\left( \ln b_{1}\right) ^{\frac{1}{m}}=2$, we can take 
\begin{equation*}
\exp \left( \left( C_{m}\ln K\right) ^{m}\right) \leq \exp \left(
C_{m,\lambda ,\Lambda }\left( \left( \beta -1\right) ^{m}+\left( \ln \frac{%
\varphi \left( r\right) }{\left( 1-\nu \right) r}\right) ^{m}\right) \right)
\equiv C^{\ast }.
\end{equation*}%
It follows that%
\begin{equation*}
\Phi ^{-\left( j\right) }\left( \int_{B\left( x,r_{j+1}\right) }\Phi
^{\left( j\right) }\left( v^{2\beta }\right) ~d\mu _{j+1}\right) \leq \Phi
^{-\left( j\right) }\left( b_{j+1}\right) \leq C^{\ast }.
\end{equation*}%
On the other hand, by Proposition \ref{prop ineq3} (and Remark \ref%
{remark-LI-PP}) we have that%
\begin{equation*}
\left\Vert v^{2\beta }\right\Vert _{L^{\infty }\left( B_{\infty }\right)
}\leq \liminf_{j\rightarrow \infty }\Phi ^{-\left( j+1\right) }\left(
\int_{B_{j+1}}\Phi ^{\left( j+1\right) }\left( v^{2}\right) ~d\mu
_{j+1}\right) ,
\end{equation*}%
hence%
\begin{eqnarray*}
\left\Vert v^{2\beta }\right\Vert _{L^{\infty }\left( B\left( x,\nu r\right)
\right) } &=&\left\Vert \left( \tilde{u}^{+}+\tilde{\phi}\right) ^{2\beta
}\right\Vert _{L^{\infty }\left( B\left( x,\nu r\right) \right) }=\left\Vert 
\tilde{u}^{+}+\tilde{\phi}\right\Vert _{L^{\infty }\left( B\left( x,\nu
r\right) \right) }^{2\beta } \\
&=&\left\Vert \frac{u+\phi ^{\ast }}{\left\Vert u^{+}+\phi ^{\ast
}\right\Vert _{L^{2\beta }(d\mu _{r})}}\right\Vert _{L^{\infty }\left(
B\left( x,\nu r\right) \right) }^{2\beta }e^{2^{m}}=\frac{\left\Vert \left(
u+\phi ^{\ast }\right) ^{\beta }\right\Vert _{L^{\infty }\left( B\left(
x,\nu r\right) \right) }^{2}}{\left\Vert \left( u^{+}+\phi ^{\ast }\right)
^{\beta }\right\Vert _{L^{2}(B\left( x,r\right) ,d\mu _{r})}^{2}}e^{2^{m}} \\
&\leq &\exp \left( C_{m,\lambda ,\Lambda }\left( \left( \beta -1\right)
^{m}+\left( \ln \frac{\varphi \left( r\right) }{\left( 1-\nu \right) r}%
\right) ^{m}\right) \right) :=\left( C\left( \varphi ,m,\lambda ,\Lambda
,r,\nu ,\beta \right) \right) ^{2}.
\end{eqnarray*}%
Recalling now that we wrote $u$ for $\tilde{u}$ defined in (\ref{utilde}),
and by the choice of $v$ in (\ref{choices}), this yields%
\begin{equation*}
\left\Vert \left( u^{+}+\phi ^{\ast }\right) ^{\beta }\right\Vert
_{L^{\infty }\left( B\left( x,\nu r\right) \right) }\leq C\left( \varphi
,m,\lambda ,\Lambda ,r,\nu ,\beta \right) ~\left\Vert u^{+}+\phi ^{\ast
}\right\Vert _{L^{2\beta }(B\left( x,r\right) ,d\mu _{r})}^{\beta }
\end{equation*}%
for all $\beta \geq 1$ when $L_{\tilde{A}}u\leq \phi _{0}-\mathrm{div}%
_{A}\left( \overrightarrow{\phi }_{1}\right) $, while we obtain%
\begin{equation*}
\left\Vert \left( u^{-}+\phi ^{\ast }\right) ^{\beta }\right\Vert
_{L^{\infty }\left( B\left( x,\nu r\right) \right) }\leq C\left( \varphi
,m,\lambda ,\Lambda ,r,\nu ,\beta \right) ~\left\Vert u^{-}+\phi ^{\ast
}\right\Vert _{L^{2\beta }(B\left( x,r\right) ,d\mu _{r})}^{\beta }
\end{equation*}%
for all $\beta \geq 1$ when $L_{\tilde{A}}u\geq \phi _{0}-\mathrm{div}%
_{A}\left( \overrightarrow{\phi }_{1}\right) $.
\end{proof}

In the previous theorem we obtain abstract local boundedness of weak
solutions of $Lu=\phi _{0}-\mathrm{div}_{A}\vec{\phi}_{1}$, when the right
hand side only had the first term this was obtained in \cite{KoRiSaSh19}. In
order to obtain continuity, we need $L^{\infty }$ bounds for powers of
solutions $u^{\beta }$ for $\beta $ in a neighbourhood of $\beta =0$. When $%
\beta <0$ this can be done with a slight modification of the previous
argument via the application of a different Caccioppoli estimate (Lemma \ref%
{Cacc-positive}). Note that we only consider nonnegative weak
supersolutions, as this suffices for our applications.

\begin{theorem}
\label{L_infinity-beta} Under the hypotheses of Theorem \ref{L_infinity},
for all $\nu \in \left[ \nu _{0},1\right) $ and $\beta <0$ there exists a
constant $C\left( \varphi ,m,\lambda ,\Lambda ,r,\nu \right) $ such that if $%
u$ is a \textbf{nonnegative} weak \textbf{supersolution} to the equation $L_{%
\tilde{A}}u=\phi _{0}-\mathrm{div}_{A}\vec{\phi}_{1}$ in $B\left( 0,r\right) $%
, then 
\begin{equation}
\left\Vert \left( u+\phi ^{\ast }\right) ^{\beta }\right\Vert _{L^{\infty
}\left( B\left( 0,\nu r\right) \right) }\leq C\left( \varphi ,m,\lambda
,\Lambda ,r,\nu ,\beta \right) ~\left\Vert \left( u+\phi ^{\ast }\right)
^{\beta }\right\Vert _{L^{2}(d\mu _{r})}\qquad \beta <0
\label{L-inf-beta-est}
\end{equation}%
In fact, we can choose%
\begin{equation*}
C\left( \varphi ,m,\lambda ,\Lambda ,r,\nu \right) =\exp \left( C_{m,\lambda
,\Lambda }\left( \left( \left\vert \beta \right\vert +1\right) ^{m}+\left(
\ln \frac{\varphi \left( r\right) }{\left( 1-\nu \right) r}\right)
^{m}\right) \right) .
\end{equation*}
\end{theorem}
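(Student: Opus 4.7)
The plan is to mirror the Moser iteration of Theorem \ref{L_infinity} with three essential substitutions: I replace the Caccioppoli estimate of Lemma \ref{reverse Sobolev} by Lemma \ref{Cacc-positive}, which allows a decreasing test function $h$ applied to a nonnegative supersolution; I use the smoothed test function $\tilde h_{j,\beta}(t) = \tilde h_j(t^\beta) = \sqrt{\tilde\Phi_m^{(j-1)}(t^{2\beta})}$ in place of $h_{j,\beta}$, since Lemma \ref{Cacc-positive} requires $C^1$ regularity with piecewise continuous second derivative; and I verify its hypotheses using the estimates of Proposition \ref{prop hdec} for $\tilde h_{j,\beta}$.

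First I normalize $u$ as in (\ref{utilde}), dividing by $\|(u+\phi^{\ast})^\beta\|^{1/\beta}_{L^2(d\mu_r)}$ and rescaling with a factor $e^{2^{m-1}/|\beta|}$; this brings $v := \tilde u + \tilde \phi^{\ast}$ to the regime where $\int v^{2\beta}\,d\mu_r$ equals a fixed constant depending only on $m$. If $\phi^{\ast} = 0$ I replace it first by a small $\varepsilon > 0$ and let $\varepsilon \to 0$ at the end, so that $v > 0$ throughout and the negative power $v^\beta$ is well-defined. I then apply Lemma \ref{Cacc-positive} to $h = \tilde h_{j,\beta}$ with the standard Lipschitz cutoff $\psi_j$. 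By Proposition \ref{prop hdec} (applied to $\tilde h$), condition (I) holds with $c_1 = 1$ and $C_1 = 3 + |\beta-1|/|\beta|$; condition (II) holds with $C_2 = C_m |\beta| j^{m-1}$; and condition (III) holds because $\beta < 0$ forces $\tilde h_{j,\beta}' \leq 0$, matching exactly the supersolution hypothesis. The resulting reverse Sobolev inequality has the same shape as (\ref{main-cacc}) with constant $C_{m,\lambda,\Lambda}(|\beta|+1)^2 j^{2(m-1)}$.

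From this point the argument proceeds identically to the proof of Theorem \ref{L_infinity}: apply the $(\Phi_m, A, \varphi)$-Sobolev bump inequality to $w = \psi_j^2 \tilde h_{j,\beta}(v)^2$ (using (\ref{same-OS}) to move between $\Phi_m$ and $\tilde \Phi_m$ up to a harmless constant), combine with the Caccioppoli bound, and invoke $\tilde h_{j,\beta}(v)^2 = \tilde\Phi_m^{(j-1)}(v^{2\beta})$ to obtain the iterative inequality
\[
\int_{B_{j+1}} \Phi_m^{(j)}(v^{2\beta})\,d\mu_{j+1} \;\leq\; \Phi_m\!\left( K\, j^{m+1} \int_{B_j} \Phi_m^{(j-1)}(v^{2\beta})\,d\mu_j\right),
\]
with $K = C_{m,\lambda,\Lambda}(|\beta|+1)\varphi(r)/((1-\nu)r)$. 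The auxiliary sequence $b_{j+1} = \Phi_m(Kj^{m+1} b_j)$ started at $b_1 = \int v^{2\beta}\,d\mu_r$ is then majorized, by the same comparison argument used in Theorem \ref{L_infinity}, by $\Phi_m^{(j)}(C^{\ast})$, where $C^{\ast} = \exp\bigl(C_{m,\lambda,\Lambda}((|\beta|-1)^m + (\ln(\varphi(r)/((1-\nu)r)))^m)\bigr)$. Passing to $\liminf$ via Proposition \ref{prop ineq3} and Remark \ref{remark-LI-PP} yields $\|v^{2\beta}\|_{L^\infty(B(0,\nu r))} \leq (C^{\ast})^2$, and undoing the normalization gives the desired bound on $\|(u+\phi^{\ast})^\beta\|_{L^\infty(B(0,\nu r))}$.

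The main obstacle I anticipate is verifying that $\tilde h_{j,\beta}$ satisfies the structural conditions of Lemma \ref{Cacc-positive} with constants whose dependence on $j$ (the polynomial $j^{m-1}$ in $C_2$) is compatible with the later iteration; this is precisely what Proposition \ref{prop hdec} was engineered to provide. The argument rests essentially on the sign constraint $\beta < 0$, which simultaneously guarantees the monotonicity of $\tilde h_{j,\beta}$ needed for Lemma \ref{Cacc-positive}(III) and the nonnegativity of $\beta(\beta-1)$ that keeps $\tilde h_{j,\beta}$ convex; it is exactly this obstruction that prevents the method from reaching the range $0 < \beta < 1$ mentioned in the introduction.
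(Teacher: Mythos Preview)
Your proposal is correct and follows essentially the same approach as the paper: replace Lemma~\ref{reverse Sobolev} by Lemma~\ref{Cacc-positive}, use the $C^{1}$ test functions $\tilde h_{j,\beta}(t)=\sqrt{\tilde\Phi_m^{(j-1)}(t^{2\beta})}$ built from the smoothed Young function $\tilde\Phi_m$, verify its hypotheses via Proposition~\ref{prop hdec}, pass from $\tilde\Phi_m$ back to $\Phi_m$ through (\ref{same-OS}), and then run the iteration of Theorem~\ref{L_infinity} verbatim. One small slip: your normalization factor should be $e^{2^{m-1}/\beta}$ (not $e^{2^{m-1}/|\beta|}$) so that $b_1=\int v^{2\beta}\,d\mu_r=e^{2^m}$ meets the threshold in Lemma~\ref{ineq2}; with that correction the final constant takes the form $\exp\bigl(C_{m,\lambda,\Lambda}((|\beta|+1)^m+(\ln\frac{\varphi(r)}{(1-\nu)r})^m)\bigr)$ as stated.
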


\begin{proof}
We proceed as in the proof of Theorem \ref{L_infinity}, we consider a\emph{\
standard} sequence of Lipschitz cutoff functions $\left\{ \psi _{j}\right\}
_{j=1}^{\infty }$ depending on $r$ as given in Definition \ref{def Lip
cutoff}, along with the balls $B_{j}=B(0,r_{j})\supset \mathrm{supp}\psi
_{j}$, so that $r=r_{1}>\dots >r_{j}\searrow r_{\infty }\equiv
\lim_{j\rightarrow \infty }r_{j}=\nu r$, and $\left\Vert \nabla _{A}\psi
_{j}\right\Vert _{\infty }\leq \dfrac{Cj^{2}}{\left( 1-\nu \right) r}$ with $%
1-\frac{\delta _{0}\left( r\right) }{r}=\nu _{0}\leq \nu <1$.

Let $u$ be a nonnegative supersolution of $L_{\tilde{A}}u=-\mathrm{div}_{%
\tilde{A}}\nabla _{\tilde{A}}u=\phi _{0}-\mathrm{div}_{A}\vec{\phi}_{1}$ in $%
B\left( 0,r\right) $, then we have that $\left( u+\phi ^{\ast }\right)
^{\beta }\ $is locally bounded for all $\beta <0$, $\phi ^{\ast }>0$. If $%
\phi ^{\ast }=0$ we replace it by a small positive $\varepsilon $ and let $%
\varepsilon \rightarrow 0$ at the end of the argument. As in the previous
proof, we have that%
\begin{equation*}
\text{if\quad }\tilde{a}=\frac{e^{\frac{2^{m-1}}{\beta }}}{\left\Vert u+\phi
^{\ast }\right\Vert _{L^{2\beta }(d\mu _{r})}}\quad \text{then\quad }\tilde{u%
}=\tilde{a}u
\end{equation*}%
is a supersolution\ of $L\tilde{u}=\tilde{\phi}_{0}-\mathrm{div}_{A}\left( 
\overrightarrow{\tilde{\phi}}_{1}\right) $ in $B(0,r)$ with $\tilde{\phi}%
_{0}=\tilde{a}\phi _{0}$, $\overrightarrow{\tilde{\phi}}_{1}=\tilde{a}\vec{%
\phi}_{1}$, $\tilde{\phi}^{\ast }:=\left\Vert \left( \tilde{\phi}_{0},%
\overrightarrow{\tilde{\phi}}_{1}\right) \right\Vert _{X\left( B\right) }=%
\frac{\phi ^{\ast }e^{\frac{2^{m-1}}{\beta }}}{\left\Vert u+\phi ^{\ast
}\right\Vert _{L^{2\beta }(d\mu _{r})}}\leq e^{\frac{2^{m-1}}{\beta }}$, and%
\begin{equation*}
\left\Vert \left( \tilde{u}+\tilde{\phi}^{\ast }\right) ^{\beta }\right\Vert
_{L^{2}(d\mu _{r})}^{\frac{1}{\beta }}=\left\Vert \tilde{u}+\tilde{\phi}%
^{\ast }\right\Vert _{L^{2\beta }(d\mu _{r})}^{2}=\left\Vert \frac{u+\phi
^{\ast }}{\left\Vert u+\phi ^{\ast }\right\Vert _{L^{2\beta }(d\mu _{r})}}%
\right\Vert _{L^{2\beta }(d\mu _{r})}^{2}e^{\frac{2^{m}}{\beta }}=e^{\frac{%
2^{m}}{\beta }}.
\end{equation*}

By Proposition \ref{prop hdec} we have that $h\left( t\right) =h_{j,\beta
}\left( t\right) =\sqrt{\tilde{\Phi}_{m}^{(j-1)}\left( t^{2\beta }\right) },$
$j\geq 1$, (where $\tilde{\Phi}^{\left( 0\right) }\left( t\right) =t$, see
Remark \ref{remark-h0}) satisfies the hypotheses of Lemma \ref{Cacc-positive}%
. Explicitly, for $\Upsilon \left( t\right) =\Upsilon _{j,\beta }\left(
t\right) =h(t)h^{\prime \prime }(t)+h^{\prime }(t)^{2}>0$, we have%
\begin{equation*}
1\leq \frac{\Upsilon _{j,\beta }\left( t\right) }{\left( h_{j,\beta
}^{\prime }\left( t\right) \right) ^{2}}\leq 3+\frac{\left\vert \beta
-1\right\vert }{\left\vert \beta \right\vert }\qquad \text{and\qquad }%
\left\vert h_{j,\beta }^{\prime }\right\vert \leq C_{m}\left\vert \beta
\right\vert j^{m-1}\frac{h_{j,\beta }\left( t\right) }{t},
\end{equation*}%
and $h^{\prime }\left( t\right) <0$. Notice that here we are using the
modified Young function $\tilde{\Phi}_{m}$ (\ref{Phi-tilde}) so we may apply
Lemma \ref{Cacc-positive} with $c_{1}=1$, $C_{1}=C_{m}N^{m-1}+\frac{%
\left\vert \beta -1\right\vert }{\left\vert \beta \right\vert }$, $%
C_{2}=C_{m}\left\vert \beta \right\vert j^{m-1}$, and 
\begin{equation*}
\frac{C_{1}^{2}C_{2}^{2}}{c_{1}^{2}}=\left( 3+\frac{\left\vert \beta
-1\right\vert }{\left\vert \beta \right\vert }\right) ^{2}\left(
C_{m,\lambda ,\Lambda }\left\vert \beta \right\vert j^{m-1}\right) ^{2}\leq
C_{m,\lambda ,\Lambda }\left\vert \beta \right\vert ^{2}j^{2\left(
m-1\right) },
\end{equation*}%
to obtain for $v=\tilde{u}+\tilde{\phi}^{\ast }$, $h=h_{j,\beta }$%
\begin{equation*}
\int_{B_{j}}\psi _{j}^{2}\left\vert \nabla _{A}h\left( v\right) \right\vert
^{2}d\mu _{j}\leq C_{m,\lambda ,\Lambda }\left\vert \beta \right\vert
^{2}j^{2\left( m-1\right) }\int_{B_{j}}h\left( v\right) ^{2}\left(
\left\vert \nabla _{A}\psi _{j}\right\vert ^{2}+\psi _{j}^{2}\right) ~d\mu
_{j}.
\end{equation*}%
This is a similar estimate to (\ref{main-cacc}) in the previous proof of
Theorem \ref{L_infinity}. Recall that since then $\left( {\Phi }_{m},{%
\varphi }\right) $-Sobolev bump inequality (\ref{OS ineq}) holds in $B$,
then for some $C_{m}\geq 1$ we have that from (\ref{same-OS}) then $\left( {%
\tilde{\Phi}}_{m},C_{m}{\varphi }\right) $-Sobolev bump inequality holds in $%
B$. The proof proceeds now identically as before, to obtain (\ref%
{L-inf-beta-est}) with the given constants.
\end{proof}

\subsection{Abstract maximum principle\label{section maxp}}

We can now obtain the analogous weak form of the maximum principle.

\begin{theorem}
\label{L_infinity max}Let $\Omega $ be a bounded open subset of $\mathbb{R}%
^{n}$. Assume that $\Phi (t)=\Phi _{m}(t)$ with $m>2$ satisfies the (global)
Sobolev bump inequality (\ref{OS global}) for $\Omega $. Let $u$ be a weak
subsolution to the equation (\ref{eq-linear}), i.e. $L_{\tilde{A}}u=\phi
_{0}-\mathrm{div}_{\tilde{A}}\vec{\phi}_{1}$ in $\Omega $ with $A$-admissible
pair $\left( \phi _{0},\vec{\phi}_{1}\right) $ and $A\approx \tilde{A}$ in
the sense that the equivalences (\ref{struc_0}) for some $0<\lambda \leq
\Lambda <\infty $. Suppose that $u$ is nonpositive on the boundary $\partial
\Omega $ in the sense that $u^{+}\in W_{A,0}^{1,2}\left( \Omega \right) $,
and suppose that $\left\Vert \left( \phi _{0},\vec{\phi}_{1}\right)
\right\Vert _{\mathcal{X}\left( \Omega \right) }<\infty $. Then 
\begin{equation*}
\mathrm{esssup}_{x\in \Omega }u\left( x\right) \leq C\left( n,m,\lambda
,\Lambda ,\Phi ,\Omega \right) \left( \left\Vert u\right\Vert _{L^{2}(\Omega
)}+\left\Vert \left( \phi _{0},\vec{\phi}_{1}\right) \right\Vert _{\mathcal{X%
}\left( \Omega \right) }\right) .
\end{equation*}
\end{theorem}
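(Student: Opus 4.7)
The plan is to run the Moser iteration from Theorem \ref{L_infinity} (specialized to $\beta=1$) globally on $\Omega$, with the standard Lipschitz cutoffs $\psi_{j}$ replaced by the constant $\psi\equiv 1$. The hypothesis $u^{+}\in W_{A,0}^{1,2}(\Omega)$ is precisely what makes this replacement legal: any Lipschitz $\omega$ with $\omega(\phi^{\ast})=0$ satisfies $\omega(u^{+}+\phi^{\ast})\in W_{A,0}^{1,2}(\Omega)$ without a cutoff, so such functions are admissible test functions in the weak subsolution inequality over the whole of $\Omega$. Correspondingly, the global Orlicz--Sobolev bump inequality (\ref{OS global}) replaces its localized version, which is exactly why no superradius $\varphi$ and no localization parameter $\nu$ appear in the final constant; since we iterate on all of $\Omega$ the argument is in fact simpler than that of Theorem \ref{L_infinity}.

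First I would reproduce the proof of Lemma \ref{reverse Sobolev} with $\psi\equiv 1$: every gradient-of-cutoff term vanishes, while the admissibility-driven estimate (\ref{we-RHS}) is unaffected, and after the usual absorption one obtains the cutoff-free Caccioppoli
\begin{equation*}
\int_{\Omega}\bigl|\nabla_{A}h(u^{+}+\phi^{\ast})\bigr|^{2}\,dx \leq C_{\lambda,\Lambda}\,C_{h}^{2}\int_{\Omega}h(u^{+}+\phi^{\ast})^{2}\,dx
\end{equation*}
for any Lipschitz convex increasing $h$ with $h^{\prime}(t)\leq C_{h}\,h(t)/t$. Writing $v=u^{+}+\phi^{\ast}$ and noting that $h(v)^{2}-h(\phi^{\ast})^{2}\in W_{A,0}^{1,1}(\Omega)$, I would then apply the global bump inequality (\ref{OS global}) to this function, expand $\nabla_{A}(h(v)^{2})=2h(v)\nabla_{A}h(v)$, use Cauchy--Schwarz and insert the Caccioppoli to arrive at
\begin{equation*}
\Phi^{-1}\!\left(\int_{\Omega}\Phi\bigl(h(v)^{2}-h(\phi^{\ast})^{2}\bigr)\,dx\right) \leq 2C_{\Omega}C_{h}\,\|h(v)\|_{L^{2}(\Omega)}^{2}.
\end{equation*}

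Taking $h=h_{j}(t)=\sqrt{\Phi_{m}^{(j-1)}(t^{2})}$ from Proposition \ref{prop hinc} (so $C_{h_{j}}\leq C_{m}\,j^{m-1}$ and $h_{j+1}(v)^{2}=\Phi_{m}(h_{j}(v)^{2})$) produces a Moser-type recursion $b_{j+1}\leq \Phi_{m}(Kj^{m-1}b_{j})$, where $b_{j}=\int_{\Omega}h_{j}(v)^{2}\,dx$ and $K=K(m,\lambda,\Lambda,C_{\Omega},|\Omega|)$, provided the shift $h_{j}(\phi^{\ast})^{2}$ can be absorbed. A preliminary normalization $u\mapsto u/(\|u\|_{L^{2}(\Omega)}+\phi^{\ast})$ together with the pointwise bound $h_{j}(v)\geq h_{j}(\phi^{\ast})$ on $\Omega$ (which yields $|\Omega|\,h_{j}(\phi^{\ast})^{2}\leq b_{j}$), combined with convexity and the submultiplicativity (\ref{submult}) of $\Phi_{m}$, lets one bound $\int_{\Omega}\Phi_{m}(h_{j}(v)^{2})\leq \Phi_{m}(\widetilde{K}j^{m-1}b_{j})$ for a new $\widetilde{K}$, closing the recursion. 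From here the end of the proof of Theorem \ref{L_infinity} transfers verbatim: the same auxiliary lemma used to dominate the sequence (\ref{def Bn}) gives $b_{j+1}\leq \Phi_{m}^{(j)}(C^{\ast})$ for an explicit $C^{\ast}=C^{\ast}(m,\lambda,\Lambda,C_{\Omega},|\Omega|)$, and Proposition \ref{prop ineq3} together with Remark \ref{remark-LI-PP} converts this into $\|v\|_{L^{\infty}(\Omega)}\leq C^{\ast}$. Undoing the normalization yields $\mathrm{esssup}_{\Omega}u\leq \|u^{+}+\phi^{\ast}\|_{L^{\infty}(\Omega)}\leq C(\|u\|_{L^{2}(\Omega)}+\|(\phi_{0},\vec{\phi}_{1})\|_{\mathcal{X}(\Omega)})$.

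The main obstacle is the shift $h_{j}(\phi^{\ast})^{2}$ in the global Orlicz--Sobolev step: because the constant function is not an admissible test function for (\ref{OS global}), the bump inequality must be applied to the $W_{A,0}^{1,1}$ representative $h_{j}(v)^{2}-h_{j}(\phi^{\ast})^{2}$ rather than to $h_{j}(v)^{2}$ itself, and the shift grows super-polynomially in $j$, so a careless closing of the recursion would blow up the constants. Using the pointwise inequality $h_{j}(v)\geq h_{j}(\phi^{\ast})$ to dominate the shift by $b_{j}/|\Omega|$, and then combining convexity with the submultiplicativity of $\Phi_{m}$ to pass from $\int\Phi_{m}(h_{j}(v)^{2}-h_{j}(\phi^{\ast})^{2})$ back to $\int\Phi_{m}(h_{j}(v)^{2})$ without losing a factor that depends on $j$, is the one step that requires genuine care; everything else is a direct transcription of the cutoff-based argument.
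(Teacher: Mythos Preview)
Your approach matches the paper's: set $\psi_j\equiv 1$, obtain the cutoff-free Caccioppoli from the argument of Lemma \ref{reverse Sobolev}, combine it with the global Orlicz--Sobolev inequality (\ref{OS global}), and close the recursion via Lemma \ref{ineq2} and Proposition \ref{prop ineq3} exactly as in Theorem \ref{L_infinity}. The paper actually glosses over the shift issue you flag---it applies (\ref{OS global}) directly to $w=h_n(u^{+}+\phi^{\ast})^{2}$ without subtracting the nonzero boundary value $h_n(\phi^{\ast})^{2}$---so your treatment is in fact more careful there, and your absorption of the shift via $|\Omega|\,h_j(\phi^{\ast})^{2}\le b_j$ together with the submultiplicativity of $\Phi_m$ is correct and keeps the recursion constant independent of $j$.
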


\begin{proof}
An examination of all of the arguments used to prove Theorem \ref{L_infinity}
shows that the only property we need of the cutoff functions $\psi _{j}$ is
that certain Sobolev and Caccioppoli inequalities hold for the functions $%
\psi _{j}h\left( u^{+}\right) $. But under the hypothesis $u^{+}\in
W_{A,0}^{1,2}\left( \Omega \right) $, we can simply take $\psi _{j}\equiv 1$
and all of our balls $B$ to be equal to the whole set $\Omega $, since then
our weak subsolution $u^{+}$ already is such that $h\left( u^{+}\right) $
satisfies the appropriate Sobolev and Caccioppoli inequalities.

Indeed, proceeding as in the proof of Lemma \ref{reverse Sobolev}, we can
take $\psi ^{2}\omega \left( u^{+}\left( x\right) +\phi ^{\ast }\right)
=\omega \left( u^{+}\left( x\right) +\phi ^{\ast }\right) $ as a test
function, and we obtain%
\begin{equation*}
\int_{\Omega }\left\vert \nabla _{A}\left[ h\left( u^{+}+\phi ^{\ast
}\right) \right] \right\vert ^{2}dx\leq C_{\lambda ,\Lambda
}C_{h}^{2}\int_{\Omega }h\left( u^{+}+\phi ^{\ast }\right) ^{2}
\end{equation*}%
where the constant $C_{h}$ is such that $h^{\prime }\left( t\right) \leq
C_{h}\frac{h\left( t\right) }{t}$, and $\phi ^{\ast }=\left\Vert \left( \phi
_{0},\vec{\phi}_{1}\right) \right\Vert _{\mathcal{X}\left( \Omega \right) }$%
. Taking $h\left( t\right) =h_{n}\left( t\right) =\sqrt{\Phi _{m}^{\left(
n\right) }\left( t^{2}\right) }$, from Proposition \ref{prop hinc} we have
that $h_{n}^{\prime }\left( t\right) \leq C_{m}n^{m-1}\frac{h_{n}\left(
t\right) }{t}$ so $C_{h}=C_{m}n^{m-1}$ in the above inequality.

Thus we have the following pair of inequalities:

(\textbf{1}) Orlicz-Sobolev type inequality with $\Phi $ bump (\ref{OS
global}), for some constant $C_{S}=C_{S}\left( n,A,\Phi ,\Omega \right) :$ 
\begin{equation*}
\Phi ^{\left( -1\right) }\left( \int_{\Omega }\Phi (w)dx\right) \leq
C_{S}\int_{\Omega }|\nabla _{A}\left( w\right) |dx,\ \ \ \ \ w\in \mathrm{%
Lip}_{\mathrm{c}}\left( \Omega \right) .
\end{equation*}

(\textbf{2}) Caccioppoli inequality for subsolutions $u$ that are
nonpositive on $\partial \Omega $: 
\begin{equation*}
\left\Vert \nabla _{\tilde{A}}h_{n}\left( u^{+}+\phi ^{\ast }\right)
\right\Vert _{L^{2}(\Omega )}\leq C_{\lambda ,\Lambda }n^{2\left( m-1\right)
}\left\Vert h_{n}\left( u\right) \right\Vert _{L^{2}(\Omega )}\ .
\end{equation*}

Taking $w=h_{n}^{2}(u^{+}+\phi ^{\ast })=\Phi ^{\left( n\right) }\left(
\left( u^{+}+\phi ^{\ast }\right) ^{2}\right) $ and combining the two
inequalities together gives 
\begin{eqnarray*}
&&\Phi ^{\left( -1\right) }\left( \int_{\Omega }\Phi (\Phi ^{\left( n\right)
}\left( \left( u^{+}+\phi ^{\ast }\right) ^{2}\right) )dx\right) \\
&\leq &C_{S}\int_{\Omega }|\nabla _{A}\left( h_{n}(u^{+}+\phi ^{\ast
})^{2}\right) |dx=2C_{S}\left\{ \int_{\Omega }\left\vert h(u^{+}+\phi ^{\ast
})\right\vert \ \left\vert \nabla _{A}h(u^{+}+\phi ^{\ast })\right\vert
dx\right\} \\
&\leq &2C_{S}\left\Vert h_{n}\left( u^{+}+\phi ^{\ast }\right) \right\Vert
_{L^{2}(\Omega )}\left\Vert \nabla _{A}h_{n}\left( u^{+}+\phi ^{\ast
}\right) \right\Vert _{L^{2}(\Omega )}\leq C_{\lambda ,\Lambda
}C_{S}n^{2\left( m-1\right) }\ \left\Vert h_{n}\left( u^{+}+\phi ^{\ast
}\right) \right\Vert _{L^{2}(\Omega )}^{2}.
\end{eqnarray*}%
Recalling the definition of $h(u)=\sqrt{\Phi ^{(n)}\left( t^{2}\right) }$
with $\Phi =\Phi _{m}$ we get, 
\begin{equation}
\int_{\Omega }\Phi ^{\left( n+1\right) }\left( \left( u^{+}+\phi ^{\ast
}\right) ^{2}\right) dx\leq \Phi \left( C_{\lambda ,\Lambda }C_{S}n^{2\left(
m-1\right) }\int_{\Omega }\Phi ^{\left( n\right) }\left( u^{+}+\phi ^{\ast
}\right) dx\right) .  \label{iteration inequality max}
\end{equation}%
Now we proceed exactly as in the proof of Theorem \ref{L_infinity} above to
complete the proof.
\end{proof}

At this point we will prove Theorem \ref{th-abs-max}, the maximum principle
for subsolutions of 
\begin{equation}
Lu\equiv \nabla ^{\mathrm{tr}}\mathcal{A}(x,u(x))\nabla u=\phi _{0}-\mathrm{%
div}_{A}\vec{\phi}_{1}  \label{eq_temp'}
\end{equation}%
with admissible pair $\left( \phi _{0},\vec{\phi}_{1}\right) $. We wish to
replace the right hand side in Theorem \ref{L_infinity max} above by $%
\sup_{\partial \Omega }u+C\left\Vert \left( \phi _{0},\vec{\phi}_{1}\right)
\right\Vert _{\mathcal{X}\left( \Omega \right) }$.

\begin{proof}[Proof of Theorem \protect\ref{th-abs-max}]
First of all, replacing $u$ by $u-\mathrm{esssup}_{\partial \Omega }u$, it
suffices to consider the case $\mathrm{esssup}_{\partial \Omega }u\leq 0$
(if $\mathrm{esssup}_{\partial \Omega }u=\infty $ there is nothing to
prove). note that under this assumption we have $\mathrm{esssup}_{x\in
\Omega }u\left( x\right) =\mathrm{esssup}_{x\in \Omega }u^{+}\left(
x\right) $ and $\mathrm{esssup}_{x\in \partial \Omega }u\left( x\right) =0$%
. Taking $\tilde{A}\left( x\right) =\mathcal{A}\left( x,u\right) $ in
Theorem \ref{L_infinity max} we then have the estimate%
\begin{equation}
\mathrm{esssup}_{x\in \Omega }u^{+}\left( x\right) \leq C\left( n,m,\lambda
,\Lambda ,\Phi ,\Omega \right) \left( \left\Vert u^{+}\right\Vert
_{L^{2}(\Omega )}+\left\Vert \left( \phi _{0},\vec{\phi}_{1}\right)
\right\Vert _{\mathcal{X}\left( \Omega \right) }\right) .  \label{maxp00}
\end{equation}%
Now, since $u\leq 0$ on $\partial \Omega $, it follows that $u^{+}\in
W_{A,0}^{1,2}\left( \Omega \right) $ and therefore it can be used as a
nonnegative test function. Since $u$ is a subsolution to $L_{\tilde{A}%
}u=\phi _{0}-\mathrm{div}_{A}\vec{\phi}_{1}$ with $\left( \phi _{0},\vec{\phi}%
_{1}\right) $ admissible it follows that%
\begin{eqnarray*}
\int_{\Omega }\left\vert \nabla _{A}u^{+}\right\vert ^{2}d\mu 
&=&\int_{\Omega }\nabla _{A}u^{+}\cdot \nabla _{A}u~d\mu \leq \int \left(
u^{+}\phi _{0}+\nabla _{A}u^{+}\vec{\phi}_{1}\right) ~d\mu  \\
&\leq &\left\Vert \left( \phi _{0},\vec{\phi}_{1}\right) \right\Vert _{%
\mathcal{X}\left( \Omega \right) }\int \left\vert \nabla
_{A}u^{+}\right\vert ~d\mu .
\end{eqnarray*}%
Using H\"{o}lder's inequality we obtain%
\begin{equation}
\left\Vert \nabla _{A}u^{+}\right\Vert _{L^{2}\left( \Omega ,\mu \right)
}\leq \mu \left( \Omega \right) ^{\frac{1}{2}}\left\Vert \left( \phi _{0},%
\vec{\phi}_{1}\right) \right\Vert _{\mathcal{X}\left( \Omega ,\mu \right) }.
\label{maxp01}
\end{equation}%
Now, since $\Phi $ is convex, by Jensen's inequality we have%
\begin{equation*}
\int_{\Omega }\left( u^{+}\right) ^{2}\frac{d\mu }{\mu \left( \Omega \right) 
}=\Phi ^{-1}\Phi \left( \int_{\Omega }\left( u^{+}\right) ^{2}\frac{d\mu }{%
\mu \left( \Omega \right) }\right) \leq \Phi ^{-1}\left( \int_{\Omega }\Phi
\left( \left( u^{+}\right) ^{2}\right) \frac{d\mu }{\mu \left( \Omega
\right) }\right) .
\end{equation*}%
Since $\Phi $ is submultiplicative then $\Phi ^{-1}$ is supermultiplicative,
so%
\begin{eqnarray*}
\Phi ^{-1}\left( \int_{\Omega }\Phi \left( \left( u^{+}\right) ^{2}\right) 
\frac{d\mu }{\mu \left( \Omega \right) }\right)  &=&\frac{\Phi ^{-1}\left(
\mu \left( \Omega \right) \right) }{\Phi ^{-1}\left( \mu \left( \Omega
\right) \right) }\Phi ^{-1}\left( \int_{\Omega }\Phi \left( \left(
u^{+}\right) ^{2}\right) \frac{d\mu }{\mu \left( \Omega \right) }\right)  \\
&\leq &\frac{1}{\Phi ^{-1}\left( \mu \left( \Omega \right) \right) }\Phi
^{-1}\left( \int_{\Omega }\Phi \left( \left( u^{+}\right) ^{2}\right) d\mu
\right) .
\end{eqnarray*}%
Putting these inequalities together and applying the global Orlicz-Sobolev
inequality, it follows that%
\begin{eqnarray*}
\frac{\Phi ^{-1}\left( \mu \left( \Omega \right) \right) }{\mu \left( \Omega
\right) }\left\Vert u^{+}\right\Vert _{L^{2}\left( \Omega \right) }^{2}
&\leq &\Phi ^{-1}\left( \int_{\Omega }\Phi \left( \left( u^{+}\right)
^{2}\right) d\mu \right) \leq C_{S}\left\Vert \nabla _{A}\left( u^{+}\right)
^{2}\right\Vert _{L^{1}\left( \Omega ,\mu \right) } \\
&\leq &2C_{S}\left\Vert u^{+}\right\Vert _{L^{2}\left( \Omega ,\mu \right)
}\left\Vert \nabla _{A}u^{+}\right\Vert _{L^{2}\left( \Omega ,\mu \right) },
\end{eqnarray*}
so from (\ref{maxp01}) we then obtain%
\begin{equation*}
\left\Vert u^{+}\right\Vert _{L^{2}\left( \Omega \right) }\leq 2C_{S}\frac{%
\mu \left( \Omega \right) ^{\frac{3}{2}}}{\Phi ^{-1}\left( \mu \left( \Omega
\right) \right) }\left\Vert \left( \phi _{0},\vec{\phi}_{1}\right)
\right\Vert _{\mathcal{X}\left( \Omega ,\mu \right) }.
\end{equation*}%
Plugging this back into (\ref{maxp00}) yields the desired inequality.
\end{proof}

\subsection{Proof of Recurrence Inequalities\label{Sec iteration lemmas}}

Now we provide the proof of the recurrence estimate used in Sections \ref%
{section local bound} and \ref{section maxp} to prove boundedness of
solutions.

\begin{lemma}
\label{ineq2}Let $m>2$, $K>1$ and $\gamma >0$. Consider the sequence defined
by 
\begin{equation*}
b_{1}\geq e^{2^{m}},\ \ \ \ b_{n+1}=\Phi (Kn^{\gamma }b_{n}).
\end{equation*}%
Then there exists a positive number $C^{\ast }=C^{\ast }\left(
m,b_{1},K,\gamma \right) $ , such that the inequality $\Phi ^{(n-1)}(C^{\ast
})\geq b_{n}$ holds for each positive integer $n$. In fact, we can choose%
\begin{equation*}
C^{\ast }=\exp \left( \left( \ln b_{1}\right) ^{\frac{1}{m}}+C_{m}\left(
\gamma +\ln K\right) \right) ^{m}=\Phi ^{\left( C_{m}\left( \gamma +\ln
K\right) \right) }\left( b_{1}\right)
\end{equation*}%
where $C_{m}$ only depends on $m$. Now we prove the growth estimate which
allowed the Moser iteration to yield the boundedness theorem.
\end{lemma}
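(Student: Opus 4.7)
The plan is to exploit the algebraic structure of $\Phi_m$ in its unbounded regime. For $t \geq E_m = e^{2^m}$, the explicit formula $\Phi_m(t) = \exp\bigl(((\ln t)^{1/m}+1)^m\bigr)$ gives the key identity
\[
\sigma\bigl(\Phi_m(t)\bigr) = \sigma(t) + 1, \qquad \sigma(t) := (\ln t)^{1/m}.
\]
Since $\Phi_m(t) > t$ on $[E_m,\infty)$ and $K n^\gamma \geq 1$, an easy induction shows $b_n \geq b_1 \geq E_m$ for every $n$, so the formula applies throughout and $\beta_n := \sigma(b_n) \geq \beta_1 \geq 2$.

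Applying $\sigma$ to both sides of the recurrence yields
\[
\beta_{n+1} = \sigma\bigl(K n^\gamma b_n\bigr) + 1 = \bigl(\ln K + \gamma \ln n + \beta_n^m\bigr)^{1/m} + 1.
\]
The naive subadditivity bound $(a+b)^{1/m} \leq a^{1/m} + b^{1/m}$ would produce a divergent series, so I will instead use the sharper tangent-line bound from concavity of $x \mapsto x^{1/m}$:
\[
(\beta_n^m + a)^{1/m} \leq \beta_n + \frac{a}{m\,\beta_n^{m-1}}, \qquad a := \ln K + \gamma \ln n \geq 0.
\]
This gives $\beta_{n+1} - \beta_n - 1 \leq (\ln K + \gamma \ln n)/(m \beta_n^{m-1})$, and combined with the trivial lower bound $\beta_n \geq \beta_1 + (n-1) \geq n+1$ obtained by dropping the nonnegative term $a$, telescoping yields
\[
\beta_n - \beta_1 - (n-1) \leq \frac{1}{m} \sum_{k=1}^{n-1} \frac{\ln K + \gamma \ln k}{(k+1)^{m-1}}.
\]

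The convergence of $\sum_{k \geq 1} 1/(k+1)^{m-1}$ and $\sum_{k\geq 1} \ln k / (k+1)^{m-1}$ requires precisely $m-1 > 1$, i.e., $m > 2$; this is where the hypothesis is used. Setting $C_m := m^{-1}\max\bigl(\sum 1/(k+1)^{m-1},\ \sum \ln k/(k+1)^{m-1}\bigr)$, we obtain the uniform-in-$n$ bound
\[
\beta_n \leq \beta_1 + (n-1) + C_m(\ln K + \gamma).
\]
To conclude, define $C^* := \exp\bigl((\beta_1 + C_m(\ln K + \gamma))^m\bigr)$, so that $\sigma(C^*) = \beta_1 + C_m(\ln K + \gamma)$. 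Iterating the identity $\sigma(\Phi(t)) = \sigma(t) + 1$ on $C^* \geq E_m$ gives $\sigma(\Phi^{(n-1)}(C^*)) = \sigma(C^*) + n - 1 \geq \beta_n = \sigma(b_n)$, and monotonicity of $\sigma$ delivers $\Phi^{(n-1)}(C^*) \geq b_n$ as required. The only real obstacle is identifying the correct sharp form of the concavity inequality; the subadditive estimate fails, and it is precisely the quantitative $1/\beta_n^{m-1}$ decay that couples with $m>2$ to produce a convergent series and hence a finite $C^*$.
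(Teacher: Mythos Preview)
Your proof is correct and follows essentially the same approach as the paper: both linearize the recursion via $\sigma(t)=(\ln t)^{1/m}$, exploit the identity $\sigma(\Phi(t))=\sigma(t)+1$, use the concavity (tangent-line) estimate to bound the increments $\beta_{n+1}-\beta_n-1$, and then invoke $m>2$ to ensure the resulting series $\sum (\ln K + \gamma\ln k)/\beta_k^{m-1}$ converges. Your write-up is in fact somewhat cleaner than the paper's, which has a notational collision (reusing $\beta_n$ for two different sequences) and a typo ($b_1$ in place of $\beta_1$), but the mathematical content is identical.
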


\begin{proof}
Let $m>2$, $K>1$, $\gamma >0$, and 
\begin{equation*}
b_{1}=\int_{B\left( 0,r_{1}\right) }\left\vert u\right\vert ^{2}d\mu
_{r_{1}}\geq e^{2^{m}},\ \ \ \ b_{n+1}=\Phi (Kn^{\gamma }b_{n}).
\end{equation*}%
We want to estimate $\Phi ^{\left( -j\right) }\left( b_{j+1}\right) $. Let
us define another sequence by 
\begin{equation*}
\beta _{1}=C^{\ast },\ \ \ \ \ \beta _{n+1}=\Phi (\beta _{n}),\quad n\geq 0
\end{equation*}%
Thus we are trying to find a number $C^{\ast }$ such that $\beta _{n}=\Phi
^{\left( n-1\right) }\left( \beta _{1}\right) \geq b_{n}$ holds for all $%
n\geq 0$. Next we define the two related sequences: 
\begin{equation*}
\alpha _{n}=\left( \ln \beta _{n}\right) ^{1/m},\qquad \text{and\qquad }%
\beta _{n}=\left( \ln b_{n}\right) ^{1/m}.
\end{equation*}%
The sequence $\{\alpha _{n}\}$ satisfies $\alpha _{1}=(\ln C^{\ast })^{1/m}$
and 
\begin{equation*}
\alpha _{n+1}=\left( \ln \beta _{n+1}\right) ^{1/m}=\left( \ln \Phi \left(
\beta _{n}\right) \right) ^{1/m}=(\ln \beta _{n})^{1/m}+1=\alpha _{n}+1
\end{equation*}%
for all $n\geq 1$. As for the other sequence, it is clear that $\beta
_{1}=\left( \ln b_{1}\right) ^{1/m}>2$, but the recurrence relation for $%
b_{n}$ is a bit more complicated, we have:%
\begin{eqnarray*}
\beta _{n+1} &=&\left( \ln b_{n+1}\right) ^{1/m}=\left( \ln \Phi \left(
Kn^{\gamma }b_{n}\right) \right) ^{1/m}=\left( \ln \left( Kn^{\gamma
}b_{n}\right) \right) ^{1/m}+1 \\
&=&\left( \beta _{n}^{m}+\ln \left( Kn^{\gamma }\right) \right) ^{1/m}+1.
\end{eqnarray*}%
This is clear that $\beta _{n+1}>\beta _{n}+1$ thus we have a rough lower
bound 
\begin{equation}
\beta _{n+1}\geq n+b_{1}.  \label{betalow}
\end{equation}%
Since the function $g(x)=x^{1/m}$ is concave, we have 
\begin{equation*}
\beta _{n+1}=\left( \beta _{n}^{m}+\ln \left( Kn^{\gamma }\right) \right)
^{1/m}+1=\beta _{n}\left\{ 1+\frac{\ln \left( Kn^{\gamma }\right) }{\beta
_{n}^{m}}\right\} ^{1/m}+1\leq \beta _{n}+\frac{\ln \left( Kn^{\gamma
}\right) }{m\cdot \beta _{n}^{m-1}}+1
\end{equation*}%
Thus 
\begin{equation}
\beta _{n+1}\leq b_{1}+n+\frac{1}{m}\sum_{j=1}^{n}\frac{\ln \left(
Kj^{\gamma }\right) }{\beta _{j}^{m-1}}\quad \Longrightarrow \quad \alpha
_{n}-\beta _{n}\geq \alpha _{1}-b_{1}-\frac{1}{m}\sum_{j=1}^{n}\frac{\ln
\left( Kj^{\gamma }\right) }{\beta _{j}^{m-1}}.  \label{betalpha}
\end{equation}%
Because $m>2$, by (\ref{betalow}) we have 
\begin{equation*}
\sum_{j=1}^{n}\frac{\ln \left( Kj^{\gamma }\right) }{\beta _{j}^{m-1}}%
<\sum_{j=1}^{\infty }\frac{\ln \left( Kj^{\gamma }\right) }{(\beta
_{1}+j-1)^{m-1}}\leq \frac{C_{m}}{\beta _{1}^{m-2}}\left( \gamma +\ln
K\right) \leq C_{m}\left( \gamma +\ln K\right) <\infty ,
\end{equation*}%
where we used that $\beta _{1}=\left( \ln b_{1}\right) ^{\frac{1}{m}}\geq 2$%
. Therefore, choosing $\alpha _{1}=\beta _{1}+C_{m}\left( \gamma +\ln
K\right) $, (\ref{betalpha}) guarantees $\alpha _{n}>\beta _{n}$ for all $%
n\geq 1$, and so%
\begin{equation*}
\Phi ^{\left( n-1\right) }\left( C^{\ast }\right) =\Phi ^{\left( n-1\right)
}\left( a_{1}\right) >b_{n},
\end{equation*}%
where $C^{\ast }=C^{\ast }(b_{1},K,\gamma )$ is 
\begin{eqnarray*}
C^{\ast } &=&\exp \left( \alpha _{1}^{m}\right) =\exp \left( \beta
_{1}+C_{m}\left( \gamma +\ln K\right) \right) ^{m} \\
&=&\exp \left( \left( \ln b_{1}\right) ^{\frac{1}{m}}+C_{m}\left( \gamma
+\ln K\right) \right) ^{m}=\Phi ^{\left( C_{m}\left( \gamma +\ln K\right)
\right) }\left( b_{1}\right) .
\end{eqnarray*}
\end{proof}

\begin{remark}
\label{Moser fails}Lemma \ref{ineq2} fails for $m\leq 2$ even with $\gamma
=0 $ and $K>e$. Indeed, then from the calculations above we have%
\begin{eqnarray*}
\beta _{n+1} &=&\beta _{n}\left( 1+\frac{\ln \left( Kn^{\gamma }\right) }{%
\beta _{n}^{m}}\right) ^{1/m}+1 \\
&\geq &\beta _{n}+\frac{\ln \left( Kn^{\gamma }\right) }{m\beta _{n}^{m-1}}%
+1\geq \beta _{n}+\frac{\ln K}{m\beta _{n}^{m-1}}+1
\end{eqnarray*}%
which when iterated gives%
\begin{equation*}
\beta _{n+1}\geq \beta _{1}+n+\sum_{j=1}^{n}\frac{\ln K}{m\beta _{j}^{m-1}}%
\geq \beta _{1}+n+\frac{\ln K}{2}\sum_{j=1}^{n}\frac{1}{\beta _{j}}.
\end{equation*}%
So if there is a positive constant $A$ such that $\beta _{n+1}\leq n+A$ for $%
n$ large, then we would have%
\begin{equation*}
\beta _{n+1}\geq \beta _{1}+n+\frac{\ln K}{2}c\ln n
\end{equation*}%
for some positive constant $c$, which is a contradiction to our assumption.
Thus $\beta _{n+1}\leq \alpha _{0}+n$ for all $n\geq 1$ is impossible. That
is, we have 
\begin{equation*}
\Phi ^{\left( -n\right) }\left( b_{n}\right) =e^{\left[ \left( \ln
b_{n}\right) ^{\frac{1}{m}}-n\right] ^{m}}=e^{\left[ \beta _{n}-n\right]
^{m}}\geq e^{\left[ \beta _{1}+\frac{\ln K}{2}c\ln n\right] ^{m}}\nearrow
\infty
\end{equation*}%
as $n\rightarrow \infty $, so Lemma \ref{ineq2} does not hold.
\end{remark}

\section{The geometric setting\label{section geom-setting}}

In order to obtain \emph{geometric} applications, we will take the metric $d$
in Theorem \ref{th-abs-bound} to be the Carnot-Carath\'{e}odory metric
associated with the vector field $\nabla _{A}$ for appropriate matrices $A$,
and we will show that the hypotheses of our abstract theorems hold in this
geometry. For this we need to introduce a family of infinitely degenerate
geometries that are simple enough so that we can compute the balls
explicitly, prove the required Orlicz-Sobolev bump inequality, and define an
appropriate accumulating sequence of Lipschitz cutoff functions. We will
work solely in the plane and consider linear operators of the form%
\begin{equation*}
Lu\left( x,y\right) \equiv \nabla ^{\mathrm{tr}}A\left( x,y\right) \nabla
u\left( x,y\right) ,\ \ \ \ \ \left( x,y\right) \in \Omega ,
\end{equation*}%
where $\Omega \subset \mathbb{R}^{2}$ is a planar domain, and where the $%
2\times 2$ matrix is 
\begin{equation*}
A\left( x,y\right) =\left[ 
\begin{array}{cc}
1 & 0 \\ 
0 & f\left( x\right) ^{2}%
\end{array}%
\right] ,
\end{equation*}%
where $f\left( x\right) =e^{-F\left( x\right) }$ is even and there is $R>0$
such that $F$ satisfies the following five structure conditions for some
constants $C\geq 1$ and $\varepsilon >0$:

\begin{definition}
\textbf{Structural conditions}\label{structure conditions}

\begin{enumerate}
\item $\lim_{x\rightarrow 0^{+}}F\left( x\right) =+\infty $;

\item $F^{\prime }\left( x\right) <0$ and $F^{\prime \prime }\left( x\right)
>0$ for all $x\in (0,R)$;

\item $\frac{1}{C}\left\vert F^{\prime }\left( r\right) \right\vert \leq
\left\vert F^{\prime }\left( x\right) \right\vert \leq C\left\vert F^{\prime
}\left( r\right) \right\vert $ for $\frac{1}{2}r<x<2r<R$;

\item $\frac{1}{-xF^{\prime }\left( x\right) }$ is increasing in the
interval $\left( 0,R\right) $ and satisfies $\frac{1}{-xF^{\prime }\left(
x\right) }\leq \frac{1}{\varepsilon }\,$for $x\in (0,R)$;

\item $\frac{F^{\prime \prime }\left( x\right) }{-F^{\prime }\left( x\right) 
}\approx \frac{1}{x}$ for $x\in (0,R)$.%
\setcounter{enumi}{0}\RESUME%
\end{enumerate}
\end{definition}

\begin{remark}
We make no smoothness assumption on $f$ other than the existence of the
second derivative $f^{\prime \prime }$ on the open interval $(0,R)$. Note
also that at one extreme, $f$ can be of finite type, namely $f\left(
x\right) =x^{\alpha }$ for any $\alpha >0$, and at the other extreme, $f$
can be of strongly degenerate type, namely $f\left( x\right) =e^{-\frac{1}{
x^{\alpha }}}$ for any $\alpha >0$. Assumption (1) rules out the elliptic
case $f\left( 0\right) >0$.
\end{remark}

Under the general structural conditions \ref{structure conditions} we will
find further sufficient conditions on $F$ so that the $\left( \Phi ,A,{%
\varphi }\right) $-Orlicz-Sobolev bump inequality (\ref{OS ineq}) holds for
a particular $\Phi $ in this geometry, where the superradius ${\varphi }$
will depend on $F$ (see Proposition \ref{sob}). In \cite[Section 8.2]%
{KoRiSaSh19} we showed that these geometries support both the $\left(
1,1\right) $-Poincar\'{e} and the $\left( 1,1\right) $-Sobolev inequalities.

In particular, we consider specific functions $F$ satisfying the structural
conditions \ref{structure conditions}, namely, the geometries $F_{k,\sigma }$
defined by%
\begin{equation*}
F_{k,\sigma }\left( r\right) =\left( \ln \frac{1}{r}\right) \left( \ln
^{\left( k\right) }\frac{1}{r}\right) ^{\sigma },\qquad k\in \mathbb{N}%
,\quad \sigma >0.
\end{equation*}%
Note that $f_{k,\sigma }=e^{-F_{k,\sigma }\left( r\right) }=e^{-\left( \ln 
\frac{1}{r}\right) \left( \ln ^{\left( k\right) }\frac{1}{r}\right) ^{\sigma
}}$ vanishes to infinite order at $r=0$, and that $f_{k,\sigma }$ vanishes
to a faster order than $f_{k^{\prime },\sigma ^{\prime }}$ if either $%
k<k^{\prime }$ or \ if $k=k^{\prime }$ and $\sigma >\sigma ^{\prime }$.

To see that in the geometries $F_{k,\sigma }$ there exists a standard
sequence of Lipschitz cutoff functions in $B=B\left( x,r\right) $, as given
in Definition \ref{def Lip cutoff}, we will prove the following general
lemma for the Carnot-Carath\'{e}odory metric induced by a continuous
nonnegative semidefinite quadratic form.

\begin{lemma}
\label{spec_cutoff_lemma} Let $\xi ^{t}A(x)\xi $ be a continuous nonnegative
semidefinite quadratic form. Suppose that the subunit metric $d$ associated
to $A(x)$ is topologically equivalent to the Euclidean metric $d_{E}$ in the
sense that for all $B\left( x,r\right) \subset \Omega $ there exist
Euclidean balls $B_{E}\left( x,r_{E}\left( x,r\right) \right) $ and $%
B_{E}\left( x,R_{E}\left( x,r\right) \right) $ such that%
\begin{equation}
B_{E}\left( x,r_{E}\left( x,r\right) \right) \subseteq B\left( x,r\right)
\subseteq B_{E}\left( x,R_{E}\left( x,r\right) \right) .
\label{contain-upper}
\end{equation}%
Then for each ball $B\left( x,R\right) \subset \Omega $ and $0<r<R$ there
exists a cutoff function $\phi _{r,R}\in \mathrm{Lip}(\Omega )$ satisfying 
\begin{equation}
\begin{cases}
\mathrm{supp}\left( \phi _{r,R}\right) & \subseteq B\left( x,R\right) , \\ 
\left\{ x:\phi _{r,R}(x)=1\right\} & \supseteq B\left( x,r\right) , \\ 
\left\Vert \nabla _{A}\phi _{r,R}\right\Vert _{L^{\infty }\left( B\left(
x,R\right) \right) } & \leq \dfrac{C_{n}}{R-r}.%
\end{cases}
\label{cutoff}
\end{equation}
\end{lemma}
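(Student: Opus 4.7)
The plan is to define the cutoff explicitly in terms of the subunit distance function. Writing $\rho(y) = d(x,y)$, set
\[
\phi_{r,R}(y) = \chi\!\left(\frac{R-\rho(y)}{R-r}\right), \qquad \chi(s) = \max\{0,\min\{1,s\}\}.
\]
Immediately $\phi_{r,R}\equiv 1$ on $B(x,r)$ and $\mathrm{supp}(\phi_{r,R}) \subseteq \overline{B(x,R)}$. Modulo verifying $\phi_{r,R} \in \mathrm{Lip}(\Omega)$, the remaining issue is the bound $\left\Vert \nabla_A \phi_{r,R}\right\Vert_{L^{\infty}(B(x,R))} \leq C/(R-r)$, which via the chain rule reduces to the pointwise estimate $\left\vert \nabla_A \rho\right\vert \leq 1$ almost everywhere on $B(x,R)$.

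To establish this $A$-gradient bound I would argue as follows. First, the topological-equivalence hypothesis (\ref{contain-upper}) makes $\rho$ Euclidean-continuous on $B(x,R)$. Combined with continuity of the quadratic form, any Euclidean segment of length $t$ emanating from $y_0$ in a direction $v$ satisfying $\langle v,\xi\rangle^{2} \leq \xi^{\mathrm{tr}} A(y_0)\xi$ for all $\xi$ is, for $t$ sufficiently small, $(1+\varepsilon)$-subunit in the sense that a time-reparametrization by $1+\varepsilon$ makes it subunit, yielding $d(y_0, y_0 + tv) \leq (1+\varepsilon) t$. Applying this with $v$ running over the $A(y_0)$-unit ball and using the triangle inequality $|\rho(y_1)-\rho(y_0)| \leq d(y_0,y_1)$ produces local Euclidean Lipschitz bounds on $\rho$, hence Euclidean differentiability a.e.\ by Rademacher's theorem. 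At any such point $y_0$ of differentiability where $A$ is continuous, the same computation applied to any subunit $v$ gives $|\langle v,\nabla\rho(y_0)\rangle| \leq 1+\varepsilon$; sending $\varepsilon \to 0$ and taking the supremum over subunit $v$ yields $\nabla\rho(y_0)^{\mathrm{tr}} A(y_0) \nabla\rho(y_0) \leq 1$, i.e., $\left\vert \nabla_A \rho(y_0)\right\vert \leq 1$.

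With $\left\vert \nabla_A \rho\right\vert \leq 1$ almost everywhere in hand, the chain rule and $|\chi'|\leq 1$ deliver $\left\vert \nabla_A \phi_{r,R}\right\vert \leq 1/(R-r)$ a.e., so the constant in the statement is $C_n = 1$. The main obstacle is the rigorous justification of the subunit-segment comparison under the mere continuity (rather than smoothness) of $A$: straight Euclidean segments in $A(y_0)$-admissible directions must be shown to be subunit up to a $1+o(1)$ error, which is a standard but nontrivial application of the definition of the subunit metric together with continuity of the quadratic form. The topological-equivalence hypothesis is used only qualitatively, to ensure the Euclidean perturbation arguments stay inside $\Omega$ and that $\rho$ is Euclidean-continuous on the ball.
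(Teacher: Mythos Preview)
Your approach has a genuine gap at the Rademacher step. You argue that subunit segments give $|\rho(y_0+tv)-\rho(y_0)|\le (1+\varepsilon)t$ for $A(y_0)$-subunit directions $v$, and then assert this ``produces local Euclidean Lipschitz bounds on $\rho$.'' But when $A(y_0)$ is degenerate---which is exactly the situation of interest in this paper---the subunit directions at $y_0$ do not span $\mathbb{R}^n$, and no such bound follows for displacements in the degenerate directions. Concretely, in the paper's planar geometry with $A=\mathrm{diag}(1,f(x_1)^2)$ and $f(0)=0$, the distance $d((0,0),(0,t))$ is not $O(t)$ as $t\to 0$; the subunit metric $\rho$ is Euclidean-continuous (by the topological equivalence hypothesis) but \emph{not} Euclidean-Lipschitz near the degeneracy. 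So Rademacher's theorem does not apply to $\rho$, you cannot assert $\nabla\rho$ exists a.e., and your cutoff $\phi_{r,R}=\chi((R-\rho)/(R-r))$ is not a priori in $\mathrm{Lip}(\Omega)$.

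The paper's proof circumvents exactly this obstruction by working with the \emph{regularized} distance $d^\varepsilon$ associated to $A^\varepsilon=A+\varepsilon^2 I$. For each $\varepsilon>0$ the form $A^\varepsilon$ is uniformly elliptic, so $d^\varepsilon(x,\cdot)$ \emph{is} Euclidean-Lipschitz and Rademacher applies; the cited result \cite[Lemma~65]{SaWh4} then gives the uniform bound $|\nabla_A d^\varepsilon|\le\sqrt{n}$, independent of $\varepsilon$. One fixes a single $\varepsilon^*$ small enough (using $d^\varepsilon\nearrow d$ uniformly on compacta) so that the support of $g(d^{\varepsilon^*}(x,\cdot))$ stays inside $B(x,R)$, and the gradient bound follows with $C_n=\tfrac{4}{3}\sqrt{n}$. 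The regularization is not a technicality but the device that restores Euclidean differentiability; your argument would need an analogous mechanism to succeed.
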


\begin{proof}
For any $\varepsilon \geq 0$ let $A^{\varepsilon }(x,\xi )=\xi ^{t}A(x)\xi
+\varepsilon ^{2}|\xi |^{2}$. It has been shown in \cite[Lemma 65]{SaWh4}
that under the hypothesis of Lemma \ref{spec_cutoff_lemma} the subunit
metric $d^{\varepsilon }(x,y)$ associated to $A^{\varepsilon }$ satisfies 
\begin{equation*}
\left\vert \nabla _{A}d^{\varepsilon }\left( x,y\right) \right\vert \leq 
\sqrt{n},\quad x,y\in \Omega
\end{equation*}%
uniformly in $\varepsilon >0$. Moreover, $d^{\varepsilon }(\cdot ,y)\nearrow
d(\cdot ,y)$, the convergence is monotone and $d$ is continuous (in the
Euclidean distance), therefore, $d^{\varepsilon }(\cdot ,y)\rightarrow
d(\cdot ,y)$ uniformly on compact subsets of $\Omega $.

Define $g(t)$ to vanish for $t\geq R-\frac{R-r}{4}$, to equal $1$ for $t\leq
r$ and to be linear on the interval $[r,R-\frac{R-r}{4}]$. Let $\phi
_{r,R}(x)=g(d^{\varepsilon ^{\ast }}(x,y))$, with $\varepsilon ^{\ast }$ to
be chosen later. Since $d^{\varepsilon ^{\ast }}\left( x,y\right) \leq
d\left( x,y\right) $ we have 
\begin{equation*}
\phi _{r,R}(x)=1\quad \text{when}\quad d(x,y)\leq r.
\end{equation*}%
And since $\phi _{r,R}(x)=0$ when $d^{\varepsilon ^{\ast }}(x,y)\geq R-\frac{%
R-r}{4}$, by choosing $\varepsilon ^{\ast }$ small enough, we obtain that $%
\phi _{r,R}(x)=0$ when $d(x,y)\geq R$. This shows that $\mathrm{supp}\left(
\phi _{r,R}\right) \subseteq B\left( x,R\right) $ and $\left\{ x:\phi
_{r,R}\left( x\right) =1\right\} \supseteq B\left( x,r\right) $. Next, 
\begin{equation}
\left\vert \nabla _{A}\phi _{r,R}(x)\right\vert \leq \left\Vert g^{\prime
}\right\Vert _{\infty }\left\vert \nabla _{A}d^{\varepsilon ^{\ast
}}\right\vert \leq \frac{4}{3}\frac{1}{R-r}\sqrt{n}=\frac{C_{n}}{R-r}.
\label{cut_1}
\end{equation}%
This completes the proof.
\end{proof}

\begin{remark}
Note that the condition that $A(x)$ is continuous cannot be easily omitted.
In \cite{Zhong} the author constructs an example of a discontinuous solution
to a degenerate linear elliptic equation (see Theorem 1.3 and Conjecture 6).
However, the matrix $Q$ in that case is discontinuous and this requirement
seems to be essential for the construction.
\end{remark}

\subsection{Geometric Orlicz-Sobolev inequality\label{Sec Orlicz}}

In this section we use subrepresentation inequalities proved in \cite%
{KoRiSaSh19} to prove the relevant Sobolev and Poincar\'{e} inequalities.
More precisely, we will use \cite[Lemma 58]{KoRiSaSh19}, which says that for
every Lipschitz function $w$ there holds 
\begin{equation}
\left\vert w\left( x\right) -\mathbb{E}_{x,r_{1}}w\right\vert \leq
C\int_{\Gamma \left( x,r\right) }\left\vert \nabla _{A}w\left( y\right)
\right\vert \frac{\widehat{d}\left( x,y\right) }{\left\vert B\left(
x,d\left( x,y\right) \right) \right\vert }dy,  \label{subrepresentation}
\end{equation}%
where 
\begin{equation}
\widehat{d}\left( x,y\right) \equiv \min \left\{ d\left( x,y\right) ,\frac{1%
}{\left\vert F^{\prime }\left( x_{1}+d\left( x,y\right) \right) \right\vert }%
\right\} .  \label{def d hat'}
\end{equation}%
Here $\Gamma \left( x,r\right) $ is a cusp-like region defined as 
\begin{equation*}
\Gamma \left( x,r\right) =\bigcup\limits_{k=1}^{\infty }\mathrm{co}\left[
E\left( x,r_{k}\right) \cup E\left( x,r_{k+1}\right) \right] ,
\end{equation*}%
where\ the sets $E\left( x,r_{k}\right) $ are curvilinear trapezoidal sets
on which the function $f$ does not change much, and which satisfy 
\begin{equation}
\left\vert E\left( x,r_{k}\right) \right\vert \approx \left\vert E\left(
x,r_{k}\right) \bigcap B\left( x,r_{k}\right) \right\vert \approx \left\vert
B\left( x,r_{k}\right) \right\vert \text{ for all }k\geq 1.
\label{claim that}
\end{equation}%
Finally, we use the following notation for averages 
\begin{equation*}
\mathbb{E}_{x,r_{1}}w\equiv \frac{1}{\left\vert E(x,r_{1})\right\vert }\int
\int_{E(x,r_{1})}w.
\end{equation*}

In our setting of infinitely degenerate metrics in the plane, the metrics we
consider are elliptic away from the $x_{2}$ axis, and are invariant under
vertical translations. As a consequence, we need only consider Sobolev
inequalities for the metric balls $B\left( 0,r\right) $ centered at the
origin. So from now on we consider $X=\mathbb{R}^{2}$ and the metric balls $%
B\left( 0,r\right) $ associated to one of the geometries $F$ considered in 
\cite[Part 2]{KoRiSaSh19}.

First we recall that the optimal form of the degenerate Orlicz-Sobolev \emph{%
norm} inequality for balls is%
\begin{equation*}
\left\Vert w\right\Vert _{L^{\Theta }\left( \mu _{r_{0}}\right) }\leq
Cr_{0}\left\Vert \nabla _{A}w\right\Vert _{L^{\Omega }\left( \mu
_{r_{0}}\right) },
\end{equation*}%
where $d\mu _{r_{0}}\left( x\right) =\frac{dx}{\left\vert B\left(
0,r_{0}\right) \right\vert }$, the balls $B\left( 0,r_{0}\right) $ are
control balls for a metric $A$, and the Young function $\Theta $ is a `bump
up' of the Young function $\Omega $. We will instead obtain the
nonhomogeneous form of this inequality where $L^{\Omega }\left( \mu
_{r_{0}}\right) =L^{1}\left( \mu _{r_{0}}\right) $ is the usual Lebesgue
space, and the factor $r_{0}$ on the right hand side is replaced by a
suitable superradius $\varphi \left( r_{0}\right) $, namely%
\begin{equation}
\Phi ^{\left( -1\right) }\left( \int_{B\left( 0,r_{0}\right) }\Phi \left(
w\right) ~d\mu _{r_{0}}\right) \leq C\varphi \left( r_{0}\right) \
\left\Vert \nabla _{A}w\right\Vert _{L^{1}\left( \mu _{r_{0}}\right) },\ \ \
\ \ w\in \mathrm{Lip}_{\mathrm{c}}\left( X\right) ,
\label{Phi bump'}
\end{equation}%
which we refer to as the $\left( \Phi ,A,\varphi \right) $\emph{-Sobolev
Orlicz bump inequality}. In fact, consider the positive operator $T_{B\left(
0,r_{0}\right) }:L^{1}\left( \mu _{r_{0}}\right) \rightarrow L^{\Phi }\left(
\mu _{r_{0}}\right) $ defined by 
\begin{equation*}
T_{B\left( 0,r_{0}\right) }g(x)\equiv \int_{B(0,r_{0})}K_{B\left(
0,r_{0}\right) }\left( x,y\right) g(y)dy
\end{equation*}%
with kernel $K_{B\left( 0,r_{0}\right) }$ defined as 
\begin{equation}
K_{B\left( 0,r_{0}\right) }(x,y)=\frac{\widehat{d}\left( x,y\right) }{%
\left\vert B\left( x,d\left( x,y\right) \right) \right\vert }\mathbf{1}%
_{\Gamma \left( x,r_{0}\right) }\left( y\right) .  \label{kernel_est}
\end{equation}%
We will obtain the following stronger inequality,%
\begin{equation}
\Phi ^{\left( -1\right) }\left( \int_{B\left( 0,r_{0}\right) }\Phi \left(
T_{B\left( 0,r_{0}\right) }g\right) d\mu _{r_{0}}\right) \leq C\varphi
\left( r_{0}\right) \ \left\Vert g\right\Vert _{L^{1}\left( \mu
_{r_{0}}\right) }\ ,  \label{Phi bump}
\end{equation}%
which we refer to as the \emph{strong} $\left( \Phi ,A,\varphi \right) $%
-Sobolev Orlicz bump inequality, and which is stronger by the
subrepresentation inequality $w\lesssim T_{B\left( 0,r_{0}\right) }\nabla
_{A}w$ on $B\left( 0,r_{0}\right) $. But this inequality cannot in general
be reversed. When we wish to emphasize that we are working with (\ref{Phi
bump'}), we will often call it the \emph{standard} $\left( \Phi ,A,\varphi
\right) $-Sobolev Orlicz bump inequality.

Recall the operator $T_{B\left( 0,r_{0}\right) }:L^{1}\left( \mu
_{r_{0}}\right) \rightarrow L^{\Phi }\left( \mu _{r_{0}}\right) $ defined by 
\begin{equation*}
T_{B\left( 0,r_{0}\right) }g(x)\equiv \int_{B(0,r_{0})}K_{B\left(
0,r_{0}\right) }\left( x,y\right) g(y)dy
\end{equation*}%
with kernel $K$ defined as in (\ref{kernel_est}). We begin by proving that
the bound (\ref{Phi bump}) holds if the following endpoint inequality holds:%
\begin{equation}
\Phi ^{-1}\left( \sup_{y\in B}\int_{B}\Phi \left( K(x,y)|B|\alpha \right)
d\mu (x)\right) \leq C\alpha \varphi \left( r\right) \ .  \label{endpoint'}
\end{equation}%
for all $\alpha >0$. Indeed, if (\ref{endpoint'}) holds, then with $%
g=\left\vert \nabla _{A}w\right\vert $ and $\alpha =\left\Vert g\right\Vert
_{L^{1}}=\left\Vert \nabla _{A}w\right\Vert _{L^{1}}$, we have using first
the subrepresentation inequality, and then Jensen's inequality applied to
the convex function $\Phi $,%
\begin{eqnarray*}
\int_{B}\Phi (w)d\mu (x) &\lesssim &\int_{B}\Phi \left( \int_{B}K(x,y)\ |B|\
||g||_{L^{1}(\mu )}\frac{g\left( y\right) d\mu \left( y\right) }{%
||g||_{L^{1}(\mu )}}\right) d\mu (x) \\
&\leq &\int_{B}\int_{B}\Phi \left( K(x,y)\ |B|\ ||g||_{L^{1}(\mu )}\right) 
\frac{g\left( y\right) d\mu \left( y\right) }{||g||_{L^{1}(\mu )}}d\mu (x) \\
&\leq &\int_{B}\left\{ \sup_{y\in B}\int_{B}\Phi \left( K(x,y)\ |B|\
||g||_{L^{1}(\mu )}\right) d\mu (x)\right\} \frac{g\left( y\right) d\mu
\left( y\right) }{||g||_{L^{1}(\mu )}} \\
&\leq &\Phi \left( C\varphi \left( r\right) \ ||g||_{L^{1}(\mu )}\right)
\int_{B}\frac{g\left( y\right) d\mu \left( y\right) }{||g||_{L^{1}(\mu )}}%
=\Phi \left( C\varphi \left( r\right) \ ||g||_{L^{1}(\mu )}\right) ,
\end{eqnarray*}%
and so%
\begin{equation*}
\Phi ^{-1}\left( \int_{B}\Phi (w)d\mu (x)\right) \lesssim C\varphi \left(
r\right) \ ||\nabla _{A}w||_{L^{1}(\mu )}.
\end{equation*}%
The converse follows from Fatou's lemma, but we will not need this. Note
that (\ref{endpoint'}) is obtained from (\ref{Phi bump}) by replacing $%
g\left( y\right) dy$ with the point mass $|B|\alpha \delta _{x}\left(
y\right) $ so that $Tg\left( x\right) \rightarrow K(x,y)\ |B|\ \alpha $.

\begin{remark}
The inhomogeneous condition (\ref{endpoint'}) is in general stronger than
its homogeneous counterpart 
\begin{equation*}
\sup_{y\in B\left( 0,r_{0}\right) }\left\Vert K_{B\left( 0,r_{0}\right)
}\left( \cdot ,y\right) \left\vert B\left( 0,r_{0}\right) \right\vert
\right\Vert _{L^{\Phi }\left( \mu _{r_{0}}\right) }\leq C\varphi \left(
r_{0}\right) \ ,
\end{equation*}
but is equivalent to it when $\Phi $ is submultiplicative. We will not
however use this observation.
\end{remark}

Now we turn to the explicit near power bumps $\Phi $ in (\ref{def Phi m ext}%
), which satisfy 
\begin{equation*}
\Phi \left( t\right) =\Phi _{m}\left( t\right) =e^{\left( \left( \ln
t\right) ^{\frac{1}{m}}+1\right) ^{m}},\ \ \ \ \ t>e^{2^{m}},
\end{equation*}%
for $m\in \left( 1,\infty \right) $. Let $\psi (t)=\left( 1+\left( \ln
t\right) ^{-\frac{1}{m}}\right) ^{m}-1$ for $t>E=e^{2^{m}}$ and write $\Phi
\left( t\right) =t^{1+\psi \left( t\right) }$.

\begin{proposition}
\label{sob}Let $0<r_{0}<1$ and $C_{m}>0$. Suppose that the geometry $F$
satisfies the monotonicity property: 
\begin{equation}
\varphi \left( r\right) \equiv \frac{1}{|F^{\prime }(r)|}e^{C_{m}\left( 
\frac{\left\vert F^{\prime }\left( r\right) \right\vert ^{2}}{F^{\prime
\prime }(r)}+1\right) ^{m-1}}\text{ is an increasing function of }r\in
\left( 0,r_{0}\right) \text{.}  \label{mon prop}
\end{equation}
Then the $\left( \Phi ,\varphi \right) $-Sobolev inequality (\ref{Phi bump})
holds with geometry $F$, with $\varphi $ as in (\ref{mon prop}) and with $%
\Phi $ as in (\ref{def Phi m ext}), $m>1$.
\end{proposition}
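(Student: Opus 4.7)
The plan is to reduce the proof to the endpoint inequality (\ref{endpoint'}), which the discussion immediately preceding the statement of the proposition already shows is sufficient for the strong $\left(\Phi,A,\varphi\right)$-Sobolev bump inequality (\ref{Phi bump}). Thus it suffices to establish, for every $\alpha > 0$ and every $B = B(0,r_0)$,
$$\sup_{y\in B}\int_{B}\Phi_m\left(K_B(x,y)\,|B|\,\alpha\right)\,d\mu_{r_0}(x)\ \leq\ \Phi_m\!\left(C\alpha\,\varphi(r_0)\right).$$

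First, I would fix $y \in B$ and use the explicit description of the cusp region $\Gamma(x,r_0) = \bigcup_k \mathrm{co}[E(x,r_k)\cup E(x,r_{k+1})]$, together with the volume equivalences (\ref{claim that}) and the geometric description of the balls $B(0,r_k)$ in this setting, to decompose the integral dyadically according to the scales $r_k$. On each annular piece, the pointwise size of $K_B(x,y)\,|B|$ is comparable to $\frac{\widehat{d}_k}{|B(0,r_k)|}|B(0,r_0)|$, where $\widehat{d}_k \leq \min\{r_k, 1/|F'(r_k)|\}$ by the definition (\ref{def d hat'}). The structural conditions on $F$ give explicit formulas for $|B(0,r_k)|$ in terms of $r_k$, $f(r_k)$, and $|F'(r_k)|$, reducing the problem to estimating a series indexed by $k$.

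Next, I would use the submultiplicativity of $\Phi_m$ together with the factorization $\Phi_m(t) = t^{1+\psi(t)}$, $\psi(t) = (1+(\ln t)^{-1/m})^m - 1$, to convert the pointwise bound on $K_B\,|B|\,\alpha$ at scale $r_k$ into a term of the form $\Phi_m\bigl(\alpha/|F'(r_k)|\bigr)\cdot \exp\bigl(\text{correction at scale }r_k\bigr)$. The correction exponent should, after algebraic simplification using structural condition (5) and the fact that $|F'(r)|^2/F''(r)$ governs the ratio appearing in the exponent of $\Phi_m$ after integration, come out to be $C_m(|F'(r_k)|^2/F''(r_k) + 1)^{m-1}$ — i.e.\ exactly the exponent defining $\varphi(r_k)$.

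Finally, having obtained a term-by-term bound roughly of the form $\Phi_m\bigl(C\alpha\,\varphi(r_k)\bigr)$ for each scale $r_k$, I would invoke the monotonicity hypothesis (\ref{mon prop}) to dominate every such term by $\Phi_m\bigl(C\alpha\,\varphi(r_0)\bigr)$, and then use either a geometric decay coming from the ratios $|B(0,r_{k+1})|/|B(0,r_k)|$, or the submultiplicative absorption property of $\Phi_m$, to sum the series and obtain the required bound. The main obstacle will be the Orlicz bookkeeping in the middle step: the exponent $\psi(K_B\,|B|\,\alpha)$ varies with both scale and $\alpha$, and one must verify that the particular exponent $(|F'(r)|^2/F''(r) + 1)^{m-1}$ in the definition of $\varphi$ is sharp enough to absorb the logarithmic excess coming from $\psi$ at every scale simultaneously — this is where the specific form of $\Phi_m$ (in particular the exponent $\tfrac{1}{m}$ in $\psi$) enters decisively, and is the reason the monotonicity of $\varphi$ is formulated precisely as in (\ref{mon prop}).
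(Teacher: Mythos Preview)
Your reduction to the endpoint inequality (\ref{endpoint'}) matches the paper, as does the plan to exploit submultiplicativity of $\Phi_m$ and the factorization $\Phi_m(t)=t^{1+\psi(t)}$. From that point on, however, the paper proceeds quite differently, and your proposal has a real gap at exactly the step you yourself flag as ``the main obstacle.''

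The paper does not run a dyadic decomposition over scales $r_k$. It first uses submultiplicativity \emph{once}, with the specific choice $\omega(r_0)=1/(t_m|F'(r_0)|)$, to strip off $\Phi_m(\alpha\,\omega(r_0))$ and reduce to the $\alpha$-independent estimate (\ref{will prove}). It then integrates out $x_2$ explicitly over the height $2h_{y_1-x_1}$ of the dual cone, obtaining a one-dimensional integral in $r=y_1-x_1$,
\[
\mathcal I_{0,y_1}=\frac{1}{\omega(r_0)}\int_0^{y_1}\Bigl(\tfrac{|B|}{h_r\,\omega(r_0)}\Bigr)^{\psi\bigl(\tfrac{|B|}{h_r\,\omega(r_0)}\bigr)}dr.
\]
This integral is split into three regions (a small region where $\Phi$ is linear, and two big regions according to whether $r\lessgtr 1/|F'(x_1)|$). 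In the main region a change of variables $R=A(y_1)/r$ and an integration by parts convert the integral into the boundary expression $\mathcal F(y_1)\approx \tfrac{1}{\omega(r_0)|F'(y_1)|}\bigl(A(y_1)|F'(y_1)|\bigr)^{\psi(A(y_1)|F'(y_1)|)}$. The paper then \emph{maximizes $\mathcal F(y_1)$ over $y_1\in(0,r_0]$ by calculus}: setting $\mathcal F'(y_1)=0$ yields an implicit equation expressing $\ln\bigl(c(r_0)|F'(y_1^\ast)|/f(y_1^\ast)\bigr)$ in terms of $F''(y_1^\ast)/|F'(y_1^\ast)|^2$, and estimating $(1+B^{1/m})^m-B$ at this critical point is precisely what produces the exponent $C_m\bigl(1+|F'(y_1^\ast)|^2/F''(y_1^\ast)\bigr)^{m-1}$. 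Only then does the monotonicity hypothesis (\ref{mon prop}) enter, to replace $y_1^\ast$ by $r_0$.

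Your assertion that the correction exponent at scale $r_k$ ``should, after algebraic simplification \ldots\ come out to be $C_m(|F'(r_k)|^2/F''(r_k)+1)^{m-1}$'' is exactly the content of the proof, and you have supplied no mechanism for it. In the paper this exponent does not fall out of a scale-by-scale comparison; it arises from a global optimization over the continuous parameter $y_1$, and the implicit critical-point equation is what links $\psi$ to the ratio $|F'|^2/F''$. A dyadic argument would have to reproduce that optimization somehow. In addition, your proposed summation via ``geometric decay coming from the ratios $|B(0,r_{k+1})|/|B(0,r_k)|$'' is problematic in this non-doubling setting, where those ratios are unbounded; the paper sidesteps any summation entirely by treating the $r$-integral continuously and closing it with integration by parts.
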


For fixed $\Phi =\Phi _{m}$ with $m>1$, we now consider the geometry of
balls defined by%
\begin{eqnarray*}
F_{k,\sigma }\left( r\right) &=&\left( \ln \frac{1}{r}\right) \left( \ln
^{\left( k\right) }\frac{1}{r}\right) ^{\sigma }; \\
f_{k,\sigma }\left( r\right) &=&e^{-F_{k,\sigma }\left( r\right)
}=e^{-\left( \ln \frac{1}{r}\right) \left( \ln ^{\left( k\right) }\frac{1}{r}
\right) ^{\sigma }},
\end{eqnarray*}%
where $k\in \mathbb{N}$ and $\sigma >0$.

\begin{corollary}
\label{Sob Fsigma}The strong $\left( \Phi ,\varphi \right) $-Sobolev
inequality (\ref{Phi bump}) with $\Phi =\Phi _{m}$ as in (\ref{def Phi m ext}
), $m>1$, and geometry $F=F_{k,\sigma }$ holds if\newline
\qquad (\textbf{either}) $k\geq 2$ and $\sigma >0$ and $\varphi (r_{0})$ is
given by 
\begin{equation*}
\varphi (r_{0})=r_{0}^{1-C_{m}\frac{\left( \ln ^{\left( k\right) }\frac{1}{
r_{0}}\right) ^{\sigma \left( m-1\right) }}{\ln \frac{1}{r_{0}}}},\ \ \ \ \ 
\text{for }0<r_{0}\leq \beta _{m,\sigma },
\end{equation*}
for positive constants $C_{m}$ and $\beta _{m,\sigma }$ depending only on $m$
and $\sigma $;\newline
\qquad (\textbf{or}) $k=1$ and $\sigma <\frac{1}{m-1}$ and $\varphi (r_{0})$
is given by 
\begin{equation*}
\varphi (r_{0})=r_{0}^{1-C_{m}\frac{1}{\left( \ln \frac{1}{r_{0}}\right)
^{1-\sigma \left( m-1\right) }}},\ \ \ \ \ \text{for }0<r_{0}\leq \beta
_{m,\sigma },
\end{equation*}
for positive constants $C_{m}$ and $\beta _{m,\sigma }$ depending only on $m$
and $\sigma $.\newline
Conversely, the \emph{standard} $\left( \Phi ,\varphi \right) $-Sobolev
inequality (\ref{Phi bump'}) with $\Phi $ as in (\ref{def Phi m ext}), $m>1$
, fails if $k=1$ and $\sigma >\frac{1}{m-1}$.
\end{corollary}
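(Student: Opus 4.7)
The plan is to apply Proposition \ref{sob} directly: one computes the asymptotic behavior of $|F'_{k,\sigma}|$, $F''_{k,\sigma}$ and the key ratio $|F'|^2/F''$ for $F=F_{k,\sigma}$ as $r\to 0^+$, substitutes into the formula
\[
\varphi(r)=\frac{1}{|F'(r)|}\exp\!\Bigl(C_m\bigl(\tfrac{|F'(r)|^2}{F''(r)}+1\bigr)^{m-1}\Bigr),
\]
simplifies to the claimed power-of-$r$ form, and finally verifies monotonicity of $\varphi$ on a small interval $(0,\beta_{m,\sigma})$. Writing $L_j(r)=\ln^{(j)}(1/r)$ and using $L_j'(r)=-1/(r\prod_{i=1}^{j-1}L_i(r))$, a straightforward differentiation of $F_{k,\sigma}=L_1\cdot L_k^\sigma$ gives, for small $r$,
\[
|F'_{1,\sigma}(r)|\sim\frac{(1+\sigma)L_1(r)^\sigma}{r},\quad F''_{1,\sigma}(r)\sim\frac{(1+\sigma)L_1(r)^\sigma}{r^2},\quad \frac{|F'|^2}{F''}\sim (1+\sigma)L_1(r)^\sigma,
\]
while for $k\ge 2$ the second term in $F'$ is lower order, yielding
\[
|F'_{k,\sigma}(r)|\sim\frac{L_k(r)^\sigma}{r},\quad F''_{k,\sigma}(r)\sim\frac{L_k(r)^\sigma}{r^2},\quad \frac{|F'|^2}{F''}\sim L_k(r)^\sigma.
\]

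Next I substitute: since $-\ln r=L_1(r)$, the exponential collapses into a power of $r$. For $k\ge 2$,
\[
\varphi(r)\sim\frac{r}{L_k^\sigma}\exp\!\bigl(C_m L_k^{\sigma(m-1)}\bigr)=\frac{r}{L_k^\sigma}\,r^{-C_m L_k^{\sigma(m-1)}/L_1}\asymp r^{1-C_m L_k^{\sigma(m-1)}/L_1},
\]
(the logarithmic prefactor is absorbed by enlarging $C_m$), which is exactly $r^{1-C_m(\ln^{(k)}(1/r))^{\sigma(m-1)}/\ln(1/r)}$. Analogously for $k=1$, $\varphi(r)\asymp r^{1-C_m L_1^{\sigma(m-1)-1}}=r^{1-C_m/(\ln(1/r))^{1-\sigma(m-1)}}$.

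Third, I check monotonicity. For $k\ge 2$,
\[
\frac{d}{dr}\ln\varphi(r)=\frac{1}{r}-\frac{C_m\sigma(m-1)L_k^{\sigma(m-1)-1}}{r L_1 L_2\cdots L_{k-1}}=\frac{1}{r}\Bigl(1+o(1)\Bigr)>0,
\]
so $\varphi$ is increasing on some $(0,\beta_{m,\sigma})$ for every $\sigma>0$; no restriction on $\sigma$ arises. For $k=1$,
\[
\frac{d}{dr}\ln\varphi(r)=\frac{1}{r}\bigl(1-C_m\sigma(m-1)L_1(r)^{\sigma(m-1)-1}\bigr),
\]
which is positive as $r\to 0$ precisely when $\sigma(m-1)-1<0$, i.e. $\sigma<\tfrac{1}{m-1}$; this forces the stated restriction. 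Together with Proposition \ref{sob} this establishes the strong $(\Phi,\varphi)$-Sobolev inequality under either listed hypothesis.

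Finally for the converse, when $k=1$ and $\sigma>\tfrac{1}{m-1}$ I would show the standard inequality (\ref{Phi bump'}) cannot hold for any admissible superradius $\varphi$ by testing on a family of extremal cutoffs adapted to the cuspidal trapezoids $E(0,\rho)$ from Section \ref{Sec Orlicz}. Namely, take $w_\rho$ equal to $1$ on the thin set $E(0,\rho)$ where $f_{1,\sigma}$ forces $|B(0,\rho)|$ to be exponentially small compared to $\rho^2$, and linearly cut off to $0$ on a slightly larger set; then $\|\nabla_A w_\rho\|_{L^1(\mu_\rho)}$ is controlled by $|F'_{1,\sigma}(\rho)|^{-1}$ times the volume ratio, while the left-hand side $\Phi^{(-1)}(\int\Phi(w_\rho)d\mu_\rho)$ acquires, via $\Phi_m^{(-1)}(t)=\exp((\ln t)^{1/m}-1)^m$, a term of order $r\exp(c L_1^{\sigma(m-1)})$ with $\sigma(m-1)>1$, forcing the quotient $\text{LHS}/\text{RHS}$ to blow up faster than any power of $1/\rho$. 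I expect this counterexample to be the main obstacle: identifying the correct extremal test function and doing the Orlicz-norm bookkeeping so that the growth $\sigma(m-1)>1$ is precisely what defeats monotonicity. The computation of $\varphi$ and its monotonicity in the positive direction is largely mechanical once the derivatives of $F_{k,\sigma}$ are in hand.
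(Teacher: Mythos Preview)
Your treatment of the positive direction follows the paper's proof closely: compute $|F'|$, $F''$, and $|F'|^2/F''$, substitute into the formula from Proposition~\ref{sob}, simplify to the stated power of $r$, and verify monotonicity. One small caveat is that Proposition~\ref{sob} requires monotonicity of the \emph{exact} expression $\varphi(r)=|F'(r)|^{-1}\exp\bigl(C_m(|F'|^2/F''+1)^{m-1}\bigr)$, not of its simplified asymptotic form; you should differentiate that expression rather than the cleaned-up power, though the outcome is the same.

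The genuine gap is in the converse. Your proposed test function $w_\rho$, equal to $1$ on $E(0,\rho)$ and cut off outside, cannot produce the needed blowup: since $0\le w_\rho\le 1$ and $d\mu_\rho$ is a probability measure, one has $\int\Phi(w_\rho)\,d\mu_\rho\le\Phi(1)$, so the left side of (\ref{Phi bump'}) is bounded by $1$ regardless of $\sigma$. The paper's counterexample is different in kind: it takes the \emph{unbounded} radial function
\[
w(r)=\frac{\eta(r)}{f(r)}=\eta(r)\,e^{(\ln\frac{1}{r})^{1+\sigma}},
\]
with $\eta$ a radial cutoff, and exploits the identity $|\nabla_A v|=|\partial v/\partial r|$ for radial $v$. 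Then $\int|\nabla_A w|\,dx\approx r_0$ because the factor $1/f$ cancels the $f$ coming from the area element, while $\int\Phi_m(w)\,dx\gtrsim\int_0^{r_0/2}e^{m(\ln\frac{1}{r})^{(1+\sigma)(1-1/m)}}r\,dr/(\ln\frac{1}{r})^\sigma=\infty$ precisely when $(1+\sigma)(1-1/m)>1$, i.e.\ $\sigma>\tfrac{1}{m-1}$. The key idea you are missing is that the extremal function must itself be large (of order $1/f$) near the degeneracy, not merely supported on a small set.
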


\begin{proof}[Proof of Proposition \protect\ref{sob}]
It suffices to prove the endpoint inequality (\ref{endpoint'}), namely 
\begin{equation*}
\Phi ^{-1}\left( \sup_{y\in B}\int_{B}\Phi \left( K(x,y)|B|\alpha \right)
d\mu (x)\right) \leq C\alpha \varphi \left( r\left( B\right) \right) \ ,\ \
\ \ \ \alpha >0,
\end{equation*}%
for the balls and kernel associated with our geometry $F$, the Orlicz bump $%
\Phi $, and the function $\varphi \left( r\right) $ satisfying (\ref{mon
prop}). Fix parameters $m>1$ and $t_{m}>1$. Following the proof of \cite[%
Proposition 80]{KoRiSaSh19} we consider the specific function $\omega \left(
r\left( B\right) \right) $ given by 
\begin{equation*}
\omega \left( r\left( B\right) \right) =\frac{1}{t_{m}\left\vert F^{\prime
}\left( r\left( B\right) \right) \right\vert }.
\end{equation*}%
Using the submultiplicativity of $\Phi $ we have 
\begin{eqnarray*}
\int_{B}\Phi \left( K(x,y)|B|\alpha \right) d\mu (x) &=&\int_{B}\Phi \left( 
\frac{K(x,y)|B|}{\omega \left( r\left( B\right) \right) }\alpha \omega
\left( r\left( B\right) \right) \right) d\mu (x) \\
&\leq &\Phi \left( \alpha \omega \left( r\left( B\right) \right) \right)
\int_{B}\Phi \left( \frac{K(x,y)|B|}{\omega \left( r\left( B\right) \right) }%
\right) d\mu (x)
\end{eqnarray*}%
and we will now prove 
\begin{equation}
\int_{B}\Phi \left( \frac{K(x,y)|B|}{\omega \left( r\left( B\right) \right) }%
\right) d\mu (x)\leq C_{m}\varphi \left( r\left( B\right) \right) \left\vert
F^{\prime }\left( r\left( B\right) \right) \right\vert ,  \label{will prove}
\end{equation}%
for all small balls $B$ of radius $r\left( B\right) $ centered at the
origin. Altogether this will give us 
\begin{equation*}
\int_{B}\Phi \left( K(x,y)|B|\alpha \right) d\mu (x)\leq C_{m}\varphi \left(
r\left( B\right) \right) \left\vert F^{\prime }\left( r\left( B\right)
\right) \right\vert \Phi \left( \frac{\alpha }{t_{m}\left\vert F^{\prime
}\left( r\left( B\right) \right) \right\vert }\right) .
\end{equation*}%
Now we note that $x\Phi \left( y\right) =xy\frac{\Phi \left( y\right) }{y}%
\leq xy\frac{\Phi \left( xy\right) }{xy}=\Phi \left( xy\right) $ for $x\geq
1 $ since $\frac{\Phi \left( t\right) }{t}$ is monotone increasing. But from
(\ref{mon prop}) we have $\varphi \left( r\right) \left\vert F^{\prime
}\left( r\right) \right\vert =e^{C_{m}\left( \frac{\left\vert F^{\prime
}\left( r\right) \right\vert ^{2}}{F^{\prime \prime }(r)}+1\right)
^{m-1}}\gg 1$ and so 
\begin{equation*}
\int_{B}\Phi \left( K(x,y)|B|\alpha \right) d\mu (x)\leq \Phi \left(
C_{m}\varphi \left( r\left( B\right) \right) \left\vert F^{\prime }\left(
r\left( B\right) \right) \right\vert \alpha \frac{1}{t_{m}\left\vert
F^{\prime }\left( r\left( B\right) \right) \right\vert }\right) =\Phi \left( 
\frac{C_{m}}{t_{m}}\alpha \varphi \left( r\left( B\right) \right) \right) ,
\end{equation*}%
which is (\ref{endpoint'}) with $C=\frac{C_{m}}{t_{m}}$. Thus it remains to
prove (\ref{will prove}).

So we now take $B=B\left( 0,r_{0}\right) $ with $r_{0}\ll 1$ so that $\omega
\left( r\left( B\right) \right) =\omega \left( r_{0}\right) $. First, from 
\cite{KoRiSaSh19} we have the estimates 
\begin{equation*}
\left\vert B\left( 0,r_{0}\right) \right\vert \approx \frac{f(r_{0})}{%
|F^{\prime }(r_{0})|^{2}},
\end{equation*}%
and in $\Gamma (x,r)$ 
\begin{equation*}
K(x,y)\approx \frac{1}{h_{y_{1}-x_{1}}}\approx 
\begin{cases}
\begin{split}
\frac{1}{rf(x_{1})},\quad 0& <r=y_{1}-x_{1}<\frac{1}{|F^{\prime }(x_{1})|} \\
\frac{|F^{\prime }(x_{1}+r)|}{f(x_{1}+r)},\quad 0& <r=y_{1}-x_{1}\geq \frac{1%
}{|F^{\prime }(x_{1})|}
\end{split}%
\end{cases}%
.
\end{equation*}%
Next, write $\Phi (t)$ as 
\begin{equation*}
\Phi (t)=t^{1+\psi (t)},\ \ \ \ \ \text{for }t>0,
\end{equation*}%
where for $t\geq E$, 
\begin{eqnarray*}
t^{1+\psi (t)} &=&\Phi (t)=e^{\left( \left( \ln t\right) ^{\frac{1}{m}%
}+1\right) ^{m}}=t^{\left( 1+\left( \ln t\right) ^{-\frac{1}{m}}\right) ^{m}}
\\
&\Longrightarrow &\psi (t)=\left( 1+\left( \ln t\right) ^{-\frac{1}{m}%
}\right) ^{m}-1\approx \frac{m}{\left( \ln t\right) ^{1/m}},
\end{eqnarray*}%
and for $t<E$, 
\begin{eqnarray*}
t^{1+\psi (t)} &=&\Phi (t)=\frac{\Phi (E)}{E}t \\
&\Longrightarrow &\left( 1+\psi (t)\right) \ln t=\ln \frac{\Phi (E)}{E}+\ln t
\\
&\Longrightarrow &\psi (t)=\frac{\ln \frac{\Phi (E)}{E}}{\ln t}.
\end{eqnarray*}%
Now temporarily fix $y=\left( y_{1},y_{2}\right) \in B_{+}\left(
0,r_{0}\right) \equiv \left\{ x\in B\left( 0,r_{0}\right) :x_{1}>0\right\} $%
. We then have for $0<a<b<r_{0}$ that%
\begin{eqnarray*}
\mathcal{I}_{a,b}\left( y\right) &\equiv &\int_{\left\{ x\in B_{+}\left(
0,r_{0}\right) :a\leq y_{1}-x_{1}\leq b\right\} \cap \Gamma ^{\ast
}(y,r_{0})}\Phi \left( K_{B\left( 0,r_{0}\right) }\left( x,y\right) \frac{%
\left\vert B\left( 0,r_{0}\right) \right\vert }{\omega \left( r_{0}\right) }%
\right) \frac{dx}{\left\vert B\left( 0,r_{0}\right) \right\vert } \\
&=&\int_{y_{1}-b}^{y_{1}-a}\left\{
\int_{y_{2}-h_{y_{1}-x_{1}}}^{y_{2}+h_{y_{1}-x_{1}}}\Phi \left( \frac{1}{%
h_{y_{1}-x_{1}}}\left\vert B\left( 0,r_{0}\right) \right\vert \frac{%
\left\vert B\left( 0,r_{0}\right) \right\vert }{\omega \left( r_{0}\right) }%
\right) dx_{2}\right\} \frac{dx_{1}}{\left\vert B\left( 0,r_{0}\right)
\right\vert } \\
&=&\int_{y_{1}-b}^{y_{1}-a}2h_{y_{1}-x_{1}}\Phi \left( \frac{1}{%
h_{y_{1}-x_{1}}}\frac{\left\vert B\left( 0,r_{0}\right) \right\vert }{\omega
\left( r_{0}\right) }\right) \frac{dx_{1}}{\left\vert B\left( 0,r_{0}\right)
\right\vert } \\
&=&\int_{y_{1}-b}^{y_{1}-a}2h_{y_{1}-x_{1}}\left( \frac{1}{h_{y_{1}-x_{1}}}%
\frac{\left\vert B\left( 0,r_{0}\right) \right\vert }{\omega \left(
r_{0}\right) }\right) \left( \frac{1}{h_{y_{1}-x_{1}}}\frac{\left\vert
B\left( 0,r_{0}\right) \right\vert }{\omega \left( r_{0}\right) }\right)
^{\psi \left( \frac{1}{h_{y_{1}-x_{1}}}\frac{\left\vert B\left(
0,r_{0}\right) \right\vert }{\omega \left( r_{0}\right) }\right) }\frac{%
dx_{1}}{\left\vert B\left( 0,r_{0}\right) \right\vert }
\end{eqnarray*}%
which simplifies to 
\begin{eqnarray*}
\mathcal{I}_{a,b}\left( y\right) &=&\frac{2}{\omega \left( r_{0}\right) }%
\int_{y_{1}-b}^{y_{1}-a}\left( \frac{1}{h_{y_{1}-x_{1}}}\frac{\left\vert
B\left( 0,r_{0}\right) \right\vert }{\omega \left( r_{0}\right) }\right)
^{\psi \left( \frac{1}{h_{y_{1}-x_{1}}}\frac{\left\vert B\left(
0,r_{0}\right) \right\vert }{\omega \left( r_{0}\right) }\right) }dx_{1} \\
&=&\frac{2}{\omega \left( r_{0}\right) }\int_{a}^{b}\left( \frac{1}{h_{r}}%
\frac{\left\vert B\left( 0,r_{0}\right) \right\vert }{\omega \left(
r_{0}\right) }\right) ^{\psi \left( \frac{1}{h_{r}}\frac{\left\vert B\left(
0,r_{0}\right) \right\vert }{\omega \left( r_{0}\right) }\right) }dr.
\end{eqnarray*}%
Thus we have 
\begin{eqnarray*}
&&\int_{B_{+}\left( 0,r_{0}\right) }\Phi \left( K_{B\left( 0,r_{0}\right)
}\left( x,y\right) \frac{\left\vert B\left( 0,r_{0}\right) \right\vert }{%
\omega \left( r_{0}\right) }\right) \frac{dx}{\left\vert B\left(
0,r_{0}\right) \right\vert } \\
&=&\mathcal{I}_{0,y_{1}}\left( x\right) \\
&=&\frac{2}{\omega \left( r_{0}\right) }\int_{0}^{y_{1}}\left( \frac{1}{h_{r}%
}\frac{\left\vert B\left( 0,r_{0}\right) \right\vert }{\omega \left(
r_{0}\right) }\right) ^{\psi \left( \frac{1}{h_{r}}\frac{\left\vert B\left(
0,r_{0}\right) \right\vert }{\omega \left( r_{0}\right) }\right) }dr\ .
\end{eqnarray*}

To prove (\ref{will prove}) it thus suffices to show 
\begin{equation}
\mathcal{I}_{0,y_{1}}=\frac{1}{\omega \left( r_{0}\right) }
\int_{0}^{y_{1}}\left( \frac{\left\vert B\left( 0,r_{0}\right) \right\vert }{%
h_{r}\omega \left( r_{0}\right) }\right) ^{\psi \left( \frac{ \left\vert
B\left( 0,r_{0}\right) \right\vert }{h_{r}\omega \left( r_{0}\right) }%
\right) }dr\leq C_{m}\ \varphi \left( r_{0}\right) \left\vert F^{\prime
}\left( r_{0}\right) \right\vert \ ,  \label{the integral}
\end{equation}
where $C_{0}$ is a sufficiently large positive constant.

To prove this we divide the interval $\left( 0,y_{1}\right) $ of integration
in $r$ into three regions:

(\textbf{1}): the small region $\mathcal{S}$ where $\frac{|B(0,r_{0})|}{
h_{r}\omega \left( r_{0}\right) }\leq E$,

(\textbf{2}): the big region $\mathcal{R}_{1}$ that is disjoint from $%
\mathcal{S}$ and where $r=y_{1}-x_{1}<\frac{1}{\left\vert F^{\prime }\left(
x_{1}\right) \right\vert }$ and

(\textbf{3}): the big region $\mathcal{R}_{2}$ that is disjoint from $%
\mathcal{S}$ and where $r=y_{1}-x_{1}\geq \frac{1}{\left\vert F^{\prime
}\left( x_{1}\right) \right\vert }$.

In the small region $\mathcal{S}$ we use that $\Phi $ is linear on $\left[
0,E\right] $\ to obtain that the integral in the right hand side of (\ref%
{the integral}), when restricted to those $r\in \left( 0,y_{1}\right) $ for
which $\frac{|B(0,r_{0})|}{h_{r}\omega \left( r_{0}\right) }\leq E$, is
equal to 
\begin{eqnarray*}
&&\frac{1}{\omega \left( r_{0}\right) }\int_{0}^{y_{1}}\left( \frac{
\left\vert B\left( 0,r_{0}\right) \right\vert }{h_{r}\omega \left(
r_{0}\right) }\right) ^{\frac{\ln \frac{\Phi (E)}{E}}{\ln \left( \frac{
\left\vert B\left( 0,r_{0}\right) \right\vert }{h_{r}\omega \left(
r_{0}\right) }\right) }}dr \\
&=&\frac{1}{\omega \left( r_{0}\right) }\int_{0}^{y_{1}}e^{\ln \frac{ \Phi
(E)}{E}}dr=\frac{1}{\omega \left( r_{0}\right) }\frac{\Phi (E)}{E} y_1 \\
&\leq &\frac{\Phi (E)}{E}t_{m}\ r_{0}\left\vert F^{\prime }\left(
r_{0}\right) \right\vert \ ,
\end{eqnarray*}
since $\omega \left( r_{0}\right) =\frac{1}{t_{m}\left\vert F^{\prime
}\left( r_{0}\right) \right\vert }$. We now turn to the first big region $%
\mathcal{R}_{1}$ where we have $h_{y_{1}-x_{1}}\approx rf(x_{1})=rf(y_1-r)$.
The integral to be evaluated is 
\begin{equation*}
\frac{1}{\omega \left( r_{0}\right) }\int_{0}^{y_1 }\left( \frac{|B(0,r_{0})|%
}{h_{r}\omega \left( r_{0}\right) }\right) ^{\psi \left( \frac{|B(0,r_{0})|}{%
h_{r}\omega \left( r_{0}\right) }\right) }dr,\quad\text{where}\quad \frac{%
|B(0,r_{0})|}{h_{r}\omega \left( r_{0}\right) }\approx \frac{|B(0,r_{0})|}{%
rf(y_1-r)\omega \left( r_{0}\right) }
\end{equation*}
Now we note that since $x_1<y_1$, we have $\frac{1}{\left\vert F^{\prime
}\left( x_{1}\right) \right\vert }\leq \frac{1}{\left\vert F^{\prime }\left(
y_{1}\right) \right\vert }$, and thus in this region we have $x_1<y_1\leq
x_1+\frac{1}{\left\vert F^{\prime }\left( y_{1}\right) \right\vert }$, and
it is sufficient to evaluate 
\begin{equation*}
\frac{1}{\omega \left( r_{0}\right) }\int_{0}^{\frac{1}{\left\vert F^{\prime
}\left( y_{1}\right) \right\vert } }\left( \frac{|B(0,r_{0})|}{h_{r}\omega
\left( r_{0}\right) }\right) ^{\psi \left( \frac{|B(0,r_{0})|}{h_{r}\omega
\left( r_{0}\right) }\right) }dr.
\end{equation*}
From the inequalities for $y_1$ it also follows that $f(x_1)\approx f(y_1)$,
so $h_{y_{1}-x_{1}}\approx rf(y_{1})$. Write 
\begin{equation*}
\frac{|B(0,r_{0})|}{h_{r}\omega \left( r_{0}\right) }\leq C^{\prime }\frac{%
|B(0,r_{0})|}{rf(y_1)\omega \left( r_{0}\right) }\leq C\frac{t_mf(r_{0})}{%
rf(y_1)\left\vert F^{\prime }\left( r_{0}\right)\right\vert },
\end{equation*}
and we will now evaluate the following integral 
\begin{equation*}
\frac{1}{\omega \left( r_{0}\right) }\int_{0}^{\frac{1}{|F^{\prime }(y_{1})|}%
}\left( \frac{A}{r}\right) ^{\psi \left( \frac{A}{r}\right) }dr,\quad\text{%
where}\quad A=C\frac{t_mf(r_{0})}{f(y_1)\left\vert F^{\prime }\left(
r_{0}\right)\right\vert }.
\end{equation*}%
Making a change of variables 
\begin{equation*}
R=\frac{A}{r}=\frac{A\left( y_{1}\right) }{r},
\end{equation*}%
we obtain%
\begin{equation*}
\frac{1}{\omega \left( r_{0}\right) }\int_{0}^{\frac{1}{|F^{\prime }(y_{1})|}%
}\left( \frac{A}{r}\right) ^{\psi \left( \frac{A}{r}\right) }dr=\frac{1}{
\omega \left( r_{0}\right) }A\int_{A\left\vert F^{\prime }\left(
y_{1}\right) \right\vert }^{\infty }R^{\psi (R)-2}dR.
\end{equation*}%
Integrating by parts gives 
\begin{align*}
\int_{A\left\vert F^{\prime }\left( y_{1}\right) \right\vert }^{\infty
}R^{\psi (R)-2}dR& =\int_{A\left\vert F^{\prime }\left( y_{1}\right)
\right\vert }^{\infty }R^{\psi (R)+1}\left( -\frac{1}{2R^{2}}\right)
^{\prime }dR \\
& =-\frac{R^{\psi (R)+1}}{2R^{2}}\big|_{A\left\vert F^{\prime }\left(
x_{1}\right) \right\vert }^{\infty }+\int_{A\left\vert F^{\prime }\left(
x_{1}\right) \right\vert }^{\infty }\left( R^{\psi (R)+1}\right) ^{\prime }%
\frac{1}{2R^{2}}dR \\
& \leq \frac{\left( A|F^{\prime }(y_{1})|\right) ^{\psi \left( A|F^{\prime
}(y_{1})|\right) }}{2A|F^{\prime }(y_{1})|}+\int_{A\left\vert F^{\prime
}\left( y_{1}\right) \right\vert }^{\infty }\frac{1}{2}R^{\psi (R)-2}\left(
1+C\frac{m-1}{\left( \ln R\right) ^{\frac{1}{m}}}\right) dR \\
& \leq \frac{\left( A|F^{\prime }(y_{1})|\right) ^{\psi \left( A|F^{\prime
}(y_{1})|\right) }}{2A|F^{\prime }(y_{1})|}+\frac{1+C\frac{m-1}{\left( \ln
E\right) ^{\frac{1}{m}}}}{2}\int_{A\left\vert F^{\prime }\left( y_{1}\right)
\right\vert }^{\infty }R^{\psi (R)-2}dR,
\end{align*}%
where we used 
\begin{equation*}
\left\vert \psi ^{\prime }(R)\right\vert \leq C\frac{1}{R}\frac{1}{\left(
\ln R\right) ^{\frac{m+1}{m}}}.
\end{equation*}%
Taking $E$ large enough depending on $m$ we can assure 
\begin{equation*}
\frac{1+C\frac{m-1}{\left( \ln E\right) ^{\frac{1}{m}}}}{2}\leq \frac{3}{4},
\end{equation*}%
which gives 
\begin{equation*}
\int_{A\left\vert F^{\prime }\left( y_{1}\right) \right\vert }^{\infty
}R^{\psi (R)-2}dR\lesssim \frac{\left( A|F^{\prime }(y_{1})|\right) ^{\psi
\left( A|F^{\prime }(y_{1})|\right) }}{A|F^{\prime }(y_{1})|},
\end{equation*}%
and therefore 
\begin{eqnarray*}
\mathcal{I}_{0,\frac{1}{\left\vert F^{\prime }\left( y_{1}\right)
\right\vert }}\left( x\right) &\lesssim&\frac{1}{\omega \left( r_{0}\right) }
A\int_{A\left\vert F^{\prime }\left( y_{1}\right) \right\vert }^{\infty
}R^{\psi (R)-2}dR \\
&\lesssim &\frac{1}{\omega \left( r_{0}\right) |F^{\prime }(y_{1})|}\left(
A\left( y_{1}\right) |F^{\prime }(y_{1})|\right) ^{\psi \left(
A(y_{1})|F^{\prime }(y_{1})|\right) }.
\end{eqnarray*}
We now look for the maximum of the function on the right hand side 
\begin{eqnarray*}
\mathcal{F}(y_{1}) &\equiv &\frac{1}{\omega \left( r_{0}\right) |F^{\prime
}(y_{1})|}\left( A\left( y_{1}\right) |F^{\prime }(y_{1})|\right) ^{\psi
\left( A(y_{1})|F^{\prime }(y_{1})|\right) } \\
&=&t_{m}\left\vert F^{\prime }\left( r_{0}\right) \right\vert \frac{1}{
\left\vert F^{\prime }\left( y_{1}\right) \right\vert }\left( c(r_{0})\frac{
\left\vert F^{\prime }\left( y_{1}\right) \right\vert }{f\left( y_{1}\right) 
}\right) ^{\psi \left( c(r_{0})\frac{\left\vert F^{\prime }\left(
y_{1}\right) \right\vert }{f\left( y_{1}\right) }\right) }
\end{eqnarray*}%
where 
\begin{equation*}
c(r_{0})= f\left( y_{1}\right) A\left( y_{1}\right) =\frac{Ct_{m}\ f(r_{0})}{%
|F^{\prime }(r_{0})|}.
\end{equation*}%
Using the definition of $\psi (t)$ and $B\left( y_{1}\right) \equiv \ln %
\left[ c(r_{0})\frac{\left\vert F^{\prime }\left( y_{1}\right) \right\vert }{
f\left( y_{1}\right) }\right] $, we can rewrite $\mathcal{F}(y_{1})$ as 
\begin{equation}
\mathcal{F}(y_{1})=t_{m}\left\vert F^{\prime }\left( r_{0}\right)
\right\vert \frac{1}{\left\vert F^{\prime }\left( y_{1}\right) \right\vert }%
\exp \left( \left( 1+B\left( y_{1}\right) ^{\frac{1}{m}}\right) ^{m}-B\left(
y_{1}\right) \right) .  \label{cali_F}
\end{equation}%
Let $y_{1}^{\ast }\in \left( 0,r_{0}\right] $ be the point at which $%
\mathcal{F}$ takes its maximum. Differentiating $\mathcal{F}(y_{1})$ with
respect to $y_{1}$ and then setting the derivative equal to zero, we obtain
that $y_{1}^{\ast }$ satisfies the equation, 
\begin{equation*}
\frac{F^{\prime \prime }(y_{1}^{\ast })}{|F^{\prime }(y_{1}^{\ast })|^{2}}%
=\left( \left( 1+B\left( y_{1}^{\ast }\right) ^{-\frac{1}{m}}\right)
^{m-1}-1\right) \left( 1+\frac{F^{\prime \prime }(y_{1}^{\ast })}{|F^{\prime
}(y_{1}^{\ast })|^{2}}\right) .
\end{equation*}%
Simplifying gives the following implicit expression for $y_{1}^{\ast }$ that
maximizes $\mathcal{F}(y_{1})$ 
\begin{equation*}
B\left( y_{1}^{\ast }\right) =\ln \left[ c(r_{0})\frac{\left\vert F^{\prime
}\left( y_{1}^{\ast }\right) \right\vert }{f\left( y_{1}^{\ast }\right) }%
\right] =\left( \left( 1+\frac{F^{\prime \prime }(y_{1}^{\ast })}{|F^{\prime
}(y_{1}^{\ast })|^{2}+F^{\prime \prime }(y_{1}^{\ast })}\right) ^{\frac{1}{
m-1}}-1\right) ^{-m}.
\end{equation*}%
To estimate $\mathcal{F}(y_{1}^{\ast })$ in an effective way, we set $%
b\left( y_{1}^{\ast }\right) \equiv \frac{F^{\prime \prime }(y_{1}^{\ast })}{
|F^{\prime }(y_{1}^{\ast })|^{2}+F^{\prime \prime }(y_{1}^{\ast })}$ and
begin with 
\begin{align*}
& \left( 1+B\left( y_{1}\right) ^{\frac{1}{m}}\right) ^{m}-B\left(
y_{1}\right) =\left( 1+\left( \ln \left[ c(r_{0})\frac{\left\vert F^{\prime
}\left( y_{1}^{\ast }\right) \right\vert }{f\left( y_{1}^{\ast }\right) }%
\right] \right) ^{\frac{1}{m}}\right) ^{m}-\ln \left[ c(r_{0})\frac{
\left\vert F^{\prime }\left( y_{1}^{\ast }\right) \right\vert }{f\left(
y_{1}^{\ast }\right) }\right] \\
& =\frac{\left( 1+\frac{F^{\prime \prime }(y_{1}^{\ast })}{|F^{\prime
}(y_{1}^{\ast })|^{2}+F^{\prime \prime }(y_{1}^{\ast })}\right) ^{\frac{m}{
m-1}}-1}{\left( \left( 1+\frac{F^{\prime \prime }(y_{1}^{\ast })}{|F^{\prime
}(y_{1}^{\ast })|^{2}+F^{\prime \prime }(y_{1}^{\ast })}\right) ^{\frac{1}{
m-1}}-1\right) ^{m}}=\frac{\left( 1+b\left( y_{1}^{\ast }\right) \right) ^{ 
\frac{m}{m-1}}-1}{\left( \left( 1+b\left( y_{1}^{\ast }\right) \right) ^{ 
\frac{1}{m-1}}-1\right) ^{m}} \\
& \leq C_{m}\left( \frac{1}{b\left( y_{1}^{\ast }\right) }\right)
^{m-1}=C_{m}\left( \frac{|F^{\prime }(y_{1}^{\ast })|^{2}+F^{\prime \prime
}(y_{1}^{\ast })}{F^{\prime \prime }(y_{1}^{\ast })}\right)
^{m-1}=C_{m}\left( 1+\frac{|F^{\prime }(y_{1}^{\ast })|^{2}}{F^{\prime
\prime }(y_{1}^{\ast })}\right) ^{m-1},
\end{align*}%
where in the last inequality we used (\textbf{1}) the fact that $b\left(
y_{1}^{\ast }\right) =\frac{F^{\prime \prime }(y_{1}^{\ast })}{|F^{\prime
}(y_{1}^{\ast })|^{2}+F^{\prime \prime }(y_{1}^{\ast })}<1$ provided $%
y_{1}^{\ast }\leq r$, which we may assume since otherwise we are done, and (%
\textbf{2}) the inequality 
\begin{equation*}
\frac{\left( 1+b\right) ^{\frac{m}{m-1}}-1}{\left( \left( 1+b\right) ^{\frac{
1}{m-1}}-1\right) ^{m}}\leq \frac{1}{2}m\left( 2m-1\right) \left( m-1\right)
^{2m}b^{1-m},\ \ \ \ \ 0\leq b<1,
\end{equation*}%
which follows easily from upper and lower estimates on the binomial series.
Combining this with (\ref{cali_F}) we thus obtain the following upper bound 
\begin{equation*}
\mathcal{F}(y_{1})\leq t_{m}\left\vert F^{\prime }\left( r_{0}\right)
\right\vert \frac{1}{\left\vert F^{\prime }\left( y_{1}^{\ast }\right)
\right\vert }e^{C_{m}\left( 1+\frac{|F^{\prime }(y_{1}^{\ast })|^{2}}{
F^{\prime \prime }(y_{1}^{\ast })}\right) ^{m-1}}=t_{m}\left\vert F^{\prime
}\left( r_{0}\right) \right\vert \ \varphi \left( y_{1}^{\ast }\right) ,
\end{equation*}%
with $\varphi $ as in (\ref{mon prop}). Using the monotonicity of $\varphi
\digamma $ we therefore obtain 
\begin{equation*}
\mathcal{I}_{0,\frac{1}{\left\vert F^{\prime }\left( y_{1}\right)
\right\vert }}\left( x\right) \lesssim \mathcal{F}(y_{1})\leq
t_{m}\left\vert F^{\prime }\left( r_{0}\right) \right\vert \ \varphi \left(
r_{0}\right) =t_{m}\left\vert F^{\prime }\left( r_{0}\right) \right\vert
\varphi \left( r_{0}\right) ,
\end{equation*}%
which is the estimate required in (\ref{the integral}).

For the second big region $\mathcal{R}_{2}$ we have 
\begin{equation*}
\frac{1}{h_{y_{1}-x_{1}}}\approx \frac{|F^{\prime }(x_{1}+r)|}{f(x_{1}+r)}=%
\frac{|F^{\prime }(y_{1})|}{f(y_{1})},
\end{equation*}
and the integral to be estimated becomes 
\begin{align*}
I_{\mathcal{R}_{2}}&\equiv \frac{1}{\omega \left( r_{0}\right) }%
\int_{x_{1}\in \mathcal{R}_{2}}\left( c(r_{0})\frac{\left\vert F^{\prime
}\left( y_{1}\right) \right\vert }{f\left( y_{1}\right) }\right) ^{\psi
\left( c(r_{0})\frac{\left\vert F^{\prime }\left( y_{1}\right) \right\vert }{%
f\left( y_{1}\right) }\right) }dx_{1} \\
&\leq \frac{y_{1}}{\omega \left( r_{0}\right) }\left( c(r_{0})\frac{%
\left\vert F^{\prime }\left( y_{1}\right) \right\vert }{f\left( y_{1}\right) 
}\right) ^{\psi \left( c(r_{0})\frac{\left\vert F^{\prime }\left(
y_{1}\right) \right\vert }{f\left( y_{1}\right) }\right) } \\
&=t_m|F^{\prime }(r_{0})|y_{1}\left( c(r_{0})\frac{\left\vert F^{\prime
}\left( y_{1}\right) \right\vert }{f\left( y_{1}\right) }\right) ^{\psi
\left( c(r_{0})\frac{\left\vert F^{\prime }\left( y_{1}\right) \right\vert }{%
f\left( y_{1}\right) }\right) },
\end{align*}
where 
\begin{equation*}
c(r_{0})=\frac{t_{m}\ f(r_{0})}{|F^{\prime }(r_{0})|}.
\end{equation*}%
We now look for the maximum of the function 
\begin{equation*}
\mathcal{G}(y_{1}) \equiv t_{m}\left\vert F^{\prime }\left( r_{0}\right)
\right\vert y_1\left( c(r_{0})\frac{ \left\vert F^{\prime }\left(
y_{1}\right) \right\vert }{f\left( y_{1}\right) }\right) ^{\psi \left(
c(r_{0})\frac{\left\vert F^{\prime }\left( y_{1}\right) \right\vert }{%
f\left( y_{1}\right) }\right) },
\end{equation*}%
and look for the maximum of $\mathcal{G}(y_{1})$ on $\left( 0,r_{0}\right] $%
. We claim that a bound for $\mathcal{G}$ can be obtained in a similar way
and yields 
\begin{equation*}
\mathcal{G}(y_{1})\leq C_{m}\left\vert F^{\prime }\left( r_{0}\right)
\right\vert \varphi \left( r_{0}\right) ,
\end{equation*}%
where $\varphi \left( r_{0}\right) $ satisfies (\ref{mon prop}) with a
constant $C_{m}$ slightly bigger than in the case of $\mathcal{F}$. Indeed,
rewriting $\mathcal{G}(y_{1})$ in a form similar to (\ref{cali_F}) we have 
\begin{align*}
\mathcal{G}(y_{1})&=t_{m}\left\vert F^{\prime }\left( r_{0}\right)
\right\vert y_{1} \exp \left( \left( 1+\left( \ln \left[ c(r_{0})\frac{%
\left\vert F^{\prime }\left( y_{1}\right) \right\vert }{f\left( y_{1}\right) 
}\right] \right) ^{ \frac{1}{m}}\right) ^{m}-\ln \left[ c(r_{0})\frac{%
\left\vert F^{\prime }\left( y_{1}\right) \right\vert }{f\left( y_{1}\right) 
}\right] \right) \\
&=t_{m}\left\vert F^{\prime }\left( r_{0}\right) \right\vert y_{1} \exp
\left( \left( 1+B\left( y_{1}\right) ^{\frac{1}{m}}\right) ^{m}-B\left(
y_{1}\right) \right)
\end{align*}%
Again, we differentiate and equate the derivative to zero to obtain the
following implicit expression for $y_{1}^{\ast }$ maximizing $\mathcal{G}%
(y_{1})$: 
\begin{equation*}
1 =\left( \left( 1+\left( \ln \left[ c(r_{0})\frac{\left\vert F^{\prime
}\left( y_{1}^{\ast }\right) \right\vert }{f\left( y_{1}^{\ast }\right) }%
\right] \right) ^{-\frac{1}{m}}\right) ^{m-1}-1\right) y_{1}\left(
|F^{\prime }(y_{1}^{\ast })|+\frac{ F^{\prime \prime }(y_{1}^{\ast })}{%
|F^{\prime }(y_{1}^{\ast })|}\right) .
\end{equation*}
A calculation similar to the one for the function $\mathcal{F}$ gives 
\begin{align*}
\left( 1+\left( \ln \left[ c(r_{0})\frac{\left\vert F^{\prime }\left(
y_{1}^{\ast }\right) \right\vert }{f\left( y_{1}^{\ast }\right) }\right]
\right) ^{\frac{1}{m}}\right) ^{m}-& \ln \left[ c(r_{0})\frac{\left\vert
F^{\prime }\left( y_{1}^{\ast }\right) \right\vert }{f\left( y_{1}^{\ast
}\right) }\right] =\frac{\left( 1+\frac{\left\vert F^{\prime }\left(
y_{1}^{\ast }\right) \right\vert }{y_{1}^{\ast }|F^{\prime }(y_{1}^{\ast
})|^{2}+y_{1}^{\ast }F^{\prime \prime}(y_{1}^{\ast })}\right) ^{\frac{m}{m-1}%
}-1}{\left( \left(1+\frac{\left\vert F^{\prime }\left( y_{1}^{\ast }\right)
\right\vert }{y_{1}^{\ast }|F^{\prime }(y_{1}^{\ast })|^{2}+y_{1}^{\ast
}F^{\prime \prime}(y_{1}^{\ast })}\right) ^{\frac{1}{m-1}}-1\right) ^{m}} \\
& \leq C_{m}\left( \frac{y_{1}^{\ast }|F^{\prime }(y_{1}^{\ast
})|^{2}+y_{1}^{\ast }F^{\prime \prime}(y_{1}^{\ast })}{\left\vert F^{\prime
}\left( y_{1}^{\ast }\right) \right\vert }\right) ^{m-1}\leq \tilde{C}%
_{m}\left( 1+\frac{|F^{\prime }(y_{1}^{\ast })|^{2}}{F^{\prime \prime
}(y_{1}^{\ast })}\right) ^{m-1},
\end{align*}
where we used $|F^{\prime }(r)/F^{\prime \prime}(r)|\approx r$. From this
and the monotonicity condition we obtain 
\begin{equation*}
I_{\mathcal{R}_{2}}\lesssim\mathcal{G}(y_{1})\leq C_{m}\left\vert F^{\prime
}\left( r_{0}\right) \right\vert \varphi \left( r_{0}\right),
\end{equation*}
which concludes the estimate for the region $\mathcal{R}_{2}$.
\end{proof}

Now we turn to the proof of Corollary \ref{Sob Fsigma}.

\begin{proof}[Proof of Corollary \protect\ref{Sob Fsigma}]
We must first check that the monotonicity property (\ref{mon prop}) holds
for the indicated geometries $F_{k,\sigma }$, where 
\begin{eqnarray*}
f\left( r\right) &=&f_{k,\sigma }\left( r\right) \equiv \exp \left\{ -\left(
\ln \frac{1}{r}\right) \left( \ln ^{\left( k\right) }\frac{1}{r}\right)
^{\sigma }\right\} ; \\
F\left( r\right) &=&F_{k,\sigma }\left( r\right) \equiv \left( \ln \frac{1}{%
r }\right) \left( \ln ^{\left( k\right) }\frac{1}{r}\right) ^{\sigma }.
\end{eqnarray*}
Consider first the case $k=1$. Then $F\left( r\right) =F_{1,\sigma }\left(
r\right) =\left( \ln \frac{1}{r}\right) ^{1+\sigma }$ satisfies 
\begin{equation*}
F^{\prime }\left( r\right) =-\left( 1+\sigma \right) \frac{\left( \ln \frac{%
1 }{r}\right) ^{\sigma }}{r}\text{ and }F^{\prime \prime }\left( r\right)
=-\left( 1+\sigma \right) \left\{ -\frac{\left( \ln \frac{1}{r}\right)
^{\sigma }}{r^{2}}-\sigma \frac{\left( \ln \frac{1}{r}\right) ^{\sigma -1}}{
r^{2}}\right\} ,
\end{equation*}
which shows that 
\begin{eqnarray*}
\varphi \left( r\right) &=&\frac{1}{1+\sigma }\exp \left\{ -\ln \frac{1}{r}
-\sigma \ln \ln \frac{1}{r}+C_{m}\left( \frac{\left( 1+\sigma \right) ^{2} 
\frac{\left( \ln \frac{1}{r}\right) ^{2\sigma }}{r^{2}}}{\left( 1+\sigma
\right) \left\{ \frac{\left( \ln \frac{1}{r}\right) ^{\sigma }}{r^{2}}
+\sigma \frac{\left( \ln \frac{1}{r}\right) ^{\sigma -1}}{r^{2}}\right\} }
+1\right) ^{m-1}\right\} \\
&=&\frac{1}{1+\sigma }\exp \left\{ -\ln \frac{1}{r}-\sigma \ln \ln \frac{1}{%
r }+C_{m}\left( 1+\sigma \right) ^{m-1}\left( \frac{\left( \ln \frac{1}{r}
\right) ^{\sigma }}{\left\{ 1+\sigma \frac{1}{\ln \frac{1}{r}}\right\} }+ 
\frac{1}{1+\sigma }\right) ^{m-1}\right\} ,
\end{eqnarray*}
is increasing in $r$ provided both $\sigma \left( m-1\right) <1$ and $0\leq
r\leq \alpha _{m,\sigma }$, where $\alpha _{m,\sigma }$ is a positive
constant depending only on $m$ and $\sigma $. Hence we have the upper bound 
\begin{equation*}
\varphi \left( r\right) \leq \exp \left\{ -\ln \frac{1}{r}+C_{m}\left( \ln 
\frac{1}{r}\right) ^{\sigma \left( m-1\right) }\right\} =r^{1-C_{m}\frac{1}{
\left( \ln \frac{1}{r}\right) ^{1-\sigma \left( m-1\right) }}},\ \ \ \ \
0\leq r\leq \beta _{m,\sigma },
\end{equation*}
where $\beta _{m,\sigma }>0$ is chosen even smaller than $\alpha _{m,\sigma
} $ if necessary.

Thus in the case $\Phi =\Phi _{m}$ with $m>2$ and $F=F_{\sigma }$ with $%
0<\sigma <\frac{1}{m-1}$, we see that the norm $\varphi \left( r_{0}\right) $
of the Sobolev embedding satisfies 
\begin{equation*}
\varphi \left( r_{0}\right) \leq r_{0}^{1-C_{m}\frac{1}{\left( \ln \frac{1}{
r_{0}}\right) ^{1-\sigma \left( m-1\right) }}},\ \ \ \ \ \text{for }
0<r_{0}\leq \beta _{m,\sigma },
\end{equation*}
and hence that 
\begin{equation*}
\frac{\varphi \left( r_{0}\right) }{r_{0}}\leq \left( \frac{1}{r_{0}}\right)
^{\frac{C_{m}}{\left( \ln \frac{1}{r_{0}}\right) ^{1-\sigma \left(
m-1\right) }}}\ \ \ \ \ \text{for }0<r_{0}\leq \beta _{m,\sigma }.
\end{equation*}

Now consider the case $k\geq 2$. Our first task is to show that $F_{k,\sigma
}$ satisfies the structure conditions in Definition \ref{structure
conditions}. Only condition (5) is not obvious, so we now turn to that. We
have $F\left( r\right) =F_{k,\sigma }\left( r\right) =\left( \ln \frac{1}{r}%
\right) \left( \ln ^{\left( k\right) }\frac{1}{r}\right) ^{\sigma }$
satisfies 
\begin{eqnarray*}
F^{\prime }\left( r\right) &=&-\frac{\left( \ln ^{\left( k\right) }\frac{1}{r%
}\right) ^{\sigma }}{r}-\left( \ln \frac{1}{r}\right) \frac{\sigma \left(
\ln ^{\left( k\right) }\frac{1}{r}\right) ^{\sigma -1}}{\left( \ln ^{\left(
k-1\right) }\frac{1}{r}\right) \left( \ln ^{\left( k-2\right) }\frac{1}{r}%
\right) ...\left( \ln \frac{1}{r}\right) r} \\
&=&-\frac{\left( \ln ^{\left( k\right) }\frac{1}{r}\right) ^{\sigma }}{r}-%
\frac{\sigma \left( \ln ^{\left( k\right) }\frac{1}{r}\right) ^{\sigma -1}}{%
\left( \ln ^{\left( k-1\right) }\frac{1}{r}\right) \left( \ln ^{\left(
k-2\right) }\frac{1}{r}\right) ...\left( \ln ^{\left( 2\right) }\frac{1}{r}%
\right) r} \\
&=&-\frac{\left( \ln ^{\left( k\right) }\frac{1}{r}\right) ^{\sigma }}{r}%
\left\{ 1+\frac{\sigma }{\left( \ln ^{\left( k\right) }\frac{1}{r}\right)
\left( \ln ^{\left( k-1\right) }\frac{1}{r}\right) \left( \ln ^{\left(
k-2\right) }\frac{1}{r}\right) ...\left( \ln ^{\left( 2\right) }\frac{1}{r}%
\right) }\right\} \\
&=&-\frac{F\left( r\right) }{r\ln \frac{1}{r}}\left\{ 1+\frac{\sigma }{%
\left( \ln ^{\left( k\right) }\frac{1}{r}\right) \left( \ln ^{\left(
k-1\right) }\frac{1}{r}\right) ...\left( \ln ^{\left( 2\right) }\frac{1}{r}%
\right) }\right\} \equiv -\frac{F\left( r\right) \Lambda _{k}\left( r\right) 
}{r\ln \frac{1}{r}},
\end{eqnarray*}%
and 
\begin{eqnarray*}
F^{\prime \prime }\left( r\right) &=&-\frac{F^{\prime }\left( r\right)
\Lambda _{k}\left( r\right) }{r\ln \frac{1}{r}}-\frac{F\left( r\right)
\Lambda _{k}^{\prime }\left( r\right) }{r\ln \frac{1}{r}}-F\left( r\right)
\Lambda _{k}\left( r\right) \frac{d}{dr}\left( \frac{1}{r\ln \frac{1}{r}}%
\right) \\
&=&-\frac{F^{\prime }\left( r\right) \Lambda _{k}\left( r\right) }{r\ln 
\frac{1}{r}}-\frac{F\left( r\right) \Lambda _{k}^{\prime }\left( r\right) }{%
r\ln \frac{1}{r}}+\frac{F\left( r\right) \Lambda _{k}\left( r\right) }{%
r^{2}\ln \frac{1}{r}}\left( 1-\frac{1}{\ln \frac{1}{r}}\right) ,
\end{eqnarray*}%
where 
\begin{eqnarray*}
\Lambda _{k}^{\prime }\left( r\right) &=&\frac{d}{dr}\left( \frac{\sigma }{%
\left( \ln ^{\left( k\right) }\frac{1}{r}\right) \left( \ln ^{\left(
k-1\right) }\frac{1}{r}\right) ...\left( \ln ^{\left( 2\right) }\frac{1}{r}%
\right) }\right) \\
&=&-\sigma \sum_{j=2}^{k}\frac{\left( \ln ^{\left( j\right) }\frac{1}{r}%
\right) }{\left( \ln ^{\left( k\right) }\frac{1}{r}\right) ...\left( \ln
^{\left( 2\right) }\frac{1}{r}\right) }\frac{1}{\left( \ln ^{\left(
j-1\right) }\frac{1}{r}\right) ...\left( \ln \frac{1}{r}\right) r} \\
&=&-\sigma \frac{1}{\left( \ln ^{\left( k\right) }\frac{1}{r}\right)
...\left( \ln ^{\left( 2\right) }\frac{1}{r}\right) r}\sum_{j=2}^{k}\frac{%
\ln ^{\left( j\right) }\frac{1}{r}}{\left( \ln ^{\left( j-1\right) }\frac{1}{%
r}\right) ...\left( \ln \frac{1}{r}\right) } \\
&=&-\sigma \frac{1}{\left( \ln ^{\left( k\right) }\frac{1}{r}\right)
...\left( \ln ^{\left( 2\right) }\frac{1}{r}\right) r}\left( \frac{\ln
^{\left( 2\right) }\frac{1}{r}}{\ln \frac{1}{r}}+\sum_{j=3}^{k}\frac{\ln
^{\left( j\right) }\frac{1}{r}}{\left( \ln ^{\left( j-1\right) }\frac{1}{r}%
\right) ...\left( \ln \frac{1}{r}\right) }\right) \\
&=&-\sigma \frac{1}{\left( \ln ^{\left( k\right) }\frac{1}{r}\right)
...\left( \ln ^{\left( 2\right) }\frac{1}{r}\right) \left( \ln \frac{1}{r}%
\right) r}\left( \ln ^{\left( 2\right) }\frac{1}{r}+\sum_{j=3}^{k}\frac{\ln
^{\left( j\right) }\frac{1}{r}}{\left( \ln ^{\left( j-1\right) }\frac{1}{r}%
\right) ...\left( \ln ^{\left( 2\right) }\frac{1}{r}\right) }\right) .
\end{eqnarray*}%
Now 
\begin{equation*}
\ln ^{\left( 2\right) }\frac{1}{r}+\sum_{j=3}^{k}\frac{\ln ^{\left( j\right)
}\frac{1}{r}}{\left( \ln ^{\left( j-1\right) }\frac{1}{r}\right) ...\left(
\ln ^{\left( 2\right) }\frac{1}{r}\right) }\approx \ln ^{\left( 2\right) }%
\frac{1}{r},
\end{equation*}%
and so 
\begin{equation*}
-\Lambda _{k}^{\prime }\left( r\right) \approx \left\{ 
\begin{array}{ccc}
\frac{\sigma }{\left( \ln \frac{1}{r}\right) r} & \text{ for } & k=2 \\ 
\frac{\sigma }{\left( \ln ^{\left( k\right) }\frac{1}{r}\right) ...\left(
\ln ^{\left( 3\right) }\frac{1}{r}\right) \left( \ln \frac{1}{r}\right) r} & 
\text{ for } & k\geq 3%
\end{array}%
\right. .
\end{equation*}%
We also have $\Lambda _{k}\left( r\right) \approx 1$, which then gives 
\begin{equation*}
-F^{\prime }\left( r\right) \approx \frac{F\left( r\right) }{r\ln \frac{1}{r}%
},
\end{equation*}%
and 
\begin{equation*}
F^{\prime \prime }\left( r\right) \approx \frac{F\left( r\right) }{%
r^{2}\left( \ln \frac{1}{r}\right) ^{2}}+\frac{\sigma F\left( r\right) }{%
\left( \ln ^{\left( k\right) }\frac{1}{r}\right) ...\left( \ln ^{\left(
3\right) }\frac{1}{r}\right) \left( \ln \frac{1}{r}\right) ^{2}r^{2}}+\frac{%
F\left( r\right) }{r^{2}\ln \frac{1}{r}}\approx \frac{F\left( r\right) }{%
r^{2}\ln \frac{1}{r}}.
\end{equation*}%
From these two estimates we immediately obtain structure condition (5) of
Definition \ref{structure conditions}.

We also have 
\begin{equation*}
\frac{\left\vert F^{\prime }\left( r\right) \right\vert ^{2}}{F^{\prime
\prime }(r)}\approx \frac{F\left( r\right) ^{2}}{\left( r\ln \frac{1}{r}
\right) ^{2}}\frac{r^{2}\ln \frac{1}{r}}{F\left( r\right) }=\frac{F\left(
r\right) }{\ln \frac{1}{r}}=\left( \ln ^{\left( k\right) }\frac{1}{r}\right)
^{\sigma },\ \ \ \ \ 0\leq r\leq \beta _{m,\sigma }\ ,
\end{equation*}
and then from the definition of $\varphi \left( r\right) \equiv \frac{1}{
|F^{\prime }(r)|}e^{C_{m}\left( \frac{\left\vert F^{\prime }\left( r\right)
\right\vert ^{2}}{F^{\prime \prime }(r)}+1\right) ^{m-1}}$ in (\ref{mon prop}
), we obtain 
\begin{eqnarray*}
\varphi \left( r\right) &=&\frac{1}{|F^{\prime }(r)|}e^{C_{m}\left( \frac{
\left\vert F^{\prime }\left( r\right) \right\vert ^{2}}{F^{\prime \prime
}(r) }+1\right) ^{m-1}}\approx r\frac{e^{C_{m}\left( \ln ^{\left( k\right) }%
\frac{ 1}{r}\right) ^{\sigma \left( m-1\right) }}}{\left( \ln ^{\left(
k\right) } \frac{1}{r}\right) ^{\sigma }} \\
&\lesssim &re^{C_{m}\left( \ln ^{\left( k\right) }\frac{1}{r}\right)
^{\sigma \left( m-1\right) }}\approx r^{1-C_{m}\frac{\left( \ln ^{\left(
k\right) }\frac{1}{r}\right) ^{\sigma \left( m-1\right) }}{\ln \frac{1}{r}}
},\ \ \ \ \ 0\leq r\leq \beta _{m,\sigma }.
\end{eqnarray*}
This completes the proof of the monotonicity property (\ref{mon prop}) and
the estimates for $\varphi \left( r\right) $ for each of the two cases in
Corollary \ref{Sob Fsigma}.

Finally, we must show that the standard $\left( \Phi ,\varphi \right) $
-Sobolev inequality (\ref{Phi bump'}) with $\Phi $ as in (\ref{def Phi m ext}
), $m>1$, fails if $k=1$ and $\sigma >\frac{1}{m-1}$, and for this it is
convenient to use the identity $\left\vert \nabla _{A}v\right\vert
=\left\vert \frac{\partial v}{\partial r}\right\vert $ for radial functions $%
v$, see \cite[Appendix C.]{KoRiSaSh19}. Take $f\left( r\right) =f_{1,\sigma
}\left( r\right) =r^{\left( \ln \frac{1}{r}\right) ^{\sigma }}$ and with $%
\eta \left( r\right) \equiv \left\{ 
\begin{array}{ccc}
1 & \text{ if } & 0\leq r\leq \frac{r_{0}}{2} \\ 
2\left( 1-\frac{r}{r_{0}}\right) & \text{ if } & \frac{r_{0}}{2}\leq r\leq
r_{0}%
\end{array}%
\right. $, we define the radial function 
\begin{equation*}
w\left( x,y\right) =w\left( r\right) =e^{\left( \ln \frac{1}{r}\right)
^{\sigma +1}}=\frac{\eta \left( r\right) }{f\left( r\right) },\ \ \ \ \
0<r<r_{0}.
\end{equation*}%
From $\left\vert \nabla _{A}r\right\vert =1$, we obtain the equality $%
\left\vert \nabla _{A}w\left( x,y\right) \right\vert =\left\vert \nabla
_{A}r\right\vert \left\vert w^{\prime }\left( r\right) \right\vert
=\left\vert w^{\prime }\left( r\right) \right\vert $, and combining this
with $\left\vert \nabla _{A}\eta \left( r\right) \right\vert \leq \frac{2}{%
r_{0}}\mathbf{1}_{\left[ \frac{r_{0}}{2},r_{0}\right] }$ and the estimate
(7.8) from \cite{KoRiSaSh19}, we have 
\begin{eqnarray*}
\int \int_{B\left( 0,r_{0}\right) }\left\vert \nabla _{A}w\left( x,y\right)
\right\vert dxdy &\lesssim &\int_{0}^{r_{0}}\left\vert w^{\prime }\left(
r\right) \right\vert \frac{f\left( r\right) }{\left\vert F^{\prime }\left(
r\right) \right\vert }dr+\frac{2}{r_{0}}\int_{\frac{r_{0}}{2}}^{r_{0}}\frac{1%
}{\left\vert F^{\prime }\left( r\right) \right\vert }dr \\
&\approx &\int_{0}^{r_{0}}\frac{f^{\prime }\left( r\right) }{f\left(
r\right) ^{2}}\frac{f\left( r\right) ^{2}}{f^{\prime }\left( r\right) }dr+%
\frac{2}{r_{0}}\int_{\frac{r_{0}}{2}}^{r_{0}}Crdr\approx r_{0}\ .
\end{eqnarray*}%
On the other hand, $\Phi _{m}\left( t\right) \geq t^{1+\frac{m}{\left( \ln
t\right) ^{\frac{1}{m}}}}$ and $\left\vert F^{\prime }\left( r\right)
\right\vert =\left( \sigma +1\right) \left( \ln \frac{1}{r}\right) ^{\sigma }%
\frac{1}{r}$, so we obtain 
\begin{eqnarray*}
&&\int \int_{B\left( 0,r_{0}\right) }\Phi _{m}\left( w\left( x,y\right)
\right) dxdy \\
&\gtrsim &\int_{0}^{\frac{r_{0}}{2}}\Phi _{m}\left( \frac{1}{f\left(
r\right) }\right) \frac{f\left( r\right) }{\left\vert F^{\prime }\left(
r\right) \right\vert }dr\geq \int_{0}^{\frac{r_{0}}{2}}\left( \frac{1}{%
f\left( r\right) }\right) ^{1+\frac{m}{F\left( r\right) ^{\frac{1}{m}}}}%
\frac{f\left( r\right) }{\left\vert F^{\prime }\left( r\right) \right\vert }%
dr \\
&\approx &\int_{0}^{\frac{r_{0}}{2}}f\left( r\right) ^{-\frac{m}{\left( \ln 
\frac{1}{r}\right) ^{\frac{\sigma }{m}}}}\frac{1}{\left( \ln \frac{1}{r}%
\right) ^{\sigma }\frac{1}{r}}dr=\int_{0}^{\frac{r_{0}}{2}}e^{m\left( \ln 
\frac{1}{r}\right) ^{\left( \sigma +1\right) \left( 1-\frac{1}{m}\right) }}%
\frac{rdr}{\left( \ln \frac{1}{r}\right) ^{\sigma }}=\infty
\end{eqnarray*}%
if $\left( \sigma +1\right) \left( 1-\frac{1}{m}\right) >1$, i.e. $\sigma >%
\frac{1}{m-1}$. This finishes the proof of corollary \ref{Sob Fsigma}.
\end{proof}

\subsection{Proof of the geometric theorems}

In this section we prove the geometric Theorems \ref{th-geom-bound} and \ref%
{th-geom-max} as consequence of the abstract Theorems \ref{th-abs-bound} and %
\ref{th-abs-max}, and of the geometric Orlicz-Sobolev inequality established
in Section \ref{Sec Orlicz}.

\begin{proof}[Proof of Theorem \protect\ref{th-geom-bound}]
Theorem \ref{th-geom-bound} is a consequence of the abstract Theorem \ref%
{th-abs-bound} and the geometric results described in Section \ref{Sec
Orlicz}, once we show that under the hypotheses of Theorem \ref%
{th-geom-bound} conditions (\ref{cond-1}), (\ref{cond-2}), and (\ref{cond-3}%
) of Theorem \ref{th-abs-bound} are satisfied.

Since the matrix $A\left( x\right) $ in (\ref{sing-matrix}) is elliptic away
from the line $x_{1}=0$ and it is independent of the second variable $x_{2}$%
, it suffices to prove the theorem for a ball $B\left( 0,r_{0}\right)
\Subset \Omega $. By Corollary \ref{Sob Fsigma} in Section \ref{Sec Orlicz},
when $k=1$ and $0<\sigma <\frac{1}{m-1}$ or $k\geq 2$ and $\sigma >0$, we
have that there exists $0<r_{0}=r_{0}\left( m,\sigma \right) $ such that the
single scale $\left( \Phi ,A,\varphi \right) $-Orlicz-Sobolev bump
inequality (\ref{OS ineq}) holds with $\Phi =\Phi _{m}$ at $\left(
0,r\right) $ for some $m>2$ and superradius ${\varphi }\left( r\right) $
given by%
\begin{equation}
\frac{\varphi \left( r\right) }{r}=\exp \left( C_{m}\left( \ln ^{\left(
k\right) }\frac{1}{r}\right) ^{\sigma \left( m-1\right) }\right) ,\qquad 
\text{for }0<r\leq r_{0}\leq 1.  \label{phi-grow}
\end{equation}

Hence condition (\ref{cond-2}) from Theorem \ref{th-abs-bound} is satisfied
because of condition (2) of Theorem \ref{th-geom-bound}.

It suffices to consider the case that $u$ is a weak \emph{subsolution} of (%
\ref{eq-quasi}) in $\Omega $, with right hand side pair as in condition (1)
of Theorem \ref{th-geom-bound}. Write $\tilde{A}\left( x\right) =\mathcal{A}%
\left( x,u\left( x\right) \right) $ as before. Since $\phi _{0}\in L^{\Phi
^{\ast }}\left( B\left( 0,r\right) \right) $, and $\vec{\phi}_{1}\in
L^{\infty }\left( B\left( 0,r\right) \right) $, then the pair $\left( \phi
_{0},\vec{\phi}_{1}\right) $ is strongly $A$-admissible at $\left(
0,r\right) $ by Proposition \ref{prop:admiss-suff}, so condition (\ref%
{cond-1}) from Theorem \ref{th-abs-bound} holds.

Finally, given $B\left( x,r_{0}\right) \Subset \Omega $, the existence of an 
$\left( A,d\right) $-\emph{standard} accumulating sequence of Lipschitz
cutoff functions at $\left( x,r_{0}\right) $ follows directly from Lemma \ref%
{spec_cutoff_lemma} above, so condition (\ref{cond-3}) from Theorem \ref%
{th-abs-bound} holds. Therefore, applying Theorem \ref{th-abs-bound}, $u$ is 
\emph{locally bounded above} in $\Omega $.
\end{proof}

The proof of our second application, the geometric maximum principle Theorem %
\ref{th-geom-max}, also proceeds by showing that under the conditions of
Theorem \ref{th-geom-max} all the hypotheses of the abstract counterpart,
Theorem \ref{th-abs-max}, are satisfied.

\begin{proof}[Proof of Theorem \protect\ref{th-geom-max}]
We will show that under the hypotheses of the theorem the pair $\left( \phi
_{0},\vec{\phi}_{1}\right) $ is $A$-admissible in $\Omega $, and that the
global $\left( \Phi ,A\right) $-Orlicz-Sobolev bump inequality (\ref{OS
global}) holds in $\Omega $ with $\Phi =\Phi _{m}$ for some $m>2$.

First, since $\phi _{0}\in L^{\Phi ^{\ast }}\left( \Omega \right) $, and $%
\vec{\phi}_{1}\in L^{\infty }\left( \Omega \right) $, from Proposition \ref%
{prop:admiss-suff} in Section \ref{Sec Orlicz} it follows that the right
hand side pair $\left( \phi _{0},\vec{\phi}_{1}\right) $ is strongly $A$%
-admissible in $\Omega $ (and therefore it is $A$-admissble).

So, it only remains to show that the global $\left( \Phi ,A\right) $%
-Orlicz-Sobolev bump inequality (\ref{OS global}) holds in $\Omega $ for
some Young function $\Phi =\Phi _{m}$ with $m>2$. This is proved in the same
way as in \cite[Proposition 81]{KoRiSaSh19}, where the global Sobolev
inequality is obtained from the local $\left( \Phi ,A,{\varphi }\right) $%
-Orlicz-Sobolev inequality. Indeed, we have seen above that such local
inequality holds for $\Phi _{m}$ ($m>2$) at $B\left( x,r\right) $ for $%
0<r\leq r_{0}\left( m,\sigma \right) $, and 
\begin{equation*}
\varphi \left( r\right) =r\exp \left( C_{m}\left( \ln ^{\left( k\right) }%
\frac{1}{r}\right) ^{\sigma \left( m-1\right) }\right) ,\qquad \text{for }%
0<r\leq r_{0}\leq 1.
\end{equation*}%
Since $\Omega $ is bounded, we can cover it with a finite number of balls $%
\Omega \subset \bigcup_{j=1}^{N}B\left( x_{j},r_{0}\right) $, and given $%
\Omega ^{\prime }\Subset \Omega $ we let $\eta _{j}$ be a partition of unity
subordinated to $\left\{ \left\{ B\left( x_{j},r_{0}\right) \right\}
_{j=1}^{n},\Omega ,\Omega ^{\prime }\right\} $, i.e. $\eta _{j}\in
C_{0}^{\infty }\left( B_{j}\right) $, $0\leq \eta _{j}\leq 1$, $\left\Vert
\nabla _{A}\eta _{j}\right\Vert _{L^{\infty }}\leq \frac{C}{\mathrm{dist}%
\left( \Omega ^{\prime },\partial \Omega \right) }=C_{0}$, and $\sum \eta
_{j}\equiv 1$ on $\overline{\Omega ^{\prime }}$. Suppose $v\in \mathrm{Lip}%
_{\mathrm{c}}\left( \Omega \right) $ with $\mathrm{supp}v\subset \Omega
^{\prime }$, then 
\begin{equation*}
v=\sum_{j=1}^{N}v\eta _{j}\equiv \sum_{j=1}^{N}v_{j},
\end{equation*}%
and by inequality (\ref{finite}) we have 
\begin{eqnarray*}
\Phi ^{-1}\left( \int_{\Omega }\Phi \left( \left\vert v\right\vert \right)
~dx\right) &\leq &C_{\Phi ,N}\sum_{j=1}^{N}\Phi ^{-1}\left( \int_{B_{j}}\Phi
\left( \left\vert v_{j}\right\vert \right) ~dx\right) \\
&=&C_{\Phi ,N}\sum_{j=1}^{N}\Phi ^{-1}\left( \left\vert B_{j}\right\vert
\int_{B_{j}}\Phi \left( \left\vert v_{j}\right\vert \right) ~\frac{dx}{%
\left\vert B_{j}\right\vert }\right) \\
&\leq &C_{\Phi ,N}\sum_{j=1}^{N}\Phi ^{-1}\left( \int_{B_{j}}\Phi \left(
\left\vert v_{j}\right\vert \right) ~\frac{dx}{\left\vert B_{j}\right\vert }%
\right) \\
&\leq &C_{\Phi ,N}{\varphi }\left( r_{0}\right)
\sum_{j=1}^{N}\int_{B_{j}}\left\vert \nabla _{A}v_{j}\right\vert ~\frac{dx}{%
\left\vert B_{j}\right\vert }.
\end{eqnarray*}%
where we used that $0<\left\vert B_{j}\right\vert \leq 1$. Note that here we
are using the local Orlicz-Sobolev inequality at balls centered at arbitrary
points in $\Omega $. This is allowed because the hypotheses of Corollary \ref%
{Sob Fsigma} are still satisfied away form the line $x_{1}=0$, where the
geometry is the most singular. Letting $C_{1}=\max_{j}\left\vert
B_{j}\right\vert ^{-1}$, it follows that%
\begin{equation*}
\Phi ^{-1}\left( \int_{\Omega }\Phi \left( \left\vert v\right\vert \right)
~dx\right) \leq C_{\Phi ,N}C_{1}{\varphi }\left( r_{0}\right) \left\Vert
\nabla _{A}v\right\Vert _{L^{1}\left( \Omega \right) }+C_{\Phi ,N}C_{0}C_{1}{%
\varphi }\left( r_{0}\right) \left\Vert v\right\Vert _{L^{1}\left( \Omega
\right) }.
\end{equation*}%
Since the matrix $A$ is non-singular in the $x_{1}$-direction we have the $%
\left( 1,1\right) $-Sobolev estimate 
\begin{equation*}
\left\Vert v\right\Vert _{L^{1}\left( \Omega \right) }\leq C\mathrm{diam}%
\Omega \left\Vert \nabla _{A}v\right\Vert _{L^{1}\left( \Omega \right)
}\qquad \text{for all }v\in \mathrm{Lip}_{\mathrm{c}}\left(
\Omega \right) .
\end{equation*}%
Substituting this into the previous inequality yields the global
Orlicz-Sobolev inequality 
\begin{equation*}
\Phi ^{-1}\left( \int_{\Omega }\Phi \left( \left\vert v\right\vert \right)
~dx\right) \leq C_{\Phi ,\Omega }\left\Vert \nabla _{A}v\right\Vert
_{L^{1}\left( \Omega \right) }
\end{equation*}%
as wanted.
\end{proof}

\bibliographystyle{amsplain}
\bibliography{libraryMoser}

\providecommand{\bysame}{\leavevmode\hbox to3em{\hrulefill}\thinspace}
\providecommand{\MR}{\relax\ifhmode\unskip\space\fi MR }
% \MRhref is called by the amsart/book/proc definition of \MR.
\providecommand{\MRhref}[2]{%
  \href{http://www.ams.org/mathscinet-getitem?mr=#1}{#2}
}
\providecommand{\href}[2]{#2}
\begin{thebibliography}{10}

\bibitem{CrRo21}
David Cruz-Uribe and Scott Rodney, \emph{Bounded weak solutions to elliptic
  {PDE} with data in {O}rlicz spaces}, J. Differential Equations \textbf{297}
  (2021), 409--432. \MR{4280269}

\bibitem{CrRo23}
\bysame, \emph{A note on the limit of {O}rlicz norms}, Real Anal. Exchange
  \textbf{48} (2023), no.~1, 77--81. \MR{4556453}

\bibitem{DiFaMoRo23}
Giuseppe Di~Fazio, Maria~Stella Fanciullo, Dario~Daniele Monticelli, Scott
  Rodney, and Pietro Zamboni, \emph{Matrix weights and regularity for
  degenerate elliptic equations}, Nonlinear Anal. \textbf{237} (2023), Paper
  No. 113363, 24. \MR{4634961}

\bibitem{GiTr}
David Gilbarg and Neil~S. Trudinger, \emph{Elliptic partial differential
  equations of second order}, Classics in Mathematics, Springer-Verlag, Berlin,
  2001, Reprint of the 1998 edition. \MR{1814364}

\bibitem{KoMaRi}
Lyudmila Korobenko, Diego Maldonado, and Cristian Rios, \emph{From {S}obolev
  inequality to doubling}, Proc. Amer. Math. Soc. \textbf{143} (2015), no.~9,
  4017--4028. \MR{3359590}

\bibitem{KoRiSaSh1}
Lyudmila Korobenko, Cristian Rios, Eric Sawyer, and Ruipeng Shen, \emph{Local
  boundedness, maximum principles, and continuity of solutions to infinitely
  degenerate elliptic equations}, 2015.

\bibitem{KoRiSaSh19}
Lyudmila Korobenko, Cristian Rios, Eric Sawyer, and Ruipeng Shen, \emph{Local
  boundedness, maximum principles, and continuity of solutions to infinitely
  degenerate elliptic equations with rough coefficients}, Mem. Amer. Math. Soc.
  \textbf{269} (2021), no.~1311, vii+130. \MR{4224718}

\bibitem{KoSa21}
Lyudmila Korobenko and Eric Sawyer, \emph{Continuity of infinitely degenerate
  weak solutions via the trace method}, J. Funct. Anal. \textbf{281} (2021),
  no.~9, Paper No. 109170, 30. \MR{4287784}

\bibitem{SaWh4}
Eric~T. Sawyer and Richard~L. Wheeden, \emph{H\"{o}lder continuity of weak
  solutions to subelliptic equations with rough coefficients}, Mem. Amer. Math.
  Soc. \textbf{180} (2006), no.~847, x+157. \MR{2204824}

\bibitem{SaWh3}
\bysame, \emph{Degenerate {S}obolev spaces and regularity of subelliptic
  equations}, Trans. Amer. Math. Soc. \textbf{362} (2010), no.~4, 1869--1906.
  \MR{2574880}

\bibitem{Zhong}
Xiao Zhong, \emph{Discontinuous solutions of linear, degenerate elliptic
  equations}, J. Math. Pures Appl. (9) \textbf{90} (2008), no.~1, 31--41.
  \MR{2435212}

\end{thebibliography}

\end{document}